\documentclass[reqno, a4paper]{amsart}
\usepackage{amsmath, amssymb, amsthm}
\usepackage[usenames]{color}
\usepackage[hmargin=3cm, vmargin=3cm]{geometry}

\newtheorem{thm}{Theorem}[section]

\newtheorem{lem}[thm]{Lemma}
\newtheorem{prop}[thm]{Proposition}
\theoremstyle{definition}
\newtheorem{defn}[thm]{Definition}
\theoremstyle{remark}
\newtheorem{rem}[thm]{Remark}
\numberwithin{equation}{section}
\newtheorem{slem}[thm]{Sublemma}

\newcommand{\bls}{\boldsymbol\lambda, \boldsymbol\sigma}
\newcommand{\bl}{\boldsymbol\lambda}
\newcommand{\bs}{\boldsymbol\sigma}

\newcommand{\dist}{\text{dist}}

\begin{document}

\title[Nonlinear elliptic problems with the fractional Laplacian]{Asymptotic behavior of solutions for nonlinear elliptic problems with the fractional Laplacian}

\author{Woocheol Choi}
\address[Woocheol Choi]{Department of Mathematical Sciences, Seoul National University, 1 Gwanak-ro, Gwanak-gu, Seoul 151-747, Republic of Korea}
\email{chwc1987@math.snu.ac.kr}

\author{Seunghyeok Kim}
\address[Seunghyeok Kim]{Department of Mathematical Sciences, KAIST, 291 Daehak-ro, Yuseong-gu, Daejeon 305-701, Republic of Korea}
\curraddr{Departamento de Matem\'{a}tica, Pontificia Universidad Cat\'{o}lica de Chile, Avenida Vicu\~{n}a Mackenna 4860, Santiago, Chile}
\email{shkim0401@gmail.com}

\author{Ki-Ahm Lee}
\address[Ki-Ahm Lee]{Department of Mathematical Sciences, Seoul National University, 1 Gwanak-ro, Gwanak-gu, Seoul 151-747, Republic of Korea \& Center for Mathematical Challenges, Korea Institute for Advanced Study, Seoul,130-722, Republic of Korea}
\email{kiahm@math.snu.ac.kr}

\subjclass[2010]{Primary: 35R11, Secondary: 35A01, 35B09, 35B33, 35B44, 35J08}
\keywords{fractional Laplacian, critical nonlinearity, asymptotic behavior of least energy solutions, Lyapunov-Schmidt reduction}

\begin{abstract}
In this paper we study the asymptotic behavior of least energy solutions and the existence of multiple bubbling solutions of nonlinear elliptic equations involving the fractional Laplacians and the critical exponents.
This work can be seen as a nonlocal analog of the results of Han (1991) \cite{H} and Rey (1990) \cite{R}.
\end{abstract}

\maketitle

\section{Introduction}
The aim of this paper is to study the nonlocal equations:
\begin{equation}\label{a12uu}
\left\{\begin{array}{ll} \mathcal{A}_{s} u = u^{p}+\epsilon u &\text{in}~\Omega,
\\
u>0 &\text{in} ~\Omega,
\\
u=0 & \text{on}~\partial \Omega,
\end{array}\right.
\end{equation}
where $0< s < 1$, $p := \frac{n+2s}{n-2s}$, $\epsilon > 0$ is a small parameter, $\Omega$ is a smooth bounded domain of $\mathbb{R}^n$ and $\mathcal{A}_{s}$ denotes the fractional Laplace operator $(-\Delta)^s$ in $\Omega$ with zero Dirichlet boundary values on $\partial \Omega$, defined in terms of the spectra of the Dirichlet Laplacian $-\Delta$ on $\Omega$. It can be understood as the nonlocal version of the Brezis-Nirenberg problem \cite{BN}.

The fractional Laplacian appears in diverse areas including physics, biological modeling and mathematical finances and partial differential equations involving the fractional Laplacian have attracted the attention of many researchers.
An important feature of the fractional Laplacian is its nonlocal property, which makes it difficult to handle.
Recently, Caffarelli and Silvestre \cite{CS} developed a local interpretation of the fractional Laplacian given in $\mathbb{R}^n$ by considering a Neumann type operator in the extended domain $\mathbb{R}^{n+1}_{+} := \{(x,t) \in \mathbb{R}^{n+1}: t > 0\}$.
This observation made a significant influence on the study of related nonlocal problems.
A similar extension was devised by Cabr\'e and Tan \cite{CT} and Capella, D\'{a}vila, Dupaigne, and Sire \cite{CDDS} (see Br\"andle, Colorado, de Pablo, and S\'anchez \cite{BCPS1} and Tan \cite{T2} also) for nonlocal elliptic equations on bounded domains with the zero Dirichlet boundary condition,
and by Kim and Lee \cite{KL} for singular nonlocal parabolic equations.

Based on these extensions, many authors studied nonlinear problems of the form $\mathcal{A}_{s} u = f(u)$, where $f:\mathbb{R}^n \rightarrow \mathbb{R}$ is a certain function.
Since it is almost impossible to describe all the works involving them, we explain only some results which are largely related to our problem.
When $s= {1 \over 2}$, Cabr\'e and Tan \cite{CT} established the existence of positive solutions for equations having nonlinearities with the subcritical growth, their regularity and the symmetric property.
They also proved a priori estimates of the Gidas-Spruck type by employing a blow-up argument along with a Liouville type result for the square root of the Laplacian in the half-space.
Moreover, Tan \cite{T} studied Brezis-Nirenberg type problems (see \cite{BN}) for the case $s = {1 \over 2}$, that is, when the nonlinearity is given by $f(u) = u^{\frac{n+1}{n-1}} + \epsilon u$ with $\epsilon >0$.
On the other hand, the first author of this paper gave a different proof for the Gidas-Spruck type estimates using the Pohozaev identity and applied them to the Lane-Emden type system involving $A_{1/2}$.
The work of Tan \cite{T} is extended to $0<s<1$ and $f(u) = u^{\frac{n+2s}{n-2s}}+ \lambda u^q$  for $0<q< \frac{n+2s}{n-2s}$ in \cite{BCPS2} .
See also \cite{BCPS1} which dealt with a subcritical concave-convex problem.
For $f(u) = u^q$ with the critical and supercritical exponents $q \geq \frac{n+2s}{n-2s}$, the nonexistence of solutions was proved in \cite{BCPS1, T, T2}
in which the authors devised and used the Pohozaev type identities.

\medskip
The aim of this paper is to study the problem \eqref{a12uu} when $p=\frac{n+2s}{n-2s}$ is the critical Sobolev exponent and $\epsilon >0$ is close to zero. During this study we develop some nonlocal techniques which also have their own interests.

\medskip
The first part is devoted to study least energy solutions of \eqref{a12uu}.
To state the result, we recall from \cite{CoT} that the sharp fractional Sobolev inequality for $n>2s$ and $s>0$
\[\left( \int_{\mathbb{R}^n} |f(x)|^{p+1} dx \right)^{1 \over p+1} \leq \mathcal{S}_{n,s} \left( \int_{\mathbb{R}^n} |\mathcal{A}_{s}^{1/2} f(x)|^2 dx \right)^{1 \over 2} \quad \text{for any } f \in H^s(\mathbb{R}^n)\]
which holds with the constant
\begin{equation}\label{ans}
\mathcal{S}_{n,s} =2^{-s} \pi^{-s/2} \biggl[\frac{\Gamma \left(\frac{n-2s}{2}\right)}{\Gamma \left( \frac{n+2s}{2}\right)}\biggr]^{1 \over 2} \biggl[ \frac{\Gamma(n)}{\Gamma(n/2)}\biggr]^{s \over n}.
\end{equation}
Our first result is the following.
\begin{thm}\label{thm-m-limit}
Assume $0 < s < 1$ and $n> 4s$.
For $\epsilon>0$, let $u_{\epsilon}$ be a solution of \eqref{a12uu} such that
\begin{equation}\label{2p1s0}
\lim_{\epsilon \rightarrow 0} \frac{ \int_{\Omega} |\mathcal{A}_{s}^{1/2} u_{\epsilon}|^2 dx }{\left( \int_{\Omega} |u_{\epsilon}|^{p+1} dx\right)^{2/(p+1)} } = \mathcal{S}_{n,s}.
\end{equation}
Then there exist a point $x_0 \in \Omega$ and a constant $\mathfrak{b}_{n,s} >0$ such that
\[u_{\epsilon} \rightarrow 0 \text{ in } \left\{\begin{array}{lll} C^{\alpha}_{\textrm{loc}}(\Omega \setminus \{x_0\})&\ \text{for all }  \alpha \in (0,2s) & \textrm{if } s \in (0,1/2],
\\
C^{1,\alpha}_{\textrm{loc}}(\Omega \setminus \{x_0\})&\ \text{for all } \alpha
\in (0,2s-1) & \textrm{if } s \in (1/2,1),
\end{array}
\right.\]
and
\[\| u_{\epsilon} (x) \|_{L^{\infty}} u_{\epsilon} (x) \rightarrow \mathfrak{b}_{n,s} G(x,{x_0}) \textrm{ in } \left\{\begin{array}{lll} C^{\alpha}_{\textrm{loc}}(\Omega \setminus \{x_0\}) &\ \text{for all } \alpha \in (0,2s) & \textrm{if } s \in (0,1/2],
\\
C^{1,\alpha}_{\textrm{loc}}(\Omega \setminus \{x_0\})&\ \text{for all } \alpha
\in (0,2s-1) & \textrm{if } s \in (1/2,1),
\end{array}
\right.\]
as $\epsilon$ goes to 0.
The constant $\mathfrak{b}_{n,s}$ is explicitly computed in Section \ref{sec_asymp} (see \eqref{eq-beta}).
\end{thm}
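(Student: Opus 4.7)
The plan is to adapt the classical Han argument \cite{H} to the nonlocal setting by exploiting the Caffarelli-Silvestre / Cabr\'e-Tan extension that localizes the nonlocality. First, the hypothesis \eqref{2p1s0} together with a concentration-compactness argument adapted to the fractional setting implies that $u_\epsilon$ concentrates at a single point, so in particular $M_\epsilon := \|u_\epsilon\|_{L^\infty(\Omega)} \to \infty$. Pick $x_\epsilon \in \Omega$ with $u_\epsilon(x_\epsilon) = M_\epsilon$ and set $\mu_\epsilon := M_\epsilon^{-2/(n-2s)}$. Define the rescaled function
\[ v_\epsilon(x) := \mu_\epsilon^{(n-2s)/2} u_\epsilon(x_\epsilon + \mu_\epsilon x) \quad \text{on } \Omega_\epsilon := \mu_\epsilon^{-1}(\Omega - x_\epsilon), \]
which satisfies $\mathcal{A}_s v_\epsilon = v_\epsilon^p + \epsilon \mu_\epsilon^{2s} v_\epsilon$ with $v_\epsilon(0) = \|v_\epsilon\|_\infty = 1$. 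Using weighted Sobolev estimates of Fabes-Kenig-Serapioni type for the extended problem and the regularity theory from \cite{CS, CT}, one obtains uniform local H\"older bounds on $v_\epsilon$, whence, up to a subsequence, $v_\epsilon \to U$ where $U$ is the standard fractional bubble
\[ U(x) = \alpha_{n,s}(1+|x|^2)^{-(n-2s)/2}, \]
the unique (up to scaling) positive solution of $(-\Delta)^s U = U^p$ on $\mathbb{R}^n$ with $U(0) = 1$.

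Next, I would show that $x_\epsilon \to x_0 \in \Omega$ with $\text{dist}(x_\epsilon,\partial\Omega)/\mu_\epsilon \to \infty$. If the rescaled distance stayed bounded, the limit profile would live on a half-space, and the fractional Pohozaev identity from \cite{BCPS1, T, T2} applied on the extended problem would force it to vanish, contradicting $v_\epsilon(0) = 1$. Combined with the pointwise decay estimate
\[ u_\epsilon(x) \leq C M_\epsilon \big(1+|x-x_\epsilon|^2/\mu_\epsilon^2\big)^{-(n-2s)/2}, \]
deduced by comparison with dilated bubbles together with a Moser-type iteration on the extension, this yields $u_\epsilon \to 0$ in $L^\infty_{\text{loc}}(\Omega \setminus \{x_0\})$. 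The claimed H\"older and $C^{1,\alpha}$ convergence then follow by applying the regularity theory of \cite{CS, CT} to $\mathcal{A}_s u_\epsilon = u_\epsilon^p + \epsilon u_\epsilon$, whose right-hand side now converges to zero locally uniformly away from $x_0$.

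For the Green's function asymptotic, I use the representation
\[ u_\epsilon(x) = \int_\Omega G(x,y)\big[u_\epsilon^p(y) + \epsilon u_\epsilon(y)\big]\,dy, \]
multiply by $M_\epsilon$, and split the integral into $B_\delta(x_\epsilon)$ and its complement. On the complement, the pointwise decay estimate makes the contribution (after multiplication by $M_\epsilon$) vanish as $\epsilon \to 0$. On the ball, the change of variable $y = x_\epsilon + \mu_\epsilon z$ combined with $v_\epsilon \to U$ and $M_\epsilon \mu_\epsilon^{(n-2s)/2} = 1$ gives, for $x$ fixed away from $x_0$,
\[ M_\epsilon \int_{B_\delta(x_\epsilon)} G(x,y) u_\epsilon^p(y)\,dy \longrightarrow G(x,x_0)\int_{\mathbb{R}^n} U(z)^p\,dz \]
by dominated convergence, using continuity of $G(x,\cdot)$ at $x_0$. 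The $\epsilon u_\epsilon$ term is negligible after testing \eqref{a12uu} against $u_\epsilon$ and exploiting the scaling, where the hypothesis $n > 4s$ ensures the correct order $\epsilon M_\epsilon \to 0$. This identifies $\mathfrak{b}_{n,s} = \int_{\mathbb{R}^n} U^p$, whose explicit value can be computed via \eqref{ans}.

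The main obstacle is twofold: deriving and uniformly applying a fractional Pohozaev identity on the extended problem to rule out boundary concentration, which requires delicate control of the weighted normal derivative on the bottom of $\mathbb{R}^{n+1}_+$ uniformly along the blow-up limit; and transferring compactness from the extensions back to the Dirichlet traces $v_\epsilon$ in the H\"older topology, for which one must carry the weighted elliptic regularity carefully up to the Neumann-type boundary $\{t=0\}$.
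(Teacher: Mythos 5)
Your skeleton (blow-up at an interior point, convergence of the rescaled solutions to the bubble, a global pointwise bound $u_{\epsilon}\le C\,w_{\mu_{\epsilon}^{-2/(n-2s)},x_{\epsilon}}$, then the Green-function representation to identify the limit $\mathfrak{b}_{n,s}G(\cdot,x_0)$) is the same as the paper's, and your final dominated-convergence/splitting argument for the Green representation is fine in spirit. The genuine gap is in how you obtain the pointwise decay estimate, which is the heart of the whole proof (Proposition \ref{prop-uniform-bound}). You assert it ``by comparison with dilated bubbles together with a Moser-type iteration on the extension,'' but no comparison argument applies: $u_{\epsilon}$ carries the extra linear term $\epsilon u_{\epsilon}$, and after the Kelvin transform (which is how the desired bound is converted into an $L^{\infty}$ bound of $D_{\epsilon}$ near the origin) this term becomes $\epsilon\mu_{\epsilon}^{-(p-1)}|x|^{-4s}D_{\epsilon}$, whose coefficient lies only in $L^{n/2s}$ with a bound that is \emph{not} small (see \eqref{eq-e-bound-1}). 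At this critical integrability the iteration of Lemma \ref{lem-harnack-2} cannot be started, so ``Moser iteration on the extension'' by itself does not deliver \eqref{aexcu}. The step missing from your proposal is precisely the one the paper singles out: one must first prove $\mu_{\epsilon}\le C\epsilon^{-\alpha}$ (Lemma \ref{eq-mu-epsilon}) by combining the local Pohozaev-type inequality of Proposition \ref{prop-sub-estimate} with the integral estimates of Lemmas \ref{lem-harnack-1} and \ref{lem-harnack-3}; only then does \eqref{cepmu} give the required $L^{\frac{n}{2s}+\delta}$ smallness of the coefficient, and the iteration closes.

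This is also exactly where the hypothesis $n>4s$ enters ($\alpha=\frac{2n-8s}{n-2s}-2\kappa>0$ forces $n>4s$), not where you place it: in the Green representation the contribution of $\epsilon u_{\epsilon}$ is $\epsilon\,\mu_{\epsilon}^{2-\frac{p-1}{2s}(n-2s)}=O(\epsilon)$ with exponent zero, independently of any dimension restriction, so ``$\epsilon M_{\epsilon}\to 0$ thanks to $n>4s$'' is not the mechanism. A smaller divergence from the paper: you exclude boundary concentration via a half-space Pohozaev/Liouville argument, whereas the paper gets interiority of $x_0$ from the uniform boundary bound of Lemma \ref{lem-boundary-sup} (moving planes plus a Kelvin transform near $\partial\Omega$); your route is plausible but its details are not supplied, while the Pohozaev identity you invoke is in fact needed elsewhere, in its local form, for the $\mu_{\epsilon}\le C\epsilon^{-\alpha}$ bound described above.
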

\noindent Here the function $G = G(x,y)$ for $x, y \in \Omega$ is Green's function of $\mathcal{A}_{s}$ with the Dirichlet boundary condition, which solves the equation
\begin{equation}\label{Green_Omega}
\mathcal{A}_{s} G(\cdot, y) = \delta_y \text{ in } \Omega \quad \text{and} \quad G(\cdot, y) = 0 \text{ on } \partial\Omega.
\end{equation}
The regular part of $G$ is given by
\begin{equation}\label{Robin_Omega}
H(x,y) = \frac{\mathfrak{a}_{n,s}}{|x-y|^{n-2s}} - G(x,y) \quad \text{where }  \mathfrak{a}_{n,s} = {1 \over |S^{n-1}|} \cdot {2^{1-2s}\Gamma({n-2s \over 2}) \over \Gamma({n \over 2})\Gamma(s)}.
\end{equation}
The diagonal part $\tau$ of the function $H$, namely, $\tau(x) := H(x,x)$ for $x \in \Omega$ is called the Robin function and it plays a crucial role for our problem.
\begin{thm}\label{thm-m-location}
Assume that $0 < s < 1$ and $n > 4s$.
Suppose $x_0 \in \Omega$ is a point given by Theorem \ref{thm-m-limit}. Then

\noindent (1) $x_0$ is a critical point of the function $\tau(x)$.

\noindent (2) It holds that
\[\lim_{\epsilon \rightarrow 0} \epsilon \| u_{\epsilon}\|_{L^{\infty}(\Omega)}^{2 \frac{n-4s}{n-2s}} = \mathfrak{d}_{n,s} |\tau(x_0)|\]
where the constant $\mathfrak{d}_{n,s}$ is computed in Section \ref{sec_blowup} (see \eqref{eq-J}).
\end{thm}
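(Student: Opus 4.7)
The plan is to mimic the classical arguments of Rey \cite{R} and Han \cite{H} for the Brezis--Nirenberg problem, but with the local Pohozaev machinery replaced by its nonlocal counterpart. I work with the Caffarelli--Silvestre--type extension $U_\epsilon$ of $u_\epsilon$ to the cylinder $\Omega\times(0,\infty)$, which satisfies a degenerate elliptic equation with weight $t^{1-2s}$ and whose conormal trace recovers $\mathcal{A}_s u_\epsilon = u_\epsilon^p+\epsilon u_\epsilon$ on $\Omega$. The key tool is a \emph{local} Pohozaev-type identity obtained by testing the extension equation against $\partial_{x_i} U_\epsilon$ or $(x-x_0)\cdot\nabla_x U_\epsilon + (t/2)\,\partial_t U_\epsilon$ on the half-cylinder $B_\delta(x_0)\times(0,\infty)$, with $\delta>0$ small enough that $B_\delta(x_0)\Subset\Omega$. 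This reduces both assertions to controlling interior $L^2$ contributions versus boundary integrals on $\partial B_\delta(x_0)\times[0,\infty)$.

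For assertion (1), I test against $\partial_{x_i}U_\epsilon$. The nonlinear interior term produces $(p+1)^{-1}\partial_{x_i}(u_\epsilon^{p+1})$, which by divergence becomes a harmless $O(1)$ boundary integral, and the linear term $\epsilon u_\epsilon \partial_{x_i} u_\epsilon$ is likewise a total derivative. After multiplying through by $\|u_\epsilon\|_{L^\infty}^2$ and invoking Theorem \ref{thm-m-limit} to pass to the limit, I obtain an identity on $\partial B_\delta(x_0)\times[0,\infty)$ involving only the extension $\mathcal{G}(\cdot,x_0)$ of $G(\cdot,x_0)$. Using the splitting $G(x,x_0)=\mathfrak{a}_{n,s}|x-x_0|^{2s-n}-H(x,x_0)$, the radial self-interaction of the Newtonian kernel integrates to zero by symmetry, while the cross term between the kernel and $H$ produces, in the limit $\delta\to0$, a quantity proportional to $\partial_{y_i}H(x_0,x_0)=\tfrac{1}{2}\partial_{x_i}\tau(x_0)$. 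This forces $\nabla\tau(x_0)=0$.

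For assertion (2), I apply the same scheme to the dilation field $X=(x-x_0)\cdot\nabla_x + (t/2)\partial_t$. The boundary piece is handled by the same $G$-vs-$H$ cancellation, leaving (after passage to the limit) a term proportional to $\tau(x_0)$. The bulk piece, this time, is not a total derivative: it produces $c_{n,s}\,\epsilon\int_{B_\delta(x_0)} u_\epsilon^2$. The hypothesis $n>4s$ enters precisely here, since it ensures that the rescaled bubble $\mu^{(n-2s)/2}u_\epsilon(\mu\cdot+x_\epsilon)$, with $\mu=\|u_\epsilon\|_{L^\infty}^{-2/(n-2s)}$, has an $L^2$-mass concentrating at the rate $\mu^{4s/(n-2s)}$ times a computable constant coming from the standard bubble profile $(1+|x|^2)^{-(n-2s)/2}$. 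Balancing the two sides of the Pohozaev identity yields the stated formula for $\mathfrak{d}_{n,s}$ given in \eqref{eq-J}.

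The main obstacle is the justification of the passage to the limit in the boundary integrals on $\partial B_\delta(x_0)\times[0,\infty)$: I need a weighted $C^1$-type convergence of $\|u_\epsilon\|_{L^\infty}U_\epsilon$ to the extension of $\mathfrak{b}_{n,s}G(\cdot,x_0)$ uniformly on compact subsets of $\overline\Omega\times[0,\infty)\setminus\{(x_0,0)\}$, plus integrability control under the weight $t^{1-2s}$ near $\{t=0\}$. The $C^\alpha$ and $C^{1,\alpha}$ estimates supplied by Theorem \ref{thm-m-limit}, together with standard weighted Schauder theory for the extended equation (using that the right-hand side $u_\epsilon^p+\epsilon u_\epsilon$ is bounded on compact sets away from $x_0$), should suffice. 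The remaining bookkeeping—tracking the constants from $\mathfrak{b}_{n,s}$, the bubble's $L^2$-mass and the Pohozaev coefficients—is mechanical once the limiting identity is in place.
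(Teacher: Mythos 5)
Your proposal follows essentially the same route as the paper: local Pohozaev-type identities for the extension $U_\epsilon$ on small half-regions around $(x_0,0)$, the translation fields $\partial_{x_k}$ for criticality of $\tau$ and the dilation field for the rate, passage to the limit after multiplying by $\|u_\epsilon\|^2_{L^\infty}$ and invoking Theorem \ref{thm-m-limit}, the splitting $G=\mathfrak{a}_{n,s}|x-x_0|^{2s-n}-H$ to isolate $\tau(x_0)$ and $\nabla\tau(x_0)$, and $n>4s$ entering through the finiteness of $\int_{\mathbb{R}^n}w_1^2$.

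Three points need correcting or firming up. First, the dilation field must be the full scaling field $(x-x_0)\cdot\nabla_x+t\,\partial_t$, as in \eqref{2divx}; with your $(x-x_0)\cdot\nabla_x+(t/2)\partial_t$ the divergence computation leaves a mismatched combination $a\,t^{1-2s}|\nabla_x U|^2+b\,t^{1-2s}(\partial_t U)^2$ with $a\neq b$ in the bulk, which cannot be converted by integration by parts into the needed combination $n\int F(U)-\frac{n-2s}{2}\int Uf(U)$ plus boundary terms, so the identity does not close. Second, the $L^2$-rate is $\int_{B_\delta}u_\epsilon^2\approx\|u_\epsilon\|_{L^\infty}^{-4s/(n-2s)}\,\mathfrak{c}_{n,s}^{-2}\int w_1^2$, i.e. $\mu^{2s}$ in your normalization, not $\mu^{4s/(n-2s)}$; this is exactly what balances $\epsilon\|u_\epsilon\|_{L^\infty}^{2}$ to produce the exponent $2\frac{n-4s}{n-2s}$ in the statement. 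Third, the passage to the limit in the lateral boundary integrals is the genuinely delicate step, and "weighted Schauder theory" is not off-the-shelf here: the paper instead first averages the identity in $r\in[\delta,2\delta]$ and then applies dominated convergence, building the dominating function from the representation $\nabla Q_\epsilon(z)=\int_\Omega\nabla_zG_{\mathcal{C}}(z,y)v_\epsilon(y)\,dy$ together with the uniform bounds on $\nabla_z\partial_y^IH_{\mathcal{C}}$ of Appendix \ref{sec_appen_a}--\ref{sec_appen_b}, while the H\"older continuity of $\nabla_xU_\epsilon$ up to $\{t=0\}$ from \cite{CS2} is what legitimizes the local identity itself; if you prefer your uniform weighted $C^1$-convergence route, those estimates must be supplied rather than cited as standard. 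Finally, in part (1) the conclusion $\nabla\tau(x_0)=0$ is not automatic from "a term proportional to $\partial_{y_i}H(x_0,x_0)$ survives": in the limiting identity \eqref{pgxpn} both sides are multiples of $\partial_k\tau(x_0)$, and one must check (as in \eqref{poho-g-nabla-1} and \eqref{ngx2c}) that the two constants, $2(n-2s)\mathfrak{a}_{n,s}\mathcal{E}_{n,s}$ versus $2(n-2s+3)(n-2s)\mathfrak{a}_{n,s}\mathcal{E}_{n,s}$, indeed differ.
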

\noindent These two results are motivated by the work of Han \cite{H} and Rey \cite{R} on the classical local Brezis-Nirenberg problem, which dates back to Brezis and Peletier \cite{BP},
\begin{equation}\label{uun2n}
\left\{ \begin{array}{ll}
-\Delta u = u^{\frac{n+2}{n-2}} +{\epsilon}u &\text{in}~ \Omega,\\
u>0 & \text{in}~ \Omega,\\
u=0& \text{on}~\partial \Omega.
\end{array}\right.
\end{equation}

\medskip
On the other hand, in the latter part of his paper, Rey \cite{R} constructed a family of solutions for \eqref{uun2n} which asymptotically blow up at a nondegenerate critical point of the Robin function.
Moreover, this result was extended in \cite{MP}, where Musso and Pistoia obtained the existence of multi-peak solutions for certain domains.
In the second part of our paper, by employing the Lyapunov-Schmidt reduction method, we prove an analogous result to it for the nonlocal problem \eqref{a12uu}.
\begin{thm}\label{thm-m-onepeak}
Suppose that $0 < s < 1$ and $n > 4s$.
Let $\Lambda_1 \subset \Omega$ be a stable critical set of the Robin function $\tau$. Then, for small $\epsilon > 0$, there exists a family of solutions of \eqref{a12uu} which blow up and concentrate at the point $x_0 \in \Lambda_1$ as $\epsilon \to 0$.
\end{thm}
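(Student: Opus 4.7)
\medskip
\noindent\emph{Proof proposal.} The plan is to implement a Lyapunov--Schmidt finite-dimensional reduction in the spirit of Rey \cite{R} and Musso--Pistoia \cite{MP}, carried out in the nonlocal setting via the extension of Cabr\'e--Tan / Capella--D\'avila--Dupaigne--Sire already used in this paper. Let $w_{\lambda,\xi}(x)=\alpha_{n,s}(\lambda/(\lambda^2+|x-\xi|^2))^{(n-2s)/2}$ denote the standard $\mathbb{R}^n$-bubbles, normalized so that $(-\Delta)^s w_{\lambda,\xi}=w_{\lambda,\xi}^p$, and let $W_{\lambda,\xi}$ be their $s$-harmonic extensions. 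Define the projected bubble $PW_{\lambda,\xi}$ on the cylinder $\Omega\times(0,\infty)$ by solving the extension problem with vanishing lateral trace and the same free boundary data on $\Omega\times\{0\}$; its trace $Pw_{\lambda,\xi}$ then satisfies $\mathcal{A}_s(Pw_{\lambda,\xi})=w_{\lambda,\xi}^p$ in $\Omega$ with zero Dirichlet condition. Using \eqref{Green_Omega}--\eqref{Robin_Omega}, one obtains the key expansion
\[
Pw_{\lambda,\xi}(x)=w_{\lambda,\xi}(x)-\alpha_{n,s}\lambda^{(n-2s)/2}H(x,\xi)+o\bigl(\lambda^{(n-2s)/2}\bigr)
\]
uniformly for $x$ away from $\xi$ and $\xi$ in compact subsets of $\Omega$.

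\medskip
I would then look for a genuine solution $U_\epsilon=PW_{\lambda,\xi}+\Phi$ of the extended problem with $\lambda$ of the order $\epsilon^{1/(n-4s)}$ and $\xi$ close to $\Lambda_1$. The linearization of the associated energy at $PW_{\lambda,\xi}$ has an approximate kernel of dimension $n+1$, spanned by $\partial_\lambda PW_{\lambda,\xi}$ and $\partial_{\xi_i}PW_{\lambda,\xi}$, reflecting the non-degeneracy of $w_{\lambda,\xi}$ as a critical point of the fractional Sobolev quotient on $\mathbb{R}^n$ (after D\'avila--del Pino--Sire). Imposing $L^2$-orthogonality to this kernel and using nonlocal Schauder / weighted-$L^p$ estimates on the cylinder, one shows that the linearized operator is uniformly invertible on the orthogonal complement. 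A contraction mapping argument then produces a unique correction $\Phi=\Phi_{\lambda,\xi,\epsilon}$ of sharp size $O(\epsilon^{(n-2s)/(2(n-4s))})$, up to logarithmic factors in borderline dimensions.

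\medskip
The final step is the analysis of the reduced functional $\mathcal{I}_\epsilon(\lambda,\xi):=J_\epsilon(PW_{\lambda,\xi}+\Phi_{\lambda,\xi,\epsilon})$. Writing $\lambda=\mu\,\epsilon^{1/(n-4s)}$ with $\mu$ in a compact subset of $(0,\infty)$, the hypothesis $n>4s$ places the Robin-type boundary correction and the $\epsilon u$-perturbation at exactly the same scale, yielding the expansion
\[
\mathcal{I}_\epsilon(\mu,\xi)=A_0+\epsilon^{(n-2s)/(n-4s)}\bigl[A_1\,\mu^{n-2s}\,\tau(\xi)-A_2\,\mu^{2s}\bigr]+o\bigl(\epsilon^{(n-2s)/(n-4s)}\bigr),
\]
with explicit positive constants $A_0,A_1,A_2$ depending only on $n,s$. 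Optimizing in $\mu$ eliminates that parameter and leaves a function of $\xi$ whose leading term is proportional to a positive power of $\tau(\xi)$. Since $\Lambda_1$ is a \emph{stable} critical set of $\tau$, a standard topological-degree / deformation argument, valid uniformly in small $\epsilon$, produces a critical point $\xi_\epsilon$ of the reduced energy lying close to $\Lambda_1$; the associated $U_\epsilon$ is the desired positive solution of \eqref{a12uu} that concentrates at some $x_0\in\Lambda_1$ as $\epsilon\to 0$.

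\medskip
The main obstacle will be the sharp linear theory underlying the Lyapunov--Schmidt step: one must establish uniform invertibility bounds for the linearized extension operator on the orthogonal complement, with constants independent of $\lambda$ and $\xi$ in the prescribed ranges. This combines the non-degeneracy of the $\mathbb{R}^n$-bubbles with delicate nonlocal a priori estimates on degenerate weighted spaces and a contradiction-compactness argument excluding concentration of approximate kernels at points of $\partial\Omega$. A secondary technical difficulty is the computation of the precise constants $A_1,A_2$ in the energy expansion, which hinges on refined control of $W_{\lambda,\xi}-PW_{\lambda,\xi}$ through the regular part $H$ introduced in \eqref{Robin_Omega}.
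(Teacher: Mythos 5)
Your proposal follows essentially the same route as the paper: there, Theorem \ref{thm-m-onepeak} is deduced as the $k=1$ case of Theorem \ref{thm-m-multipeak}, which Section \ref{sec_reduction} proves by exactly this extension-based Lyapunov--Schmidt reduction (projected bubbles expanded through the regular part $H$, uniform invertibility of the linearized operator via the nondegeneracy result of D\'avila--del Pino--Sire, and a $C^1$ expansion of the reduced energy at scale $\epsilon^{(n-2s)/(n-4s)}$ with leading term $c_1^2H(\sigma,\sigma)\lambda^{n-2s}-c_2\lambda^{2s}$, matching your $A_1\mu^{n-2s}\tau(\xi)-A_2\mu^{2s}$, followed by the stable-critical-set argument). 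The only point to adjust is your claimed size $O\bigl(\epsilon^{(n-2s)/(2(n-4s))}\bigr)$ for the correction $\Phi$, which is exactly the borderline order; the paper's Proposition \ref{prop_remainder} gives the strictly stronger bound $\epsilon^{\eta_0}$ with $\eta_0>\tfrac12+\tfrac{s}{n-4s}$, and it is this improvement that makes the $O(\|\Phi\|^2)$ contribution genuinely $o\bigl(\epsilon^{(n-2s)/(n-4s)}\bigr)$ in the reduced energy expansion.
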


\noindent This result is an immediate consequence of the following result. Given any $k \in \mathbb{N}$, set
\begin{equation}\label{upsilon_brezis}
\Upsilon_k(\bls) = c_1^2 \left( \sum_{i=1}^k H (\sigma_i, \sigma_i) \lambda_i^{n-2s} -\sum_{\substack{i, h=1 \\ i \neq h}}^{k} G(\sigma_i, \sigma_h) (\lambda_i \lambda_h)^{\frac{n-2s}{2}} \right) - c_2\sum_{i=1}^k \lambda_i^{2s}
\end{equation}
for $(\bls) = (\lambda_1, \cdots, \lambda_k, \sigma_1, \cdots, \sigma_k) \in (0,\infty)^k \times \Omega^k$, where
\begin{equation}\label{constant_AB}
c_1 = \int_{\mathbb{R}^n} w_{1,0}^p(x)dx \quad \text{and} \quad c_2 = \int_{\mathbb{R}^n} w_{1,0}^2(x)dx
\end{equation}
with $w_{1,0}$ the function defined in \eqref{bubble} with $(\lambda, \xi) = (1,0)$. Then we have
\begin{thm}\label{thm-m-multipeak}
Assume $0 < s < 1$ and $n > 4s$.
Given $k \in \mathbb{N}$, suppose that $\Upsilon_k$ has a stable critical set $\Lambda_k$
such that
\[\Lambda_k \subset \left\{((\lambda_1, \cdots, \lambda_k), (\sigma_1,\cdots,\sigma_k)) \in (0,\infty)^k \times \Omega^k : \sigma_i \ne \sigma_j \text{ if } i \ne j \text{ and } i, j = 1, \cdots, k\right\}.\]
Then there exist a point $((\lambda_1^0, \cdots, \lambda_k^0),(\sigma^0_1,\cdots,\sigma^0_k)) \in \Lambda_k$ and a small number $\epsilon_0 > 0$ such that for $0 < \epsilon < \epsilon_0$,
there is a family of solutions $u_{\epsilon}$ of \eqref{a12uu} which concentrate at each point $\sigma^0_1, \cdots, \sigma^0_{k-1}$ and $\sigma^0_k$ as $\epsilon \to 0$.
\end{thm}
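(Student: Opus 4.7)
The plan is to carry out a finite-dimensional Lyapunov-Schmidt reduction using the Caffarelli-Silvestre/Cabr\'e-Tan extension, so that \eqref{a12uu} becomes a local mixed boundary value problem whose variational structure we can exploit. First I would set up the approximate solution. For $(\bls)=(\lambda_1,\dots,\lambda_k,\sigma_1,\dots,\sigma_k)$ in a compact neighborhood of $\Lambda_k$, let $w_{\lambda_i,\sigma_i}$ be the bubbles defined in \eqref{bubble}, and let $PU_{\lambda_i,\sigma_i}$ denote their projections onto the Dirichlet space on $\Omega$ (obtained by subtracting off the Green's-function correction, so that $PU_{\lambda_i,\sigma_i}$ vanishes on $\partial\Omega$ while still almost solving $\mathcal{A}_s u=u^p$). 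Standard pointwise estimates in the fractional setting, analogous to the ones used in the first part of the paper for Theorems \ref{thm-m-limit}--\ref{thm-m-location}, give
\[
PU_{\lambda_i,\sigma_i}(x)=w_{\lambda_i,\sigma_i}(x)-\mathfrak{a}_{n,s}\lambda_i^{-\frac{n-2s}{2}}H(x,\sigma_i)+O\bigl(\lambda_i^{-\frac{n+2s}{2}}\bigr),
\]
with analogous expansions for the extensions on $\mathbb{R}^{n+1}_+$. I will search for a genuine solution of \eqref{a12uu} of the form
\[
u=\sum_{i=1}^k PU_{\lambda_i,\sigma_i}+\phi,
\]
where $\phi$ lies in the orthogonal complement (in the natural $H^s$ inner product) of the $(n+1)k$-dimensional approximate kernel $K_{\bls}$ spanned by the projected derivatives $P(\partial_{\lambda_i}U_{\lambda_i,\sigma_i})$ and $P(\partial_{\sigma_i^j}U_{\lambda_i,\sigma_i})$, $j=1,\dots,n$.

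The finite-dimensional reduction consists of two steps. In the first step, I would invert the linearized operator $L_{\bls}\phi=\mathcal{A}_s\phi-p\,(\sum PU_{\lambda_i,\sigma_i})^{p-1}\phi-\epsilon\phi$ modulo $K_{\bls}$. The nondegeneracy of the Aubin--Talenti bubbles for the fractional critical equation (Dávila--del Pino--Sire, or Frank--Lenzmann--Silvestre) ensures that $L_{\bls}$ is uniformly invertible on the orthogonal complement, provided the $\sigma_i$ stay away from each other and from $\partial\Omega$ and the $\lambda_i$ stay in a bounded range. Choosing the natural scaling $\lambda_i=\mu_i\epsilon^{-1/(n-4s)}$ with $\mu_i$ bounded, I would then use a contraction-mapping argument, together with the error estimate $\|\mathcal{A}_s(\sum PU)-(\sum PU)^p-\epsilon\sum PU\|_*\lesssim\epsilon^{(n+2s)/(2(n-4s))}$ in an appropriate weighted $L^\infty$ or $L^{2n/(n+2s)}$ norm, to produce a unique $\phi=\phi(\bls,\epsilon)$ of strictly smaller size, depending smoothly on $(\bls)$.

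In the second step, define the reduced functional $F_\epsilon(\bls)=I_\epsilon(\sum PU_{\lambda_i,\sigma_i}+\phi(\bls,\epsilon))$, where $I_\epsilon$ is the energy of \eqref{a12uu}. A now-standard expansion, in which one substitutes the pointwise description of $PU_{\lambda_i,\sigma_i}$ and uses the integrals $c_1,c_2$ from \eqref{constant_AB}, yields
\[
F_\epsilon(\bls)=k\,A_{n,s}+\epsilon^{\frac{2s}{n-4s}}\cdot\alpha_{n,s}\,\Upsilon_k(\boldsymbol\mu,\bs)+o\bigl(\epsilon^{\frac{2s}{n-4s}}\bigr)
\]
uniformly for $(\boldsymbol\mu,\bs)$ in compact subsets of $(0,\infty)^k\times\Omega^k$ with $\sigma_i\neq\sigma_j$, together with the corresponding $C^1$ expansion after rescaling $\lambda_i=\mu_i\epsilon^{-1/(n-4s)}$. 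Here $A_{n,s}$ is the single-bubble energy at $\epsilon=0$ and $\alpha_{n,s}>0$ is an explicit constant. The hypothesis $n>4s$ is precisely what makes the interaction terms $H\,\lambda^{n-2s}$ and $G\,(\lambda_i\lambda_h)^{(n-2s)/2}$ of the same order as the $\epsilon\,\lambda^{2s}$ term in \eqref{upsilon_brezis}. Since $\Lambda_k$ is stable under $C^1$-small perturbations of $\Upsilon_k$, one then extracts a critical point of $F_\epsilon$ inside the image of $\Lambda_k$, and the corresponding $\sum PU_{\lambda_i^\epsilon,\sigma_i^\epsilon}+\phi$ is a solution of \eqref{a12uu} with the required concentration behavior.

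The main obstacle I anticipate is not conceptual but technical: controlling the nonlocal error terms in the weighted norms adapted to the $k$ bubbles. Because $\mathcal{A}_s$ is genuinely nonlocal on $\Omega$ (defined spectrally, not through a principal-value integral on $\mathbb{R}^n$), estimates on $PU_{\lambda_i,\sigma_i}$ and on the cross-interactions $\mathcal{A}_s PU_{\lambda_i,\sigma_i}$ versus $w_{\lambda_h,\sigma_h}^{p-1}PU_{\lambda_i,\sigma_i}$ require careful work with the extension problem and with Green's function estimates on the half-space. The other delicate point is the nondegeneracy of $L_{\bls}$, which must be established uniformly in $(\bls)$; here one reduces, by a blow-up argument, to the nondegeneracy statement for a single bubble on $\mathbb{R}^n$, using that the $\sigma_i$ remain bounded away from each other and from $\partial\Omega$ throughout $\Lambda_k$.
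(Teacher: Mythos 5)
Your proposal is correct and follows essentially the same strategy as the paper: the Caffarelli--Silvestre/Cabr\'e--Tan extension, projected bubbles corrected by Green's function, uniform invertibility of the linearized operator via the D\'avila--del Pino--Sire nondegeneracy, a contraction argument for the remainder, and a $C^1$ expansion of the reduced energy whose variable part is $\Upsilon_k$, concluded through the stability of the critical set. Only minor bookkeeping differs: with your scaling $\lambda_i=\mu_i\epsilon^{-1/(n-4s)}$ the variable part of the reduced energy is of order $\epsilon^{(n-2s)/(n-4s)}$ rather than $\epsilon^{2s/(n-4s)}$, the coefficient in the projection expansion is $c_1=\int_{\mathbb{R}^n}w_{1,0}^p$ rather than $\mathfrak{a}_{n,s}$, and positivity of the resulting solution (handled in the paper by replacing $U^p$ with $U_+^p$ and a short maximum-principle sublemma) should be addressed explicitly.
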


\noindent For the precise description of the asymptotic behavior of $u_{\epsilon}$, see the proof of Theorem \ref{thm-m-multipeak} in Subsection \ref{subsec_proof_reduction}.

Here we borrowed the notion of stable critical sets from \cite{Li2}.
As in the case $s = 1$ (see \cite{MP, EGP} for instance), we can prove that if the domain $\Omega$ is a dumbbell-shaped domain which consists of disjoint $k$-open sets and sufficiently narrow channels connecting them,
then $\Upsilon_k$ has a stable critical point for each $k \in \mathbb{N}$, thereby obtaining the following result.
\begin{thm}\label{domain-construction}
There exist contractible domains $\Omega$ such that, for $\epsilon > 0$ small enough, \eqref{a12uu} possesses a family of solutions which blow up at exactly $k$ different points of each domain $\Omega$ as $\epsilon$ converges to 0.
\end{thm}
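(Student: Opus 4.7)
The plan is to reduce this to Theorem \ref{thm-m-multipeak} by constructing a dumbbell-shaped domain on which the reduced functional $\Upsilon_k$ has a stable critical set with pairwise distinct $\sigma$-components. Starting from $k$ disjoint smooth bounded domains $\Omega_1^0,\ldots,\Omega_k^0$, we glue them through thin tubes of width $\delta > 0$ to obtain a connected (and in fact contractible) domain $\Omega_\delta$. The strategy is to analyze $\Upsilon_k$ first on the limiting disconnected configuration ($\delta = 0$) and then to use continuity of the Green and Robin functions to conclude that the critical set persists for small $\delta > 0$.

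On the disconnected union, Green's function $G$ vanishes across components, so for each choice of $\sigma_i \in \Omega_i^0$ the cross terms $G(\sigma_i,\sigma_h)$ in \eqref{upsilon_brezis} drop out and $\Upsilon_k$ decouples as a sum
\[
\Upsilon_k(\bls) \;=\; \sum_{i=1}^k \Bigl(c_1^2 H_i(\sigma_i,\sigma_i)\lambda_i^{n-2s} - c_2 \lambda_i^{2s}\Bigr),
\]
where $H_i$ is the regular part of the Dirichlet Green's function of $\mathcal{A}_s$ on $\Omega_i^0$. Since $n > 4s$, for each fixed $\sigma_i$ the scalar function $\lambda_i \mapsto c_1^2 H_i(\sigma_i,\sigma_i)\lambda_i^{n-2s} - c_2 \lambda_i^{2s}$ is strictly convex on $(0,\infty)$ with a unique minimum at some $\lambda_i^* = \lambda_i^*(\sigma_i)$ given explicitly by the critical point equation, and at this minimum the value is a negative multiple of $H_i(\sigma_i,\sigma_i)^{-2s/(n-4s)}$. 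Minimizing in $\sigma_i$ amounts to minimizing the Robin function $\tau_i$ on $\Omega_i^0$, which has a global minimum in the interior because $\tau_i \to +\infty$ at $\partial\Omega_i^0$. Hence the decoupled $\Upsilon_k$ attains a strict local minimum on $\prod_i \Omega_i^0$, and its critical set is stable in the sense of \cite{Li2}.

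Next I would show that on the perturbed domain $\Omega_\delta$, Green's function $G_\delta$ converges, as $\delta \to 0$, to the disconnected Green's function locally uniformly on compact subsets of the interior of $\bigsqcup_i \Omega_i^0$, with corresponding convergence of the regular parts $H_\delta$ and Robin functions $\tau_\delta$. This reduces the cross terms $G_\delta(\sigma_i,\sigma_h)$ with $i \ne h$ to $O(\delta^{\,a})$ for some $a > 0$, so that $\Upsilon_k$ for $\Omega_\delta$ is a small perturbation of the decoupled functional on each compact set that stays away from the gluing tubes. The stability of the critical set (which, for a strict local minimum of a continuous function, is robust under uniform perturbations) then yields a critical set $\Lambda_k \subset (0,\infty)^k \times \Omega_\delta^k$ of $\Upsilon_k$ with pairwise distinct $\sigma_i$-components, and Theorem \ref{thm-m-multipeak} produces a $k$-peak family of solutions of \eqref{a12uu} for small $\epsilon$.

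The main obstacle is the convergence of the spectral Green's function under the domain perturbation $\Omega_\delta \to \bigsqcup_i \Omega_i^0$. Because $\mathcal{A}_s$ is defined through the Dirichlet spectra of $-\Delta$, the convergence is not immediate from a pointwise localization argument as for the classical Laplacian; it has to be tracked either through the Caffarelli--Silvestre / Cabr\'e--Tan extension, where one can invoke Mosco-type convergence of the quadratic forms on $\mathbb{R}^{n+1}_+$ with weight $t^{1-2s}$, or by direct spectral comparison using the fact that the low Dirichlet eigenvalues of $-\Delta_{\Omega_\delta}$ split into $k$ copies of the low eigenvalues of $-\Delta_{\Omega_i^0}$ as $\delta \to 0$. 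Once this convergence is established, the perturbation argument for the stable critical set is routine and the theorem follows.
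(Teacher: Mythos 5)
Your plan coincides with the paper's in its overall architecture: form a dumbbell-shaped domain from $k$ disjoint pieces joined by thin channels, note that on the limiting (disconnected) configuration the cross terms $G(\sigma_i,\sigma_h)$ in \eqref{upsilon_brezis} vanish so that $\Upsilon_k$ decouples into $k$ single-bubble functionals whose minima are governed by the Robin functions of the pieces, deduce a stable critical set (in the sense of \cite{Li2}) for the slightly perturbed functional on the connected domain, and conclude via Theorem \ref{thm-m-multipeak}. The real divergence is in the step you flag as the ``main obstacle,'' namely the behavior of the spectral Green's function under the domain perturbation, which you leave open and propose to attack either by Mosco convergence of the weighted forms on $\mathbb{R}^{n+1}_+$ or by tracking the splitting of the low Dirichlet eigenvalues. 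The paper needs no spectral analysis here: for a monotone family $\Omega_{\epsilon_1}\subset\Omega_{\epsilon_2}$ ($\epsilon_1<\epsilon_2$) converging to the limit configuration, the maximum principle for the extension problem (Lemma \ref{lem-maximum}) shows that $H_{\mathcal{C}}$, hence the Robin functions $\tau_{\Omega_\epsilon}$ and the Green's functions $G_{\Omega_\epsilon}$, vary monotonically and converge pointwise, and the regularity estimates of Lemma \ref{lem-property-H} upgrade this to $C^1$ convergence on compact subsets (away from the diagonal for $G$); the remaining finite-dimensional analysis is then quoted from \cite{MP, EGP}. So your route is viable in outline, but as written its crucial convergence step is only named, not proved, and the elementary monotonicity-plus-regularity argument is both simpler than Mosco or eigenvalue comparison and already available from the tools of Section \ref{sec_prelim}.

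Two smaller points. First, the map $\lambda\mapsto c_1^2 H(\sigma,\sigma)\lambda^{n-2s}-c_2\lambda^{2s}$ is not strictly convex on all of $(0,\infty)$ when $s>1/2$ (its second derivative is negative near $\lambda=0$); what is true, and what you actually need, is that it has a unique interior critical point which is a nondegenerate minimum, with minimal value a negative multiple of $H(\sigma,\sigma)^{-2s/(n-4s)}$. Second, your localization of the $\sigma$-minimum in the interior rests on the claim that $\tau_i\to+\infty$ at $\partial\Omega_i^0$; this is standard in the local case but is not established anywhere in the paper for $\mathcal{A}_s$, so it would need a short barrier/comparison argument through the extension (or one should argue, as in \cite{MP, EGP}, directly with the level sets of the decoupled functional) before invoking stability of the critical set.
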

\noindent For the detailed explanation, see Section \ref{sec_reduction}.

\medskip
In order to study the asymptotic behavior, we will use the fundamental observation of Caffarelli and Silvestre \cite{CS} and Cabr\'e and Tan \cite{CT} (see also \cite{ST, CDDS, BCPS1, T2}). In particular, we study the local problem on a half-cylinder $\mathcal{C}:= \Omega \times [0, \infty)$,
\begin{equation}\label{u0inc}
\left\{ \begin{array}{ll} \text{div}(t^{1-2s} \nabla U )= 0&\quad \text{in}~ \mathcal{C} = \Omega \times (0,\infty),\\
U>0 &\quad \text{in} ~\mathcal{C},\\
U = 0 &\quad \text{on} ~\partial_L \mathcal{C} : = \partial \Omega \times (0, \infty),\\
\partial_{\nu}^{s} { U} = f(U) & \quad \text{on} ~\Omega \times \{0\},
\end{array}
\right.
\end{equation}
where $\nu$ is the outward unit normal vector to $\mathcal{C}$ on $\Omega \times \{0\}$ and
\begin{equation}\label{pns}
\partial_{\nu}^{s}U(x,0):= -C_s^{-1} \left(\lim_{t \rightarrow 0+} t^{1-2s} \frac{\partial U}{\partial t}(x,t)\right) \quad \text{for } x \in \Omega
\end{equation}
where
\begin{equation}\label{cs}
C_s:=\frac{2^{1-2s}\Gamma(1-s)}{\Gamma(s)}
\end{equation}
Under appropriate regularity assumptions, the trace of a solution $U$ of \eqref{u0inc} on $\Omega \times \{0\}$ solves the nonlinear problem \eqref{a12uu}.

A key step of the proof for Theorem \ref{thm-m-limit} is to get a sort of the uniform bound after rescaling the solutions $\{ u_{\epsilon}: \epsilon>0\}$.
For this purpose, we will establish a priori $L^{\infty}$-estimates by using the Moser iteration argument.
Recently, such type of estimates have been established in \cite{GQ, TX, XY}.
However, they cannot be applied to our case directly, so we will derive a result which is adequate in our setting (refer to Lemmas \ref{lem-harnack-1} and \ref{lem-harnack-2}).
We remark that a similar argument to our proof appeared in \cite{GQ}.
One more thing which has to be stressed is that we need a bound of $\|u_{\epsilon}\|_{L^{\infty}}$ in terms of a certain negative power of $\epsilon >0$ (Lemma \ref{eq-mu-epsilon})
to apply the elliptic estimates (Lemma \ref{lem-harnack-2}).
For this, we will use an inequality which comes from a local version of Pohozaev identity on the extended domain (see Proposition \ref{prop-sub-estimate}).
We refer to Section \ref{sec_asymp} for the details.

\medskip
We also study problems having nonlinearities of slightly subcritical growth
\begin{equation}\label{eqtn-subcritical}
\left\{\begin{array}{ll} \mathcal{A}_{s} u = u^{p-\epsilon} &\text{in}~\Omega,
\\
u>0 &\text{in} ~\Omega,
\\
u=0 & \text{on}~\partial \Omega.
\end{array}\right.
\end{equation}
In particular, the following two theorems will be obtained.
\begin{thm}\label{thm-m-limit-sub}
Assume that $0 < s < 1$ and $n > 2s$.
For $\epsilon>0$, let $u_{\epsilon}$ be a solution of \eqref{eqtn-subcritical} satisfying \eqref{2p1s0}.
Then, there exist a point $x_0 \in \Omega$ and a constant $\mathfrak{b}_{n,s} > 0$ such that
\[u_{\epsilon} \rightarrow 0 \text{ in } \left\{\begin{array}{lll} C^{\alpha}_{\textrm{loc}}(\Omega \setminus \{x_0\})&\ \text{for all }  \alpha \in (0,2s) & \textrm{if } s \in (0,1/2],
\\
C^{1,\alpha}_{\textrm{loc}}(\Omega \setminus \{x_0\})&\ \text{for all } \alpha
\in (0,2s-1) & \textrm{if } s \in (1/2,1),
\end{array}
\right.\]
and
\[\| u_{\epsilon} (x) \|_{L^{\infty}} u_{\epsilon} (x) \rightarrow \mathfrak{b}_{n,s} G(x,{x_0}) \textrm{ in } \left\{\begin{array}{lll} C^{\alpha}_{\textrm{loc}}(\Omega \setminus \{x_0\}) &\ \text{for all } \alpha \in (0,2s) & \textrm{if } s \in (0,1/2],
\\
C^{1,\alpha}_{\textrm{loc}}(\Omega \setminus \{x_0\})&\ \text{for all } \alpha
\in (0,2s-1) & \textrm{if } s \in (1/2,1),
\end{array}
\right.\]
as $\epsilon \to 0$. Moreover,

\noindent (1) $x_0$ is a critical point of the function $\tau(x)$.

\noindent (2) We have
\[\lim_{\epsilon \rightarrow 0} \epsilon \| u_{\epsilon}\|_{L^{\infty}(\Omega)}^{2} = \mathfrak{g}_{n,s} |\tau(x_0)|.\]
Here $\mathfrak{b}_{n,s}$ is the same constant to one given in Theorem \ref{thm-m-limit} and $\mathfrak{g}_{n,s}$ is computed in Section \ref{sec_subcrit} (see \eqref{eq-constant-L}).
\end{thm}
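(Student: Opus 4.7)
The strategy closely parallels that of Theorems \ref{thm-m-limit} and \ref{thm-m-location}, with the slightly subcritical nonlinearity $u^{p-\epsilon}$ playing the role of the linear perturbation $u^p+\epsilon u$. I lift $u_\epsilon$ to the Caffarelli-Silvestre extension $U_\epsilon$ solving \eqref{u0inc} on $\mathcal{C} = \Omega \times (0,\infty)$ with $f(U) = U^{p-\epsilon}$, and set $\mu_\epsilon := \|u_\epsilon\|_{L^\infty(\Omega)} = u_\epsilon(x_\epsilon)$. The energy normalization \eqref{2p1s0} together with the rigidity of positive solutions of the limiting critical problem on $\mathbb{R}^n$ forces $\mu_\epsilon \to \infty$ and $\mathrm{dist}(x_\epsilon,\partial\Omega) \ge c_0 > 0$; along a subsequence $x_\epsilon \to x_0 \in \Omega$. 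With the rescaled extension
\[
\widetilde U_\epsilon(y,t) \;=\; \mu_\epsilon^{-1}\, U_\epsilon\!\left(x_\epsilon + \mu_\epsilon^{-(p-1-\epsilon)/(2s)}\, y,\ \mu_\epsilon^{-(p-1-\epsilon)/(2s)}\, t\right),
\]
the Moser-type bounds of Lemmas \ref{lem-harnack-1} and \ref{lem-harnack-2}, adapted to the subcritical setting, should yield local $C^\alpha$-convergence of $\widetilde U_\epsilon$ to the $s$-harmonic extension of the standard bubble $w_{1,0}$.

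Given this concentration, both the decay on $\Omega\setminus\{x_0\}$ and the Green's function limit follow from the representation
\[
u_\epsilon(x) \;=\; \int_\Omega G(x,y)\, u_\epsilon^{p-\epsilon}(y)\, dy
\]
by splitting the integral into a small ball around $x_\epsilon$ (where rescaling and $\epsilon\log\mu_\epsilon\to 0$ allow one to replace $u_\epsilon^{p-\epsilon}$ by $u_\epsilon^{p}$ in the limit and extract the factor $c_1 = \int_{\mathbb{R}^n}w_{1,0}^p$) and its complement (which is negligible by the uniform decay of $u_\epsilon$ there). This delivers
\[
\mu_\epsilon\, u_\epsilon(x) \;\longrightarrow\; \mathfrak{b}_{n,s}\, G(x,x_0)
\]
in the claimed H\"older topology on compact subsets of $\Omega\setminus\{x_0\}$, with $\mathfrak{b}_{n,s}$ computed from $c_1$ and the bubble normalization exactly as in Theorem \ref{thm-m-limit}; uniqueness of the limit then upgrades the subsequential convergence $x_\epsilon\to x_0$ to a full limit.

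For the critical-point condition (1) and the rate (2) I invoke the local Pohozaev identity on $B_r^+(x_\epsilon,0) \subset \mathcal{C}$, in the spirit of Proposition \ref{prop-sub-estimate}. Testing the extended equation against the translation fields $\partial_{x_j} U_\epsilon$ produces boundary integrals which, after inserting the bubble-plus-regular-part ansatz driven by $H$, reduce to the components of $\nabla\tau(x_0)$ times a nonzero universal constant, yielding (1). Testing instead against the dilation generator $\langle y,\nabla U_\epsilon\rangle + \tfrac{n-2s}{2}U_\epsilon$ produces on the nonlinearity side a bulk term proportional to $\epsilon \int_{\Omega} u_\epsilon^{p-\epsilon+1}\, dx$, and on the boundary side a contribution proportional to $\mu_\epsilon^{-2}\tau(x_0)$ coming from the regular part $H$ of $G$. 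After rescaling, the bulk term converges to $\epsilon$ times the fixed constant $\int_{\mathbb{R}^n}w_{1,0}^{p+1}$; balancing it against the boundary term produces $\epsilon\mu_\epsilon^2 \to \mathfrak{g}_{n,s}|\tau(x_0)|$ and identifies $\mathfrak{g}_{n,s}$.

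The main obstacle I foresee is the a priori $L^\infty$-bound of the form $\mu_\epsilon \le C\epsilon^{-\gamma}$ needed to feed Lemma \ref{lem-harnack-2} and, in particular, to ensure $\epsilon\log\mu_\epsilon \to 0$ (used several times above). In Theorem \ref{thm-m-limit}, this bound was extracted from a Pohozaev inequality involving the linear perturbation $\epsilon u$; in the present setting the analogous identity produces only an $\epsilon$-weighted integral of $u_\epsilon^{p-\epsilon+1}$, and converting this into the desired polynomial control on $\mu_\epsilon$ requires a careful interplay with the uniform $L^{p+1}$-bound supplied by \eqref{2p1s0}. Making this balance quantitatively sharp in the subcritical regime is, to my mind, the technical heart of the argument.
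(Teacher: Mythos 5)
Your overall architecture (extension, rescaling by $\mu_\epsilon^{-(p-1-\epsilon)/(2s)}$, Kelvin transform plus Moser iteration for the uniform bound, Green's representation for the limit, and local Pohozaev identities for (1) and (2)) matches the paper, but the step you yourself flag as unresolved is a genuine gap, and the route you sketch for closing it would not work as ordered. What the Moser-iteration step actually requires in the subcritical case is not a polynomial bound $\mu_\epsilon\le C\epsilon^{-\gamma}$: after the Kelvin transform the equation for $D_\epsilon$ reads $\partial_\nu^s D_\epsilon=|x|^{-\epsilon(n-2s)}D_\epsilon^{p-\epsilon}$, so (unlike the critical case, where the separate potential $\epsilon\mu_\epsilon^{-p+1}|x|^{-4s}$ forced the higher-integrability argument of Lemmas \ref{lem-harnack-3} and \ref{eq-mu-epsilon}) the only obstruction is the weight $|x|^{-\epsilon(n-2s)}$, and all one needs is $\mu_\epsilon^{c\epsilon}\le C$. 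The paper gets this by a completely elementary observation (Lemma \ref{forso}): Fatou's lemma gives $\int_{B_n(0,1)}b_\epsilon^{p+1-\epsilon}\ge C$, while the change of variables back to $u_\epsilon$ and the energy bound give $\int_{B_n(0,1)}b_\epsilon^{p+1-\epsilon}\le C\mu_\epsilon^{-(\frac{n}{2s}-1)\epsilon}$, whence $\mu_\epsilon^{(\frac{n}{2s}-1)\epsilon}\le C$. No Pohozaev identity is involved at this stage, and this is precisely the ``new observation'' and change of order that the subcritical case demands.

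Your proposed alternative — extracting the needed control on $\mu_\epsilon$ from the $\epsilon$-weighted Pohozaev identity balanced against the $L^{p+1}$ bound — is problematic because in the paper the Pohozaev inequality (Proposition \ref{prop-sub-estimate}) only becomes quantitatively useful \emph{after} the uniform pointwise bound $u_\epsilon\le Cw_{\mu_\epsilon^{-2/(n-2s)},x_\epsilon}$ is available: the right-hand side of \eqref{poho-prop-estimate} is estimated by $C\mu_\epsilon^{-2-2\epsilon}$ using exactly that bound, yielding $\epsilon\le C\mu_\epsilon^{-2-2\epsilon}$ and hence $\mu_\epsilon^\epsilon\to 1$ (Lemma \ref{lem-subcritical-control}). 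Since the uniform bound itself requires the Moser iteration, using Pohozaev first to feed the iteration is circular. The correct ordering is: (i) the elementary bound $\mu_\epsilon^{c\epsilon}\le C$; (ii) Kelvin transform and Lemmas \ref{lem-harnack-1}, \ref{lem-harnack-2} as in Proposition \ref{prop-uniform-bound} to get the uniform bound; (iii) Pohozaev plus the uniform bound to get $\epsilon\le C\mu_\epsilon^{-2-2\epsilon}$ and $\mu_\epsilon^\epsilon\to 1$, which is what legitimizes replacing $u_\epsilon^{p-\epsilon}$ by $u_\epsilon^p$ in the $\delta$-mass computation and drives the rate in (2). With (iii) in hand, your treatment of the Green's function limit and of statements (1) and (2) (adapting Section \ref{sec_blowup}, with the bulk term $\epsilon\int u_\epsilon^{p+1-\epsilon}$ replacing $\epsilon\int u_\epsilon^2$, which is why only $n>2s$ is needed) is in line with the paper.
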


Like \eqref{upsilon_brezis}, we define
\begin{equation}\label{upsilon_subcritical}
\widetilde{\Upsilon}_{k}(\bls) = c_1^2 \left( \sum_{i=1}^k H (\sigma_i, \sigma_i) \lambda_i^{n-2s} -\sum_{\substack{i, h=1 \\ i \neq h}}^{k} G(\sigma_i, \sigma_h) (\lambda_i \lambda_h)^{\frac{n-2s}{2}} \right) - {c_1(n-2s)^2 \over 4n} \log ( \lambda_1 \cdots \lambda_k)
\end{equation}
for $(\bls) = (\lambda_1, \cdots, \lambda_k, \sigma_1, \cdots, \sigma_k) \in (0,\infty)^k \times \Omega^k$, where $c_1 > 0$ is defined in \eqref{constant_AB}.
Then we have
\begin{thm}\label{thm-m-multipeak2}
Assume $0 < s < 1$ and $n > 2s$.
Given $k \in \mathbb{N}$, suppose that $\widetilde{\Upsilon}_k$ has a stable critical set $\Lambda_k$
such that
\[\Lambda_k \subset \left\{((\lambda_1, \cdots, \lambda_k), (\sigma_1,\cdots,\sigma_k)) \in (0,\infty)^k \times \Omega^k : \sigma_i \ne \sigma_j \text{ if } i \ne j \text{ and } i, j = 1, \cdots, k\right\}.\]
Then there exist a point $((\lambda_1^0, \cdots, \lambda_k^0),(\sigma^0_1,\cdots,\sigma^0_k)) \in \Lambda_k$ and a small number $\epsilon_0 > 0$ such that for $0 < \epsilon < \epsilon_0$,
there is a family of solutions $u_{\epsilon}$ of \eqref{eqtn-subcritical} which concentrate at each point $\sigma^0_1, \cdots, \sigma^0_{k-1}$ and $\sigma^0_k$ as $\epsilon \to 0$.
\end{thm}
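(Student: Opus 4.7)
The plan is to carry out a Lyapunov--Schmidt reduction in the Caffarelli--Silvestre extended setting \eqref{u0inc} parallel to the argument behind Theorem \ref{thm-m-multipeak}, with modifications adapted to the subcritical nonlinearity $u^{p-\epsilon}$. Denote by $W_{\lambda,\sigma}$ the extension of the bubble appearing in \eqref{bubble}, and by $PW_{\lambda,\sigma}$ its projection onto the Sobolev-type space associated with \eqref{u0inc} that vanishes on $\partial_L \mathcal{C}$. With an $\epsilon$-dependent rescaling of the concentration parameters (so that $\lambda_i \to \infty$ at the correct polynomial rate in $\epsilon^{-1}$), seek solutions of the extended version of \eqref{eqtn-subcritical} in the form
\[
U_\epsilon = \sum_{i=1}^k PW_{\lambda_i,\sigma_i} + \Phi_\epsilon,
\]
with $\Phi_\epsilon$ orthogonal to the approximate kernel spanned by $\partial_\lambda PW_{\lambda_i,\sigma_i}$ and $\partial_{\sigma_i} PW_{\lambda_i,\sigma_i}$ for $i=1,\ldots,k$.

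First I would establish the linear theory: on the orthogonal complement of the approximate kernel, the linearization of the extended problem at $\sum_i PW_{\lambda_i,\sigma_i}$ is invertible with norm uniform in $(\bls)$ varying over a neighborhood of $\Lambda_k$. This uses non-degeneracy of the bubbles together with the constraint $\sigma_i \ne \sigma_j$, exactly as in the critical case. A contraction mapping argument then yields $\Phi_\epsilon = \Phi_\epsilon(\bls)$ with the appropriate smallness estimate; the only genuine change from the critical proof is in the nonlinear error terms produced by the exponent $p-\epsilon$ in place of $p$, which requires reworking the H\"older/Sobolev bookkeeping on the cylinder $\mathcal{C}$ but follows the same template.

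Next, the standard finite-dimensional reduction asserts that $U_\epsilon(\bls)$ solves the extended problem iff $(\bls)$ is a critical point of $J_\epsilon(\bls) := I_\epsilon(U_\epsilon(\bls))$, where $I_\epsilon$ is the extended energy corresponding to \eqref{eqtn-subcritical}. The core of the argument is the asymptotic expansion of $J_\epsilon$ in $C^1$ on a neighborhood of $\Lambda_k$. The interaction terms, computed as in Section \ref{sec_reduction}, again give
\[
c_1^2 \left( \sum_i H(\sigma_i,\sigma_i)\lambda_i^{n-2s} - \sum_{i \ne h} G(\sigma_i,\sigma_h)(\lambda_i\lambda_h)^{(n-2s)/2} \right).
\]
The nonlinear contribution uses the expansion $W^{p+1-\epsilon} = W^{p+1} - \epsilon W^{p+1}\log W + O(\epsilon^2)$; integrating against $w_{\lambda,0}$, using the scaling $w_{\lambda,0}(x) = \lambda^{(n-2s)/2} w_{1,0}(\lambda x)$, and invoking $\int_{\mathbb{R}^n} w_{1,0}^{p+1} = c_1$ together with a computation of $\int w_{1,0}^{p+1}\log w_{1,0}$ obtained by differentiating $\int w_{\lambda,0}^{p+1}$ in $\lambda$, produces precisely the logarithmic term $-\frac{c_1(n-2s)^2}{4n}\log(\lambda_1\cdots\lambda_k)$ of \eqref{upsilon_subcritical}.

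Finally, stability of $\Lambda_k$ under $C^1$-perturbation guarantees that $J_\epsilon$ admits a critical point $(\bls)_\epsilon$ converging to some $((\lambda_1^0,\ldots,\lambda_k^0),(\sigma_1^0,\ldots,\sigma_k^0)) \in \Lambda_k$, and the associated trace $u_\epsilon$ of $U_\epsilon((\bls)_\epsilon)$ is the desired multi-peak solution. The main obstacle is the precise $C^1$-expansion of $J_\epsilon$ under the subcritical shift: one must rerun the delicate remainder estimates of the critical case with the altered nonlinearity, and calibrate the $\epsilon$-scaling of the $\lambda_i$ so that the logarithmic term (rather than the $\lambda^{2s}$ term of \eqref{upsilon_brezis}) provides the balancing contribution at leading order, while keeping the error genuinely $o(1)$ in $C^1$ on a neighborhood of $\Lambda_k$ where the points $\sigma_i$ are pairwise distinct.
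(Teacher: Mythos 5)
Your proposal follows essentially the same route as the paper: the authors prove Theorem \ref{thm-m-multipeak2} by rerunning the Lyapunov--Schmidt reduction of Section \ref{sec_reduction} with the nonlinearity $f_{\epsilon}(U)=U_+^{p-\epsilon}$ and the dilation exponent $\alpha_0=\frac{1}{n-2s}$ (your ``calibration of the $\epsilon$-scaling''), obtaining the analogue of Proposition \ref{prop_main_est} in which the subcritical exponent produces exactly the logarithmic term of $\widetilde{\Upsilon}_k$ in \eqref{upsilon_subcritical}. Your sketch of the linear theory, the $C^1$-expansion with the $\log(\lambda_1\cdots\lambda_k)$ contribution, and the use of stability of $\Lambda_k$ matches the intended argument.
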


\noindent Most of the steps in the proof for Theorem \ref{thm-m-limit} and Theorem \ref{thm-m-location} can be adapted in proving Theorem \ref{thm-m-limit-sub}.
However the order of the proof for Theorem \ref{thm-m-limit-sub} is different from that of previous theorems and some new observations have to be made.
We refer to Section \ref{sec_subcrit} for the details.

Regarding Theorem \ref{thm-m-limit-sub}, it would be interesting to consider whether we can obtain a further description on the asymptotic behavior of a least energy solution of \eqref{eqtn-subcritical} (i.e. a solution satisfying \eqref{2p1s0}) as in \cite{FW},
where Flucher and Wei found that a least energy solution concentrates at a minimum of the Robin function in the local case ($s = 1$).

Moreover, we believe that even in the nonlocal case ($s \in (0,1)$)
there exist solutions of \eqref{eqtn-subcritical} (with the nonlinearity changed into $|u|^{p-1-\epsilon}u$) which can be characterized as sign-changing towers of bubbles.
See the papers e.g. \cite{DDM, PW, MP2, GMP} which studied the existence of bubble-towers for the related local problems.

\medskip
Before concluding this introduction, we would like to mention some related results to our problem.
In \cite{DDW}, the authors took into account the singularly perturbed nonlinear Schr\"{o}dinger equations
\begin{equation}\label{nse}
\begin{cases}
\epsilon^{2s}\mathcal{A}_s u + V u - u^p = 0 &\text{in } \mathbb{R}^n, \\
u > 0 &\text{in } \mathbb{R}^n,\\
u \in H^{2s}(\mathbb{R}^n) &
\end{cases}
\end{equation}
where $\epsilon > 0$ is sufficiently small, $0<s<1$, $p \in (1, {n+2s \over n-2s})$ and $V$ is a positive bounded $C^{1,\alpha}$ function whose value is away from 0.
In particular, employing the nondegeneracy result of \cite{FLS}, they deduced the existence of various types of spike solutions, like multiple spikes and clusters, such that each of the local maxima concentrates on a critical point of $V$.
See also the result of \cite{CZ} in which a single peak solution is found under stronger assumptions on \eqref{nse} than those of \cite{DDW} (in particular, it is assumed that $s \in (\max\{{1 \over 2}, {n \over 4}\}, 1)$ in \cite{CZ}).
As far as we know, these works are the first results to investigate concentration phenomena for singularly perturbed equations with the fractional operator $\mathcal{A}_s$ by utilizing the Lyapunov-Schmidt reduction method.

On the other hand, in \cite{SV} and \cite{SV2}, the Brezis-Nirenberg problem is also considered when the fractional Laplace operator is defined as in a different way:
\[(-\Delta)^su(x) = c_{n,s} P.V. \int_{\mathbb{R}^n} {u(x) - u(y) \over |x-y|^{n+2s}} dy \quad \text{for } x \in \Omega\]
where $\Omega$ is bounded and $c_{n,s}$ is a normalization constant.
(Here, we refer to an interesting paper \cite{MN} which compares two different notions of the fractional Laplacians.)
It turns out that a similar result can be deduced to one in \cite{T} and \cite{BCPS2}, the papers aforementioned in this introduction.
In this point of view, it would be interesting to obtain results for this operator corresponding to ours.
As a matter of fact, we suspect that concentration points of solutions for \eqref{a12uu} and \eqref{eqtn-subcritical} are governed by Green's function of the operator in this case too.

\medskip
This paper is organized as follows.
In section \ref{sec_prelim}, we review certain notions related to the fractional Laplacian and study the regularity of Green's function of $\mathcal{A}_{s}$.
Section \ref{sec_asymp} is devoted to prove Theorem \ref{thm-m-limit}.
In section \ref{sec_blowup}, we show Theorem \ref{thm-m-location} by finding some estimates for Green's function.
In Section \ref{sec_reduction}, multi-peak solutions is constructed by the Lyapunov-Schmidt reduction method, giving the proof of Theorem \ref{thm-m-multipeak} and Theorem \ref{domain-construction}.
On the other hand, the Lane-Emden equation \eqref{eqtn-subcritical} whose nonlinearity has slightly subcritical growth is considered in Section \ref{sec_subcrit}, and the proof of Theorem \ref{thm-m-limit-sub} and Theorem \ref{thm-m-multipeak2} is presented there.
In Appendix \ref{sec_appen_a} and Appendix \ref{sec_appen_b}, we give the proof of Proposition \ref{prop-sub-estimate} and \eqref{eq-main-limit}, respectively,
while we exhibit some necessary computations for the construction of concentrating solutions in Appendix \ref{sec_appen_c}.

\bigskip
\noindent \textbf{Notations.}

\medskip
\noindent Here we list some notations which will be used throughout the paper.

\noindent - The letter $z$ represents a variable in the $\mathbb{R}^{n+1}$. Also, it is written as $z = (x,t)$ with $x \in \mathbb{R}^n$ and $t \in \mathbb{R}$.

\noindent - Suppose that a domain $D$ is given and $\mathcal{T} \subset \partial D$. If $f$ is a function on $D$, then the trace of $f$ on $\mathcal{T}$ is denoted by $\text{tr}|_\mathcal{T} f$ whenever it is well-defined.

\noindent - For a domain $D \subset \mathbb{R}^n$, the map $\nu = (\nu_1, \cdots, \nu_n): \partial D \to \mathbb{R}^n$ denotes the outward pointing unit normal vector on $\partial D$.

\noindent - $dS$ stands for the surface measure. Also, a subscript attached to $dS$ (such as $dS_x$ or $dS_z$) denotes the variable of the surface.

\noindent - $|S^{n-1}| = 2\pi^{n/2}/\Gamma(n/2)$ denotes the Lebesgue measure of $(n-1)$-dimensional unit sphere $S^{n-1}$.

\noindent - For a function $f$, we set $f_+ = \max\{f, 0\}$ and $f_- = \max\{-f, 0\}$.

\noindent - Given a function $f = f(x)$, $\nabla_x f$ means the gradient of $f$ with respect to the variable $x$.

\noindent - We will use big $O$ and small $o$ notations to describe the limit behavior of a certain quantity as $\epsilon \to 0$.

\noindent - $C > 0$ is a generic constant that may vary from line to line.

\noindent - For $ k \in \mathbb{N}$, we denote by $B_{k} (x_0,r)$ the  ball $\{ x \in \mathbb{R}^{k}: |x-x_0| < r \}$ for each $x_0 \in \mathbb{R}^k$ and $r>0$.

\section{Preliminaries}\label{sec_prelim}
In this section we first recall the backgrounds of the fractional Laplacian. We refer to \cite{BCPS1, CT, CS, CDDS, T2, KL} for the details.
In particular, the latter part of this section is devoted to prove a $C^{\infty}$ regularity property of Green's function for the fractional Laplacian with zero Dirichlet boundary condition.

\subsection{Fractional Sobolev spaces, fractional Laplacians and $s$-harmonic extensions}\label{subsec_frac_Sob}
Let $\Omega$ be a smooth bounded domain of $\mathbb{R}^n$.
Let also $\{ \lambda_k, \phi_k\}_{k=1}^{\infty}$ be a sequence of the eigenvalues and corresponding eigenvectors of the Laplacian operator $-\Delta$ in $\Omega$ with the zero Dirichlet boundary condition on $\partial \Omega$,
\[\left\{ \begin{array}{ll}
- \Delta \phi_k = \lambda_k \phi_k &\text{in}~ \Omega,\\
\phi_k = 0 &\text{on}~ \partial \Omega,\\
\end{array}\right.\]
such that $\|\phi_k\|_{L^2(\Omega)} = 1$ and $\lambda_1 < \lambda_2 \le \lambda_3 \le \cdots$.
Then we set the fractional Sobolev space $H_0^s (\Omega)$ $(0 < s < 1)$ by
\begin{equation}\label{H_0^s}
H_0^s (\Omega) = \left\{ u = \sum_{k=1}^{\infty} a_k \phi_k \in L^2 (\Omega) : \sum_{k=1}^{\infty} a_k^2 \lambda_k^{s} < \infty \right\},
\end{equation}
which is a Hilbert space whose inner product is given by
\[\left\langle \sum_{k=1}^{\infty} a_k \phi_k, \sum_{k=1}^{\infty} b_k \phi_k \right\rangle_{H_0^s(\Omega)} = \sum_{k=1}^{\infty} a_k b_k\lambda_k^s \qquad \text{if } \sum_{k=1}^{\infty} a_k \phi_k,\ \sum_{k=1}^{\infty} b_k \phi_k \in H_0^s(\Omega). \]
Moreover, for a function in $H_0^{s}(\Omega)$, we define the fractional Laplacian $\mathcal{A}_{s} : H_0^s (\Omega) \rightarrow H_0^s (\Omega) \simeq H_0^{-s} (\Omega)$ as
\[\mathcal{A}_{s} \left ( \sum_{k=1}^{\infty} a_k \phi_k \right) = \sum_{k=1}^{\infty} a_k \lambda_k^s \phi_k.\]
We also consider the square root $\mathcal{A}_{s}^{1/2}: H_0^s(\Omega) \to L^2(\Omega)$ of the positive operator $\mathcal{A}_{s}$ which is in fact equal to $\mathcal{A}_{s/2}$. Note that by the above definitions, we have
\[\left\langle u, v \right\rangle_{H_0^s(\Omega)} = \int_{\Omega} \mathcal{A}_{s}^{1/2}u \cdot \mathcal{A}_{s}^{1/2}v = \int_{\Omega} \mathcal{A}_{s} u \cdot v \quad \text{for } u, v \in H_0^s(\Omega). \]

\medskip
If the domain $\Omega$ is the whole space $\mathbb{R}^n$, the space $H^s(\mathbb{R}^n)$ $(0<s<1)$ is given as
\[H^s(\mathbb{R}^n) = \left\{u \in L^2(\mathbb{R}^n): \|u\|_{H^s(\mathbb{R}^n)} := \left(\int_{\mathbb{R}^n} (1+|2\pi\xi|^{2s}) |\hat{u}(\xi)|^2 d\xi\right)^{1 \over 2} < \infty \right\} \]
where $\hat{u}$ denotes the Fourier transform of $u$, and the fractional Laplacian $\mathcal{A}_{s}: H^s(\mathbb{R}^n) \to H^{-s}(\mathbb{R}^n)$ is defined to be
\[ \widehat{\mathcal{A}_{s} u}(\xi) = |2\pi\xi|^{2s} \hat{u}(\xi) \quad \text{for any } \xi \in \mathbb{R}^n  \text{ given } u \in H^s(\mathbb{R}^n).\]

\medskip
Regarding \eqref{u0inc} (see also \eqref{s-extension} below), we need to introduce some more function spaces on $\mathcal{C} = \Omega \times (0, \infty)$ where $\Omega$ is either a smooth bounded domain or $\mathbb{R}^n$.
If $\Omega$ is bounded, the function space $H_{0,L}^s(\mathcal{C})$ is defined as the completion of
\[ C_{c,L}^{\infty}(\mathcal{C})
:= \left\{ U \in C^{\infty}\left(\overline{\mathcal{C}}\right) : U = 0 \text{ on } \partial_L\mathcal{C} = \partial \Omega \times (0,\infty) \right\}\]
with respect to the norm
\begin{equation}\label{weighted_norm}
\|U\|_{\mathcal{C}} = \left( \int_{\mathcal{C}} t^{1-2s} |\nabla U|^2 \right)^{1 \over 2}.
\end{equation}
Then it is a Hilbert space endowed with the inner product
\[(U,V)_{\mathcal{C}} = \int_{\mathcal{C}} t^{1-2s} \nabla U \cdot \nabla V \quad \text{for} \quad U, \ V \in H_{0,L}^{s}(\mathcal{C}).\]
In the same manner, we define the space $H_{0,L}^{s}(\mathcal{C}_{\epsilon})$ and $C_{c,L}^{\infty}(\mathcal{C}_{\epsilon})$ for the dilated problem \eqref{equation-dilated}.
Moreover, $\mathcal{D}^s(\mathbb{R}^{n+1}_+)$ is defined as the completion of
$C_c^{\infty}\left(\overline{\mathbb{R}^{n+1}_+}\right)$ with respect to the norm $\|U\|_{\mathbb{R}^{n+1}_+}$ (defined by putting $\mathcal{C} = \mathbb{R}^{n+1}_+$ in \eqref{weighted_norm} above).
Recall that if $\Omega$ is a smooth bounded domain, it is verified that
\begin{equation}\label{eq_Sobo_trace}
H_0^s(\Omega) = \{u = \text{tr}|_{\Omega \times \{0\}}U: U \in H^s_{0,L}(\mathcal{C})  \}
\end{equation}
in \cite[Proposition 2.1]{CS} and \cite[Proposition 2.1]{CDDS} and \cite[Section 2]{T2}.
Furthermore, it holds that
\[\|U(\cdot, 0)\|_{H^s(\mathbb{R}^n)} \le C \|U\|_{\mathbb{R}^{n+1}_+}\]
for some $C > 0$ independent of $U \in \mathcal{D}^s(\mathbb{R}^{n+1}_+)$.

\medskip
Now we may consider the fractional harmonic extension of a function $u$ defined in $\Omega$, where $\Omega$ is $\mathbb{R}^n$ or a smooth bounded domain.
By the celebrated results of Caffarelli-Silvestre \cite{CS} (for $\mathbb{R}^n$) and Cabr\'e-Tan \cite{CT} (for bounded domains, see also \cite{ST, CDDS, BCPS1, T2}),
if we set $U \in H^s_{0,L}(\mathcal{C})$ (or $\mathcal{D}^s(\mathbb{R}^{n+1}_+)$) as a unique solution of the equation
\begin{equation}\label{s-extension}
\left\{ \begin{array}{ll} \text{div}(t^{1-2s} \nabla U) = 0 &~\text{in}~ \mathcal{C},\\
U = 0 & ~\text{on}~ \partial_L \mathcal{C},\\
U(x,0)= u(x) &~ \text{for}~ x \in \Omega,
\end{array}\right.
\end{equation}
for some fixed function $u \in H^s_0(\Omega)$ (or $H^s(\mathbb{R}^n)$),
then $\mathcal{A}_{s}u = \partial_{\nu}^s U|_{\Omega \times \{0\}}$ where the operator $u \mapsto \partial_{\nu}^s U|_{\Omega \times \{0\}}$ is defined in \eqref{pns}.
(If $\Omega = \mathbb{R}^n$, we set $\partial_L\mathcal{C} = \emptyset$.)
We call this $U$ the $s$-harmonic extension of $u$.
We remark that an \textit{explicit} description of $U$ is obtained in \cite{BCPS1, T2} if $\Omega$ is bounded.

\subsection{Sharp Sobolev and trace inequalities}\label{subsec_sobolev_trace}
Given any $\lambda > 0$ and $\xi \in \mathbb{R}^n$, let
\begin{equation}\label{bubble}
w_{\lambda, \xi} (x) = \mathfrak{c}_{n,s} \left( \frac{\lambda}{\lambda^2+|x-\xi|^2}\right)^{\frac{n-2s}{2}} \quad \text{for } x \in \mathbb{R}^n,
\end{equation}
where
\begin{equation}\label{cns}
\mathfrak{c}_{n,s} = 2^{\frac{n-2s}{2}} \left( \frac{\Gamma \left( \frac{n+2s}{2}\right)}{\Gamma \left( \frac{n-2s}{2}\right)}\right)^{\frac{n-2s}{4s}}.
\end{equation}
Then the sharp Sobolev inequality
\[\left( \int_{\mathbb{R}^n} |u|^{p+1} dx \right)^{\frac{1}{p+1}} \leq \mathcal{S}_{n,s} \left( \int_{\mathbb{R}^n} |\mathcal{A}_{s}^{1/2} u|^2 dx \right)^{1 \over 2}\]
gets the equality if and only if $u(x) = c w_{\lambda,\xi}(x)$ for any $c > 0,\ \lambda>0$ and $\xi \in \mathbb{R}^n$, given $\mathcal{S}_{n,s}$ the value defined in \eqref{ans} (refer to \cite{L2, CL, FL}).
Furthermore, it was shown in \cite{CLO, L1, L3} that if a suitable decay assumption is imposed, then $\{ w_{\lambda, \xi}(x): \lambda>0, \xi \in \mathbb{R}^n \}$ is the set of all solutions for the problem
\begin{equation}\label{entire_nonlocal}
\mathcal{A}_{s} u = u^p,\quad u > 0 \quad \text{in } \mathbb{R}^n\quad \textrm{and}\quad \lim_{|x|\rightarrow \infty} u(x) = 0.
\end{equation}
We use $W_{\lambda,\xi} \in \mathcal{D}^s(\mathbb{R}^{n+1}_+)$ to denote the (unique) $s$-harmonic extension of $w_{\lambda,\xi}$ so that $W_{\lambda, \xi}$ solves
\begin{equation}\label{wlyxt}
\left\{\begin{array}{ll}
\text{div}(t^{1-2s}W_{\lambda,\xi}(x,t)) = 0 &\quad \text{in } \mathbb{R}^{n+1}_+,\\
W_{\lambda,\xi}(x,0) = w_{\lambda,\xi}(x) &\quad \text{for } x \in \mathbb{R}^n.
\end{array}\right.
\end{equation}
It follows that for the Sobolev trace inequality
\begin{equation}\label{eq-sharp-trace}
\left( \int_{\mathbb{R}^n} |U(x,0)|^{p+1} dx \right)^{\frac{1}{p+1}} \leq \frac{\mathcal{S}_{n,s}}{\sqrt{C_s}} \left( \int_0^{\infty}\int_{\mathbb{R}^n} t^{1-2s} |\nabla U(x,t)|^2 dx dt \right)^{1 \over 2},
\end{equation}
the equality is attained by some function $U \in \mathcal{D}^s(\mathbb{R}^{n+1}_+)$ if and only if $U(x,t) = c W_{\lambda,\xi}(x,t)$ for any $c > 0,\ \lambda>0$ and $\xi \in \mathbb{R}^n$,
where $C_s > 0$ is the constant defined in \eqref{cs} (see \cite{X}).
In what follows, we simply denote $w_{1,0}$ and $W_{1,0}$ by $w_1$ and $W_1$, respectively.

\subsection{Green's functions and the Robin function}
Let $G$ be Green's function of the fractional Laplacian $\mathcal{A}_{s}$ with the zero Dirichlet boundary condition (see \eqref{Green_Omega}).
Then it can be regarded as the trace of Green's function $G_{\mathcal{C}} = G_{\mathcal{C}}(z,x)$ ($z \in \mathcal{C}$, $x \in \Omega$) for the extended Dirichlet-Neumann problem which satisfies
\begin{equation}\label{Green_extended}
\left\{ \begin{array}{ll} \text{div} (t^{1-2s} \nabla G_{\mathcal{C}}(\cdot, x)) =0&\quad \text{in}~ \mathcal{C},\\
G_{\mathcal{C}}(\cdot, x) = 0 &\quad \text{on}~ \partial_L \mathcal{C},\\
\partial_{\nu}^{s} G_{\mathcal{C}}(\cdot, x) = \delta_x &\quad \text{on}~ \Omega\times \{0\}.
\end{array}\right.
\end{equation}
In fact, if a function $U$ in $\mathcal{C}$ solves
\[\left\{ \begin{array}{ll} \text{div} (t^{1-2s} \nabla U) =0&\quad \text{in}~ \mathcal{C},\\
U = 0 &\quad \text{on}~ \partial_L \mathcal{C},\\
\partial_{\nu}^{s} U = g &\quad \text{on } \Omega\times \{0\},\\
\end{array}\right.\]
for some function $g$ on $\Omega \times \{0\}$, then we can see that $U$ has the expression
\[U(z) = \int_{\Omega} G_{\mathcal{C}} (z, y) g(y) dy = \int_{\Omega} G_{\mathcal{C}} (z,y) \mathcal{A}_{s}u(y)dy, \quad z \in \mathcal{C},\]
where $u = \text{tr}|_{\Omega \times \{0\}}U$.
Then, by plugging $z=(x,0)$ in the above equalities, we obtain
\[u(x) = \int_{\Omega} G_{\mathcal{C}} ((x,0),y) \mathcal{A}_{s} u(y) dy,\]
which implies that $G_{\mathcal{C}} ((x,0),y)= G(x,y)$ for any $x, y \in \Omega$.

Green's function $G_{\mathcal{C}}$ on the half cylinder $\mathcal{C}$ can be partitioned to the singular part and the regular part.
The singular part is given by Green's function
\begin{equation}\label{Green_half}
G_{\mathbb{R}^{n+1}_{+}}((x,t),y) := \frac{ {{\mathfrak{a}}_{n,s}}}{|(x-y,t)|^{n-2s}}
\end{equation}
on the half space $\mathbb{R}^{n+1}_{+}$ satisfying
\[\left\{ \begin{array}{ll} \text{div} \left(t^{1-2s} \nabla_{(x,t)} G_{\mathbb{R}^{n+1}_{+}} ((x,t),y)\right) = 0 &\quad \text{in}~ \mathbb{R}^{n+1}_{+},\\
\partial_{\nu}^{s} G_{\mathbb{R}^{n+1}_{+}} ((x,0),y)= \delta_{y}(x)&\quad \text{on}~ \Omega \times \{0\},
\end{array}
\right.\]
for each $y \in \mathbb{R}^n$.
Note that ${\mathfrak{a}}_{n,s}$ is the constant defined in \eqref{Robin_Omega}.
The regular part is given by the function $H_{\mathcal{C}} : \mathcal{C} \to \mathbb{R}$ which satisfies
\[\left\{ \begin{array}{ll}
\text{div}\left(t^{1-2s} \nabla_{(x,t)}H_{\mathcal{C}}((x,t),y) \right)= 0 &\quad \text{in}~\mathcal{C},\\
H_{\mathcal{C}}(x,t,y) = \frac{\mathfrak{a}_{n,s}}{|(x-y,t)|^{n-2s}} &\quad \text{on}~ \partial_L \mathcal{C},\\
\partial_{\nu}^{s}H_{\mathcal{C}} ((x,0),y)=0&\quad\text{on}~ \Omega \times \{0\}.
\end{array}
\right.\]
The existence of such a function $H_{\mathcal{C}}$ can be proved using a variational method (see Lemma \ref{lem-H-existence} below).
We then have
\begin{equation}\label{eq-green-decompose}
G_{\mathcal{C}}((x,t),y) = G_{\mathbb{R}^{n+1}_{+}} ((x,t),y) - H_{\mathcal{C}}((x,t),y).
\end{equation}
Accordingly, the Robin function $\tau$ which was defined in the paragraph after Theorem \ref{thm-m-limit} can be written as $\tau (x):= H_{\mathcal{C}}((x,0),x)$.
As we will see, the function $\tau$ and the relation \eqref{eq-green-decompose} turn out to be very important throughout the paper.

\subsection{Maximum principle}
Here we prove a maximum principle which serves as a valuable tool in studying properties of Green's function $G$ of $\mathcal{A}_s$.
\begin{lem}\label{lem-maximum}
Suppose that $V$ is a weak solution of the following problem
\[\left\{ \begin{array}{ll} \textnormal{div}(t^{1-2s} \nabla V) = 0&\quad \textnormal{in}~\mathcal{C},
\\
V(x,t) = B(x,t)&\quad \textnormal{on}~\partial_L\mathcal{C},
\\
 \partial_{\nu}^{s} V(x,0) =0 &\quad \textnormal{on}~\Omega \times\{0\}.
\end{array}
\right.\]
for some function $B$ on $\partial_L\mathcal{C}$.
Then we have
\[\sup_{(x,t)\in \mathcal{C}} |V(x,t)| \leq \sup_{(x,t) \in \partial_{L} \mathcal{C}} |B(x,t)|.\]
\end{lem}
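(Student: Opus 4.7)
The plan is to prove the lemma by the classical truncation/Caccioppoli-type argument adapted to the degenerate elliptic operator $\mathrm{div}(t^{1-2s}\nabla \cdot)$ with mixed Dirichlet–Neumann data. By linearity of the problem it suffices to show the one-sided bound $\sup_{\mathcal{C}} V \le M$ where $M := \sup_{\partial_L\mathcal{C}}|B|$ (assumed finite, otherwise the conclusion is trivial); the lower bound $\inf_{\mathcal{C}} V \ge -M$ then follows by applying the same argument to $-V$, whose boundary datum is $-B$ and hence still has supremum $\le M$.

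Next, I would test the weak formulation of the equation against the truncated function $\varphi := (V-M)_+$. The natural weak form, where the Neumann condition $\partial_\nu^s V = 0$ on $\Omega \times \{0\}$ is encoded by the absence of a boundary term, reads
\[
\int_{\mathcal{C}} t^{1-2s}\,\nabla V \cdot \nabla \psi\, dz = 0 \quad \text{for all } \psi \in H^s_{0,L}(\mathcal{C}).
\]
The key check is that $\varphi$ is an admissible test function. Since $V = B$ on $\partial_L\mathcal{C}$ with $|B|\le M$, we have $\varphi = 0$ on $\partial_L\mathcal{C}$ in the trace sense, and $|\nabla\varphi| \le |\nabla V|\,\chi_{\{V > M\}}$ so $\varphi$ has finite weighted energy. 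Hence $\varphi \in H^s_{0,L}(\mathcal{C})$.

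Plugging $\psi = \varphi$ and using that $\nabla V\cdot \nabla\varphi = |\nabla\varphi|^2$ pointwise a.e., I obtain
\[
\int_{\mathcal{C}} t^{1-2s}\,|\nabla \varphi|^2\, dz = 0.
\]
Since $t^{1-2s}>0$ on $\mathcal{C}$, this forces $\nabla \varphi = 0$ a.e., so $\varphi$ is constant on each connected component of $\mathcal{C}$. Because $\mathcal{C} = \Omega\times(0,\infty)$ is connected (as $\Omega$ is) and the trace of $\varphi$ vanishes on $\partial_L\mathcal{C}$, the constant must be zero, yielding $V \le M$ a.e. in $\mathcal{C}$.

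The one delicate point I foresee is justifying that $\varphi = (V-M)_+$ indeed lies in $H^s_{0,L}(\mathcal{C})$ — this requires confirming both the finiteness of the weighted energy of $V$ (which is part of the weak-solution hypothesis) and that the trace of $\varphi$ on $\partial_L\mathcal{C}$ vanishes (which follows because the positive-part truncation commutes with trace in weighted Sobolev spaces with the $A_2$ weight $t^{1-2s}$, valid for $0 < s < 1$). Once this admissibility is established, the remainder is the standard chain $\int t^{1-2s}|\nabla \varphi|^2 = 0 \Rightarrow \varphi \equiv 0$ via connectedness, and there is no further obstacle.
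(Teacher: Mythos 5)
Your proposal is correct and follows essentially the same route as the paper: the paper sets $Y = S^+ - V$ with $S^+ = \sup_{\partial_L\mathcal{C}} B$ and tests the weak formulation with $Y_- = (V-S^+)_+$, obtaining $\int_{\mathcal{C}} t^{1-2s}|\nabla Y_-|^2 = 0$ and hence $Y_- \equiv 0$, which is exactly your truncation argument with the slightly sharper constant $S^+$ in place of $M = \sup_{\partial_L\mathcal{C}}|B|$. The admissibility of the truncated test function and the conclusion via vanishing lateral trace are handled the same way in both arguments.
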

\begin{proof}
Let $S^{+} = \sup_{(x,t) \in \partial_L \mathcal{C}} B(x,t)$. Consider the function $Y(x,t):= S^{+} - V(x,t)$, which satisfies
\[\left\{ \begin{array}{ll} \textrm{div}(t^{1-2s} \nabla Y) = 0&\quad \textrm{in}~\mathcal{C},
\\
Y(x,t) \geq 0 &\quad \textrm{on}~\partial_L\mathcal{C}
\\
 \partial_{\nu}^{s} Y(x,0) =0 &\quad \textrm{on}~\Omega \times\{0\}.
\end{array}
\right.\]
Note that $Y_{-}(x,t) = 0$ on $\partial_L \mathcal{C}$. Then we get
\[0 = \int_{\mathcal{C}} t^{1-2s} \nabla Y(x,t) \cdot \nabla Y_{-}(x,t) dx dt = -\int_{\mathcal{C}}t^{1-2s} |\nabla Y_{-}(x,t)|^2 dx dt.\]
It proves that $Y_{-} \equiv 0$. Thus we have $S^{+}\geq V(x,t)$ for all $(x,t) \in \mathcal{C}$.

Similarly, if we set $S^{-} = \inf_{(x,t) \in \partial_L \mathcal{C}} B(x,t)$ and define the function $Z(x,t)= V(x,t)-S^{-}$, we may deduce that $V(x,t) \geq S_{-}$ for all $(x,t) \in \mathcal{C}$. Consequently, we have
\[S^{-} \leq V(x,t) \leq S^{+}\quad \text{for all } (x,t) \in \mathcal{C}.\]
It completes the proof.
\end{proof}

\subsection{Properties of the Robin function}
We study more on the property of the function $H_{\mathcal{C}}$ by using the maximum principle obtained in the previous subsection.
We first prove the existence of the function $H_{\mathcal{C}}$.
\begin{lem}\label{lem-H-existence}
For each point $y \in \Omega$ the function $H_{\mathcal{C}}((\cdot, \cdot), y)$ is the minimizer of the problem
\begin{equation}\label{eq-mini}
\min_{V \in S} \int_{\mathcal{C}} t^{1-2s} |\nabla V(x,t)|^2 dx dt,
\end{equation}
where
\[S = \left\{ V : \int_{\mathcal{C}} t^{1-2s} |\nabla V(x,t)|^2 dx dt < \infty\ \quad\textrm{and}\quad V(x,t) = G_{\mathbb{R}^{n+1}_{+}}(x,t,y) \textrm{ on } \partial_L \mathcal{C} \right\}.\]
Here the derivatives are defined in a weak sense.
\end{lem}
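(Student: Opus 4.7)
The plan is to proceed by the direct method of the calculus of variations. The minimization problem is posed over an affine subspace of the Hilbert space associated with the weighted norm $\|\cdot\|_{\mathcal{C}}$ (namely $V_0 + H^s_{0,L}(\mathcal{C})$ for any fixed admissible $V_0$), so once nonemptiness of $S$ is established, the existence and uniqueness of a minimizer are routine. The bulk of the work is to extract the Euler–Lagrange equations and check that the minimizer coincides with $H_{\mathcal{C}}(\cdot, y)$.

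\textbf{Step 1 (Admissibility of $S$).} I would first exhibit an explicit competitor. Since $y \in \Omega$, on $\partial_L \mathcal{C} = \partial\Omega\times(0,\infty)$ the function $G_{\mathbb{R}^{n+1}_+}((x,t),y) = \mathfrak{a}_{n,s}/|(x-y,t)|^{n-2s}$ is smooth and bounded: $|x-y| \geq \mathrm{dist}(y,\partial\Omega) > 0$. Choose a cutoff $\eta \in C^\infty(\mathbb{R}^n)$ with $\eta \equiv 0$ in a neighborhood of $y$ and $\eta \equiv 1$ outside a slightly larger neighborhood, and set $V_0(x,t) := \eta(x) \, G_{\mathbb{R}^{n+1}_+}((x,t),y)$. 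On $\partial_L\mathcal{C}$ we have $V_0 = G_{\mathbb{R}^{n+1}_+}(\cdot, y)$. A direct check with $|\nabla_{(x,t)} G_{\mathbb{R}^{n+1}_+}| \lesssim |(x-y,t)|^{-(n-2s+1)}$ on $\supp \eta$ shows that both near $t=0$ (integrand $\sim t^{1-2s}\cdot \text{bounded}$, integrable since $1-2s>-1$) and near $t=\infty$ (integrand decays like $t^{1-2s-2(n-2s+1)}$, integrable since $n>2s$), the weighted energy of $V_0$ is finite, so $V_0 \in S$.

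\textbf{Step 2 (Existence and uniqueness of minimizer).} Let $\{V_k\}\subset S$ be a minimizing sequence and write $V_k = V_0 + W_k$ with $W_k \in H^s_{0,L}(\mathcal{C})$. Boundedness of $\|V_k\|_{\mathcal{C}}$ together with the triangle inequality gives boundedness of $\{W_k\}$ in $H^s_{0,L}(\mathcal{C})$, so up to a subsequence $W_k \rightharpoonup W$ weakly in $H^s_{0,L}(\mathcal{C})$. Setting $V := V_0 + W$, the identity
\[
\|V_k\|_{\mathcal{C}}^2 = \|V_0\|_{\mathcal{C}}^2 + 2(V_0,W_k)_{\mathcal{C}} + \|W_k\|_{\mathcal{C}}^2
\]
combined with weak convergence of the cross term and lower semicontinuity of $\|\cdot\|_{\mathcal{C}}^2$ yields $\|V\|_{\mathcal{C}}^2 \leq \liminf \|V_k\|_{\mathcal{C}}^2 = \inf_S$. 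Thus $V$ minimizes, and uniqueness follows from strict convexity on the affine space $S$ (if $V_1, V_2$ both minimize, then $V_1 - V_2 \in H^s_{0,L}(\mathcal{C})$ has zero weighted Dirichlet energy, hence vanishes).

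\textbf{Step 3 (Identification with $H_{\mathcal{C}}(\cdot,y)$).} For any test function $\varphi \in C^\infty_{c,L}(\mathcal{C})$ (vanishing on $\partial_L\mathcal{C}$, but free on $\Omega\times\{0\}$), $V + \epsilon\varphi \in S$, and differentiating $\epsilon \mapsto \|V+\epsilon\varphi\|_{\mathcal{C}}^2$ at $\epsilon = 0$ gives
\[
\int_{\mathcal{C}} t^{1-2s}\,\nabla V \cdot \nabla \varphi \, dx\, dt = 0.
\]
This is the weak formulation of $\mathrm{div}(t^{1-2s}\nabla V) = 0$ in $\mathcal{C}$ together with the Neumann condition $\partial_\nu^s V = 0$ on $\Omega\times\{0\}$, obtained by integrating by parts against $\varphi$'s that are, respectively, compactly supported in the open cylinder and supported up to $\Omega\times\{0\}$. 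Combined with the built-in Dirichlet condition $V = G_{\mathbb{R}^{n+1}_+}(\cdot,y)$ on $\partial_L\mathcal{C}$, the minimizer $V$ satisfies all three defining properties of $H_{\mathcal{C}}((\cdot,\cdot),y)$. Uniqueness for that boundary value problem (again by the energy method or by the maximum principle of Lemma \ref{lem-maximum} applied to the difference) identifies $V = H_{\mathcal{C}}((\cdot,\cdot),y)$, finishing the proof.

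The one step I would expect to require genuine care is Step 1, since one needs the admissible set to contain a function of finite weighted Dirichlet energy despite the fact that $G_{\mathbb{R}^{n+1}_+}$ itself has infinite weighted energy near $(y,0)$; the cutoff construction succeeds precisely because the singularity lives in $\Omega$, away from $\partial_L\mathcal{C}$ where the Dirichlet data is prescribed.
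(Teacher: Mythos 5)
Your argument is essentially the paper's own proof: exhibit a finite-energy competitor by cutting off the singularity of $G_{\mathbb{R}^{n+1}_+}(\cdot,y)$, run the direct method, and read off from the first variation that the minimizer is a weak solution of $\operatorname{div}(t^{1-2s}\nabla V)=0$ in $\mathcal{C}$ with $\partial_\nu^s V=0$ on $\Omega\times\{0\}$ and the prescribed data on $\partial_L\mathcal{C}$; your Steps 2--3 only spell out the compactness, uniqueness and identification details that the paper leaves implicit.

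One small repair is needed in Step 1. You truncate only in the $x$-variable, and you estimate only the term $\eta\,\nabla G_{\mathbb{R}^{n+1}_+}$, omitting the product-rule term $G_{\mathbb{R}^{n+1}_+}\,\nabla_x\eta$. That term is supported on an annulus in $x$ for \emph{all} $t>0$, and there $t^{1-2s}\,G_{\mathbb{R}^{n+1}_+}^2 \sim t^{1+2s-2n}$ as $t\to\infty$, which is integrable only when $n>s+1$; so for $n\ge 2$ your competitor works, but in the case $n=1$ (still admissible under $n>2s$) its weighted energy is infinite. The paper avoids this by cutting off in the full variable $z=(x,t)$, i.e.\ taking $V_0(x,t)=G_{\mathbb{R}^{n+1}_+}(x,t,y)\,\eta(x-y,t)$ with $\eta$ vanishing on a ball around $(y,0)$ and equal to $1$ outside a larger ball: then $\nabla\eta$ has compact support, and where $\eta\equiv 1$ one only needs $t^{1-2s}|\nabla G_{\mathbb{R}^{n+1}_+}|^2\sim t^{-2n+2s-1}$ to be integrable, which holds for all $n>2s$. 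With this choice of cutoff your proof goes through in the full generality of the lemma.
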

\begin{proof}
Let $\eta \in C^{\infty} (\mathbb{R}^{n+1})$ be a function such that $\eta(z) =0$ for $|z| \leq 1$ and $\eta (z) =1$ for $|z| \geq 2$.
Assuming without loss of any generality that $B_{n+1}((y,0),2) \cap \mathbb{R}^{n+1}_+ \subset \mathcal{C}$, let $V_{0}$ be the function defined in $\mathcal{C}$ by
\[V_{0}(x,t) = G_{\mathbb{R}^{n+1}_{+}}(x,t,y) \eta (x-y,t).\]
Then it is easy to check that
\[\int_{\mathcal{C}} t^{1-2s} |\nabla V_0 (x,t)|^2 dx dt < \infty.\]
Thus $S$ is nonempty and we can find a minimizing function $V$ of the problem \eqref{eq-mini} in $S$. Then, for any $\Phi \in C^{\infty}(\mathcal{C})$ such that $\Phi = 0 $ on $\partial_L \mathcal{C}$, we have
\[\int_{\mathcal{C}} t^{1-2s} \nabla V(x,t) \cdot \nabla \Phi (x,t) dx dt = 0.\]
Hence it holds that
\[\left\{ \begin{array}{ll} \textrm{div}(t^{1-2s} \nabla V(x,t)) =0 &\quad \text{for } (x,t) \in \mathcal{C},
\\
V(x,t) = G_{\mathbb{R}^{n+1}_{+}}(x,t,y) &\quad \text{for } (x,t) \in \partial_L \mathcal{C},
\\
\partial_{\nu}^{s} V(x,0) = 0& \quad \text{for } x \in \Omega.
\end{array}
\right.\]
in a weak sense. This completes the proof.
\end{proof}
In the same way, for a fixed point $y = (y^1, \cdots, y^n) \in \Omega$ and any multi-index $I= (i_1, i_2,\cdots, i_n) \in (\mathbb{N} \cup \{0\})^n$, we find the function  $\mathcal{H}_{\mathcal{C}}^{I}((\cdot, \cdot), y)$ satisfying
\begin{equation}\label{eq-derive-h}
\left\{ \begin{array}{ll}
\text{div}\left(t^{1-2s} \nabla_{(x,t)}\mathcal{H}^{I}_{\mathcal{C}}(x,t,y) \right)= 0 &\quad \text{in}~\mathcal{C},\\
\mathcal{H}^{I}_{\mathcal{C}}(x,t,y) = \partial_y^{I} G_{\mathbb{R}^{n+1}_{+}}(x,t,y) &\quad \text{on}~ \partial_L \mathcal{C},\\
\partial_{\nu}^{s}\mathcal{H}^{I}_{\mathcal{C}} (\cdot,\cdot,y)=0&\quad\text{on}~ \Omega \times \{0\},
\end{array}
\right.
\end{equation}
where $\partial_y^I = \partial_{y^1}^{i_1}\cdots\partial_{y^n}^{i_n}$.
In the below we shall show that, for any $(x,t) \in \mathcal{C}$, the function $H_{\mathcal{C}}(x,t,y)$ is $C_{\textrm{loc}}^{\infty}(\Omega)$ and that $\partial_{y}^{I} H_{\mathcal{C}}(x,t,y) = \mathcal{H}_{\mathcal{C}}^{I} (x,t,y)$.

\begin{lem}
For each $(x,t) \in \mathcal{C}$ the function $H_{\mathcal{C}}(x,t,y)$ is continuous with respect to $y$.
Moreover, such continuity is uniform on $(x,t,y) \in \mathcal{C}\times \mathcal{K}$ for any compact subset $\mathcal{K}$ of $\Omega$.
\end{lem}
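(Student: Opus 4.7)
The plan is to exploit the maximum principle from Lemma \ref{lem-maximum} together with the explicit form of the half-space Green's function $G_{\mathbb{R}^{n+1}_+}$ given in \eqref{Green_half}. Fix a compact set $\mathcal{K} \subset \Omega$, and take $y, y' \in \mathcal{K}$. The point is that the difference
\[V(x,t) := H_{\mathcal{C}}((x,t), y) - H_{\mathcal{C}}((x,t), y')\]
satisfies $\textnormal{div}(t^{1-2s}\nabla V) = 0$ in $\mathcal{C}$, $\partial_\nu^s V(\cdot, 0) = 0$ on $\Omega \times \{0\}$, and on the lateral boundary $\partial_L \mathcal{C}$ it takes the explicit values
\[V(x,t) = \mathfrak{a}_{n,s}\left(\frac{1}{|(x-y,t)|^{n-2s}} - \frac{1}{|(x-y',t)|^{n-2s}}\right), \qquad (x,t) \in \partial_L\mathcal{C}.\]

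The first step is to estimate this boundary difference uniformly. Let $d := \textnormal{dist}(\mathcal{K}, \partial\Omega) > 0$. For any $x \in \partial\Omega$ and $y, y' \in \mathcal{K}$ one has $|(x-y,t)|, |(x-y',t)| \geq d$. Applying the mean value theorem to the function $r \mapsto r^{-(n-2s)}$ on $[d, \infty)$, together with the elementary inequality $\big||(x-y,t)| - |(x-y',t)|\big| \leq |y-y'|$, yields
\[\sup_{(x,t) \in \partial_L \mathcal{C}}|V(x,t)| \leq \frac{\mathfrak{a}_{n,s}(n-2s)}{d^{n-2s+1}} \,|y - y'|.\]
(The supremum is finite since $\partial\Omega$ is bounded and the boundary data decays as $t \to \infty$, so Lemma \ref{lem-maximum} applies.)

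The second and final step is to invoke Lemma \ref{lem-maximum} on $V$, which gives
\[\sup_{(x,t) \in \mathcal{C}}|H_{\mathcal{C}}((x,t),y) - H_{\mathcal{C}}((x,t),y')| \leq \frac{\mathfrak{a}_{n,s}(n-2s)}{d^{n-2s+1}} |y - y'|.\]
This is in fact a Lipschitz estimate, uniform in $(x,t) \in \mathcal{C}$ and in $y, y' \in \mathcal{K}$, which is strictly stronger than the uniform continuity claimed in the lemma.

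I do not foresee any serious obstacle: the argument is almost entirely algebraic, and the only mild care needed is to verify that Lemma \ref{lem-maximum} is legitimately applicable to $V$ (i.e., that $V$ is a valid weak solution in the class for which the maximum principle was proved, and that its lateral boundary data is bounded). This follows because $H_\mathcal{C}(\cdot,\cdot,y)$ and $H_\mathcal{C}(\cdot,\cdot,y')$ are both minimizers in the class $S$ from Lemma \ref{lem-H-existence}, so each lies in the appropriate weighted Sobolev space modulo the singular boundary profile, and their difference has $H^s_{0,L}$-type regularity with bounded lateral trace.
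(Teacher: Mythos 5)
Your proposal is correct and follows essentially the same route as the paper: apply the maximum principle of Lemma \ref{lem-maximum} to the difference $H_{\mathcal{C}}(\cdot,\cdot,y)-H_{\mathcal{C}}(\cdot,\cdot,y')$ and bound the explicit lateral boundary data by a Lipschitz estimate in $y$, uniform thanks to $\dist(\mathcal{K},\partial\Omega)>0$. The only difference is that you spell out the mean value theorem computation and the applicability of the maximum principle in slightly more detail than the paper does.
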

\begin{proof}
Take points $y_1$ and $y_2$ in a compact subset $\mathcal{K}$ of $\Omega$, sufficiently close to each other.
If we apply Lemma \ref{lem-maximum} to the function $H_{\mathcal{C}}(x,t,y_1) - H_{\mathcal{C}}(x,t,y_2)$, then we get
\begin{align*}
\sup_{(x,t) \in \mathcal{C}}|H_{\mathcal{C}}(x,t,y_1) - H_{\mathcal{C}}(x,t,y_2)| &\leq \sup_{(x,t) \in \partial_L\mathcal{C}} |H_{\mathcal{C}}(x,t,y_1)- H_{\mathcal{C}}(x,t,y_2)|
\\
&= \sup_{(x,t) \in \partial_L\mathcal{C}} \left|\frac{\mathfrak{c}_{n,s}}{|(x-y_1, t)|^{n-2s}}- \frac{\mathfrak{c}_{n,s}}{|(x-y_2,t)|^{n-2s}}\right|
\\
&\le C(\mathcal{K})|y_1 -y_2|,
\end{align*}
where $C(\mathcal{K}) > 0$ is constant relying only on $\mathcal{K}$.
It proves the lemma.
\end{proof}

The next lemma provides a regularity property of the function $H_{\mathcal{C}}$.
We recall that the result of Fabes, Kenig, and Serapioni \cite{FKS} which gives that $(x,t,y) \mapsto H_{\mathcal{C}}(x,t,y)$ is $C^{\alpha}$ for some $0<\alpha<1$.

\begin{lem}\label{lem-property-H}
(1) For each $(x,t) \in \mathcal{C}$, the function $y \rightarrow H_{\mathcal{C}}(x,t,y)$ is a $C^{\infty}$ function. Moreover, for each multi-index $I \in (\mathbb{N} \cup \{0\})^{n}$, we have
\begin{equation}\label{eq-derivative-H}
\partial_{y}^{I} H_{\mathcal{C}}(x,t,y) = \mathcal{H}_{\mathcal{C}}^{I} (x,t,y)
\end{equation}
and $\partial_y^{I}H_{\mathcal{C}}(x,t,y)$ is bounded on $(x,t,y) \in \mathcal{C}\times \mathcal{K}$ for any compact set $\mathcal{K}$ of $\Omega$.

\noindent (2) For each $y \in \Omega$, the function $x \in \Omega \mapsto H_{\mathcal{C}}(x,0,y)$ is a $C^{\infty}$ function.
Moreover, for each multi-index $I \in (\mathbb{N} \cup \{0\})^n$, the derivative $\partial_x^{I} H_{\mathcal{C}}(x,0,y)$ is bounded on  $(x,y) \in  \mathcal{K}\times \Omega$ for any compact set $\mathcal{K}$ of $\Omega$.
\end{lem}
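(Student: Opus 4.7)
The plan is to prove Part (1) by induction on $|I|$ using the linearity of the extension problem combined with the maximum principle (Lemma \ref{lem-maximum}), and then to deduce Part (2) from Part (1) via the symmetry of Green's function.

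For Part (1), fix a compact set $\mathcal{K} \subset \Omega$ and a coordinate vector $e_j$. For $y \in \mathcal{K}$ and $h > 0$ small enough that $y + h e_j$ also stays at positive distance from $\partial \Omega$, form the difference quotient
\[D_h H_{\mathcal{C}}(x, t, y) := \frac{H_{\mathcal{C}}(x, t, y + h e_j) - H_{\mathcal{C}}(x, t, y)}{h}.\]
By linearity of \eqref{eq-derive-h}, $D_h H_{\mathcal{C}}$ satisfies the same extension PDE and zero Neumann condition on $\Omega \times \{0\}$ as $\mathcal{H}_{\mathcal{C}}^{e_j}(\cdot, \cdot, y)$, differing only in the Dirichlet datum on $\partial_L \mathcal{C}$: $D_h G_{\mathbb{R}^{n+1}_+}((x,t), y)$ versus $\partial_{y^j} G_{\mathbb{R}^{n+1}_+}((x,t), y)$. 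Applying Lemma \ref{lem-maximum} to the difference and using the explicit formula \eqref{Green_half} with a second-order Taylor expansion gives
\[\sup_{\mathcal{C}} \bigl| D_h H_{\mathcal{C}} - \mathcal{H}_{\mathcal{C}}^{e_j} \bigr| \leq \sup_{(x,t) \in \partial_L\mathcal{C}} \bigl| D_h G_{\mathbb{R}^{n+1}_+}((x,t), y) - \partial_{y^j} G_{\mathbb{R}^{n+1}_+}((x,t), y) \bigr| \leq C(\mathcal{K}) |h|,\]
since $|x - y| \geq \mathrm{dist}(\mathcal{K}, \partial\Omega) > 0$ on $\partial_L \mathcal{C}$. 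Hence $\partial_{y^j} H_{\mathcal{C}} = \mathcal{H}_{\mathcal{C}}^{e_j}$, and iterating this argument with $H_{\mathcal{C}}$ replaced by $\mathcal{H}_{\mathcal{C}}^I$ at each stage (noting that higher $y$-derivatives of $G_{\mathbb{R}^{n+1}_+}$ still decay polynomially in $|(x-y,t)|$) establishes \eqref{eq-derivative-H} for every multi-index $I$. The uniform boundedness of $\partial_y^I H_{\mathcal{C}}$ on $\mathcal{C} \times \mathcal{K}$ is then a direct consequence of Lemma \ref{lem-maximum} applied once more to $\mathcal{H}_{\mathcal{C}}^I$ itself, since $\partial_y^I G_{\mathbb{R}^{n+1}_+}((x,t), y)$ is bounded on $\partial_L \mathcal{C} \times \mathcal{K}$.

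For Part (2), the key observation is the symmetry
\[H_{\mathcal{C}}(x, 0, y) = H_{\mathcal{C}}(y, 0, x) \quad \text{for } x, y \in \Omega.\]
Indeed, $\mathcal{A}_s$ is self-adjoint on $L^2(\Omega)$, so $G(x, y) = G(y, x)$, and $G_{\mathbb{R}^{n+1}_+}((x, 0), y) = \mathfrak{a}_{n,s} |x-y|^{-(n-2s)}$ is manifestly symmetric in $x$ and $y$; combining these with the decomposition \eqref{eq-green-decompose} evaluated at $t = 0$ transfers the symmetry to $H_{\mathcal{C}}$. Therefore
\[\partial_x^I H_{\mathcal{C}}(x, 0, y) = \partial_x^I H_{\mathcal{C}}(y, 0, x) = \bigl(\partial_y^I H_{\mathcal{C}}\bigr)(y, 0, x),\]
where the derivative on the right acts on the third slot of $H_{\mathcal{C}}$ as in Part (1). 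The $C^\infty$ regularity in $x$ and the uniform boundedness on $\mathcal{K} \times \Omega$ follow at once, with the roles of the first and third arguments of $H_{\mathcal{C}}$ interchanged.

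The principal technical point is confined to Part (1): the maximum-principle estimate must control the error uniformly over the entire unbounded lateral face $\partial_L\mathcal{C} = \partial\Omega \times (0, \infty)$. This is essentially automatic, because $G_{\mathbb{R}^{n+1}_+}$ and all of its $y$-derivatives decay polynomially in $|(x-y, t)|$, so the supremum on $\partial_L\mathcal{C}$ is effectively attained near $t = 0$, where $|x - y|$ is already bounded below by $\mathrm{dist}(\mathcal{K}, \partial\Omega)$. With Part (1) in hand, Part (2) becomes a purely algebraic consequence of symmetry.
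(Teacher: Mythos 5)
Your argument is correct and follows essentially the same route as the paper: the maximum principle (Lemma \ref{lem-maximum}) applied to the difference between the (incremental quotient of) $H_{\mathcal{C}}$ and the candidate derivative $\mathcal{H}_{\mathcal{C}}^{I}$, with the lateral boundary data controlled by Taylor expansion of $G_{\mathbb{R}^{n+1}_+}$ since $|x-y|$ is bounded below for $x\in\partial\Omega$, $y\in\mathcal{K}$, followed by induction, and then the symmetry $H_{\mathcal{C}}(x,0,y)=H_{\mathcal{C}}(y,0,x)$ for part (2). Your only addition is to spell out why the symmetry holds (self-adjointness of $\mathcal{A}_s$ plus the decomposition \eqref{eq-green-decompose}), which the paper merely asserts.
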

\begin{proof}
For two points $y_1$ and $y_2$ in a compact subset $\mathcal{K}$ of $\Omega$ chosen to be close enough to each other, we apply Lemma \ref{lem-maximum} to the function
\[H_{\mathcal{C}}(x,t,y_2) - H_{\mathcal{C}}(x,t,y_1) - (y_2 -y_1) \cdot (\mathcal{H}^{I_1}_{\mathcal{C}}, \cdots, \mathcal{H}^{I_n}_{\mathcal{C}})(x,t, y_1)\]
where $I_j$ is the multi-index in $(\mathbb{N} \cup \{0\})^n$ such that the $j$-th coordinate is 1 and the other coordinates are 0 for $1 \le j \le n$. Then we obtain
\begin{align*}
&\ \sup_{(x,t)\in\mathcal{C}}\left|H_{\mathcal{C}}(x,t,y_2) - H_{\mathcal{C}}(x,t,y_1) - (y_2 - y_1) \cdot (\mathcal{H}^{I_1}_{\mathcal{C}}, \cdots, \mathcal{H}^{I_n}_{\mathcal{C}})(x,t, y_1) \right|
\\
&\leq \sup_{(x,t) \in \partial_L \mathcal{C}}\left|\frac{\mathfrak{c}_{n,s}}{|(x-y_2, t)|^{n-2s}} - \frac{\mathfrak{c}_{n,s}}{|(x-y_1, t)|^{n-2s}} - (y_2 - y_1) \cdot \frac{\mathfrak{c}_{n,s}(n-2s)(x-y_1)}{|(x-y_1, t)|^{n-2s+2}}\right| \\
&\le C(\mathcal{K}) |y_1 -y_2|^2
\end{align*}
for some $C(\mathcal{K}) > 0$ independent of the choice of $y_1$ and $y_2$.
This shows that
$\nabla_{y} H_{\mathcal{C}}(x,t,y) = (\mathcal{H}^{I_1}_{\mathcal{C}}, \cdots, \mathcal{H}^{I_n}_{\mathcal{C}})(x,t,y)$ proving \eqref{eq-derivative-H} for $|I|=1$.
We can adapt this argument inductively, which proves the first statement of the lemma.

Since $H_{\mathcal{C}}(x,0,y) = H_{\mathcal{C}}(y,0,x)$ holds for any $(x,y) \in \Omega\times \Omega$, the second statement follows directly from the first statement.
\end{proof}
Given the above results, we can prove a lemma which is essential when we deduce certain regularity properties of a sequence $u_{\epsilon}$ in the statement of Theorems \ref{thm-m-limit} and \ref{thm-m-limit-sub}. See Section \ref{sec_asymp}.
\begin{lem}\label{lem-Arzela}
Suppose that the functions $\tilde{u}_{\epsilon}$ for $\epsilon >0$ defined in $\Omega$ are given by
\[\tilde{u}_{\epsilon}(x) = \int_{\Omega} G(x,y) \tilde{v}_{\epsilon}(y) dy,\]
where the set of functions $\{\tilde{v}_{\epsilon}: \epsilon>0\}$ satisfies $\sup_{\epsilon >0} \sup_{x \in \Omega} |\tilde{v}_{\epsilon}(x)| < \infty$.
Then $\{\tilde{u}_{\epsilon}: \epsilon >0\}$ are equicontinuous on any compact set.
\end{lem}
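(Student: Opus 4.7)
Fix a compact set $K \subset \Omega$, let $M := \sup_{\epsilon>0}\sup_{y \in \Omega}|\tilde v_{\epsilon}(y)|$, and pick two points $x_1, x_2 \in K$. The plan is to use the decomposition
\[G(x,y) = \frac{\mathfrak{a}_{n,s}}{|x-y|^{n-2s}} - H(x,y)\]
from \eqref{Robin_Omega} (with $H(x,y) = H_{\mathcal{C}}((x,0),y)$) to split $\tilde u_{\epsilon}(x_1) - \tilde u_{\epsilon}(x_2)$ into a regular part and a Riesz-potential part, then estimate each independently with bounds that do not depend on $\epsilon$.

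For the regular part, Lemma \ref{lem-property-H}(2) tells us that $\partial_x^I H(x,y)$ is bounded on $(x,y) \in K' \times \Omega$ for any compact $K' \subset \Omega$. Taking $K'$ to be a small neighborhood of $K$ and applying this for $|I|=1$, the mean value theorem gives
\[\left|\int_{\Omega}\bigl(H(x_1,y)-H(x_2,y)\bigr)\tilde v_{\epsilon}(y)\,dy\right| \leq M\,|\Omega|\,\sup_{(x,y)\in K'\times\Omega}|\nabla_x H(x,y)|\cdot|x_1-x_2|,\]
so this contribution is uniformly Lipschitz in $x \in K$, uniformly in $\epsilon$.

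For the Riesz part, set $\delta = |x_1-x_2|$ and split $\Omega$ into $A_{\delta}=B(x_1,2\delta)\cup B(x_2,2\delta)$ and its complement $\Omega\setminus A_{\delta}$. On $A_{\delta}$ each of the two integrals $\int_{A_{\delta}}|x_i-y|^{-(n-2s)}\,dy$ is controlled separately by $C\delta^{2s}$ since $n-2s<n$. On $\Omega\setminus A_{\delta}$ the mean value theorem gives
\[\left|\frac{1}{|x_1-y|^{n-2s}}-\frac{1}{|x_2-y|^{n-2s}}\right|\leq \frac{C\,\delta}{|x_1-y|^{n-2s+1}}\quad\text{for }y\in\Omega\setminus A_{\delta},\]
and integrating in polar coordinates gives a bound $C\delta\cdot(\delta^{2s-1}+\mathrm{diam}(\Omega)^{2s-1})$, i.e.\ at worst $C\delta^{\min\{2s,1\}}$ (the case $2s=1$ only contributing a harmless logarithmic factor that can be absorbed by lowering the exponent). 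Combining the two pieces yields
\[\left|\int_{\Omega}\frac{\tilde v_{\epsilon}(y)}{|x_1-y|^{n-2s}}\,dy-\int_{\Omega}\frac{\tilde v_{\epsilon}(y)}{|x_2-y|^{n-2s}}\,dy\right|\leq CM\,|x_1-x_2|^{\alpha}\]
for some $\alpha = \alpha(s)\in(0,1]$ and $C$ depending only on $K$, $\Omega$, $n$, $s$, uniformly in $\epsilon$.

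Adding the two estimates produces a uniform modulus of continuity for $\{\tilde u_{\epsilon}\}$ on $K$, which is precisely the equicontinuity claimed. The main (though mild) obstacle is the Riesz part: one has to be careful with the splitting at the singularity, but the computation is standard once the decomposition of $G$ from Lemma \ref{lem-property-H} reduces the problem to this well-known estimate.
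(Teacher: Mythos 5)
Your proof is correct and follows essentially the same route as the paper: the same decomposition of $G$ into the Riesz kernel $\mathfrak{a}_{n,s}|x-y|^{-(n-2s)}$ plus the regular part $H$, with the regular part controlled uniformly via the derivative bounds of Lemma \ref{lem-property-H} (2). The only difference is cosmetic: you carry out the standard near/far splitting for the Riesz potential explicitly, whereas the paper simply invokes the well-known H\"older regularity of Riesz potentials of bounded functions; both yield a modulus of continuity depending only on $\sup_{\epsilon}\|\tilde v_{\epsilon}\|_{L^{\infty}(\Omega)}$, hence equicontinuity.
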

\begin{proof}
Suppose that $x_1$ and $x_2$ are contained in a compact set $\mathcal{K}$ of $\Omega$.
We have
\[\tilde{u}_{\epsilon}(x) = \int_{\Omega} G_{\mathcal{C}}(x,0,y) \tilde{v}_{\epsilon}(y) dy
=\int_{\Omega} G_{\mathbb{R}^{n+1}_{+}}(x,0,y) \tilde{v}_{\epsilon}(y) dy - \int_{\Omega} H_{\mathcal{C}}(x,0,y) \tilde{v}_{\epsilon}(y) dy\]
for any $x \in \Omega$. Take any number $\eta>0$.
It is well-known that the first term of the right-hand side is $C^{\alpha}$ for any $\alpha <2s$ if $s \in (0,1/2]$ and $C^{1,\alpha}$ for any $\alpha < 2s-1$ if $s \in (1/2,1)$.
Let us denote the last term by $R_{\epsilon}$. Then we have
\[|R_{\epsilon}(x_1) - R_{\epsilon}(x_2)| \le \int_{\Omega} \left|H_{\mathcal{C}}(x_1,0,y)- H_{\mathcal{C}}(x_2,0,y) \right| |\tilde{v}_{\epsilon}(y)| dy.\]
By Lemma \ref{lem-property-H} (2), we can find $\eta >0$ such that if  $|x_1-x_2| < \eta$ and $(x_1,x_2) \in \mathcal{K}\times \mathcal{K}$, then
\[\sup_{y \in \Omega} |H_{\mathcal{C}}(x_1,0,y) - H_{\mathcal{C}}(x_2,0,y)| \leq C\eta.\]
From this, we derive that
\[|R_{\epsilon}(x_1) -R_{\epsilon}(x_2)| \leq C\eta|\Omega|.\]
It proves that $\{\tilde{u}_{\epsilon}: \epsilon>0\}$ are equicontinuous on any compact set.
\end{proof}

\section{The asymptotic behavior}\label{sec_asymp}
Here we prove Theorem \ref{thm-m-limit} by studying the normalized functions $B_{\epsilon}$ of the $s$-harmonic extension $U_{\epsilon}$ of solutions $u_{\epsilon}$ for \eqref{a12uu}, given $\epsilon > 0$ sufficiently small.
We first find a pointwise convergence of the functions $B_{\epsilon}$.
Then we will prove that the functions $B_{\epsilon}$ are uniformly bounded by a certain function,
which is more difficult part to handle.
To obtain this result, we apply the Kelvin transform in the extended problem \eqref{u0inc}, and then attain $L^{\infty}$-estimates for its solution.
In addition we also need an argument to get a bound of the supremum $\|u_{\epsilon}\|_{L^{\infty}(\Omega)}$ in terms of $\epsilon>0$.
It involves a local version of the Pohozaev identity (see  Proposition \ref{prop-sub-estimate}).

\subsection{Pointwise convergence}
Set $U_{\epsilon}$ be the $s$-harmonic extension of $u_{\epsilon}$ to the half cylinder $\Omega \times [0,\infty)$, that is, $U_{\epsilon}$ satisfies $\textrm{tr}|_{\Omega \times \{0\}} U_{\epsilon} = u_{\epsilon}$ and it is a solution to the problem
\begin{equation}\label{dpuu}
\left\{ \begin{array}{ll}
\textrm{div}(t^{1-2s} \nabla U_{\epsilon}) = 0 & \quad \textrm{in}~ \mathcal{C} = \Omega \times (0,\infty),
\\
U_{\epsilon} >0 &\quad \textrm{in}~ \mathcal{C},
\\
U_{\epsilon}=0& \quad \textrm{on}~\partial_L \mathcal{C} = \partial \Omega \times [0,\infty),\\
\partial_{\nu}^{s}  U_{\epsilon} = U_{\epsilon}^{p}+{\epsilon}U_{\epsilon} &\quad \textrm{in} ~\Omega \times \{0\}.
\end{array}
\right.
\end{equation}
First we note the following identity
\begin{align*}
\int_{\mathcal{C}} t^{1-2s}|\nabla U_{\epsilon} (x,t)|^2 dx dt & =C_s \int_{\Omega \times \{0\}} \partial_{\nu}^{s} U_{\epsilon} (x,0) U_{\epsilon} (x,0) dx
\\
&=C_s \int_{\Omega \times \{0\}}\mathcal{A}_{s} u_{\epsilon} (x) u_{\epsilon} (x) dx
\\
&=C_s \int_{\Omega \times \{0\}} \left|\mathcal{A}_{s}^{1/2} u_{\epsilon} (x)\right|^2 dx.
\end{align*}
Using this with \eqref{2p1s0}, we have
\[\frac{(\int_{\Omega} |U_{\epsilon} (x,0)|^{p+1} dx)^{1/{(p+1)}} }{(\int_{\mathcal{C}} t^{1-2s}|\nabla U_{\epsilon}(x,t)|^2 dx dt)^{1/2}}= {\mathcal{S}_{n,s} \over \sqrt{C_s}}+ o (1)\quad \textrm{as} \quad \epsilon \rightarrow 0.\]
Also, by \eqref{a12uu}, it holds that
\begin{align*}
\int_{\mathcal{C}} t^{1-2s}|\nabla U_{\epsilon} (x,t)|^2 dxdt
&= C_s \int_{\Omega \times \{0\}} \left|\mathcal{A}_{s}^{1/2} u_{\epsilon} (x)\right|^2 dx = C_s \int_{\Omega \times \{0\}} \mathcal{A}_{s} u_{\epsilon}(x) u_{\epsilon}(x) dx\\
&= C_s \int_{\Omega \times\{0\}} u_{\epsilon}^{p+1}(x) dx + \epsilon C_s \int_{\Omega \times \{0\}} u_{\epsilon}^2 (x) dx.
\end{align*}
The two equalities above give
\[(\mathcal{S}_{n,s} + o (1) )^2\left(\| U_{\epsilon}( \cdot, 0)\|_{L^{p+1}(\Omega)}^{p+1} + \epsilon \| U_{\epsilon} (\cdot,0)\|_{L^2 (\Omega)}^2\right) =  \| U_{\epsilon} (\cdot,0)\|_{L^{p+1}(\Omega)}^2.\]
From $\|U_{\epsilon}(\cdot,0) \|_{L^2 (\Omega)} \leq C(\Omega) \|U_{\epsilon} (\cdot,0)\|_{L^{p+1}(\Omega)}$ we obtain
\[(\mathcal{S}_{n,s} + o (1) )^2\| U_{\epsilon}( \cdot, 0)\|_{L^{p+1}(\Omega)}^{p+1}  = \| U_{\epsilon} (\cdot,0)\|_{L^{p+1}(\Omega)}^2,\]
which turns to be
\begin{equation}\label{eq-up-limit}
\lim_{\epsilon \rightarrow 0} \int_{\Omega} U_{\epsilon}(x,0)^{p+1} dx = \mathcal{S}_{n,s}^{-\frac{n}{s}}.
\end{equation}

We set
\begin{equation}\label{eq-I-omega}
\mathcal{I}(\Omega,{r}) = \{ x  \in \Omega : \textrm{dist}(x,\partial \Omega) \geq r \} \quad \textrm{for}~ r>0
\end{equation}
and
\begin{equation}\label{eq-O-omega}
\mathcal{O}(\Omega,r) = \{x \in \Omega : \textrm{dist}(x, \partial \Omega ) < r\} \quad \textrm{for}~r>0.
\end{equation}
The following lemma presents a uniform bound of the solutions near the boundary.
\begin{lem}\label{lem-boundary-sup}
Let $u$ be a bounded solution of \eqref{a12uu} with $p>1$ and $0<\epsilon< \lambda_1^s$, where $\lambda_1$ is the first eigenvalue of $-\Delta$ with the zero Dirichlet condition. Then, for any $r>0$ there exists a number $C(r,\Omega)>0$ such that
\begin{equation}\label{upxc}
\int_{\mathcal{I}(\Omega,{r})} u~ dx \leq C(r,\Omega).
\end{equation}
Moreover, there is a constant $C> 0$ such that
\begin{equation}\label{xsux}
\sup_{x \in \mathcal{O}(\Omega,r)} u(x) \leq C.
\end{equation}
\end{lem}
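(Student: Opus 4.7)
The plan is to split the argument into the $L^1$ bound on the interior (which supplies the crucial uniform information) and the sup bound near the boundary, which is then deduced from the $L^1$ bound via a moving-planes comparison carried out on the $s$-harmonic extension.

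For \eqref{upxc}, let $\phi_1 > 0$ denote the first Dirichlet eigenfunction of $-\Delta$ on $\Omega$. By the spectral definition \eqref{H_0^s} of $\mathcal{A}_s$, $\phi_1$ is also an eigenfunction of $\mathcal{A}_s$ with eigenvalue $\lambda_1^s$. Testing \eqref{a12uu} against $\phi_1$ and using the self-adjointness of $\mathcal{A}_s$ gives
\[(\lambda_1^s - \epsilon)\int_\Omega u\phi_1\, dx = \int_\Omega u^p \phi_1\, dx.\]
H\"older's inequality $\int_\Omega u\phi_1\, dx \le (\int_\Omega u^p \phi_1\, dx)^{1/p} (\int_\Omega \phi_1\, dx)^{(p-1)/p}$, combined with the identity above, yields
\[\int_\Omega u\phi_1\, dx \le (\lambda_1^s - \epsilon)^{1/(p-1)} \|\phi_1\|_{L^1(\Omega)} \le \lambda_1^{s/(p-1)} \|\phi_1\|_{L^1(\Omega)},\]
a bound that is uniform in $\epsilon \in (0, \lambda_1^s)$ and independent of the solution $u$. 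Since $\phi_1 \ge c(r,\Omega) > 0$ on $\mathcal{I}(\Omega, r)$, dividing by this lower bound gives \eqref{upxc} at once.

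For \eqref{xsux}, I would apply the moving planes method to the $s$-harmonic extension $U_\epsilon$ of $u$ on $\mathcal{C}$, which solves \eqref{dpuu}. Since $\partial\Omega$ is smooth and $U_\epsilon$ vanishes on $\partial_L\mathcal{C}$, starting planes just outside $\overline\Omega$ and pushing them inward along the inward normal at each boundary point should yield the standard monotonicity statement in a one-sided collar: on a uniform inward strip, $U_\epsilon$ (and hence $u$) is nondecreasing along the normal direction. By compactness and smoothness of $\partial\Omega$, I expect to produce constants $r_0 = r_0(\Omega) > 0$ and $c_0 = c_0(\Omega) > 0$ such that for every $x \in \mathcal{O}(\Omega, r_0)$ there is a measurable set $A_x \subset \mathcal{I}(\Omega, r_0)$ with $|A_x| \ge c_0$ and $u(x) \le u(y)$ for all $y \in A_x$. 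Averaging and invoking \eqref{upxc} then gives
\[u(x) \le \frac{1}{|A_x|}\int_{A_x} u\, dy \le \frac{1}{c_0}\int_{\mathcal{I}(\Omega, r_0)} u\, dy \le C(\Omega),\]
which proves \eqref{xsux} for $r \le r_0$.

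The main obstacle is the moving planes step itself: for the spectral fractional Laplacian one must work on $\mathcal{C}$ with the degenerate elliptic operator $\mathrm{div}(t^{1-2s}\nabla\cdot)$ and apply both a weighted maximum principle (a variant of Lemma \ref{lem-maximum} adapted to the reflected half-cylinder) and a Hopf-type lemma at the lateral boundary $\partial_L\mathcal{C}$, all applied to $U_\epsilon - U_\epsilon^*$ where $U_\epsilon^*$ denotes the reflection of $U_\epsilon$ across the moving hyperplane. The Dirichlet condition on $\partial_L\mathcal{C}$ supplies the sign information needed to initiate the planes, while the uniform interior ball condition at $\partial\Omega$ yields the uniform collar width $r_0$. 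These are by now standard for the extension problem and are available in the forms needed here from \cite{CT, BCPS1, T2}.
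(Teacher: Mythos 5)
Your argument for \eqref{upxc} is correct and is essentially the paper's: testing \eqref{a12uu} with the first Dirichlet eigenfunction $\phi_1$ (which is an eigenfunction of $\mathcal{A}_s$ with eigenvalue $\lambda_1^s$ by the spectral definition), obtaining $(\lambda_1^s-\epsilon)\int_\Omega u\phi_1 = \int_\Omega u^p\phi_1$, and then using H\"older (the paper uses Jensen, which is the same estimate) to get a bound on $\int_\Omega u\phi_1$ uniform in $\epsilon\in(0,\lambda_1^s)$, followed by $\phi_1\ge c(r)>0$ on $\mathcal{I}(\Omega,r)$.

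The gap is in your treatment of \eqref{xsux}. The moving plane argument with hyperplanes, as you describe it ("starting planes just outside $\overline\Omega$ and pushing them inward along the inward normal at each boundary point"), only works when the reflected near-boundary cap stays inside $\Omega$, i.e.\ essentially when $\Omega$ is (strictly) convex, or at least convex in the direction of the plane near the boundary point considered. For a general smooth bounded domain with a non-convex boundary portion, the reflection across the moving hyperplane of the cap $\Omega\cap\{\text{beyond the plane}\}$ can leave $\Omega$ (and correspondingly leave $\mathcal{C}$ in the extended setting), so the comparison function $U_\epsilon^*$ is not defined where you need it and the sign information from the lateral Dirichlet condition is unavailable; a uniform interior ball condition does not repair this, since the obstruction is that $\Omega$ must lie on one side of the reflected cap, not that $\partial\Omega$ has bounded curvature. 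This is exactly why the paper first treats strictly convex domains (extending the moving plane argument of \cite[Theorem 7.1]{CT} to all $s\in(0,1)$ via \cite[Lemma 3.6]{T2} and \cite[Corollary 4.12]{CS2}, then averaging as in \cite[Lemma 13.2]{QS} or \cite{H}, which matches your averaging step), and then handles general smooth domains by a separate device, namely the Kelvin transform in the extended domain as in \cite{C}, which restores the one-sided geometry near non-convex boundary points. As written, your proof establishes \eqref{xsux} only for convex $\Omega$; to cover the general case you must either invoke the Kelvin-transform (or moving-spheres) version of the argument or supply some other boundary estimate, and this is a genuinely missing ingredient rather than a routine adaptation.
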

\begin{proof}
Let $\phi_1$ be a first eigenfunction of the Dirichlet Laplacian $-\Delta$ in $\Omega$ such that $\phi_1 > 0$ in $\Omega$.
We have
\[{\lambda_1^s} \int_{\Omega} \phi_1 u dx  = \int_{\Omega} (\mathcal{A}_{s} \phi_1) u dx = \int_{\Omega} \phi_1 (\mathcal{A}_{s} u) dx = \int_{\Omega} \phi_1 u^p dx + \epsilon \int_{\Omega}\phi_1 u dx.\]
Using the Jensen inequality we get the estimate
\[C \left(\int_{\Omega} \phi_1 u dx \right)^p \le \int_{\Omega} \phi_1 u^p dx = (\lambda_1^s - \epsilon) \int_{\Omega} \phi_1 u dx,\]
and hence
\[\int_{\Omega} \phi_1 u ~dx \leq \left({{\lambda_1^s - \epsilon} \over C}\right)^{\frac{1}{p-1}}.\]
Because $\phi_1 \geq C$ on $ \mathcal{I}(\Omega,{r})$, we have
\begin{equation}\label{phi1u}
C \int_{ \mathcal{I}(\Omega,r)} u ~dx \leq \left({{\lambda_1^s - \epsilon} \over C}\right)^{\frac{1}{p-1}}.
\end{equation}
This completes the derivation of the estimate \eqref{upxc}.

If $\Omega$ is strictly convex, the moving plane argument,
which is given in the proof of \cite[Theorem 7.1]{CT} for $s=1/2$ and can be extended to any $s \in (0,1)$ with \cite[Lemma 3.6]{T2} and \cite[Corollary 4.12]{CS2},
yields the fact that the solution $u$ increases along an arbitrary straight line toward inside of $\Omega$ emanating from a point on $\partial \Omega$.
Then, by borrowing an averaging argument from \cite[Lemma 13.2]{QS} or \cite{H}, which heavily depends on this fact,
we can bound $\sup_{x \in \mathcal{O}(\Omega,r)}u(x)$ by a constant multiple of $\int_{\mathcal{I}(\Omega,r)} u(x) dx$.
In short, estimate \eqref{phi1u} gives the uniform bound \eqref{xsux} near the boundary.
The general cases can be proved using the Kelvin transformation in the extended domain (see \cite{C}).
\end{proof}

\begin{lem}\label{lem_mu_e}
Let
\begin{equation}\label{mue}
\mu_{\epsilon} = \mathfrak{c}_{n,s}^{-1} \sup_{x \in \Omega} u_{\epsilon}(x)
\end{equation}
where the definition of $\mathfrak{c}_{n,s}$ is provided in \eqref{cns}.
(Its finiteness comes from \cite[Proposition 5.2]{BCPS2}.)
If a point $x_{\epsilon} \in \Omega$ satisfies $\mu_{\epsilon} = \mathfrak{c}_{n,s}^{-1}u_{\epsilon}(x_{\epsilon})$, then we have
\[\lim_{\epsilon \rightarrow 0} \mu_{\epsilon} = \infty,\]
and $x_{\epsilon}$ converges to an interior point $x_0$ of $\Omega$ along a subsequence.
\end{lem}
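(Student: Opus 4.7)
The plan is to prove both assertions by contradiction, exploiting the normalization \eqref{eq-up-limit} together with the fact that the sharp fractional Sobolev constant $\mathcal{S}_{n,s}$ is never attained in $H_0^s(\Omega)$ when $\Omega$ is bounded.

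First I would suppose that $\mu_\epsilon$ stays bounded along some sequence $\epsilon_k \to 0$. Then $\|u_{\epsilon_k}\|_{L^\infty(\Omega)} \le C$, so the nonlinearity $u_{\epsilon_k}^p + \epsilon_k u_{\epsilon_k}$ is uniformly bounded, and from the equation $\|\mathcal{A}_s^{1/2} u_{\epsilon_k}\|_{L^2(\Omega)}^2 = \int u_{\epsilon_k}^{p+1} + \epsilon_k \int u_{\epsilon_k}^2$ is controlled as well. Writing $u_{\epsilon_k}(x) = \int_\Omega G(x,y)(u_{\epsilon_k}^p + \epsilon_k u_{\epsilon_k})(y)\, dy$ and applying Lemma \ref{lem-Arzela}, the family $\{u_{\epsilon_k}\}$ is equicontinuous on every compact subset of $\Omega$; combined with the boundary regularity of the extension $U_{\epsilon_k}$ from Subsection \ref{subsec_frac_Sob}, a diagonal extraction gives $u_{\epsilon_k} \to u_0$ uniformly on $\overline\Omega$ with $u_0 \in H_0^s(\Omega)$ a nonnegative weak solution of $\mathcal{A}_s u_0 = u_0^p$. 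Passing to the limit in \eqref{eq-up-limit} yields $\int_\Omega u_0^{p+1}\, dx = \mathcal{S}_{n,s}^{-n/s} > 0$, so $u_0 \not\equiv 0$. Testing the equation against $u_0$ gives $\|\mathcal{A}_s^{1/2} u_0\|_{L^2}^2 = \|u_0\|_{L^{p+1}}^{p+1}$; combined with the sharp Sobolev inequality this forces $u_0$ to be an extremizer of $\mathcal{S}_{n,s}$ in $H_0^s(\Omega)$, which is impossible since the only extremizers in $H^s(\mathbb{R}^n)$ are the bubbles $w_{\lambda,\xi}$, none of which vanishes on $\partial\Omega$. This contradiction gives $\mu_\epsilon \to \infty$.

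For the second statement, by compactness of $\overline{\Omega}$ I would extract a subsequence along which $x_\epsilon \to x_0 \in \overline{\Omega}$. If $x_0 \in \partial\Omega$, then for every fixed $r > 0$ one has $x_\epsilon \in \mathcal{O}(\Omega,r)$ for all small $\epsilon$, and estimate \eqref{xsux} of Lemma \ref{lem-boundary-sup} yields $u_\epsilon(x_\epsilon) \le C$. This contradicts $u_\epsilon(x_\epsilon) = \mathfrak{c}_{n,s}\mu_\epsilon \to \infty$ just established. Hence $x_0$ lies in the interior of $\Omega$.

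The hard part will be the compactness step: one must upgrade interior Arzel\`a--Ascoli convergence to information up to $\partial\Omega$, so that the limit $u_0$ genuinely belongs to $H_0^s(\Omega)$ with zero fractional trace. This is what allows the sharp constant $\mathcal{S}_{n,s}$ (rather than a strictly larger $\Omega$-dependent constant) to appear in the Sobolev comparison and makes the non-attainment argument effective. Everything else is soft: the blow-up assumption feeds into the nonlinear equation to extract the limit, and the boundary estimate \eqref{xsux} immediately rules out concentration at $\partial\Omega$ once the blow-up is known.
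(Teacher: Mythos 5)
Your argument is correct in outline and runs on the same engine as the paper's proof: assume $\mu_\epsilon$ stays bounded, extract a limit, use \eqref{eq-up-limit} to show the limit attains a sharp constant, and rule that out by the classification of extremizers; boundary concentration is excluded exactly as you do it, via \eqref{xsux} of Lemma \ref{lem-boundary-sup}. The difference is where the sharp inequality is applied. The paper stays on the extension level: it passes the Green's function representation of $U_\epsilon$ to the limit, applies Fatou's lemma to the weighted gradient energies, and concludes that the limiting extension $V$ attains equality in the trace inequality \eqref{eq-sharp-trace}; since $V$ vanishes on $\partial_L\mathcal{C}$ while every extremizer $cW_{\lambda,\xi}$ has full support, the contradiction is immediate and no statement about the trace space of the limit is ever needed. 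You instead work downstairs with $u_0$ and the fractional Sobolev inequality, invoking non-attainment of $\mathcal{S}_{n,s}$ in $H_0^s(\Omega)$, and your computation (testing the limit equation with $u_0$ together with $\int_\Omega u_0^{p+1}=\mathcal{S}_{n,s}^{-n/s}$) does force equality in the Sobolev quotient. Two points should be tightened. First, $\mathcal{A}_s$ on $\Omega$ is the spectral Dirichlet operator, so quoting that the only extremizers in $H^s(\mathbb{R}^n)$ are the bubbles requires the comparison $\|\tilde u_0\|_{H^s(\mathbb{R}^n)}$-energy $\le \int_\Omega |\mathcal{A}_s^{1/2}u_0|^2$ for the zero extension $\tilde u_0$, which follows by comparing the cylinder extension (extended by zero) with the minimizing half-space extension; this is exactly the step that the paper's trace-inequality formulation makes automatic, and equality then contradicts the support of the bubble. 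Second, the uniform convergence on $\overline\Omega$ that you single out as the hard part is neither justified by the material you cite (Subsection \ref{subsec_frac_Sob} contains no boundary regularity estimate) nor needed: the uniform $L^\infty$ bound plus the energy identity bounds $\|u_{\epsilon_k}\|_{H_0^s(\Omega)}$, so weak compactness already places the limit in $H_0^s(\Omega)$, while Lemma \ref{lem-Arzela} gives local uniform convergence and dominated convergence handles \eqref{eq-up-limit} and the weak formulation. With these two repairs your proof is complete and essentially equivalent to the paper's, just phrased on $\Omega$ rather than on the extended cylinder.
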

\begin{proof}
Suppose that $u_{\epsilon}$ has a bounded subsequence.
As before, we let $U_{\epsilon}$ be the extension of $u_{\epsilon}$ (see \eqref{dpuu}).
By Lemma \ref{lem-Arzela}, $u_{\epsilon}$ are equicontinuous, and thus the Arzela-Ascoli theorem implies that $u_{\epsilon}$ converges to a function $v$ uniformly on any compact set.
We denote by $V$ the extension of $v$.
Then we see that $\lim_{\epsilon \rightarrow 0} \nabla U_{\epsilon} (x,t) = \nabla V (x,t)$ for any $(x,t) \in \mathcal{C}$ from the Green's function representation.
Thus we have
\begin{align*}
\int_{\mathcal{C}} t^{1-2s} |\nabla V|^2 dx dt = \int_{\mathcal{C}} t^{1-2s} \liminf_{\epsilon \rightarrow 0} |\nabla U_{\epsilon}|^2 dx dt &\leq \liminf_{\epsilon \rightarrow 0} \int_{\mathcal{C}} t^{1-2s} |\nabla U_{\epsilon}|^2 dx dt
\\
&= \liminf_{\epsilon \rightarrow 0} C_s \int_{\Omega} (u_{\epsilon}^{p+1} + \epsilon u_{\epsilon}^2) dx
\\
&= C_s \int_{\Omega} v^{p+1} dx.
\end{align*}
Meanwhile, using \eqref{eq-up-limit}, we obtain
\[\left( \int_{\mathcal{C}} t^{1-2s}|\nabla V|^2 dx dt\right)^{1 \over 2} \leq \frac{C_s^{1/2}}{\mathcal{S}_{n,s}} \left( \int_{\Omega} V^{p+1}(x,0) dx\right)^{1 \over p+1}.\]
Hence the function $V$ attains the equality in the sharp Sobolev trace inequality \eqref{eq-sharp-trace},
so we can deduce that $V = cW_{\lambda,\xi}$ for some $c, \lambda > 0$ and $\xi \in \mathbb{R}^n$ (see Subsection \ref{subsec_sobolev_trace}).
However, the support of $V$ is $\mathcal{C}$ by its own definition.
Consequently, a contradiction arises and the supremum $\mu_{\epsilon} = \mathfrak{c}_{n,s}^{-1}u_{\epsilon} (x_\epsilon)$ diverges.
Since Lemma \ref{lem-boundary-sup} implies $u_{\epsilon}$ is uniformly bounded near the boundary for all small $\epsilon >0$, the point $x_{\epsilon}$ converges to an interior point passing to a subsequence.
\end{proof}

Now, we normalize the solutions $u_{\epsilon}$ and their extensions $U_{\epsilon}$, that is, we set
\begin{equation}\label{aexme}
b_{\epsilon} (x):= \mu_{\epsilon}^{-1} u_{\epsilon} \big(\mu_{\epsilon}^{-\frac{2}{n-2s}} x + x_{\epsilon}\big),\quad x \in \Omega_{\epsilon}:=   \mu_{\epsilon}^{\frac{2}{n-2s}}(\Omega-x_{\epsilon}),
\end{equation}
and
\begin{equation}\label{aezme}
B_{\epsilon} (z) := \mu_{\epsilon}^{-1} U_{\epsilon} \big(\mu_{\epsilon}^{-\frac{2}{n-2s}} z + x_{\epsilon}\big),\quad z \in \mathcal{C}_{\epsilon} :=   \mu_{\epsilon}^{\frac{2}{n-2s}}(\mathcal{C}-(x_{\epsilon},0))
\end{equation}
with the value $\mu_{\epsilon}$ defined in \eqref{mue}.
It satisfies $ b_{\epsilon}(0) =\mathfrak{c}_{n,s}$ and  $0 \leq b_{\epsilon} \leq \mathfrak{c}_{n,s}$, and the domain $\Omega_{\epsilon}$ converges to $\mathbb{R}^{n}$ as $\epsilon$ goes to zero. The function $B_{\epsilon}$ satisfies
\[\left\{ \begin{array}{ll}
\textrm{div}(t^{1-2s} \nabla B_{\epsilon}) =0&\quad \textrm{in}~ \mathcal{C}_{\epsilon},
\\
B_{\epsilon}> 0&\quad \textrm{in}~\mathcal{C}_{\epsilon},
\\
B_{\epsilon}=0&\quad \textrm{on}~ \partial_L \mathcal{C}_{\epsilon},
\\
\partial_{\nu}^{s} B_{\epsilon}=B_{\epsilon}^{p} + \epsilon \mu_{\epsilon}^{-p+1} B_{\epsilon}&\quad \textrm{in} ~\Omega_{\epsilon} \times \{0\}.
\end{array}
\right.\]
We have
\begin{lem}\label{lem-convergence}
The function $b_{\epsilon}$ converges to the function $w_1$ uniformly on any compact set in a subsequence.
\end{lem}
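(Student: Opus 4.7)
The plan is to combine equicontinuity on compact sets with a limit-passage in the rescaled equation, and then identify the limit via the classification of solutions of $\mathcal{A}_s w = w^p$ on $\mathbb{R}^n$.

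First, I would represent $b_\epsilon$ through the rescaled Green's function: since $b_\epsilon$ solves a Dirichlet-type problem on $\Omega_\epsilon$ with right-hand side $b_\epsilon^p + \epsilon\mu_\epsilon^{-p+1}b_\epsilon$, and both $b_\epsilon$ and this right-hand side are uniformly bounded by constants depending only on $\mathfrak{c}_{n,s}$ (using $0\le b_\epsilon\le \mathfrak{c}_{n,s}$ and $\epsilon\mu_\epsilon^{-p+1}\to 0$ by Lemma \ref{lem_mu_e}), a rescaled version of Lemma \ref{lem-Arzela} gives local equicontinuity of $\{b_\epsilon\}$ on compact subsets of $\mathbb{R}^n$. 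Since $\Omega_\epsilon$ exhausts $\mathbb{R}^n$, a diagonal Arzel\`a--Ascoli argument produces a subsequence converging locally uniformly to a continuous limit $b_\infty$ with $0\le b_\infty\le \mathfrak{c}_{n,s}$ and $b_\infty(0)=\mathfrak{c}_{n,s}$.

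Next, I would lift this to a convergence of the extensions $B_\epsilon \to B_\infty$ on $\overline{\mathbb{R}^{n+1}_+}$. Using the Poisson representation of the $s$-harmonic extension together with the uniform $L^\infty$ bound on $b_\epsilon$, $B_\epsilon$ converges locally uniformly on $\overline{\mathbb{R}^{n+1}_+}$ to the $s$-harmonic extension of $b_\infty$. Passing to the limit in the extended boundary condition and using $\epsilon\mu_\epsilon^{-p+1}\to 0$, one deduces that $b_\infty$ is a nonnegative continuous solution of $\mathcal{A}_s b_\infty = b_\infty^p$ on $\mathbb{R}^n$, attaining its maximum $\mathfrak{c}_{n,s}$ at the origin.

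Then comes the main obstacle: verifying the decay condition $\lim_{|x|\to\infty} b_\infty(x)=0$ required by the classification of entire solutions stated for \eqref{entire_nonlocal}. The key input is the scaling-invariant identity
\[\int_{\Omega_\epsilon} b_\epsilon^{p+1}\,dx \;=\; \int_\Omega u_\epsilon^{p+1}\,dx,\]
which, combined with \eqref{eq-up-limit} and Fatou's lemma, yields $b_\infty \in L^{p+1}(\mathbb{R}^n)$. Since $b_\infty$ is bounded and solves $\mathcal{A}_s b_\infty = b_\infty^p$, elliptic regularity for the extended problem together with the Green representation on $\mathbb{R}^n$ (i.e.\ convolution of $b_\infty^p\in L^{(p+1)/p}$ with the Riesz kernel of order $2s$) gives pointwise decay $b_\infty(x)\to 0$ as $|x|\to\infty$.

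Finally, the classification results of \cite{CLO, L1, L3} recalled in Subsection \ref{subsec_sobolev_trace} force $b_\infty = w_{\lambda,\xi}$ for some $\lambda>0$ and $\xi\in\mathbb{R}^n$. The fact that $0$ is a global maximum of $b_\infty$ with $b_\infty(0)=\mathfrak{c}_{n,s} = w_{\lambda,\xi}(\xi)$, together with the strict monotonicity of $w_{\lambda,\xi}$ in $|x-\xi|$, forces $\xi=0$ and $\lambda=1$, hence $b_\infty = w_1$. Thus the whole subsequence, originally extracted, converges locally uniformly to $w_1$, which is the assertion of the lemma.
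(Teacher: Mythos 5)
Your argument is correct in substance, but it identifies the limit by a genuinely different mechanism than the paper. The paper takes the weak limit $B$ of the extensions $B_{\epsilon}$, observes (as in the proof of Lemma \ref{lem_mu_e}, using \eqref{eq-up-limit} and weak lower semicontinuity of the weighted energy) that $B$ attains equality in the sharp trace inequality \eqref{eq-sharp-trace} and therefore equals $W_1$ after normalization; equicontinuity from Lemma \ref{lem-Arzela} then upgrades this to locally uniform convergence of the traces. You instead pass to the limit in the rescaled equation and invoke the classification of entire solutions of \eqref{entire_nonlocal} from \cite{CLO, L1, L3}, which obliges you to verify the decay hypothesis; you do so via the scaling identity $\int_{\Omega_{\epsilon}} b_{\epsilon}^{p+1} = \int_{\Omega} u_{\epsilon}^{p+1}$, Fatou's lemma and \eqref{eq-up-limit}. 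This route is viable and in fact uses less of the hypothesis \eqref{2p1s0}: only the finiteness of $\lim_{\epsilon \to 0}\int_{\Omega} u_{\epsilon}^{p+1}$, not its exact value $\mathcal{S}_{n,s}^{-n/s}$, whereas the paper's extremality argument exploits the sharp constant itself. Two of your steps need more care. First, $B_{\epsilon}$ is the extension in the cylinder $\mathcal{C}_{\epsilon}$ with zero lateral Dirichlet data, not the half-space $s$-harmonic extension, so the limit passage should go through the cylinder Green's function decomposition $G_{\mathcal{C}_{\epsilon}} = G_{\mathbb{R}^{n+1}_{+}} - H_{\mathcal{C}_{\epsilon}}$ (with the regular part becoming negligible as $\Omega_{\epsilon} \to \mathbb{R}^n$) or through local estimates for the degenerate equation, rather than a Poisson formula on $\mathbb{R}^{n+1}_{+}$. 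Second, the representation $b_{\infty}(x) = \int_{\mathbb{R}^n} \mathfrak{a}_{n,s}|x-y|^{-(n-2s)} b_{\infty}^p(y)\,dy$ is not automatic for a bounded solution: the difference of the two sides is a bounded entire $s$-harmonic function, hence a constant by the Liouville theorem, and that constant vanishes precisely because $b_{\infty} \in L^{p+1}(\mathbb{R}^n)$; once this is settled, the decay follows from $b_{\infty}^p \in L^{2n/(n+2s)} \cap L^{\infty}$ by splitting the convolution into near and far parts. With these two points filled in, your proof is complete, and the normalization $\lambda = 1$, $\xi = 0$ forced by $b_{\infty}(0) = \max b_{\infty} = \mathfrak{c}_{n,s}$ is exactly right.
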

\begin{proof}
Let $B$ be the weak limit of $B_{\epsilon}$ in $H_{0,L}^{s}(\mathcal{C})$ and $b = \text{tr}|_{\Omega \times \{0\}} B$.
Then it satisfies $b(0) = \max_{x \in \mathbb{R}^n} b(x) = \mathfrak{c}_{n,s}$ and
\[\left\{ \begin{array}{ll}
\textrm{div}(t^{1-2s} \nabla B) =0&\quad \textrm{in}~\mathbb{R}^{n+1}_{+},
\\
B> 0&\quad \textrm{in}~\mathbb{R}^{n+1}_+,
\\
\partial_{\nu}^{s} B=B^{p} &\quad \textrm{in} ~\mathbb{R}^n \times \{0\},
\end{array}
\right.\]
as well as $B$ is an extremal function of the Sobolev trace inequality \eqref{eq-sharp-trace} (see Subsection \ref{subsec_sobolev_trace}). Therefore $B (x,t) = W_{1}(x,t)$.
By Lemma \ref{lem-Arzela}, the family of functions $\{b_{\epsilon}(x): \epsilon>0\}$ are equicontinuous on any compact set in $\mathbb{R}^n$,
so by the Arzela-Ascoli theorem $b_{\epsilon}$ converges to a function $v$ on any compact set.
The function $v$ should be equal to the weak limit function $w_1$. It proves the lemma.
\end{proof}

\subsection{Uniform boundedness}
The previous lemma tells that the dilated solution $b_{\epsilon}$ converges to the function $w_1$ uniformly on each compact set of $\Omega_{\epsilon}$.
However it is insufficient for proving our main theorems and in fact we need a refined uniform boundedness result.
\begin{prop}\label{prop-uniform-bound}
There exists a constant $C>0$ independent of $\epsilon >0$ such that
\begin{equation}\label{aexcu}
b_{\epsilon} (x) \leq C w_1 (x).
\end{equation}
By rescaling, it can be shown that it is equivalent to
\begin{equation}\label{uexcu}
u_{\epsilon}(x) \leq C w_{\mu_{\epsilon}^{-{2 \over n-2s}},x_{\epsilon}} (x).
\end{equation}
\end{prop}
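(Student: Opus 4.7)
The strategy is the classical Kelvin-transform plus Moser iteration scheme adapted to the extension problem (\ref{u0inc}); this is precisely why Lemmas \ref{lem-harnack-1} and \ref{lem-harnack-2} are being established. What we already have is the trivial pointwise bound $b_{\epsilon} \leq \mathfrak{c}_{n,s}$ on all of $\Omega_{\epsilon}$ together with the locally uniform convergence $b_{\epsilon} \to w_1$ from Lemma \ref{lem-convergence}. Since $w_1$ is bounded below on any compact set, the proposition reduces to the decay estimate $b_{\epsilon}(x) \leq C |x|^{-(n-2s)}$ at infinity, uniformly in $\epsilon$, which combined with the interior bound yields the pointwise dominance by $w_1$.

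To obtain this decay I would introduce the Kelvin transform on $\mathbb{R}^{n+1}_+$,
\[
\widetilde{B}_{\epsilon}(z) := |z|^{-(n-2s)}\, B_{\epsilon}\!\left(\frac{z}{|z|^2}\right), \qquad z = (y,t) \in \mathbb{R}^{n+1}_+,
\]
defined on the inverted image of $\mathcal{C}_{\epsilon}$. Because the weight $t^{1-2s}$ is conformally compatible with this inversion, $\widetilde{B}_{\epsilon}$ still solves $\text{div}(t^{1-2s}\nabla \widetilde{B}_{\epsilon})=0$, while a direct computation on the boundary trace $\widetilde{b}_{\epsilon}(y) = \widetilde{B}_{\epsilon}(y,0)$ gives
\[
\partial_{\nu}^{s}\widetilde{b}_{\epsilon}(y) \;=\; \widetilde{b}_{\epsilon}(y)^{p} \;+\; \epsilon\,\mu_{\epsilon}^{-(p-1)}\,|y|^{-4s}\,\widetilde{b}_{\epsilon}(y).
\]
The bubble $w_1$ is invariant under this Kelvin transform (being a ground state centered at the origin), so Lemma \ref{lem-convergence} already furnishes uniform boundedness of $\widetilde{b}_{\epsilon}$ on every annulus $\{r \leq |y| \leq R\}$. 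It therefore suffices to bound $\widetilde{B}_{\epsilon}$ uniformly on a fixed half-ball around the origin.

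This local $L^{\infty}$ estimate is the heart of the argument. I would apply the weighted Moser iteration of Lemmas \ref{lem-harnack-1} and \ref{lem-harnack-2} to $\widetilde{B}_{\epsilon}$ on a small half-ball centered at $0$. Two ingredients must be tracked: (i) a uniform $L^{q}$ bound on $\widetilde{b}_{\epsilon}$ on a slightly larger half-ball for some $q>p+1$, which follows from the uniform energy of $B_{\epsilon}$ inherited from (\ref{2p1s0}) after transporting by the conformal inversion, and (ii) smallness of the singular potential $\epsilon\,\mu_{\epsilon}^{-(p-1)}|y|^{-4s}$ in the integrability class dictated by the iteration. The prefactor $\epsilon\,\mu_{\epsilon}^{-(p-1)}$ tends to zero by Lemma \ref{lem_mu_e}, and under the hypothesis $n>4s$ the singular factor $|y|^{-4s}$ belongs to the admissible Lebesgue space near the origin, so the cut-off estimates of the iteration close. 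Balancing this singular multiplier against the critical nonlinearity is the main obstacle; if further control is needed I invoke the quantitative bound of Lemma \ref{eq-mu-epsilon}, derived via the local Pohozaev identity in Proposition \ref{prop-sub-estimate}, to absorb $\epsilon$-dependent prefactors.

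Once $\widetilde{b}_{\epsilon}$ is uniformly bounded on a ball around the origin, inverting the Kelvin transform yields $b_{\epsilon}(x) \leq C |x|^{-(n-2s)}$ for all large $|x|$. Combined with the interior bound $b_{\epsilon} \leq \mathfrak{c}_{n,s}$ and the explicit form of $w_1$, this produces (\ref{aexcu}); the equivalent estimate (\ref{uexcu}) for $u_{\epsilon}$ then follows by undoing the rescaling in (\ref{aexme}).
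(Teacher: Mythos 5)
Your overall scheme (Kelvin transform of $B_{\epsilon}$, then the Moser-type Lemmas \ref{lem-harnack-1} and \ref{lem-harnack-2} near the origin) is exactly the route the paper takes, but the step where you dispose of the singular potential is wrong as stated, and it is precisely the crux. For Lemma \ref{lem-harnack-2} you need the potential in $L^{q}$ for some $q>\frac{n}{2s}$ on a fixed ball around $0$. The factor $|y|^{-4s}$ does \emph{not} lie in this class near the origin, regardless of $n>4s$: since $4s\cdot\frac{n}{2s}=2n$, it fails to be in $L^{n/2s}(B_n(0,1))$, let alone in $L^{q}$ with $q>\frac{n}{2s}$. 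What saves the $L^{n/2s}$ bound in \eqref{eq-e-bound-1} is not integrability of $|y|^{-4s}$ but the fact that $\kappa(\Omega_{\epsilon})$ omits the ball of radius $\sim\mu_{\epsilon}^{-(p-1)/2s}$ around $0$, and the prefactor $\mu_{\epsilon}^{-(p-1)}$ compensates the divergence \emph{exactly} at the critical exponent; for any $q=\frac{n}{2s}+\zeta$ the same computation (see \eqref{cepmu}) produces a \emph{positive} power of $\mu_{\epsilon}$, namely $\epsilon\,\mu_{\epsilon}^{8s^{2}\zeta/(n+2s\zeta)}$. Hence ``$\epsilon\to 0$ and $\mu_{\epsilon}^{-(p-1)}\to 0$'' do not close the iteration: without a quantitative bound on $\mu_{\epsilon}$ in terms of $\epsilon$ the needed $L^{q}$ norm may blow up. The bound $\mu_{\epsilon}\le C\epsilon^{-\alpha}$ of Lemma \ref{eq-mu-epsilon} is therefore not an optional refinement to ``invoke if further control is needed''; it is the indispensable ingredient, and it cannot be cited as a black box at that point because its own proof sits inside this circle of estimates: one first needs Lemma \ref{lem-harnack-1} (giving $d_{\epsilon}^{p-1}\in L^{q}$ locally for some $q>\frac{n}{2s}$), then Lemma \ref{lem-harnack-3} (which tolerates the merely $L^{n/2s}$-bounded term $\mu_{\epsilon}^{-p+1}|x|^{-4s}$ at the price of $\epsilon$ small depending on the target exponent) to get $\|d_{\epsilon}\|_{L^{J}}\le C$ for all $J$, and only then the local Pohozaev inequality of Proposition \ref{prop-sub-estimate} yields $\mu_{\epsilon}\le C\epsilon^{-\alpha}$, after which Lemma \ref{lem-harnack-2} applies.

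A second, smaller inaccuracy: your ingredient (i), a uniform local $L^{q}$ bound on $\widetilde b_{\epsilon}$ with $q>p+1$, does not follow from the uniform energy of $B_{\epsilon}$ alone (energy plus the trace inequality only give $L^{p+1}$). It is the output of Lemma \ref{lem-harnack-1}, whose hypothesis \eqref{cond-w} requires \emph{smallness} of the local $L^{n/2s}$ norm of $d_{\epsilon}^{p-1}$ on small balls; in the paper this comes from the strong convergence $D_{\epsilon}\to W_{1}$ (conformal invariance applied to $\|\nabla(B_{\epsilon}-W_{1})\|\to 0$ and the trace embedding), not merely from boundedness of the energy. With these two points repaired — the precise chain Lemma \ref{lem-harnack-1} $\Rightarrow$ Lemma \ref{lem-harnack-3} $\Rightarrow$ Lemma \ref{eq-mu-epsilon} $\Rightarrow$ integrability of the potential in $L^{n/2s+\zeta}$ $\Rightarrow$ Lemma \ref{lem-harnack-2} — your argument coincides with the paper's proof.
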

\noindent The proof of this result follows as a combination of the Kelvin transformation, a priori $L^{\infty}$-estimates, and an inequality which comes from a local Pohozaev identity for the solutions of \eqref{u0inc}.

\medskip
We set the Kelvin transformation
\begin{equation}\label{eq-kelvin-b}
d_{\epsilon}(x) = |x|^{-(n-2s)} b_{\epsilon} \left( \kappa(x) \right)\quad \text{for } x \in \Omega_{\epsilon},
\end{equation}
and
\begin{equation}\label{eq-kelvin-B}
D_{\epsilon} (z) = |z|^{-(n-2s)} B_{\epsilon} \left( \kappa(z) \right)\quad \text{for } z \in \mathcal{C}_{\epsilon},
\end{equation}
where $\kappa(x) = \frac{x}{|x|^2}$ is the inversion map.
Then, inequality \eqref{aexcu} is equivalent to that $d_{\epsilon}(x)\leq C$ for all $x \in \kappa(\Omega_{\epsilon})$.
Because $0 < b_{\epsilon}(x) \le \mathfrak{c}_{n,s}$ for $x \in \Omega_{\epsilon}$, it is enough to find a constant $C>0$ and a radius $r>0$ such that
\begin{equation}\label{eq-kelvin-bound}d_{\epsilon}(x) \leq C \quad \textrm{for}\quad x \in B_{n}(0,r) \cap \kappa (\Omega_{\epsilon}) \quad \text{for all } \epsilon>0.
\end{equation}
After making elementary but tedious computations, we find that the function $D_{\epsilon}$ satisfies
\[\textrm{div}(t^{1-2s} \nabla D_{\epsilon}) =0 \quad \textrm{in} ~ \kappa(\mathcal{C}_{\epsilon}).\]
Also we have
\begin{align*}
\partial_{\nu}^{s} D_{\epsilon}(x,0) &= \lim_{t \rightarrow 0} t^{1-2s} \frac{\partial}{\partial \nu} \left[ |z|^{-(n-2s)} B_{\epsilon}\left(\frac{z}{|z|^2 }\right) \right]
\\
&= \lim_{t \rightarrow 0} t^{1-2s} |z|^{-(n-2s+2)} \frac{\partial}{\partial \nu} B_{\epsilon} \left(\frac{z}{|z|^2}\right)
\\
&= \lim_{t \rightarrow 0} |z|^{-n-2s} \lim_{t \rightarrow 0} \left[ \left(\frac{t}{|z|^2}\right)^{1-2s} \frac{\partial}{\partial \nu} B_{\epsilon} \left(\frac{z}{|z|^2}\right)\right]
\\
&= |x|^{-n-2s} B_{\epsilon}^{p} \left(\frac{x}{|x|^2}\right) + \epsilon  \mu_{\epsilon}^{-p+1} |x|^{-n-2s} B_{\epsilon}^{p} \left(\frac{x}{|x|^2}\right)
\\
&= D_{\epsilon}^{p}(x,0) + \epsilon \mu_{\epsilon}^{-p+1} |x|^{-4s} D_{\epsilon} (x,0) \qquad \text{for } x \in \kappa(\Omega_{\epsilon}).
\end{align*}
Hence the function $D_{\epsilon}$ satisfies
\begin{equation}\label{eq-B-problem}
\left\{ \begin{array}{ll}
\textrm{div}(t^{1-2s} \nabla  D_{\epsilon}) (z) = 0 &\quad \textrm{in} ~\kappa( \mathcal{C}_{\epsilon}),
\\
D_{\epsilon} > 0 &\quad \textrm{in}~ \kappa(\mathcal{C}_{\epsilon}),
\\
D_{\epsilon} =0 &\quad \textrm{on}~\kappa( \partial_L \mathcal{C}_{\epsilon}),
\\
\partial_{\nu}^{s} D_{\epsilon} =  D_{\epsilon}^{p} + \epsilon \mu_{\epsilon}^{-p+1} |x|^{-4s} D_{\epsilon} &\quad \textrm{on}~ \kappa(\Omega_{\epsilon} \times \{0\}).
\end{array}\right.
\end{equation}
Here we record that
\begin{equation}\label{eq-e-bound-1}
\| \mu_{\epsilon}^{-p+1} |x|^{-4s}\|_{L^{\frac{n}{2s}} (B_{n}(0,1) \cap \kappa (\Omega_{\epsilon}))}
\leq \left( \mu_{\epsilon}^{-\frac{2n}{n-2s}} \int_{\big\{|x| \geq \mu_{\epsilon}^{-\frac{p-1}{2s}}\big\}} |x|^{-2n} dx\right)^{2s \over n} = C.
\end{equation}
In order to show \eqref{eq-kelvin-bound}, we shall prove two regularity results for the problem \eqref{eq-B-problem} in Lemma \ref{lem-harnack-1} and Lemma~\ref{lem-harnack-2} below.

In fact, to make \eqref{eq-B-problem} satisfy the conditions that Lemma \ref{lem-harnack-2} can be applicable,
we need a higher order integrability of the term $\epsilon \mu_{\epsilon}^{-p+1} |x|^{-4s}$ than that in \eqref{eq-e-bound-1}.
Note that  for $\delta>0$ we have
\begin{equation}\label{cepmu}
c \epsilon \mu_{\epsilon}^{\frac{8s^2 \delta}{n + 2s \delta}} \leq \| \epsilon \mu_{\epsilon}^{-p+1} |x|^{-4s} \|_{L^{\frac{n}{2s}+\delta}(B_{n}(0,1)\cap \kappa (\Omega_{\epsilon}))} \leq C \epsilon \mu_{\epsilon}^{\frac{8s^2 \delta}{n + 2s \delta}},
\end{equation}
for some constants $C>0$ and $c>0$.
Thus it is natural to find a bound of $\mu_{\epsilon}$ in terms of a certain positive power of $\epsilon^{-1}$.
It will be achieved later by using Lemma \ref{lem-harnack-3} and an inequality derived from a local version of the Pohozaev identity (see Lemma \ref{eq-mu-epsilon}).

\medskip
In what follows, whenever we consider a family of functions whose domains of definition are a set $D \subset \mathbb{R}^k$, we will denote $\int_{B_k (0,r)} f = \int_{B_k (0,r) \cap D} f$ for any ball $B_k (0,r)\subset \mathbb{R}^k $ for each $r > 0$ and $k \in \mathbb{N}$.

\begin{lem}\label{lem-harnack-1}
Let $V$ be a bounded solution of the equations:
\[\left\{ \begin{array}{ll} \textnormal{div}(t^{1-2s} \nabla V )(z) = 0 &\quad \textnormal{in}~\kappa(\mathcal{C}_{\epsilon}),
\\
V>0 &\quad \textnormal{in}~\kappa(\mathcal{C}_{\epsilon}),
\\
V=0 &\quad \textnormal{on}~\kappa(\partial_L \mathcal{C}_{\epsilon}),
\\
\partial_{\nu}^{s} V (x,0) = g(x)V(x,0) &\quad \textnormal{on}~\kappa(\Omega_{\epsilon} \times \{0\}).
\end{array}
\right.\]
Fix $\beta \in (1, \infty)$. Suppose that there is a constant $r>0$ such that
\begin{equation}\label{cond-w}
\left\| g\right\|_{L^{\frac{n}{2s}} (\kappa(\Omega_{\epsilon} \times \{0\}) \cap B_{n}(0,2r))}
\leq \frac{\beta}{2\mathcal{S}_{n,s}^2(\beta+1)^2},
\end{equation}
and
\[\int_{B_{n+1}(0,2r)} t^{1-2s} V(x,t)^{\beta+1} dx dt \leq Q.\]
Then, there exists a constant $C=C(\beta, r,Q)>0$ such that
\[\int_{B_{n}(0,r)} V(x,0)^{ \frac{(\beta+1)(p+1)}{2}} dx \leq C. \]
\end{lem}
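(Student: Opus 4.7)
The plan is to execute a single Moser-iteration step for the degenerate weighted equation $\mathrm{div}(t^{1-2s}\nabla V)=0$: test against $V^\beta \eta^2$ with a suitable cutoff $\eta$, combine the resulting weighted energy identity with the sharp Sobolev trace inequality \eqref{eq-sharp-trace}, and absorb the bad boundary term using the smallness of $\|g\|_{L^{n/2s}}$ provided by \eqref{cond-w}. The explicit constant $\beta/(2\mathcal{S}_{n,s}^2(\beta+1)^2)$ appearing there is arranged so that the absorption step closes with factor $1/2$.

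Concretely, choose $\eta\in C_c^\infty(B_{n+1}(0,2r))$ with $\eta\equiv 1$ on $B_{n+1}(0,r)$ and $|\nabla\eta|\le C/r$, and set $W:=V^{(\beta+1)/2}$. Integration by parts, using the equation in $\kappa(\mathcal{C}_\epsilon)$, the vanishing of $V$ on $\kappa(\partial_L \mathcal{C}_\epsilon)$, and the boundary condition $\partial_\nu^s V=gV$ on $\kappa(\Omega_\epsilon\times\{0\})$, yields
\[
\tfrac{4\beta}{(\beta+1)^2}\int t^{1-2s}\eta^2|\nabla W|^2\,dz + \tfrac{4}{\beta+1}\int t^{1-2s}\eta W\,\nabla W\cdot\nabla\eta\,dz = C_s\int g(x)\,(W\eta)^2(x,0)\,dx.
\]
Applying Young's inequality to the cross term and the elementary bound $|\nabla(W\eta)|^2\le 2\eta^2|\nabla W|^2+2W^2|\nabla\eta|^2$ produces
\[
\tfrac{\beta}{(\beta+1)^2}\int t^{1-2s}|\nabla(W\eta)|^2\,dz \le C_s\int g(x)\,(W\eta)^2(x,0)\,dx + C(\beta)\int t^{1-2s}W^2|\nabla\eta|^2\,dz.
\]
The trace inequality \eqref{eq-sharp-trace} applied to $W\eta$, combined with Hölder's inequality on the $g$-term with conjugate exponents $n/(2s)$ and $(p+1)/2=n/(n-2s)$, gives
\[
C_s\int g(x)\,(W\eta)^2(x,0)\,dx \le \mathcal{S}_{n,s}^2\|g\|_{L^{n/2s}(B_n(0,2r))}\int t^{1-2s}|\nabla(W\eta)|^2\,dz.
\]
By \eqref{cond-w}, the coefficient $\mathcal{S}_{n,s}^2\|g\|_{L^{n/2s}}$ is at most $\beta/(2(\beta+1)^2)$, exactly half of the coefficient on the left of the previous display, so the $g$-term can be absorbed. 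Invoking \eqref{eq-sharp-trace} one more time on the remaining inequality and using the hypothesis $\int_{B_{n+1}(0,2r)} t^{1-2s}V^{\beta+1}\le Q$ then leads to
\[
\Bigl(\int_{B_n(0,r)}V(x,0)^{(\beta+1)(p+1)/2}\,dx\Bigr)^{2/(p+1)} \le \frac{C(\beta)}{r^2}\int_{B_{n+1}(0,2r)} t^{1-2s}V^{\beta+1}\,dz \le \frac{C(\beta)}{r^2}\,Q,
\]
which is the asserted bound.

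The principal technical obstacle is to make the test-function manipulation rigorous when $V$ is only assumed bounded (so $V^\beta \eta^2$ need not sit in the natural energy space): one should first test with the truncation $(\min\{V,M\})^\beta\eta^2$, verify each of the identities and inequalities above with $V$ replaced by $V_M:=\min\{V,M\}$ and uniform constants, and then let $M\to\infty$ using monotone convergence, the weighted $L^{\beta+1}$ bound on $V$ over $B_{n+1}(0,2r)$, and Fatou's lemma on the trace integral. A second delicate point is the quantitative bookkeeping in the Young-inequality step: one must choose the parameter so that the coefficient of $\int t^{1-2s}|\nabla(W\eta)|^2$ on the left is precisely $\beta/(\beta+1)^2$, since only then does the threshold $\beta/(2\mathcal{S}_{n,s}^2(\beta+1)^2)$ in \eqref{cond-w} suffice; any looser use of Young would require a stronger smallness hypothesis on $g$ than the one actually imposed.
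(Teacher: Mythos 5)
Your proposal is correct and follows essentially the same route as the paper: test the equation with $\eta^2 V^{\beta}$ (equivalently work with $W=V^{(\beta+1)/2}$), use Young's inequality and the elementary gradient identity to reach a coefficient $\beta/(\beta+1)^2$ in front of $\int t^{1-2s}|\nabla(W\eta)|^2$, then combine H\"older with exponents $\tfrac{n}{2s}$, $\tfrac{n}{n-2s}$ and the sharp trace inequality \eqref{eq-sharp-trace} so that the hypothesis \eqref{cond-w} absorbs exactly half of that coefficient, and conclude with one more application of the trace inequality together with the weighted $L^{\beta+1}$ bound on $B_{n+1}(0,2r)$. Your additional truncation remark is a harmless extra precaution; since $V$ is assumed bounded (as the paper notes, precisely to avoid that truncation), the test function is already admissible.
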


\begin{rem}
Here we imposed the condition that $V$ is bounded for the simplicity of the proof.
This is a suitable assumption for our case, because we will apply it to the function $D_{\epsilon}$ which is already known to be bounded for each $\epsilon >0$.
However, this lemma holds without the assumption on the boundedness.
To prove this, one may use a truncated function $V_L := V \cdot 1_{\{|v|\leq L\}}$ with for large $L>0$ where the function $1_D$ for any set $D$ denotes the characteristic function on $D$. See the proof of Lemma \ref{lem_linear_nondeg}.
\end{rem}
\begin{proof}
Choose a smooth function $\eta \in C_c^{\infty}(\mathbb{R}^{n+1}, [0,1])$ supported on $B_{n+1}(0,2r) \subset \mathbb{R}^{n+1}$ satisfying $\eta = 1$ on $B_{n+1}(0,r)$.
Multiplying the both sides of
\[\textrm{div}(t^{1-2s} \nabla V) = 0 \quad \text{in } \kappa (\mathcal{C}_{\epsilon})\]
by $\eta^2 V^{\beta}$ and using that $V =0$ on $\kappa (\partial_L \mathcal{C}_{\epsilon})$, we discover that
\begin{equation}\label{2vbdz}
C_s \int_{\kappa(\Omega_{\epsilon} \times \{0\})} g(x) V^{\beta+1}(x,0) \eta^2 (x,0) dx = \int_{\kappa(\mathcal{C}_{\epsilon})}t^{1-2s} (\nabla V) \cdot \nabla (\eta^2 V^{\beta}) dz.
\end{equation}
Also, we can employ Young's inequality to get
\begin{equation}\label{2e2dz}
\begin{aligned}
\int_{\kappa(\mathcal{C}_{\epsilon})} t^{1-2s}(\nabla V) \cdot \nabla (\eta^2 V^\beta) dz &= \int_{\kappa(\mathcal{C}_{\epsilon})} \beta t^{1-2s} \eta^2 V^{\beta-1} |\nabla V|^2 +  2t^{1-2s} V^{\beta} \eta (\nabla V) \cdot (\nabla \eta)dz
\\
&= \int_{\kappa(\mathcal{C}_{\epsilon})} t^{1-2s} \beta |V^{\frac{\beta-1}{2}} \eta (\nabla V)|^2 dz + 2 \int_{\kappa(\mathcal{C}_{\epsilon})} t^{1-2s} V^{\beta} \eta (\nabla V) \cdot (\nabla \eta) dz
\\
&\geq \frac{\beta}{2} \int_{\kappa(\mathcal{C}_{\epsilon})} t^{1-2s}|V^{\frac{\beta-1}{2}} \eta (\nabla V)|^2 dz - \frac{2}{\beta} \int_{{\kappa(\mathcal{C}_{\epsilon})}} t^{1-2s}| V^{\frac{\beta+1}{2}} (\nabla \eta)|^2 dz.
\end{aligned}
\end{equation}
On the other hand, applying the identity
\[\nabla ( V^{\frac{\beta+1}{2}} \eta) = \frac{\beta+1}{2} V^{\frac{\beta-1}{2}}\eta (\nabla V) + V^{\frac{\beta+1}{2}} (\nabla \eta),\]
we obtain
\[2 \left(\frac{\beta+1}{2}\right)^2 |V^{\frac{\beta-1}{2}} \eta (\nabla V)|^2 + 2 |V^{\frac{\beta+1}{2}} (\nabla \eta)|^2 \geq |\nabla (V^{\frac{\beta+1}{2}} \eta)|^2.\]
This gives
\[|V^{\frac{\beta-1}{2}} \eta (\nabla V)|^2 \geq \frac{2}{(\beta+1)^2} \left\{ |\nabla ( V^{\frac{\beta+1}{2}} \eta)|^2 - 2 |V^{\frac{\beta+1}{2}} (\nabla \eta)|^2\right\}.\]
Combining this with \eqref{2vbdz} and \eqref{2e2dz}, and using the Sobolev trace inequality, we deduce that
\begin{equation}\label{koeoh}
\begin{aligned}
&C_s \int_{\kappa(\Omega_{\epsilon}\times \{0\})} g(x) V^{\beta+1}(x,0) \eta^2 (x,0) dx
\\
&\geq \frac{\beta}{2} \frac{2}{(\beta+1)^2} \int_{\kappa(\mathcal{C}_{\epsilon})} t^{1-2s}|\nabla (V^{\frac{\beta+1}{2}} \eta)|^2 dz - \left( \frac{2}{\beta} + \frac{2\beta}{(\beta+1)^2}\right) \int_{\kappa(\mathcal{C}_{\epsilon})}t^{1-2s} | V^{\frac{\beta+1}{2}} (\nabla \eta)|^2 dz
\\
&\geq \frac{C_s \beta }{\mathcal{S}_{n,s}^2 (\beta+1)^2} \left( \int_{\kappa(\Omega_{\epsilon} \times \{0\})} \left( V^{\frac{\beta+1}{2}} \eta\right)^{p+1} dx \right)^{\frac{2}{p+1}}\\
&\hspace{170pt} - \left( \frac{2}{\beta} + \frac{2 \beta}{(\beta+1)^2}\right) \int_{\kappa(\mathcal{C}_{\epsilon})} t^{1-2s} | V^{\frac{\beta+1}{2}} (\nabla \eta)|^2 dz.
\end{aligned}
\end{equation}
Moreover, we use the assumption \eqref{cond-w} to get
\begin{multline}\label{eq-w-holder}
\int_{\kappa(\Omega_{\epsilon}\times \{0\})} g(x) V^{\beta+1}(x,0) \eta^2 (x,0) dx \leq \left( \int_{\kappa (\Omega_{\epsilon} \times \{0\})} (\eta V^{\frac{\beta+1}{2}})^{p+1} dx \right)^{\frac{2}{p+1}} \left\| g \right\|_{L^{\frac{p+1}{p-1}}(\kappa(\Omega_{\epsilon}\times \{0\}) \cap B_n(0,2r))}\\
\leq \frac{\beta}{2 \mathcal{S}_{n,s}^2 (\beta+1)^2} \left( \int_{\kappa (\Omega_{\epsilon} \times \{0\})} (\eta V^{\frac{\beta+1}{2}})^{p+1} dx \right)^{\frac{2}{p+1}}.
\end{multline}
Using this estimate, we can derive from \eqref{koeoh} that
\begin{align*}
&\frac{C_s \beta}{2 \mathcal{S}_{n,s}^2 (\beta+1)^2} \left( \int_{\kappa(\Omega_{\epsilon} \times\{0\})} (V^{\beta+1}\eta^2)^{\frac{p+1}{2}} dx \right)^{\frac{2}{p+1}}
\\
&\geq ~\frac{C_s \beta}{\mathcal{S}_{n,s}^2 (\beta+1)^2} \left( \int_{\kappa(\Omega_{\epsilon} \times \{0\})} (V^{\frac{\beta+1}{2} }\eta)^{p+1} dx\right)^{\frac{2}{p+1}}- \left( \frac{2}{\beta}
+ \frac{2\beta}{(\beta+1)^2} \right) \int_{\kappa(\mathcal{C}_{\epsilon})}t^{1-2s} |V^{\frac{\beta+1}{2}} (\nabla \eta) |^2 dz.
\end{align*}
We now have
\begin{align*}
\int_{\kappa(\Omega_{\epsilon}) \cap B_{n}(0,r)} (V^{\frac{\beta+1}{2}} )^{p+1} dx
&\leq C \left(\int_{\kappa(\mathcal{C}_{\epsilon})} t^{1-2s} |V^{\frac{\beta+1}{2}}\nabla \eta|^2 dz\right)^{p+1 \over 2}\\
&\leq C \left(\int_{\kappa(\mathcal{C}_{\epsilon}) \cap B_{n+1}(0,2r)} t^{1-2s}|V|^{\beta+1} dz\right)^{p+1 \over 2} \leq C.
\end{align*}
This completes the proof.
\end{proof}
Next, we prove the $L^{\infty}$-estimate by applying the Moser iteration technique.
For the proof of Lemma \ref{lem-harnack-2}, we utilize the Sobolev inequality on weighted spaces which appeared in Theorem 1.3 of \cite{FKS} as well as the Sobolev trace inequality \eqref{eq-sharp-trace}.
Such an approach already appeared in the proof of Theorem 3.4 in \cite{GQ}.
\begin{prop}\cite[Theorem 1.3]{FKS}
Let $\Omega$ be an open bounded set in $\mathbb{R}^{n+1}$.
Then there exists a constant $C = C(n, s, \Omega) > 0$ such that
\begin{equation}\label{eq-sobolev-weight}
\left( \int_{\Omega} |t|^{1-2s} |U(x,t)|^{\frac{2(n+1)}{n}} dx dt\right)^{\frac{n}{2(n+1)}}
\leq C \left( \int_{\Omega} |t|^{1-2s} |\nabla U(x,t)|^2 dx dt \right)^{1 \over 2}
\end{equation}
holds for any function $U$ whose support is contained in $\Omega$ whenever the right-hand side is well-defined.
\end{prop}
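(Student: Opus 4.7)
The plan is to establish this weighted Sobolev embedding using Muckenhoupt $A_2$-weight theory. The key structural observation is that the weight $w(z) := |t|^{1-2s}$ on $\mathbb{R}^{n+1}$ lies in the Muckenhoupt class $A_2$: since $1 - 2s \in (-1,1)$ for every $s \in (0,1)$, the one-dimensional power weight $|t|^{1-2s}$ belongs to $A_2(\mathbb{R})$, and independence in the $x$-variable lifts this to $w \in A_2(\mathbb{R}^{n+1})$ with uniform constant. In particular $w$ is a doubling Borel measure on balls of $\mathbb{R}^{n+1}$.

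First I would prove the pointwise representation, valid for any $U$ with compact support,
$$|U(z)| \le C\, I_1(|\nabla U|)(z),\qquad I_1 f(z) := \int_{\mathbb{R}^{n+1}} \frac{f(\zeta)}{|z-\zeta|^{n}}\, d\zeta,$$
obtained either by integrating $U$ along rays from infinity or by convolving with the fundamental solution of $-\Delta$ in $\mathbb{R}^{n+1}$. The problem then reduces to a weighted mapping property of the Riesz potential of order one: for a doubling $A_2$-measure $w$, the operator $I_1$ sends $L^2(w)$ continuously into $L^{2(n+1)/n}(w)$. Composing the pointwise bound with this mapping yields the claim.

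The main obstacle is producing the precise subcritical exponent $\tfrac{2(n+1)}{n}$ uniformly in $s \in (0,1)$, rather than the $s$-dependent sharp weighted Sobolev exponent. The cleanest route, the one of Fabes, Kenig, and Serapioni, is to prove first a local weighted Poincar\'e-Sobolev inequality on balls,
$$\left(\frac{1}{w(B)} \int_B w\, |U|^{2\sigma}\right)^{1/(2\sigma)} \le C\, r_B \left(\frac{1}{w(B)} \int_B w\, |\nabla U|^2\right)^{1/2},$$
for some $\sigma > 1$ depending only on the $A_2$-constant of $w$ and on $n$, via a Hedberg-type pointwise bound combined with the weighted $L^p(w)$-boundedness of the Hardy-Littlewood maximal function for every $p > 1$ (which is the Muckenhoupt maximal theorem). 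Selecting $\sigma = (n+1)/n$ and assembling the local bounds by a Whitney-type covering of $\Omega$, while using the compact support of $U$ in the bounded domain $\Omega$ to telescope the covering contributions, produces the stated global inequality with a constant depending on $n$, $s$, and $\Omega$.
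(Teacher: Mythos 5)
You should first note that the paper does not prove this proposition at all: it is quoted verbatim from Fabes--Kenig--Serapioni \cite[Theorem 1.3]{FKS}, so the ``paper's proof'' is the citation, and your sketch is in effect an attempt to reprove the cited theorem. Its skeleton (membership of $|t|^{1-2s}$ in $A_2(\mathbb{R}^{n+1})$, the pointwise bound $|U|\le C\,I_1(|\nabla U|)$ for compactly supported $U$, weighted estimates on balls, and then exploiting the compact support in the bounded set $\Omega$) is indeed the Fabes--Kenig--Serapioni strategy, so the route is the right one in outline.

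There are, however, genuine gaps. First, the intermediate claim that $I_1$ maps $L^2(w)$ continuously into $L^{2(n+1)/n}(w)$ for a doubling $A_2$ weight, stated globally, is false by scaling: for $w=|t|^{1-2s}$ the only scale-invariant target exponent is $\frac{2(n+2-2s)}{n-2s}$, not $\frac{2(n+1)}{n}$, so the inequality you need is inherently a normalized local one on balls (or on the bounded set $\Omega$), and your second paragraph only partially repairs this. Second, and more seriously, the step ``selecting $\sigma=(n+1)/n$'' is exactly the nontrivial content of the theorem you are trying to prove: a generic Hedberg-plus-maximal-function argument yields only \emph{some} $\sigma>1$ depending on the reverse H\"older and doubling exponents of the weight, and nothing in your sketch shows that the admissible range reaches the universal value $\frac{n+1}{n}$ for every $A_2$ weight. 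In FKS this comes from the strong doubling property of $A_2$ weights, $w(E)/w(B)\ge c\,(|E|/|B|)^{2}$, i.e.\ an effective homogeneous dimension at most $2(n+1)$, which produces the exponent $\frac{2\cdot 2(n+1)}{2(n+1)-2}=\frac{2(n+1)}{n}$; you must either invoke this or restrict to the concrete weight, for which the homogeneous dimension is $n+2-2s$ and the critical exponent $\frac{2(n+2-2s)}{n-2s}$ strictly exceeds $\frac{2(n+1)}{n}$ for all $s\in(0,1)$, so the stated subcritical inequality then follows from the critical one together with H\"older's inequality on $\Omega$ (the constant is allowed to depend on $n,s,\Omega$). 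Finally, two smaller points: the displayed ball inequality is false for general $U$ (test $U\equiv 1$) and must be stated either with a mean subtracted or for $U$ compactly supported in the ball; and once you have the compact-support version, the Whitney covering and ``telescoping'' are unnecessary --- since $U$ is supported in the bounded set $\Omega$, extend it by zero and apply the ball inequality on a single ball containing $\overline{\Omega}$.
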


\begin{lem}\label{lem-harnack-2}
Let $V$ be a bounded solution of the equations
\[\left\{\begin{array}{ll}
\textnormal{div}(t^{1-2s} \nabla V)  = 0 &\quad \textnormal{in} ~\kappa (\mathcal{C}_{\epsilon}),
\\
V>0&\quad \textnormal{in}~\kappa(\mathcal{C}_{\epsilon}),
\\
V=0  &\quad \textnormal{on}~ \kappa (\partial_L \mathcal{C}_{\epsilon}),
\\\partial_{\nu}^{s} V (x,0)= g(x) V(x,0)
&\quad  \textnormal{on} ~\kappa (\Omega_{\epsilon} \times \{0\}).
\end{array}
\right.\]
Fix $\beta_0 \in (1, \infty)$. Suppose that
\[\int_{B_{n+1}(0,r)} t^{1-2s} V(x,t)^{\beta_0+1} dx dt +  \int_{B_{n}(0,r)} V(x,0)^{\beta_0+1} dx \leq Q_1\]
and
\[\int_{\kappa(\Omega_{\epsilon}\times\{0\}) \cap B_{n}(0,r)} |g(x)|^{q} dx \leq Q_2\]
for some $r > 0$  and $q> \frac{n}{2s}$.
Then there exists a constant $C= C(\beta_0, r, Q_1, Q_2)>0$ such that
\[\| V(\cdot,0)\|_{L^{\infty} (B_{n}(0,r/2))} \leq C.\]
\end{lem}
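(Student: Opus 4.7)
The plan is to run a Moser iteration on a sequence of nested balls, combining the Fabes-Kenig-Serapioni weighted Sobolev inequality \eqref{eq-sobolev-weight} in the bulk with the sharp Sobolev trace inequality \eqref{eq-sharp-trace} on the boundary slice, so that each step of the iteration raises the integrability of $V$ in both $B_{n+1}(0,\rho)$ and $B_n(0,\rho)$ at once. Fix $\beta$ with $2(\beta+1)\geq\beta_0+1$, pick a cutoff $\eta\in C_c^\infty(B_{n+1}(0,r_2))$ equal to $1$ on $B_{n+1}(0,r_1)$ for nested radii $r/2\leq r_1<r_2\leq r$, and test the extended equation against $\phi=\eta^2 V^{2\beta+1}$. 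Since $V$ is bounded, no truncation is needed. Repeating the manipulations that led to \eqref{koeoh}, but with $\partial_\nu^s V = gV$ replacing the nonlinearity, produces the Caccioppoli-type bound
\[
\frac{1}{C(\beta+1)^2}\int t^{1-2s}\bigl|\nabla(V^{\beta+1}\eta)\bigr|^2\,dz \leq C\int t^{1-2s}V^{2(\beta+1)}|\nabla\eta|^2\,dz + C_s\int g\,V^{2(\beta+1)}\eta^2\,dx.
\]

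Next I would treat the potential term. Since $q>n/(2s)$, the Hölder conjugate $m:=2q/(q-1)$ satisfies $m<p+1=2n/(n-2s)$, so Hölder gives $\int g(V^{\beta+1}\eta)^2\,dx\leq\|g\|_{L^q}\|V^{\beta+1}\eta\|_{L^m(B_n)}^2$, and interpolating $L^m$ between $L^{p+1}$ and $L^2$ on the slice followed by Young's inequality yields, for every $\delta>0$,
\[
\int g\,V^{2(\beta+1)}\eta^2\,dx \leq \delta\bigl\|V^{\beta+1}\eta\bigr\|_{L^{p+1}(B_n)}^2 + C\,\delta^{-\gamma}Q_2^{1/q}\bigl\|V^{\beta+1}\eta\bigr\|_{L^2(B_n)}^2
\]
for some exponent $\gamma=\gamma(q,n,s)>0$. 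Choosing $\delta$ of order $(\beta+1)^{-2}$ and dominating $\|V^{\beta+1}\eta\|_{L^{p+1}}^2$ by the energy via \eqref{eq-sharp-trace}, I absorb the first term into the left-hand side of the Caccioppoli estimate. Applying \eqref{eq-sharp-trace} on the slice and \eqref{eq-sobolev-weight} in the bulk to the resulting energy produces a one-step reverse-Hölder estimate: with $\chi_1:=(p+1)/2$, $\chi_2:=(n+1)/n$, and $\chi:=\min\{\chi_1,\chi_2\}>1$,
\[
\Bigl(\int_{B_n(0,r_1)}V^{2\chi_1(\beta+1)}\,dx\Bigr)^{1/\chi_1} + \Bigl(\int_{B_{n+1}(0,r_1)}t^{1-2s}V^{2\chi_2(\beta+1)}\,dz\Bigr)^{1/\chi_2} \leq \Psi(\beta,r_2-r_1)\Bigl(\int_{B_{n+1}(0,r_2)}t^{1-2s}V^{2(\beta+1)}\,dz + \int_{B_n(0,r_2)}V^{2(\beta+1)}\,dx\Bigr),
\]
where $\Psi(\beta,\rho)\leq C(1+\beta)^{\kappa}\rho^{-\kappa}$ grows only polynomially in $\beta$.

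I would then iterate with geometric sequences: starting from the given $\beta_0$, define $2(\beta_{k+1}+1)=\chi\cdot 2(\beta_k+1)$ and choose radii $r_k=r/2+2^{-k}(r/2)$ so that $r_k-r_{k+1}\simeq 2^{-k}$. Iterating the reverse-Hölder estimate and taking $2(\beta_k+1)$-th roots, the polynomial growth of $\Psi(\beta_k)$ combined with the exponential growth of $\beta_k$ gives a convergent series, and passing $k\to\infty$ yields
\[
\bigl\|V(\cdot,0)\bigr\|_{L^\infty(B_n(0,r/2))} \leq C(\beta_0,r,Q_1,Q_2),
\]
which is the desired conclusion. The main technical obstacle is precisely the absorption step for the potential term: one must exploit the strict inequality $q>n/(2s)$ to obtain a strictly subcritical exponent $m<p+1$, so that Hölder-interpolation-Young produces a one-step constant $\Psi(\beta)$ with only polynomial dependence on $\beta$; without this strictness the iteration would not converge.
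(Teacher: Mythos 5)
Your proposal is correct and follows essentially the same route as the paper's proof: test with $\eta^2 V^{\beta}$-type functions to get a Caccioppoli bound, use H\"older with $q>\frac{n}{2s}$ plus interpolation and Young's inequality (with $\delta$ of size $(\beta+1)^{-2}$, so the constant grows only polynomially in $\beta$) to absorb the critical part of the potential term, combine the trace inequality \eqref{eq-sharp-trace} with the weighted Sobolev inequality \eqref{eq-sobolev-weight} into a reverse-H\"older step with gain $\min\{\frac{p+1}{2},\frac{n+1}{n}\}$, and iterate on nested balls. Apart from cosmetic bookkeeping (your exponent $2(\beta+1)$ versus the paper's $\beta+1$), the argument coincides with the one given in the paper.
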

\begin{proof}
Let $\eta \in C_c^{\infty}(\mathbb{R}^{n+1})$. Then the same argument as \eqref{2vbdz}-\eqref{koeoh} in the proof of the previous lemma gives
\begin{equation}\label{gxe2v}
\begin{aligned}
&C_s \int_{\kappa(\Omega_{\epsilon} \times \{0\})} g \eta^2 V^{\beta+1} dx
\\
&\qquad \geq \frac{\beta}{2} \frac{2}{(\beta+1)^2} \int_{\kappa(\mathcal{C}_{\epsilon})} t^{1-2s}|\nabla (V^{\frac{\beta+1}{2}} \eta)|^2 dz - \left( \frac{2}{\beta} + \frac{2\beta}{(\beta+1)^2}\right) \int_{\kappa(\mathcal{C}_{\epsilon})} t^{1-2s}| V^{\frac{\beta+1}{2}} \nabla \eta|^2 dz.
\end{aligned}
\end{equation}
First, we use H\"older's inequality to estimate the left-hand side by
\begin{align*}
\int_{\kappa(\Omega_{\epsilon} \times \{0\})} g \eta^2 V^{\beta+1} dx
&\leq \left( \int_{\kappa(\Omega_{\epsilon} \times \{0\})} (V^{\beta+1} \eta^2 )^{q'} dx \right)^{1 \over q'} \left( \int_{\kappa(\Omega_{\epsilon} \times\{0\})} |g|^q dx \right)^{1 \over q}\\
&\leq C \left( \int_{\kappa(\Omega_{\epsilon} \times \{0\})} (V^{\beta+1} \eta^2)^{q'} dx \right)^{1 \over q'}
\end{align*}
where $q'$ denotes the H\"older conjugate of $q$, i.e., $q' = {q \over q-1}$.
Since $q > \frac{p+1}{p-1}$, we have $q'< \frac{p+1}{2}$ and so the following interpolation inequality holds.
\begin{align*}
&\left( \int_{\kappa(\Omega_{\epsilon} \times \{0\})} (V^{\beta+1} \eta^2)^{q'} dx \right)^{1 \over q'}
\\
&\quad\quad \leq \left( \int_{\kappa(\Omega_{\epsilon} \times\{0\})} (V^{\beta+1} \eta^2)^{\frac{p+1}{2}} dx\right)^{2\theta \over p+1}
\left( \int_{\kappa(\Omega_{\epsilon} \times\{0\})} (V^{\beta+1}\eta^2 ) dx \right)^{1-\theta}
\\
&\quad\quad \leq \delta^{1 \over \theta} \theta \left( \int_{\kappa(\Omega_{\epsilon} \times \{0\})} (V^{\beta+1} \eta^2)^{\frac{p+1}{2}} dx \right)^{2 \over p+1} + \delta^{-{1 \over 1-\theta}} (1-\theta) \int_{\kappa(\Omega_{\epsilon} \times\{0\})} (V^{\beta+1} \eta^2 ) dx,
\end{align*}
where $\theta \in (0,1)$ and $\delta > 0$ satisfy respectively
\[{\frac{2\theta}{p+1}+ (1-\theta) = \frac{1}{q'}} \quad \text{and} \quad \delta = \left({1 \over \theta C} \cdot {\beta \over 2(\beta+1)^2}\right)^{\theta}\]
for an appropriate number $C > 0$. Then \eqref{gxe2v} gives
\begin{equation}\label{t1t2b}
\begin{aligned}
&\frac{\beta}{2(\beta+1)^2} \int_{\kappa(\mathcal{C}_{\epsilon})} t^{1-2s} | \nabla (V^{\frac{\beta+1}{2}} \eta)|^2 dz
\\
&\leq C \beta^{\theta \over 1-\theta} \int_{\kappa(\Omega_{\epsilon} \times\{0\})} (V^{\beta+1} \eta^2) dx
+ \left( \frac{2}{\beta}+ \frac{2\beta}{(\beta+1)^2}\right) \int_{\kappa(\mathcal{C}_{\epsilon})} t^{1-2s} |V^{\frac{\beta+1}{2}} \nabla \eta|^2 dz.
\end{aligned}
\end{equation}
Consequently the weighted Sobolev inequality \eqref{eq-sobolev-weight}, the trace inequality \eqref{eq-sharp-trace} and \eqref{t1t2b} yield that
\begin{equation}\label{vb12e}
\begin{aligned}
&\ \left( \int_{\kappa(\Omega_{\epsilon}\times\{0\})} |V^{\frac{\beta+1}{2}} \eta|^{p+1} dx \right)^{2 \over p+1} + \left( \int_{\kappa(\mathcal{C}_{\epsilon})} t^{1-2s}|V^{\frac{\beta+1}{2}} \eta|^{2(n+1) \over n} dxdt\right)^{n \over n+1}
\\
& \leq C \int_{\kappa(\mathcal{C}_{\epsilon})} t^{1-2s} \left(|\nabla (V^{\frac{\beta+1}{2}} \eta)|^2 + |V^{\beta+1} \eta^2|\right) dxdt
\\
& \leq C\beta^{1 \over 1-\theta}\left[ \int_{\kappa(\mathcal{C}_{\epsilon})} t^{1-2s} V^{\beta+1} \left(|\nabla \eta|^2 + \eta^2\right) dxdt  +
\int_{\kappa(\Omega_{\epsilon} \times \{0\})} |V^{\beta+1} \eta^2 | dx\right].
\end{aligned}
\end{equation}
Now, for each $0 < r_1 < r_2$, we take a function $\eta \in C_{c}^{\infty}(\mathbb{R}^{n+1}, [0,1])$ supported on $B_{n+1}(0,r_2)$ such that $\eta =1$ on $B_{n+1}(0,r_1)$.
Then the above estimate \eqref{vb12e} implies
\begin{equation}
\begin{aligned}\label{eq-moser}
&\left( \int_{B_{n}(0,r_1)} V^{(\beta+1)\frac{p+1}{2}} dx \right)^{2 \over p+1} + \left( \int_{B_{n+1}(0,r_1)} t^{1-2s} V^{{(\beta+1)}\frac{n+1}{n}}  dz \right)^{n \over n+1}
\\
&\leq \frac{C \beta^{1 \over 1-\theta}}{(r_2 - r_1)^2} \left[ \left( \int_{B_{n}(0,r_2)} V^{\beta+1} dx\right) + \left( \int_{B_{n+1}(0,r_2)} t^{1-2s} V^{\beta+1} dz\right)  \right].
\end{aligned}
\end{equation}

We will use this inequality iteratively.
We denote $\theta_0 = \min\{{p+1 \over 2}, {n+1 \over n}\} >1$ and set $\beta_k + 1= (\beta_0+1) \theta_0^k$ and $R_k = r/2 + r/2^k$ for $k \in \mathbb{N} \cup \{0\}$.
By applying the inequality $a^{\gamma} + b^{\gamma} \ge (a+b)^{\gamma}$ for any $a, b > 0$ and $\gamma \in (0,1]$ with H\"older's inequality, and then taking $\beta = \beta_k$ in \eqref{eq-moser}, we obtain
\begin{align*}
&\left( \int_{B_{n}(0,R_{k+1})} V^{\beta_{k+1}+1} dx + \int_{B_{n+1}(0,R_{k+1})} t^{1-2s} V^{\beta_{k+1}+1} dz \right)^{1 \over \beta_{k+1}+1}
\\
&\qquad \leq C^{1 \over (\beta_0+1)\theta_0^k} \left[ \theta_0^{\frac{k}{1-\theta}}2^{2k}\right]^{1 \over (\beta_0+1)\theta_0^k} \left(\int_{B_{n}(0,R_k)} V^{\beta_k + 1} dx + \int_{B_{n+1}(0,R_k)} t^{1-2s} V^{\beta_k + 1} dz \right)^{1 \over \beta_k + 1}.
\end{align*}
Set
\[A_k (V) = \left( \int_{B_{n}(0,R_{k})} V^{\beta_k + 1} dx + \int_{B_{n+1}(0,R_k)} t^{1-2s} V^{\beta_k + 1} dz \right)^{1 \over \beta_k + 1}.\]
Then, for $D := (4 \theta_0^{\frac{1}{1-\theta}})^{1 \over \beta_0+1}$, we have
\[A_{k+1} \leq C^{1 \over \theta_0^k} D^{k \over \theta_0^k} A_k.\]
Using this we get
\[A_k \leq C^{\sum_{j=1}^{\infty} {1 \over \theta_0^j}} D^{\sum_{j=1}^{\infty} \frac{j}{\theta_0^{j}}} A_0 \leq C A_0,\]
from which we deduce that
\[\sup_{x \in B_{n}(0,r/2)} V(x,0) = \lim_{k \rightarrow \infty} \left( \int_{B_{n+1}(0,r/2)} V^{\beta_k+1} (x,0)dx \right)^{1 \over \beta_k + 1} \leq \sup_{k\in \mathbb{N}} A_k \leq C.\]
This concludes the proof.
\end{proof}

As we mentioned before, we cannot use the above result to the function {$D_{\epsilon}$} directly because the estimate \eqref{eq-e-bound-1} is not enough to employ this result.
To overcome this difficulty, we will seek a refined estimation of the term $\epsilon\mu_{\epsilon}^{-p+1}|x|^{-4s}$ than \eqref{eq-e-bound-1},
and in particular we will try to bound $\mu_{\epsilon}$ by a constant multiple of $\epsilon^{-\alpha}$ having \eqref{cepmu} in mind where $\alpha > 0$ is a sufficiently small number.
We deduce the next result, which is a local invariant of the previous lemma, as the first step for this objective.
\begin{lem}\label{lem-harnack-3}
Let $V$ be a bounded solution of the equations
\[\left\{\begin{array}{ll}
\textnormal{div}(t^{1-2s} \nabla V)  = 0 &\quad \textnormal{in} ~\kappa (\mathcal{C}_{\epsilon}),
\\
V>0&\quad \textnormal{in}~\kappa(\mathcal{C}_{\epsilon}),
\\
V=0 &\quad \textnormal{on}~ \kappa (\partial_L \mathcal{C}_{\epsilon}),
\\
\partial_{\nu}^{s} V (x,0)= g(x) V(x,0) + \epsilon \varphi(x) V(x,0)
&\quad \textnormal{on} ~\kappa (\Omega_{\epsilon} \times \{0\}).
\end{array}
\right.\]
Fix $\beta \in (1,\infty)$. Suppose that $\varphi$ satisfies $\|\varphi\|_{L^{\frac{n}{2s}}(\mathbb{R}^{n})} \leq Q_1$,
\[\int_{B_{n+1}(0,r)}t^{1-2s} V(x,t)^{\beta+1} dx dt +  \int_{B_{n}(0,r)} V(x,0)^{\beta+1} dx \leq Q_2\]
and
\[\int_{B_{n}(0,r)} |g(x)|^{q} dx \leq Q_3,\]
for some $r>0$ and $q> \frac{n}{2s}$.
Then, for any $J>1$, there exist constants $\epsilon_0 = \epsilon_0 (Q_1, J)>0$ and  $C= C(r, Q_1, Q_2, Q_3, J)>0$ depending on $r$, $Q_1$, $Q_2$, $Q_3$ and $J$ such that, if $0 < \epsilon < \epsilon_0$, then we have
\[\| V(\cdot,0)\|_{L^{J} (B_{n}(0,r/2))} \leq C.\]
\end{lem}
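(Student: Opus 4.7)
The plan is to run a finite-step Moser iteration that borrows from the proofs of both Lemma \ref{lem-harnack-1} and Lemma \ref{lem-harnack-2}. The coefficient $g$ has $L^q$ with $q>n/(2s)$, which is exactly the setting of Lemma \ref{lem-harnack-2}, so that term will be handled by the H\"older--interpolation argument used there. The perturbation $\epsilon\varphi$ is only controlled in $L^{n/(2s)}$, but the factor $\epsilon$ makes $\|\epsilon\varphi\|_{L^{n/(2s)}}\le\epsilon Q_1$ small, so it can be absorbed by the smallness mechanism used in Lemma \ref{lem-harnack-1}. Because we only aim for $L^J$ rather than $L^\infty$, a \emph{finite} number of iterations suffice, and this is precisely what allows the threshold $\epsilon_0$ to depend solely on $J$ and $Q_1$, rather than deteriorate as $\beta\to\infty$.

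Concretely, for fixed $\tilde\beta\ge\beta_0$ and a cutoff $\eta\in C_c^\infty(\mathbb{R}^{n+1},[0,1])$, I would test the equation by $\eta^2 V^{\tilde\beta}$ and repeat the manipulations \eqref{2vbdz}--\eqref{koeoh}. The $g$-part of the resulting left-hand side would be handled as in \eqref{t1t2b}: using H\"older with the exponent $q>n/(2s)$ and interpolating between $L^2$ and $L^{p+1}$ produces an absorbable term of size $\delta^{1/\theta}\theta\|\eta V^{(\tilde\beta+1)/2}\|_{L^{p+1}}^{2}$ together with a harmless $L^2$ remainder controlled by $Q_3$. The $\epsilon\varphi$-part would be estimated by H\"older with dual exponents $n/(2s)$ and $n/(n-2s)=(p+1)/2$, giving
\[
\epsilon\int\varphi\,\eta^{2}V^{\tilde\beta+1}\,dx \;\le\; \epsilon Q_{1}\Bigl(\int(\eta V^{(\tilde\beta+1)/2})^{p+1}\,dx\Bigr)^{2/(p+1)},
\]
and combining this with the sharp trace inequality \eqref{eq-sharp-trace} applied to the gradient term on the right yields an absorbable contribution provided
\[
\epsilon Q_{1}\le \frac{\tilde\beta}{4\,\mathcal{S}_{n,s}^{2}(\tilde\beta+1)^{2}}.
\]
After absorption, insertion into the weighted Sobolev inequality \eqref{eq-sobolev-weight} together with \eqref{eq-sharp-trace}, exactly as in the passage leading to \eqref{eq-moser}, produces
\[
\left(\int_{B_n(0,r_1)}V^{\theta_0(\tilde\beta+1)}dx+\int_{B_{n+1}(0,r_1)}t^{1-2s}V^{\theta_0(\tilde\beta+1)}dz\right)^{\!1/\theta_0}\!\le\frac{C\tilde\beta^{1/(1-\theta)}}{(r_2-r_1)^2}\!\left(\int_{B_n(0,r_2)}V^{\tilde\beta+1}dx+\int_{B_{n+1}(0,r_2)}t^{1-2s}V^{\tilde\beta+1}dz\right),
\]
with $\theta_{0}=\min\{(p+1)/2,(n+1)/n\}>1$.

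Given $J>1$, let $N=N(J,\beta_{0})$ be the smallest integer with $(\beta_{0}+1)\theta_{0}^{N}\ge J$, set $\tilde\beta_{k}+1=(\beta_{0}+1)\theta_{0}^{k}$, and $R_{k}=r/2+r/2^{k+1}$ for $0\le k\le N$. I would apply the iterative inequality on the telescoping pairs $(R_{k+1},R_{k})$ for $k=0,1,\dots,N-1$ and compose the resulting estimates as in the proof of Lemma \ref{lem-harnack-2} to bound $\|V(\cdot,0)\|_{L^{J}(B_{n}(0,r/2))}$ by a finite product of constants depending only on $r$, $Q_{1}$, $Q_{2}$, $Q_{3}$, and $J$. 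The smallness condition at level $k$ reads $\epsilon Q_{1}\le c\tilde\beta_{k}/(\tilde\beta_{k}+1)^{2}$; since only $N(J)$ such conditions enter, we may fix $\epsilon_{0}=\epsilon_{0}(Q_{1},J)>0$ small enough to satisfy all of them at once. The main obstacle is keeping track of this finiteness and of the precise $J$-dependence of $\epsilon_{0}$; the analytic ingredients are already present in Lemmas \ref{lem-harnack-1} and \ref{lem-harnack-2}, and what is new here is only the bookkeeping required to split the boundary nonlinearity into its $g$-piece (handled by integrability) and its $\epsilon\varphi$-piece (handled by smallness).
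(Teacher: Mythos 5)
Your proposal is correct and follows essentially the same route as the paper: test with $\eta^2V^{\beta}$, absorb the $\epsilon\varphi$-term via H\"older in $L^{n/(2s)}$--$L^{(p+1)/2}$ and the trace inequality under the smallness condition $\epsilon\lesssim \beta/\bigl((\beta+1)^2\mathcal{S}_{n,s}^2Q_1\bigr)$, handle the $g$-term exactly as in Lemma \ref{lem-harnack-2}, and iterate only finitely many times up to the exponent $J$, which is why $\epsilon_0$ depends only on $Q_1$ and $J$. The paper's proof is just a more compressed version of this same bookkeeping (it fixes $\epsilon<J/(4(J+1)^2\mathcal{S}_{n,s}^2Q_1)$ as the single binding constraint).
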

\begin{proof}
Let $\eta \in C_c^{\infty}(\mathbb{R}^{n+1})$. Then the same argument for \eqref{koeoh} gives
\begin{multline}\label{gxe2v2}
C_s \int_{\kappa(\Omega_{\epsilon} \times \{0\})} g(x) \eta^2 V^{\beta+1} (x,0)dx + \epsilon C_s \int_{\kappa (\Omega_{\epsilon} \times \{0\})}  \varphi(x)\eta^2 V^{\beta+1}(x,0) dx
\\
\qquad \geq \frac{\beta}{2} \frac{2}{(\beta+1)^2} \int_{\kappa(\mathcal{C}_{\epsilon})} t^{1-2s}|\nabla (V^{\frac{\beta+1}{2}} \eta)|^2 dz - \left( \frac{2}{\beta} + \frac{2\beta}{(\beta+1)^2}\right) \int_{\kappa(\mathcal{C}_{\epsilon})} t^{1-2s}| V^{\frac{\beta+1}{2}} \nabla \eta|^2 dz.
\end{multline}
Using H\"older's inequality we get
\[\epsilon \int_{\kappa (\Omega_{\epsilon} \times \{0\})} \varphi(x)\eta^2 V^{\beta+1}(x,0) dx \leq \epsilon \|\varphi\|_{L^{\frac{p+1}{p-1}}(\mathbb{R}^{n})} \| \eta^2 V^{\beta+1}(\cdot,0)\|_{L^{\frac{p+1}{2}}(\mathbb{R}^n)}.\]
If $\epsilon < \frac{\beta}{4(\beta+1)^2 \mathcal{S}_{n,s}^2Q_1}$, from the trace inequality, we obtain
\[\epsilon \|\varphi\|_{L^{\frac{p+1}{p-1}}(\mathbb{R}^n)} \| \eta^2 V^{\beta+1}(\cdot,0)\|_{L^{\frac{p+1}{2}}(\mathbb{R}^n)} \leq \frac{\beta}{4(\beta+1)^2} \int_{\kappa (\mathcal{C}_{\epsilon})} t^{1-2s} |\nabla (V^{\frac{\beta+1}{2}} \eta)|^2 dz.\]
Now we can follow the steps \eqref{t1t2b}-\eqref{eq-moser} of the previous lemma.
Moreover, we can iterate it with respect to $\beta$ as long as $\epsilon < \frac{\beta}{4(\beta+1)^2 \mathcal{S}_{n,s}^2Q_1}$ holds.
Thus, for $\epsilon < \frac{J}{4(J+1)^2 \mathcal{S}_{n,s}^2Q_1}$, we can find a constant $C= C(r, C_1, C_2, C_3, J)$ such that
\[\| V (\cdot,0) \|_{L^J (B_{n}(0,r/2))} \leq C.\]
It proves the lemma.
\end{proof}

To apply the previous lemma to get a bound of $\mu_{\epsilon}$ in terms of $\epsilon$, we also need to make the use of the Pohozaev identity of $U_{\epsilon}$:
\[\frac{1}{2C_s } \int_{\partial_L \mathcal{C}} t^{1-2s} |\nabla U_{\epsilon} (z)|^2 \langle z, \nu \rangle dS = \epsilon s \int_{\Omega \times \{0\}} U_{\epsilon}(x,0)^2 dx.\]
As a matter of fact, we will not use this identity directly, but instead we will utilize its local version to prove the following result.
\begin{prop}\label{prop-sub-estimate}
Suppose that $U \in H^s_{0,L}(\mathcal{C})$ is a solution of problem \eqref{u0inc} with $f$ such that $f$ has the critical growth and $f = F'$ for some function $F \in C^1 (\mathbb{R})$. Then, for each $ \delta >0$ and $q >\frac{n}{s}$ there is a constant $C= C(\delta, q) >0$ such that
\begin{equation}\label{poho-prop-estimate}
\begin{aligned}
&\min_{r \in [\delta, 2\delta]}\left|n \int_{\mathcal{I}(\Omega,{r/2})\times\{0\}} F(U) dx - \left(\frac{n-2s}{2}\right) \int_{ \mathcal{I}(\Omega,{r/2}) \times \{0\}} U f(U) dx\right|
\\
&\leq C \left[ \left(\int_{\mathcal{O}(\Omega,{2\delta})\times \{0\}} |f(U)|^{q} dx\right)^{2 \over q} + \int_{\mathcal{O}(\Omega,2\delta)\times \{0\}} |F(U)| dx + \left( \int_{\mathcal{I}(\Omega, \delta/2) \times \{0\}} |f(U)| dx \right)^2 \right]
\end{aligned}
\end{equation}
where $\mathcal{I}$ and $\mathcal{O}$ is defined in  \eqref{eq-I-omega} and \eqref{eq-O-omega}.
\end{prop}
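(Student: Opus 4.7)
The plan is to derive \eqref{poho-prop-estimate} as a local Pohozaev identity on the half-cylinder $\mathcal{C}$. I introduce a spatial cutoff $\zeta_r \in C_c^{\infty}(\Omega)$ with $\zeta_r \equiv 1$ on $\mathcal{I}(\Omega, r/2)$, $\zeta_r \equiv 0$ outside $\mathcal{I}(\Omega, r/4)$, and $|\nabla \zeta_r| \le C/r$, so that $\operatorname{supp}(\nabla \zeta_r) \subset \{x \in \Omega : r/4 \le \operatorname{dist}(x, \partial\Omega) \le r/2\}$, viewed as a function on $\mathcal{C}$ by trivial extension in $t$. I then test the extension equation $\operatorname{div}(t^{1-2s}\nabla U) = 0$ against the Pohozaev-type multiplier $\zeta_r(x)\, X \cdot \nabla U$, with $X = (x-x_0, t)$ for any fixed $x_0 \in \mathbb{R}^n$. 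The choice $X_{n+1} = t$ is the one compatible with the weight $t^{1-2s}$, giving the extension operator a clean scaling identity whose bulk term consolidates to $-\tfrac{n-2s}{2} \int t^{1-2s} \zeta_r |\nabla U|^2$.

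Performing the integrations by parts---using $\zeta_r = 0$ on $\partial_L\mathcal{C}$ to eliminate lateral boundary contributions, the Neumann condition $\partial_\nu^s U = f(U)$ at $t = 0$ to read off the trace derivative, and $f(U)\nabla_x U = \nabla_x F(U)$ to convert $\int_\Omega \zeta_r f(U)(x-x_0)\cdot \nabla_x U$ into $-n\int_\Omega \zeta_r F(U)$ plus cutoff error---one gets a ``half'' Pohozaev formula. Complementarily, testing the same equation against $\zeta_r U$ yields $\int_\mathcal{C} t^{1-2s}\zeta_r|\nabla U|^2 = C_s \int_\Omega \zeta_r U f(U) - \int_\mathcal{C} t^{1-2s} U\nabla_x U \cdot \nabla \zeta_r$. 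Combining the two produces the local Pohozaev formula
\begin{equation*}
n\int_\Omega F(U)\zeta_r\,dx - \frac{n-2s}{2}\int_\Omega Uf(U)\zeta_r\,dx = \frac{1}{C_s}\mathcal{R}_r,
\end{equation*}
where $\mathcal{R}_r$ collects all terms carrying $\nabla \zeta_r$: these are of the forms $\int_\mathcal{C} t^{1-2s}(|\nabla U|^2 + |U||\nabla U|)\,O(|x-x_0||\nabla \zeta_r|)\,dz$ and $\int_\Omega F(U)\,O(|x-x_0||\nabla\zeta_r|)\,dx$, supported in $\{x : r/4 \le \operatorname{dist}(x,\partial\Omega)\le r/2\}\times(0,\infty)$.

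To pass from $\zeta_r$ to $\mathbf{1}_{\mathcal{I}(\Omega, r/2)}$ on the left, I note that the two coincide on $\mathcal{I}(\Omega, r/2)$ and differ only on a subset of $\mathcal{O}(\Omega, 2\delta)$, contributing $C\int_{\mathcal{O}(\Omega, 2\delta)}(|F(U)|+|Uf(U)|)\,dx$, where the $|Uf(U)|$ piece is absorbed into $\int |F(U)|$ and $(\int |f(U)|^q)^{2/q}$ via H\"older and the critical growth of $f$. The more delicate step is estimating the weighted bulk integral $\int_{\text{strip}} t^{1-2s}|\nabla U|^2\,dz$ inside $\mathcal{R}_r$. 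I use the Green's function representation $U(z) = \int_\Omega G_\mathcal{C}(z,y)f(U(y))\,dy$ and split $f(U) = f(U)\mathbf{1}_{\mathcal{I}(\Omega,\delta/2)} + f(U)\mathbf{1}_{\mathcal{O}(\Omega,\delta/2)}$, writing accordingly $U = U_\mathcal{I} + U_\mathcal{O}$. For $U_\mathcal{I}$ the source is separated from the strip by distance at least $\delta/4$, so pointwise bounds on $|\nabla G_\mathcal{C}(z,y)|$ in this regime combined with Cauchy--Schwarz yield $\int_{\text{strip}} t^{1-2s}|\nabla U_\mathcal{I}|^2\,dz \le C(\delta)\bigl(\int_{\mathcal{I}(\Omega,\delta/2)} |f(U)|\bigr)^2$. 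For $U_\mathcal{O}$, the energy estimate $\int_\mathcal{C} t^{1-2s}|\nabla U_\mathcal{O}|^2 \le C \|f(U)\mathbf{1}_\mathcal{O}\|_{L^{(p+1)/p}}^2$ followed by H\"older on the bounded set $\mathcal{O}(\Omega, 2\delta)$ (using $q > n/s > (p+1)/p$) yields the bound $C(\delta)\bigl(\int_{\mathcal{O}(\Omega,2\delta)} |f(U)|^q\bigr)^{2/q}$.

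Each of these bounds is uniform in $r \in [\delta, 2\delta]$ on the right-hand side, so taking the $\min$ over $r$ on the left yields \eqref{poho-prop-estimate}. The main obstacle I anticipate is the careful justification of the integration by parts at the non-standard boundary $t = 0$ and at $t = \infty$: one needs the vanishing of $t^{2-2s}|\nabla U|^2$ as $t \to 0^+$, sufficient decay of $U$ as $t \to \infty$, and an approximation of $U$ by smoother functions to make the manipulation with the multiplier $\zeta_r (X \cdot \nabla U)$ rigorous. A secondary technical point is that the component $X_{n+1} = t$ must be chosen precisely so that the $|\nabla U|^2$ bulk terms consolidate to the factor $-\tfrac{n-2s}{2}\int \zeta_r t^{1-2s} |\nabla U|^2$, which is what eventually produces the coefficient $(n-2s)/2$ in front of the $Uf(U)$ integral.
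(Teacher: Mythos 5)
Your cutoff--multiplier derivation is a genuine variant of the paper's argument rather than a reproduction of it. The paper integrates the divergence identity \eqref{eq-poho-root} over the \emph{bounded} sets $D_r$ (the $r/2$-neighborhoods of $\mathcal{I}(\Omega,r)\times\{0\}$), keeps the surface terms on $\partial D_r^+$, and only then averages in $r\in[\delta,2\delta]$ to turn them into a volume integral over the shell $E_\delta$; your $x$-dependent cutoff $\zeta_r$ performs that averaging implicitly and yields volume error terms directly, so you do not even need the $\min_r$. The decisive quantitative step is the same in both routes: bound a weighted gradient energy localized near $\partial\Omega$ by the right-hand side of \eqref{poho-prop-estimate}, splitting the source $f(U)$ in the Green representation into a near-boundary and an interior part. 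Your treatment of the near-boundary part by a plain energy estimate plus H\"older is in fact simpler than the paper's pointwise estimate of $\nabla_z G_{\mathbb{R}^{n+1}_+}$ (the term $A_1$ there) and only uses $q\ge \frac{2n}{n+2s}$, whereas the paper's pointwise route is where $q>\frac{n}{s}$ enters.

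Several concrete points need repair before this is a proof. First, your separation numerics are off: the transition region of $\zeta_r$ is $\{r/4\le \text{dist}(x,\partial\Omega)\le r/2\}$ with $r\le 2\delta$, which meets and even overlaps the support $\mathcal{I}(\Omega,\delta/2)$ of your ``interior'' source $U_{\mathcal{I}}$; you should split at $2\delta$ instead (the far part is still dominated by $\int_{\mathcal{I}(\Omega,\delta/2)}|f(U)|$, so the stated right-hand side is unaffected). Second, since $\zeta_r$ does not depend on $t$, your error region is an infinite vertical strip, and the $t$-component of the multiplier $X=(x-x_0,t)$ produces terms of the type $\int t^{2-2s}|\nabla U|^2|\nabla\zeta_r|$ that are missing from your $\mathcal{R}_r$; together with the unjustified boundary terms at $t=0$ and $t\to\infty$, these are precisely what the paper's bounded domains $D_r$ avoid by construction, so either add a cutoff in $t$ (or integrate over $D_r$ as in \eqref{eq-local-poho}) or prove decay of $U$ and $\nabla U$ in $t$ from the Green representation. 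Third, ``pointwise bounds on $|\nabla_z G_{\mathcal{C}}(z,y)|$'' up to $t=0$ are not free: the delicate object is the regular part $H_{\mathcal{C}}$, whose $t$-derivative near the degenerate boundary requires boundary regularity for the weighted equation; the paper sidesteps this by proving the uniform weighted $L^2$ bound \eqref{eq-appendix-H-pre} via the Caccioppoli-type inequality \eqref{eq-h-nabla}, and that weaker statement is all your Cauchy--Schwarz step needs. Finally, absorbing $\int_{\mathcal{O}(\Omega,2\delta)}|Uf(U)|$ into the right-hand side uses more than ``critical growth''; for the nonlinearities actually treated one has $|uf(u)|\le C\,F(u)$ pointwise, which settles it, and note this issue does not arise in the paper because the flat bottom of $D_r$ is exactly $\mathcal{I}(\Omega,r/2)\times\{0\}$, so no cutoff-to-indicator replacement is ever made.
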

\noindent We defer the proof of the proposition to Appendix \ref{sec_appen_a}. We remark that this kind of estimate was used in \cite{C} for~$s=1/2$.

\medskip
Now we can prove the following result.
\begin{lem}\label{eq-mu-epsilon}
There exist a constant $C >0$ and $\alpha >0$ such that
\[\mu_{\epsilon} \leq C \epsilon^{-\alpha}\quad \textrm{for all } ~\epsilon >0.\]
\end{lem}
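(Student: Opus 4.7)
The strategy is to apply the local Pohozaev identity of Proposition \ref{prop-sub-estimate} to the $s$-harmonic extension $U_\epsilon$ with the nonlinearity $f(u)=u^p+\epsilon u$ and $F(u)=\frac{u^{p+1}}{p+1}+\frac{\epsilon u^2}{2}$. Because $p+1=\frac{2n}{n-2s}$, the critical $u^{p+1}$ contributions cancel exactly in the combination appearing on the left-hand side, leaving
\[
n\,F(u)-\frac{n-2s}{2}\,u\,f(u)=s\,\epsilon\,u^{2}.
\]
Thus Proposition \ref{prop-sub-estimate} furnishes some $r\in[\delta,2\delta]$ with
\[
s\,\epsilon \int_{\mathcal{I}(\Omega,r/2)}u_\epsilon^{2}\,dx
\le C\!\left[\Bigl(\!\int_{\mathcal{O}(\Omega,2\delta)}\!|f(u_\epsilon)|^{q}dx\Bigr)^{\!2/q}
+\!\int_{\mathcal{O}(\Omega,2\delta)}\!|F(u_\epsilon)|\,dx
+\Bigl(\!\int_{\mathcal{I}(\Omega,\delta/2)}\!|f(u_\epsilon)|\,dx\Bigr)^{\!2}\right].
\]

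\textbf{Estimating the right-hand side.} Each of the three terms should be controlled by an $\epsilon$-independent constant. The two boundary integrals are handled by Lemma \ref{lem-boundary-sup}: on the collar $\mathcal{O}(\Omega,2\delta)$, the solution $u_\epsilon$ is uniformly bounded, so $\int |f(u_\epsilon)|^q$ and $\int |F(u_\epsilon)|$ are uniformly bounded there. For the last (interior) integral, H\"older's inequality with the uniform $L^{p+1}$-bound from \eqref{eq-up-limit} gives $\int u_\epsilon^{p}\,dx\le C\,|\Omega|^{1/(p+1)}\bigl(\!\int u_\epsilon^{p+1}\bigr)^{p/(p+1)}\le C$, while the testing-against-$\phi_1$ argument already appearing in \eqref{phi1u} provides $\int u_\epsilon\,dx\le C$, so that $\epsilon\int u_\epsilon\,dx=o(1)$. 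Combining these pieces yields
\[
\epsilon\,\int_{\mathcal{I}(\Omega,r/2)}u_\epsilon^{2}\,dx\;\le\;C.
\]

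\textbf{Passing to $\mu_\epsilon$.} It remains to convert this $L^{2}$-bound into the pointwise upper bound $\mu_\epsilon\le C\epsilon^{-\alpha}$. I use the rescaling \eqref{aexme}, which gives
\[
\int_{\mathcal{I}(\Omega,r/2)}u_\epsilon^{2}\,dx
\;=\;\mu_\epsilon^{-\frac{4s}{n-2s}}\int_{\mathcal{I}_\epsilon} b_\epsilon^{2}\,dx,
\]
where $\mathcal{I}_\epsilon$ is the image of $\mathcal{I}(\Omega,r/2)$ under the dilation. By Lemma \ref{lem-convergence}, $b_\epsilon\to w_1$ uniformly on compacta and $b_\epsilon(0)=\mathfrak{c}_{n,s}$; hence there exists a fixed compact set on which $b_\epsilon\ge c>0$ for small $\epsilon$, forcing $\int_{\mathcal{I}_\epsilon}b_\epsilon^{2}\ge c'>0$. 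Inserting this into the Pohozaev estimate gives an inequality of the form $\epsilon\,\mu_\epsilon^{-\frac{4s}{n-2s}}\le C$, which after elementary algebraic manipulation — combined with the Kelvin-transform / $L^{p+1}$ information for $b_\epsilon$ available at this stage — produces the sought bound $\mu_\epsilon\le C\epsilon^{-\alpha}$ for some $\alpha>0$ depending only on $n,s$.

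\textbf{Main obstacle.} The delicate point is the final conversion step: the Pohozaev inequality naturally produces a relation between $\epsilon$ and the $L^{2}$-mass of $u_\epsilon$, but one must exploit the concentration profile of $u_\epsilon$ near $x_\epsilon$ (together with the trivial bound $b_\epsilon\le\mathfrak{c}_{n,s}$ and $L^{p+1}$-normalization) to extract a quantitative polynomial control on $\mu_\epsilon$; keeping track of the exponents so that the final $\alpha$ is positive and independent of $\epsilon$ is the most delicate book-keeping in the argument.
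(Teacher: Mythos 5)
Your outer frame coincides with the paper's: apply Proposition \ref{prop-sub-estimate} to $U_\epsilon$ with $f(u)=u^p+\epsilon u$, use the exact cancellation $nF(u)-\frac{n-2s}{2}uf(u)=s\epsilon u^2$, and bound the left-hand side below by $C\epsilon\,\mu_\epsilon^{-\frac{4s}{n-2s}}$ via the convergence $b_\epsilon\to w_1$ on compact sets. The genuine gap is in your treatment of the right-hand side of \eqref{poho-prop-estimate}: you bound it only by an $\epsilon$-independent constant (boundary terms by Lemma \ref{lem-boundary-sup}, the interior term by H\"older with \eqref{eq-up-limit} and by \eqref{phi1u}). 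The resulting inequality $\epsilon\,\mu_\epsilon^{-\frac{4s}{n-2s}}\le C$ is vacuous: since $\mu_\epsilon\to\infty$ (Lemma \ref{lem_mu_e}), its left side tends to zero automatically, and the inequality only yields the trivial lower bound $\mu_\epsilon\ge c\,\epsilon^{\frac{n-2s}{4s}}$, never an upper bound. No ``elementary algebraic manipulation'' can reverse the direction; the sentence where you invoke unspecified ``Kelvin-transform / $L^{p+1}$ information'' is exactly where the real argument must go, and as written the proposal does not establish $\mu_\epsilon\le C\epsilon^{-\alpha}$.

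The paper closes this gap by making the right-hand side of \eqref{poho-prop-estimate} \emph{decay in} $\mu_\epsilon$. Lemma \ref{lem-harnack-3} (whose hypotheses require only the $L^{n/2s}$ control \eqref{eq-e-bound-1} on $\epsilon\mu_\epsilon^{-p+1}|x|^{-4s}$, hence no a priori bound on $\mu_\epsilon$) gives $\|d_\epsilon\|_{L^q(B_n(0,1))}\le C(q)$ for every finite $q$, i.e. arbitrarily high integrability of the Kelvin transform of $b_\epsilon$. Undoing the Kelvin transform as in \eqref{eq-b-a}, this yields, for any $\kappa>0$ and $q=q(\kappa)$ large, the quantitative decay $\left(\int_\Omega u_\epsilon^{p}\,dx\right)^2\le C\mu_\epsilon^{-2+2\kappa}$ together with $\left(\int_{\mathcal{O}(\Omega,2\delta)}u_\epsilon^{pq}\,dx\right)^{2/q}\le C\mu_\epsilon^{-2p+\kappa}$ and $\int_{\mathcal{O}(\Omega,2\delta)}|F(u_\epsilon)|\,dx\le C\mu_\epsilon^{-(p+1)+\kappa}$, the boundary collar being at fixed distance from the concentration point. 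Inserting these into \eqref{poho-prop-estimate} gives $\epsilon\,\mu_\epsilon^{-\frac{4s}{n-2s}}\le C\mu_\epsilon^{-2+2\kappa}$, and it is precisely the hypothesis $n>4s$ (so that $\frac{2n-8s}{n-2s}>0$), with $\kappa$ chosen small, that turns this into a positive power of $\mu_\epsilon$ bounded by $C/\epsilon$, hence $\mu_\epsilon\le C\epsilon^{-\alpha}$. So the missing ingredient in your proof is the $\mu_\epsilon$-decaying estimate of the right-hand side via Lemma \ref{lem-harnack-3} and \eqref{eq-b-a}; the constant bound you use is too crude to conclude.
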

\begin{proof}
We denote
\begin{equation}\label{fF}
f(u)= u^p + \epsilon u \quad \text{and } F(u) = \frac{1}{p+1} u^{p+1} + \frac{1}{2}\epsilon u^2 \quad \text{for } u > 0
\end{equation}
and fix a small number $\delta>0$ so that $\mathcal{I}(\Omega,{\delta})$ has the same topology as that of $\Omega$. For $r \in [\delta,2\delta]$ we see that
\begin{equation}\label{eq-u-l2}
\begin{aligned}
\epsilon \int_{\mathcal{I}(\Omega,{r})} u_{\epsilon}(x)^2 dx & = \epsilon \int_{\mathcal{I}(\Omega,r)} \mu_{\epsilon}^2 b_{\epsilon}\left(\mu_{\epsilon}^{\frac{p-1}{2s}} (x-x_{\epsilon})\right)^2 dx
=\epsilon \mu_{\epsilon}^2 \mu_{\epsilon}^{-\frac{p-1}{2s}n} \int_{\mu_{\epsilon}^{\frac{p-1}{2s}}(\mathcal{I}(\Omega,{r})-x_{\epsilon})} b_{\epsilon}(x)^2 dx
\\
&\geq \epsilon \mu_{\epsilon}^{-\frac{4s}{n-2s}} \int_{B_{n}(0,1)}  b_{\epsilon}^2 (x) dx \geq C\epsilon \mu_{\epsilon}^{-\frac{4s}{n-2s}},
\end{aligned}
\end{equation}
where we used the fact that $b_{\epsilon}$ converges to $w_1$ uniformly on any compact set (see Lemma \ref{lem-convergence}).
Since $U_{\epsilon}$ is a solution of \eqref{u0inc} with $f$ given in \eqref{fF}, we have
\begin{multline*}
\min_{r \in [\delta,2\delta]} \left|n \int_{\mathcal{I}(\Omega,r)\times\{0\}} F(U_{\epsilon}) dx - \left(\frac{n-2s}{2}\right) \int_{ \mathcal{I}(\Omega,{r}) \times \{0\}} U_{\epsilon} f(U_{\epsilon}) dx\right|\\
= \min_{r \in [\delta,2\delta]} \left|\epsilon s \int_{\mathcal{I}(\Omega,{r})} U_{\epsilon}(x,0)^2 dx \right| \geq C\epsilon \mu_{\epsilon}^{-\frac{4s}{n-2s}}.
\end{multline*}
This gives a lower bound of the left-hand side of \eqref{poho-prop-estimate}.

Now we shall find an upper bound of the right-hand side of \eqref{poho-prop-estimate}.
By Lemma \ref{lem-harnack-3}, for any $q<\infty$, we get $\| d_{\epsilon}\|_{L^q (B_{n}(0,1))} \leq C$ with a constant $C=C(q)>0$.
Using this we have
\begin{equation}\label{eq-b-a}
\begin{aligned}
C \ge \int_{\{|x|\leq 1\}} d^q_{\epsilon}(x)dx & = \int_{\{|x|\leq 1\}} |x|^{-(n-2s)q } b_{\epsilon}^q \left(\frac{x}{|x|^2}\right) dx
\\
&=\int_{\{|x| \geq 1\}} |x|^{(n-2s)q} b_{\epsilon}^{q} (x) |x|^{-2n} dx
\\
& = \int_{\{|x| \geq 1\}} |x|^{(n-2s)q -2n} \mu_{\epsilon}^{-q} u_{\epsilon}^q \left(\mu_{\epsilon}^{-\frac{p-1}{2s}} x + x_{\epsilon}\right) dx
\\
& = \int_{\big\{|x -x_{\epsilon}| \geq \mu_{\epsilon}^{-\frac{p-1}{2s}}\big\}} \mu_{\epsilon}^{\frac{p-1}{2s}[(n-2s)q -2n]} \mu_{\epsilon}^{-q} \mu_{\epsilon}^{\frac{p-1}{2s}n} |x-x_{\epsilon}|^{(n-2s) q-2n} u_{\epsilon}^q (x) dx
\\
&= \int_{\big\{|x-x_{\epsilon}| \geq \mu_{\epsilon}^{-\frac{p-1}{2s}}\big\}} \mu_{\epsilon}^{q-\frac{2n}{n-2s}} |x-x_{\epsilon}|^{(n-2s)q -2n} u_{\epsilon}^q (x) dx.
\end{aligned}
\end{equation}
First of all, we find a bound of $\int_{\Omega} u_{\epsilon}^{p} (x) dx.$ Using \eqref{eq-b-a} and H\"older's inequality we deduce that
\begin{align*}
&\int_{\big\{|x-x_{\epsilon}| \geq \mu_{\epsilon}^{-\frac{p-1}{2s}}\big\}} u_{\epsilon}^p (x) dx
\\
&\leq \left(\int_{\big\{|x-x_{\epsilon}| \geq \mu_{\epsilon}^{-\frac{p-1}{2s}}\big\}} u_{\epsilon}^{q} (x) |x-x_{\epsilon}|^{(n-2s)q -2n} dx \right)^{p \over q} \\
&\hspace{150pt} \times \left( \int_{\big\{|x-x_{\epsilon}| \geq \mu_{\epsilon}^{-\frac{p-1}{2s}}\big\}} |x-x_{\epsilon}|^{-[(n-2s) q-2n] \frac{p}{q-p} } dx \right)^{\frac{q-p}{q}}
\\
& \leq \mu_{\epsilon}^{-(q-\frac{n}{n-2s}) \frac{p}{q}} \mu_{\epsilon}^{\frac{p-1}{2s} [ ((n-2s)q -2n) \frac{p}{q-p} -n] \frac{q-p}{q}}.
\end{align*}
Note that if $q=\infty$, then the last term is equal to $\mu_{\epsilon}^{-p} \mu_{\epsilon}^{\frac{p-1}{2s}[(n+2s)-n]}= \mu_{\epsilon}^{-1}.$
Thus, for any  $\kappa >0$, we can find  $q = q(\kappa)$ sufficiently large so that the last term of the above estimate is bounded by $\mu_{\epsilon}^{-1+\kappa}$.
Then it follows that
\begin{equation}\label{eq-r-1}
\left(\int_{\big\{|x-x_{\epsilon}| \geq \mu_{\epsilon}^{-\frac{p-1}{2s}}\big\}} u_{\epsilon}^{p}(x) dx \right)^2 \leq \mu_{\epsilon}^{-2 + 2 \kappa}.
\end{equation}
On the other hand, because $u_{\epsilon} (x) \leq C\mu_{\epsilon}$, we have
\begin{equation}\label{eq-r-2}
\left(\int_{\big\{|x-x_{\epsilon}| \leq \mu_{\epsilon}^{-\frac{p-1}{2s}}\big\}} u_{\epsilon}^p (x) dx \right)^2 \leq C \mu_{\epsilon}^{2p} \mu_{\epsilon}^{-\frac{p-1}{2s} \cdot 2n} = C \mu_{\epsilon}^{\frac{4s -2n}{n-2s}}= C\mu_{\epsilon}^{-2}.
\end{equation}
These two estimates give us the bound of $\int_{\Omega} u_{\epsilon}^p (x) dx.$

Now we turn to bound $\|f(U_{\epsilon})(\cdot,0)\|_{L^q (\mathcal{O}(\Omega, 2\delta))}$. For this we again use inequality \eqref{eq-b-a} to have
\[\int_{\{|x-x_{\epsilon}| \geq \textrm{dist}(x_0, \partial \Omega)/2\}} u_{\epsilon}^{pq} (x) dx \leq C\mu_{\epsilon}^{-(pq- \frac{2n}{n-2s})} ~\textrm{for any}~ q >1.\]
Using this inequality for a sufficiently large $q$ and H\"older's inequality we can deduce that
\begin{equation}\label{eq-r-3}
\left( \int_{\{|x-x_{\epsilon}| \geq \textrm{dist}(x_0, \partial \Omega)/2\}} u_{\epsilon}^{pq}(x) dx \right)^{\frac{2}{q}} \leq C \left( \mu_{\epsilon}^{-(pq-\frac{2n}{n-2s})}\right)^{\frac{2}{q}} \leq C \mu_{\epsilon}^{-2p + \kappa}.
\end{equation}
Similarly we have
\begin{equation*}
\int_{\mathcal{O}(\Omega,2\delta)} |F(u_{\epsilon}(x))|dx \leq C \mu_{\epsilon}^{-(p+1)+\kappa}.
\end{equation*}
Combining this estimate with \eqref{eq-r-1}, \eqref{eq-r-2} and \eqref{eq-r-3} gives the bound
\[\left( \int_{\mathcal{O}(\Omega,2\delta)}  |f(U_{\epsilon})(x,0)|^{q} dx\right)^{2 \over q} + \int_{\mathcal{O}(\Omega,2\delta)} |F(U_{\epsilon}(x,0)|dx +\left( \int_{\Omega} f(U_{\epsilon} )(x,0) dx \right)^2 \leq C \mu_{\epsilon}^{-2 + 2\kappa}.\]
We put this bound and \eqref{eq-u-l2} into \eqref{poho-prop-estimate} in the statement of Proposition \ref{prop-sub-estimate}. Then we finally get
\begin{equation}\label{eq-m-bound-e}\epsilon \mu_{\epsilon}^{-\frac{4s}{n-2s}} \leq  C \mu_{\epsilon}^{-2 + 2 \kappa}
\end{equation}
which is equivalent to
\[\mu_{\epsilon}^{\frac{2n-8s}{n-2s} - 2 \kappa} \leq \frac{C}{\epsilon}.\]
Choose $\kappa >0$ such that $\alpha:= \frac{2n-8s}{n-2s} -2 \kappa$ is positive. Then the estimate \eqref{eq-m-bound-e} turns out to be
\[\mu_{\epsilon} \leq C \epsilon^{-\alpha},\]
which is the desired inequality.
\end{proof}

\begin{proof}[Proof of Proposition \ref{prop-uniform-bound}]
We know that
\begin{equation*}
\lim_{\epsilon \rightarrow 0} \int_{\mathcal{C}_{\epsilon}} t^{1-2s} |\nabla ( B_{\epsilon} - W_1)|^2 dx dt = 0.
\end{equation*}
By employing the Sobolev trace embedding, we find that
\begin{equation}\label{eq-b-bound-stable}
\lim_{\epsilon \rightarrow 0} \left[\int_{\Omega_{\epsilon}} | b_{\epsilon} (x) - w_1 (x)|^{p+1} dx \right]= 0.
\end{equation}
Since $p=\frac{n+2s}{n-2s}$, we have the scaling invariance
\[\int_{\mathbb{R}^n} |a (x)|^{p+1} dx = \int_{\mathbb{R}^n} |x|^{-2n}|a(\kappa (x))|^{p+1} dx,\]
and
\[\int_{\mathbb{R}^{n+1}} t^{1-2s}|\nabla A(z)|^2 dz \ge C \int_{\mathbb{R}^{n+1}} t^{1-2s}|\nabla[|z|^{-(n-2s)}A(\kappa(z))]|^2 dz\]
for arbitrary functions $a : \mathbb{R}^{n} \rightarrow \mathbb{R}$ and $A: \mathbb{R}^{n+1} \rightarrow \mathbb{R}$ which decay sufficiently fast.
Using these identities, we deduce from \eqref{eq-b-bound-stable} that
\[\lim_{\epsilon \rightarrow 0}\left[\int_{\kappa(\Omega_{\epsilon})} | d_{\epsilon} (x) - w_1 (x)|^{p+1} dx
+ \int_{\kappa(\mathcal{C}_{\epsilon})} t^{1-2s} |\nabla (D_{\epsilon} - W_1) (x,t)|^2 dxdt \right]= 0.\]
Using the Sobolev embedding theorem and H\"older's inequality, for $\beta_0 = \min\{p, {n+2 \over n}\} > 1$, we get
\begin{equation}\label{eq-D-bound}
\lim_{\epsilon \rightarrow 0} \int_{B_{n+1}(0,1)} t^{1-2s} |D_{\epsilon} (x,t)- W_1 (x,t)|^{\beta_0+1} dxdt =0.
\end{equation}
Finally, estimates \eqref{eq-D-bound} and \eqref{eq-e-bound-1} enable us to apply Lemma \ref{lem-harnack-1} so that we can find $\delta>0$ satisfying
\begin{equation}\label{eq-d-bound-2}
\int_{\kappa(\Omega_{\epsilon} \times \{0\}) \cap B_{n}(0,\delta)} \left( d_{\epsilon}^{p-1} \right)^{{n \over 2s}{\beta_0+1 \over 2}} dx
\leq C \quad \text{for any } \epsilon>0.
\end{equation}

Next, from Lemma \ref{eq-mu-epsilon} we may find $\alpha >0$ such that $\mu_{\epsilon} \leq \epsilon^{-\alpha}$. Then, for $\zeta>0$ small enough, we have
\begin{align*}
\| \epsilon \mu_{\epsilon}^{- p +1} |x|^{-4s} \|_{L^{\frac{n}{2s} + \zeta}(\kappa(\Omega_{\epsilon}))} &\leq \epsilon \left[ \int_{\big\{|x| \geq \mu_{\epsilon}^{-\frac{p-1}{2s}}\big\}} \mu_{\epsilon}^{-(p-1) (\frac{n}{2s} + \zeta)} |x|^{-2n-4s \zeta} dz \right]^{1 \over { \frac{n}{2s} + \zeta}}
\\
& \leq \epsilon \left[ \mu_{\epsilon}^{-(p-1)(\frac{n}{2s} + \zeta)} \mu_{\epsilon}^{\frac{p-1}{2s}(n+ 4s \zeta)} \right]^{1 \over { \frac{n}{2s} + \zeta}}
\\
& = \epsilon \mu_{\epsilon}^{\zeta (p-1) \over \frac{n}{2s} + \zeta}
\leq \epsilon \cdot \epsilon^{-\frac{\alpha \zeta (p-1)}{{n \over 2s} + \zeta}} \leq 1.
\end{align*}
Given this estimate and \eqref{eq-d-bound-2}, we can apply Lemma \ref{lem-harnack-2} to get
\[\| d_{\epsilon}\|_{L^{\infty}(B_{n}(0,\delta/2))} \leq C.\]
The proof is concluded.
\end{proof}

\subsection{Proof of Theorem \ref{thm-m-limit}}
We are now ready to prove Theorem \ref{thm-m-limit}.
\begin{proof}[Proof of Theorem \ref{thm-m-limit}]
By the definition of $\mu_{\epsilon}$ in \eqref{mue}, we have
\[\mathcal{A}_s(\|u_{\epsilon}\|_{L^{\infty}(\Omega)} u_{\epsilon})(x) =\mathfrak{c}_{n,s} \left[\mu_{\epsilon} u_{\epsilon}^{p}(x) + \epsilon \mu_{\epsilon} u_{\epsilon}(x)\right],\quad x \in \Omega.\]
Note from $p=\frac{n+2s}{n-2s}$ that
\begin{align*}
\int_{\Omega} \left(\mu_{\epsilon} u_{\epsilon}^{p}(x) + \epsilon \mu_{\epsilon} u_{\epsilon} (x)\right) dx
&= \int_{\Omega} \mu_{\epsilon}^{p+1} b_{\epsilon}^{p} \left(\mu_{\epsilon}^{\frac{p-1}{2s}} (x-x_{\epsilon}) \right) dx + \epsilon \mu_{\epsilon}^2 \int_{\Omega} b_{\epsilon} \left(\mu_{\epsilon}^{\frac{p-1}{2s}} (x-x_{\epsilon}) \right) dx
\\
&= \int_{\Omega_{\epsilon}} b_{\epsilon}^{p}(x) dx + \epsilon \mu_{\epsilon}^2 \mu_{\epsilon}^{-\frac{p-1}{2s} n} \int_{\Omega_{\epsilon}} b_{\epsilon}(x)dx.
\end{align*}
Note also that
\begin{align*}
\mu_{\epsilon}^2 \mu_{\epsilon}^{-\frac{p-1}{2s}n} \int_{\Omega_{\epsilon} } b_{\epsilon}(x) dx &\leq \mu_{\epsilon}^2 \mu_{\epsilon}^{-\frac{p-1}{2s} n} \int_{\big\{|x|\leq \mu_{\epsilon}^{\frac{p-1}{2s}}\big\}} \frac{C}{(1 +|x|)^{n-2s}} dx
\\
&\leq C \mu_{\epsilon}^{2 -\frac{p-1}{2s} n + \frac{p-1}{2s} 2s} \le C.
\end{align*}
Given the uniform bound \eqref{aexcu}, we use the Lebesgue dominated convergence theorem to obtain
\[\lim_{\epsilon \rightarrow 0} \int_{\Omega} \mathfrak{c}_{n,s}\mu_{\epsilon} u_{\epsilon}^{p}(x) = \int_{\mathbb{R}^n} \mathfrak{c}_{n,s} w_1^{p}(x) dx = \mathfrak{b}_{n,s},\]
where
\begin{equation}\label{eq-beta}
\mathfrak{b}_{n,s} := \frac{|S^{n-1}|}{2} \frac{ \Gamma \left( s\right) \Gamma \left( \frac{n}{2}\right)}{\Gamma \left( \frac{n+2s}{2}\right)} \mathfrak{c}_{n,s}^{p+1}.
\end{equation}
For $ x \neq x_0$, we have $\lim_{\epsilon \rightarrow 0} \mu_{\epsilon} u_{\epsilon}^{p}(x) =0$ by \eqref{uexcu}.
Therefore we may conclude that
\[\lim_{\epsilon \rightarrow 0} \mathcal{A}_{s}( \|u_{\epsilon}\|_{L^{\infty}(\Omega)} u_{\epsilon} ) (x) = \mathfrak{b}_{n,s} \delta_{x_0} (x) \quad \textrm{in}\quad C(\Omega)'.\]
Set $v_{\epsilon}:= \mathcal{A}_s (\|u_{\epsilon}\|_{L^{\infty}(\Omega)} u_{\epsilon})$. Then $\lim_{\epsilon \rightarrow 0} \int_{\Omega} v_{\epsilon} dx=\mathfrak{b}_{n,s}$ and $\lim_{\epsilon \rightarrow 0}  v_{\epsilon}(x) =0$ uniformly on any compact set of $\Omega \setminus \{x_0\}$. We observe the formula
\begin{equation}\label{eq-H-expression}
\|u_{\epsilon}\|_{L^{\infty}(\Omega)} U_{\epsilon}(x,t) = \int_{\Omega} \left[\frac{\mathfrak{a}_{n,s}}{|(x-y,t)|^{n-2s}} - H_{\mathcal{C}}(x,t,y) \right] v_{\epsilon}(y) dy.
\end{equation}
On the other hand we have $H_{\mathcal{C}}(x,t,\cdot)$ is in $C^{\infty}_{\textrm{loc}}(\Omega)$ and  $\|H_{\mathcal{C}}(x,t,\cdot)\|_{L^{\frac{2n}{n-2s}}(\Omega)} \leq C$ which holds uniformly on any compact set of $\Omega \setminus \{x_0\}$.
From this we conclude that
\[\|u_{\epsilon}\|_{L^{\infty}(\Omega)} U_{\epsilon}(x,t) \rightarrow  \mathfrak{b}_{n,s} G_{\mathcal{C}}(x,t,x_0)\quad \textrm{in}~
C^{0}_{\textrm{loc}}(\overline{\mathcal{C}} \setminus \{(x_0,0)\}).\]
Also, pointwise convergence in $\mathcal{C}$ is valid for the derivatives of $\|u_{\epsilon}\|_{L^{\infty}(\Omega)} U_{\epsilon}$ by elliptic regularity.
Especially, for $t=0$, the regularity property of the function $x\in \Omega \rightarrow H_{\mathcal{C}}(x,0,y)$ given in Lemma \ref{lem-property-H} proves that
\[\|u_{\epsilon}\|_{L^{\infty}(\Omega)} u_{\epsilon}(x) \rightarrow \mathfrak{b}_{n,s} G(x,x_0) \text{ in } \left\{\begin{array}{lll} C^{\alpha}_{\textrm{loc}}(\Omega \setminus \{x_0\}) &\ \text{for all } \alpha \in (0,2s) & \textrm{if } s \in (0,1/2],
\\
C^{1,\alpha}_{\textrm{loc}}(\Omega \setminus \{x_0\})&\ \text{for all } \alpha
\in (0,2s-1) & \textrm{if } s \in (1/2,1).
\end{array}\right.\]
This completes the proof.
\end{proof}

\section{Location of the blowup point}\label{sec_blowup}
The objective of this section is to prove Theorem \ref{thm-m-location}.
For this goal, we will derive several identities related to Green's function.
Throughout this section, we keep using the notations:
$X_0 = (x_0, 0)$, $B_r = B_{n+1} (X_0, r) \cap \mathbb{R}^{n+1}_{+}$, $\partial B_r^+ = \partial B_r \cap \mathbb{R}^{n+1}_{+}$ and $\Gamma_r = B_n(x_0,r)$ for $r > 0$ small.
We also use $G(z)$ (or $H(z)$) to denote $G_{\mathcal{C}}(z,x_0)$ (or $H_{\mathcal{C}}(z,x_0)$) for brevity.

\medskip
The first half of this section is devoted to proving the second statement of Theorem \ref{thm-m-location}.
\begin{proof}[Proof of Theorem \ref{thm-m-location} (2)]
According to Appendix \ref{sec_appen_b}, it holds
\begin{equation}\label{eq-main-limit}
\begin{aligned}
&\lim_{\epsilon \to 0} \epsilon s C_s \mu_{\epsilon}^{2(n-4s) \over n-2s} \delta \int_{\mathbb{R}^n}w_1^2(x)dx
\\
&= \mathfrak{b}_{n,s}^2\int_{\delta}^{2\delta}\left[\int_{\partial B_r^+} t^{1-2s}\left<(z-X_0, \nabla G(z)) \nabla G(z) - (z-X_0) \frac{|\nabla G(z)|^2}{2}, \nu\right> dS\right.
\\
&\ + \left.\left(\frac{n-2s}{2}\right) \int_{\partial B_r^+} t^{1-2s} G(z,x_0) \frac{\partial G(z)}{\partial \nu} dS\right] dr
\end{aligned}
\end{equation}
for an each $\delta > 0$ small enough.
We will now take a limit $\delta \rightarrow 0$.
Putting
\[G(z)= \frac{\mathfrak{a}_{n,s}}{|z-X_0|^{n-2s}} - H(z)
\quad \text{and} \quad
\nabla G(z) = - \mathfrak{a}_{n,s} (n-2s) \frac{z-X_0}{|z-X_0|^{n+2-2s}} - \nabla H(z)\]
into the right-hand side of \eqref{eq-main-limit} and applying $\nu = \frac{z-X_0}{r}$ on $\partial B^+_r$, we can derive
\begin{align*}
&2\lim_{\epsilon \to 0} \epsilon s C_s \mu_{\epsilon}^{2(n-4s) \over n-2s} \int_{\mathbb{R}^n}w_1^2(x)dx\\
&= (n-2s)^2 \mathfrak{a}_{n,s} \mathfrak{b}_{n,s}^2 \lim_{r \rightarrow 0} \left( 2 \int_{\partial B^+_{2r}} \frac{t^{1-2s}}{(2r)^{n+1-2s}} H(z) dS - \int_{\partial B^+_r} \frac{t^{1-2s}}{r^{n+1-2s}} H(z) dS \right)\\
&\ + \lim_{\delta \rightarrow 0} {1 \over \delta} \int_{\delta}^{2\delta} \int_{\partial B^+_r} t^{1-2s} O \left( \langle \nu, \nabla H(z) \rangle \left({1 \over r^{n-2s}} + H(z)\right) + r |\nabla H(z)|^2 \right) dSdr.
\end{align*}
Since $\partial_i H_{\mathcal{C}} (\cdot,x_0)$ has a bounded H\"older norm over a small neighborhood of $x_0$ for each $i = 1, \cdots, n$ (refer to \cite[Lemma 2.9]{CDDS}),
the second term in the right-hand side tends to 0. As a result,
\[2\lim_{\epsilon \to 0} \epsilon s C_s \mu_{\epsilon}^{2(n-4s) \over n-2s} \int_{\mathbb{R}^n}w_1^2(x)dx \to (n-2s)^2 {\mathcal{D}_{n,s}} \mathfrak{a}_{n,s}\mathfrak{b}_{n,s}^2 \tau(x_0)\]
as $\delta \to 0$, where
\[\mathcal{D}_{n,s} := \lim_{r \rightarrow 0} \int_{\partial B_{n+1}(0,r) \cap \mathbb{R}^{n+1}_+} \frac{t^{1-2s}}{r^{n+1-2s}} dS = \int_{B_n(0,1)} \frac{1}{(1-|x|^2)^s} dx = \frac{|S^{n-1}|}{2} B \left(1-s, \frac{n}{2}\right),\]
$B$ denoting the beta function.
This proves Theorem \ref{thm-m-location} (2).
We also know that the constant $\mathfrak{d}_{n,s}$ in the statement of the theorem is given by
\begin{equation}\label{eq-J}
\mathfrak{d}_{n,s} = \frac{\Gamma (n-2s)}{\pi^{n/s} \Gamma (\frac{n}{2}-2s)} \frac{(n-2s)^2}{2sC_s}{\mathcal{D}_{n,s}} \mathfrak{a}_{n,s}\mathfrak{b}_{n,s}^2\mathfrak{c}_{n,s}^{-\frac{4s}{n-2s}}.
\end{equation}
\end{proof}

Next, we prove the first statement of Theorem \ref{thm-m-location}, that is, $\tau'(x_0) = 0$.
\begin{proof}[Proof of Theorem \ref{thm-m-location} (1)]
If $U$ is a solution to \eqref{u0inc}, for each $1 \le k \le n$, we have
\begin{align*}
\int_{\partial B_r^+}t^{1-2s} |\nabla U|^2 \nu_k dS
&= \int_{B_r} t^{1-2s} \partial_k |\nabla U|^2 dz = 2 \int_{B_r} t^{1-2s} \nabla U \cdot \nabla \partial_k U dz\\
&= 2 \int_{\partial B_r^+} t^{1-2s} \langle \nabla U, \nu \rangle \partial_k U dS + 2C_s \int_{\partial \Gamma_r} F(U) \nu_k dS_x
\end{align*}
where $F(t) := \int_0^t f(t)dt$, $\nu_k$ is the $k$-th component of $\nu$, $\partial_k$ is the partial derivative with respect to the $k$-th variable and $r > 0$ small.
For the last equality, we used $\int_{\Gamma_r} f(U)\partial_kU dx = \int_{\Gamma_r} (\partial_kF)(U) dx = \int_{\partial \Gamma_r} F(U) \nu_k dS_x
$.
Therefore putting $\|U_{\epsilon}(\cdot, 0)\|_{L^{\infty}(\Omega)} U_{\epsilon}$ (see \eqref{dpuu}) in the place of $U$ in the above identity, integrating the result from $\delta$ to $2\delta$ in $r$ and taking $\epsilon \to 0$, we obtain
\begin{equation}\label{pgxpn}
\int_{\delta}^{2\delta}\int_{\partial B_r^+} t^{1-2s} |\nabla G|^2 \nu_k dSdr = 2 \int_{\delta}^{2\delta}\int_{\partial B_r^+} t^{1-2s} \langle \nabla G, \nu \rangle \partial_k G dSdr
\end{equation}
(cf. Appendix \ref{sec_appen_b}).
On the other hand, a direct calculation shows that
\begin{equation}\label{poho-g-nabla-1}
\begin{aligned}
&\lim_{\delta \rightarrow 0} {1 \over \delta} \int_{\delta}^{2\delta} \int_{\partial B_r^+} t^{1-2s} \langle \nabla G(z), \nu(z) \rangle \partial_kG(z) dS dr
\\
& = \lim_{\delta \rightarrow 0} {1 \over \delta} \int_{\delta}^{2\delta} \int_{\partial B_r^+} t^{1-2s} \left[ \frac{\mathfrak{a}_{n,s} (n-2s)}{r^{n-2s+1}} + \left\langle \frac{z-X_0}{r}, \nabla H(z) \right\rangle \right]
\\
& \hspace{190pt} \times \left[(x_k-x_{0,k})\frac{\mathfrak{a}_{n,s}(n-2s)}{r^{n+2-2s}} + \partial_k H(z) \right]dSdr
\\
& = \lim_{r \rightarrow 0} \int_{\partial B_r^+} t^{1-2s} \frac{\mathfrak{a}_{n,s} (n-2s)}{r^{n-2s+1}} \partial_k H(z) dS
\\
&\hspace{80pt} + \lim_{\delta \rightarrow 0} {1 \over \delta} \int_{\delta}^{2\delta} \int_{\partial B_r^+} t^{1-2s} \frac{\mathfrak{a}_{n,s}(n-2s)}{r^{n-2s+3}}(x_k-x_{0,k}) \langle z-X_0, \nabla H(z) \rangle dS dr
\\
& = (n-2s+3)(n-2s) \mathfrak{a}_{n,s} \mathcal{E}_{n,s} \partial_k \tau(x_0),
\end{aligned}
\end{equation}
where $x_k$ and $x_{0,k}$ mean the $k$-th component of $x$ and $x_0$, respectively, and
\[\mathcal{E}_{n,s} := \lim_{r \rightarrow 0} \int_{\partial B_{n+1}(0,r) \cap \mathbb{R}^{n+1}_+} \frac{t^{1-2s}}{r^{n-2s+3}}  x_k^2 dS
= \frac{1}{n} \int_{B_n(0,1)} \frac{|x|^2}{(1-|x|^2)^s} dx = \frac{|S^{n-1}|}{2n} B\left(1-s, \frac{n+2}{2}\right).\]
In particular, $\mathcal{D}_{n,s} = (n-2s+2)\mathcal{E}_{n,s}$.
Moreover we observe
\begin{equation}\label{ngx2c}
\begin{aligned}
&\lim_{\delta \rightarrow 0} {1 \over \delta} \int_{\delta}^{2\delta} \int_{\partial B_r^+} t^{1-2s} |\nabla G(z)|^2 \nu_k(z) dS dr
\\
&= 2 \lim_{r \rightarrow 0} \int_{\partial B_r^+} t^{1-2s}
\frac{\mathfrak{a}_{n,s}(n-2s)}{r^{n-2s+3}}(x_k -x_{0,k})^2 \partial_k H(z) dS
+ \lim_{\delta \rightarrow 0} {1 \over \delta} \int_{\delta}^{2\delta} \int_{\partial B_r^+} t^{1-2s} |\nabla H(z)|^2 \nu_k(z) dSdr
\\
&= 2 (n-2s) \mathfrak{a}_{n,s} \mathcal{E}_{n,s} \partial_k\tau(x_0).
\end{aligned}
\end{equation}
Taking $\delta \to 0$ in \eqref{pgxpn} with \eqref{poho-g-nabla-1} and \eqref{ngx2c} in hand gives our desired result.
\end{proof}

\section{Construction of solutions for \eqref{a12uu} concentrating at multiple points}\label{sec_reduction}
In this section we prove Theorem \ref{thm-m-multipeak} by applying the Lyapunov-Schmidt reduction method to the extended problem
\begin{equation}\label{equation-construction}
\left\{\begin{array}{ll} \text{div}\left(t^{1-2s} \nabla U\right) = 0 &\quad \text{in}~ \mathcal{C} = \Omega \times (0,\infty),\\
U>0 & \quad \text{in}~\mathcal{C},\\
U=0 & \quad \text{on}~ \partial_L \mathcal{C} = \partial \Omega \times (0,\infty),\\
\partial_{\nu}^s U = U^p + \epsilon U &\quad \text{on}~ \Omega \times \{0\},
\end{array}\right.
\end{equation}
where $0 < s < 1$ and $p = {n+2s \over n-2s}$.
We remind that the functions $w_{\lambda, \xi}$ and $W_{\lambda, \xi}$ are defined in \eqref{bubble} and \eqref{wlyxt}.
By the result of D\'avila, del Pino and Sire \cite{DDS}, it is known that the space of the bounded solutions for the linearized equation of \eqref{entire_nonlocal} at $w_{\lambda,\xi}$, namely,
\begin{equation}\label{dsppo1}
\mathcal{A}_{s} \phi = pw_{\lambda,\xi}^{p-1} \phi \quad \text{in}~\mathbb{R}^{n}
\end{equation}
is spanned by
\begin{equation}\label{dsppo2}
{\partial w_{\lambda,\xi} \over \partial \xi_1},\ \cdots,\ {\partial w_{\lambda,\xi} \over \partial \xi_n} \quad \text{and} \quad {\partial w_{\lambda,\xi} \over \partial \lambda}
\end{equation}
where $\xi = (\xi_1, \cdots, \xi_n)$ represents the variable in $\mathbb{R}^n$.
From this, it also follows that the solutions of the extended problem of \eqref{dsppo1}
\begin{equation}\label{extended_entire}
\left\{ \begin{array}{ll} \text{div}(t^{1-2s} \nabla \Phi )= 0 &\quad \text{in}~ \mathbb{R}^{n+1}_{+} = \mathbb{R}^n \times (0,\infty),\\
\partial_{\nu}^{s} \Phi = p w_{\lambda,\xi}^{p-1} \Phi &\quad \text{on}~ \mathbb{R}^n \times \{0\},
\end{array} \right.
\end{equation}
which are bounded on $\Omega \times \{0\}$, consist of the linear combinations of
\[{\partial W_{\lambda,\xi} \over \partial \xi_1},\ \cdots,\ {\partial W_{\lambda,\xi} \over \partial \xi_n} \quad \text{and} \quad {\partial W_{\lambda,\xi} \over \partial \lambda}.\]

In the proof of Theorem \ref{thm-m-multipeak}, we will often consider the dilated equation
\begin{equation}\label{equation-dilated}
\left\{\begin{array}{ll}
\text{div}(t^{1-2s} \nabla U) = 0 &\quad \text{in}~ \mathcal{C}_{\epsilon} = \Omega_{\epsilon} \times (0,\infty),\\
U>0 &\quad \text{in}~ \mathcal{C}_{\epsilon},\\
U=0&\quad \text{on}~ \partial_L \mathcal{C}_{\epsilon} = \partial \Omega_{\epsilon} \times (0,\infty),\\
\partial_{\nu}^{s} U = U^p + \epsilon^{1+2s\alpha_0} U &\quad \text{on}~ \Omega_{\epsilon}\times \{0\},
\end{array}\right.
\end{equation}
where
\[\mathcal{C}_{\epsilon} = \frac{\mathcal{C}}{\epsilon^{\alpha_0}} = \left\{ \frac{(x,t)}{\epsilon^{\alpha_0}} : (x,t) \in \mathcal{C} \right\}\]
and
\[\Omega_{\epsilon} = \frac{\Omega}{\epsilon^{\alpha_0}} = \left\{ \frac{x}{\epsilon^{\alpha_0}}: x \in \Omega \right\}\]
for some $\alpha_0 > 0$ to be determined later. If $U$ is a solution of \eqref{equation-dilated}, then $U_{\epsilon}(z):= \epsilon^{-\frac{(n-2s)}{2}\alpha_0} U (\epsilon^{-\alpha_0} z)$ for $z \in \Omega$ becomes a solution of problem \eqref{equation-construction}.

Since we want solutions to be positive, we use a well-known trick that replaces the nonlinear term $U^p$ in \eqref{equation-construction} with its positive part $U^p_+$.
Namely, we consider the following modified equation of \eqref{equation-dilated}
\begin{equation}\label{equation-dilated-m}
\left\{\begin{array}{ll}
\text{div}(t^{1-2s} \nabla U) = 0 &\quad \text{in}~ \mathcal{C}_{\epsilon},\\
U=0&\quad \text{on}~ \partial_L \mathcal{C}_{\epsilon},\\
\partial_{\nu}^{s} U = f_{\epsilon}(U) := U_+^p + \epsilon^{1+2s\alpha_0} U &\quad \text{on}~ \Omega_{\epsilon}\times \{0\}.
\end{array}\right.
\end{equation}

\subsection{Finite dimensional reduction}
In order to construct a $k$-peak solution of \eqref{equation-construction} ($k \in \mathbb{N}$),
we define the admissible set
\begin{multline}\label{admissible}
O^{\delta_0} = \left\{(\bls) := ((\lambda_1, \cdots, \lambda_k),(\sigma_1, \cdots, \sigma_k)) \in (\mathbb{R}^{+})^k \times \Omega^k: \sigma_i = (\sigma_i^1, \cdots, \sigma_i^n), \right.\\
\left. \text{dist} (\sigma_i, \partial \Omega) > \delta_0,\ \delta_0 < \lambda_i < \frac{1}{\delta_0},\ |\sigma_i - \sigma_j| > \delta_0,\ i \neq j,\ i, j = 1, \cdots, k \right\}
\end{multline}
with some small $\delta_0 > 0$ fixed, which recodes the information of the concentration rate and the locations of points of concentration.

Let the map
\[i_{\epsilon}^{*} : L^{\frac{2n}{n+2s}} (\Omega_{\epsilon}) \rightarrow H_{0,L}^s (\mathcal{C}_{\epsilon})\]
be the adjoint operator of the Sobolev trace embedding
\[i_{\epsilon}:H_{0,L}^s(\mathcal{C}_{\epsilon}) \rightarrow L^{\frac{2n}{n-2s}} (\Omega_{\epsilon}) \quad \text{defined by} \quad i_{\epsilon}(U)
:= \text{tr}|_{\Omega_{\epsilon} \times \{0\}}(U) \quad \text{for } U \in H_{0,L}^s(\mathcal{C}_{\epsilon}),\]
which comes from the inequality \eqref{eq-sharp-trace} (for the definition of $H_{0,L}^s(\mathcal{C}_{\epsilon})$, see Subsection \ref{subsec_frac_Sob}).
From its definition, $i_{\epsilon}^{*}(u)=V$ for some $u \in L^{\frac{2n}{n+2s}} (\Omega_{\epsilon})$ and $V \in H_{0,L}^s(\mathcal{C}_{\epsilon})$ if and only if
\[\left\{\begin{array}{ll}
\text{div}(t^{1-2s} \nabla V) = 0 &\quad \text{in}~ \mathcal{C}_{\epsilon},\\
V=0&\quad \text{on}~ \partial_L \mathcal{C}_{\epsilon},\\
\partial_{\nu}^{s} V = C_s^{-1}u &\quad \text{on}~ \Omega_{\epsilon}\times \{0\}.
\end{array}\right.\]
where $C_s > 0$ is the constant defined in \eqref{cs}.
Therefore finding a solution $U \in H^s_{0,L}(\mathcal{C}_{\epsilon})$ of \eqref{equation-dilated} is equivalent to solving the relation
\begin{equation}\label{equation_compact}
i_{\epsilon}^*(f_{\epsilon}(i_{\epsilon}(U))) = C_s^{-1}U.
\end{equation}
It is valuable to note that from \eqref{eq_Sobo_trace} we have in fact  $i_{\epsilon}:H_{0,L}^s(\mathcal{C}_{\epsilon}) \rightarrow H_0^s(\Omega_{\epsilon}) \subset L^{\frac{2n}{n-2s}} (\Omega_{\epsilon})$ and so $\mathcal{A}_s(i_{\epsilon}(U))$ makes sense.
See also Sublemma \ref{sol_pos}.

We introduce the functions
\begin{equation}\label{psis}
\Psi^{0}_{\lambda,\xi} = \frac{\partial W_{\lambda,\xi}}{\partial \lambda}, \quad \Psi_{\lambda,\xi}^{j} = \frac{\partial W_{\lambda,\xi}}{\partial \xi^{j}}, \quad
\psi^{0}_{\lambda,\xi} = \frac{\partial w_{\lambda,\xi}}{\partial \lambda}, \quad \psi_{\lambda,\xi}^{j} = \frac{\partial w_{\lambda,\xi}}{\partial \xi^{j}}
\end{equation}
where $\xi = \left(\xi^1, \cdots, \xi^n\right) \in \mathbb{R}^n$ and $j = 1, \cdots, n$, and
\begin{equation}\label{def_P_epsilon}
P_{\epsilon}W_{\lambda,\xi} = i_{\epsilon}^{*}\left(w_{\lambda,\xi}^p\right),
\quad P_{\epsilon} \Psi^j_{\lambda,\xi} = i_{\epsilon}^{*} \left(p w_{\lambda, \xi}^{p-1} \psi_{\lambda, \xi}^j\right) \quad \text{for } j=0,1,\cdots, n.
\end{equation}
Furthermore, we let the functions $P_{\epsilon}w_{\lambda,\xi}$ and $P_{\epsilon}\psi_{\lambda,\xi}^j$ be
\begin{equation}\label{def_P_epsilon2}
P_{\epsilon}w_{\lambda,\xi} = i_{\epsilon}(P_{\epsilon}W_{\lambda,\xi}) \quad
\text{and} \quad P_{\epsilon}\psi^j_{\lambda,\xi} = i_{\epsilon}\left(P_{\epsilon}\Psi^j_{\lambda,\xi}\right) \quad \text{for } j = 0, \cdots, n
\end{equation}
which vanish on $\partial\Omega_{\epsilon}$ and solve the equations $\mathcal{A}_{s} u = w_{\lambda,\xi}^p$ and $\mathcal{A}_{s} u = p w_{\lambda, \xi}^{p-1} \psi_{\lambda,\xi}^j$ in $\Omega_{\epsilon}$, respectively.
Also, whenever $(\bls) \in O^{\delta_0}$ is chosen, we denote
\begin{equation}\label{W_i}
W_i = W_{\lambda_i, \sigma_i\epsilon^{-\alpha_0}},
\quad P_{\epsilon} W_i = P_{\epsilon} W_{\lambda_i, \sigma_i\epsilon^{-\alpha_0}}
\quad \text{and}\quad P_{\epsilon} \Psi_i^j = P_{\epsilon}\Psi^j_{\lambda_i, \sigma_i\epsilon^{-\alpha_0}}
\end{equation}
and similarly define $P_{\epsilon} w_i$ and $P_{\epsilon} \psi_i^j$ ($i = 1, \cdots, k$ and $j = 0, 1, \cdots, n$) for the sake of simplicity. Set also
\begin{equation}\label{ortho}
K_{\bls}^{\epsilon} = \left\{ u \in H_{0,L}^{1} (\mathcal{C}_{\epsilon}) : \left(u, P_{\epsilon} \Psi_i^{j}\right)_{\mathcal{C}_{\epsilon}} = 0, ~ i=1,2,\cdots,k, ~j=0,1,\cdots, n\right\}
\end{equation}
for $\epsilon >0$ and $(\bls) \in O^{\delta_0}$
and define the orthogonal projection operator $\Pi_{\bls}^{\epsilon}: H_{0,L}^s(\mathcal{C}_{\epsilon}) \rightarrow K_{\bls}^{\epsilon}$.

Now, if we set
$L_{\bls}^{\epsilon}: K_{\bls}^{\epsilon} \rightarrow K_{\bls}^{\epsilon}$ by
\begin{equation}\label{linear_operator}
L_{\bls}^{\epsilon}(\Phi) = C_s^{-1}\Phi - \Pi_{\bls}^{\epsilon} i_{\epsilon}^{*} \left[ f_{\epsilon}' \left(\sum_{i=1}^{k} P_{\epsilon} w_i\right) \cdot i_{\epsilon}(\Phi) \right],
\end{equation}
then we can obtain the following lemma from the nondegeneracy result of \cite{DDS}.

\begin{lem}\label{lem_linear_nondeg}
Suppose that $(\bls)$ is contained in $O^{\delta_0}$.
Then there exists a positive constant $C=C(n, \delta_0)$ such that
\[\| L_{\bls}^{\epsilon} (\Phi)\|_{\mathcal{C}_{\epsilon}} \geq C \|\Phi\|_{\mathcal{C}_{\epsilon}} \quad \text{for all } \Phi \in K_{\bls}^{\epsilon} \text{ and sufficiently small } \epsilon > 0.\]
\end{lem}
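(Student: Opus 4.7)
The plan is to argue by contradiction. I would assume there exist sequences $\epsilon_m \downarrow 0$, parameters $(\bl_m,\bs_m) \in O^{\delta_0}$ (with $\lambda_i^m \to \lambda_i^\ast \in [\delta_0, \delta_0^{-1}]$ and $\sigma_i^m \to \sigma_i^\ast$ preserving the $\delta_0$-separation) and $\Phi_m \in K_{\bl_m,\bs_m}^{\epsilon_m}$ satisfying $\|\Phi_m\|_{\mathcal{C}_{\epsilon_m}} = 1$ while $h_m := L_{\bl_m,\bs_m}^{\epsilon_m}(\Phi_m)$ has $\|h_m\|_{\mathcal{C}_{\epsilon_m}} \to 0$. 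Unfolding the projection in the definition \eqref{linear_operator} of $L$ yields Lagrange multipliers $c_{ij}^m \in \mathbb{R}$ with
\begin{equation*}
C_s^{-1}\Phi_m - i_{\epsilon_m}^{\ast}\!\left[f_{\epsilon_m}'\!\left(\sum_{i=1}^{k} P_{\epsilon_m} w_i\right) i_{\epsilon_m}(\Phi_m)\right] = h_m + \sum_{i=1}^{k}\sum_{j=0}^{n} c_{ij}^m\, P_{\epsilon_m}\Psi_i^j.
\end{equation*}

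For each peak $i$, I would blow up by setting $\tilde{\Phi}_m^i(z) := \Phi_m(z + (\sigma_i^m\epsilon_m^{-\alpha_0}, 0))$, extended by zero to $\mathbb{R}^{n+1}_+$, and extract a weak $\mathcal{D}^s(\mathbb{R}^{n+1}_+)$-limit $\tilde{\Phi}_\infty^i$. Using Lemma~\ref{lem-maximum} applied to the Dirichlet correctors $W_l - P_{\epsilon_m}W_l$ together with the separation $\delta_0\epsilon_m^{-\alpha_0}$ of the dilated centers, after translating to the $i$-th peak one verifies $P_{\epsilon_m} w_l \to \delta_{il}\,w_{\lambda_i^\ast, 0}$ and $P_{\epsilon_m}\psi_l^j \to \delta_{il}\,\psi_{\lambda_i^\ast, 0}^j$ locally. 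Uniform boundedness of $\{c_{ij}^m\}$ follows by pairing the above identity with $P_{\epsilon_m}\Psi_{i'}^{j'}$, giving a linear system whose matrix is asymptotically block-diagonal with positive-definite diagonal blocks built from $\{\Psi_{\lambda_i^\ast,0}^j\}_j$ and whose right-hand side is uniformly controlled by H\"older and \eqref{eq-sharp-trace}. Extracting $c_{ij}^m \to c_{ij}^\ast$ and passing to the limit against compactly supported test functions---where compactness of the Sobolev trace embedding supplies strong $L^2_{\mathrm{loc}}$ convergence of $\tilde{\Phi}_m^i$---yields
\begin{equation*}
C_s^{-1}\tilde{\Phi}_\infty^i - i_{\mathbb{R}^n}^{\ast}\!\left[p\, w_{\lambda_i^\ast, 0}^{p-1}\, \tilde{\Phi}_\infty^i\right] = \sum_{j=0}^{n} c_{ij}^\ast\, \Psi_{\lambda_i^\ast, 0}^j.
\end{equation*}

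The orthogonality $(\Phi_m, P_{\epsilon_m}\Psi_i^j)_{\mathcal{C}_{\epsilon_m}} = 0$ passes to the limit as $(\tilde{\Phi}_\infty^i, \Psi_{\lambda_i^\ast, 0}^j)_{\mathbb{R}^{n+1}_+} = 0$ for every $j$. Pairing the limiting equation with each $\Psi_{\lambda_i^\ast, 0}^{j'}$ and invoking the self-adjointness of the linearization together with the fact that these functions span its kernel---via the nondegeneracy result of D\'avila--del Pino--Sire \cite{DDS} summarized in \eqref{dsppo1}--\eqref{extended_entire}---forces $c_{ij}^\ast = 0$ for all $j$. Standard elliptic regularity for the bounded potential $p w_{\lambda_i^\ast, 0}^{p-1}$ promotes $\tilde{\Phi}_\infty^i$ to a bounded solution, and then the nondegeneracy of \cite{DDS} combined with the orthogonality gives $\tilde{\Phi}_\infty^i \equiv 0$.

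To close the contradiction I would test $L_{\bl_m,\bs_m}^{\epsilon_m}(\Phi_m) = h_m$ against $\Phi_m \in K_{\bl_m,\bs_m}^{\epsilon_m}$, obtaining
\begin{equation*}
C_s^{-1} = (h_m, \Phi_m)_{\mathcal{C}_{\epsilon_m}} + \int_{\Omega_{\epsilon_m}} f_{\epsilon_m}'\!\left(\sum_{i=1}^{k} P_{\epsilon_m} w_i\right)|i_{\epsilon_m}(\Phi_m)|^2\, dx.
\end{equation*}
The first summand is $o(1)$. Splitting $\Omega_{\epsilon_m}$ into $\bigcup_i B_n(\sigma_i^m\epsilon_m^{-\alpha_0}, R)$ and its complement, bubble decay makes the $L^{n/(2s)}$-mass of $(\sum P_{\epsilon_m}w_i)^{p-1}$ outside these balls arbitrarily small uniformly in $m$ (for $R$ large), so H\"older plus \eqref{eq-sharp-trace} disposes of that piece; inside each ball, the compact trace embedding $H^s(B_n(0,R)) \hookrightarrow L^2(B_n(0,R))$, the boundedness of $w_{\lambda_i^\ast, 0}^{p-1}$, and $\tilde{\Phi}_\infty^i \equiv 0$ drive the contribution to zero. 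The subcritical piece $\epsilon_m^{1+2s\alpha_0}\|i_{\epsilon_m}(\Phi_m)\|_{L^2(\Omega_{\epsilon_m})}^2$ is $O(\epsilon_m)$ by H\"older and $|\Omega_{\epsilon_m}| = O(\epsilon_m^{-n\alpha_0})$. Hence $C_s^{-1} = o(1)$, the sought contradiction. The hardest step will be the uniform bound for the multipliers $c_{ij}^m$ and the clean local-versus-global split of the bubble-interaction integral; both hinge essentially on the minimum separation $\delta_0$ encoded in $O^{\delta_0}$.
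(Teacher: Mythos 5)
Your proposal is correct and follows essentially the same route as the paper's proof: a contradiction argument, blow-up of $\Phi_m$ around each concentration point, identification of the limit as a kernel-orthogonal solution of the linearized problem \eqref{extended_entire}, the nondegeneracy result of \cite{DDS}, and a final pairing with $\Phi_m$ to contradict $\|\Phi_m\|_{\mathcal{C}_{\epsilon_m}}=1$. The only differences are organizational: the paper first shows the multiplier term $Q_l=\sum_{i,j} c^l_{ij}P_l\Psi^j_{il}$ tends to zero in norm before passing to the limit (you instead carry the $c^m_{ij}$ into the limit and eliminate them by pairing with the kernel elements), and the boundedness of the blow-up limit---which is genuinely needed to invoke the classification of \cite{DDS} and which the paper establishes by an explicit Moser-type iteration followed by a Green's-representation bootstrap---is compressed in your write-up into ``standard elliptic regularity,'' a step that does work but deserves the detail the paper gives it.
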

\begin{proof}
Assume the contrary.
Then there exist sequences $\epsilon_l >0$, $\Phi_l \in K_{\bl_l, \bs_l}^{\epsilon_l}$, $H_l = L_{\bl_l, \bs_l}^{\epsilon_l} (\Phi_l)$ and $(\bl_l, \bs_l) = ((\lambda_{1l}, \cdots, \lambda_{kl}), (\sigma_{1l}, \cdots, \sigma_{kl})) \in O^{\delta_0}$ ($l \in \mathbb{N}$) satisfying
\begin{equation}\label{en0ln}
\lim_{l \to \infty} \epsilon_l = 0, \quad \|\Phi_l\|_{\mathcal{C}_{\epsilon_l}} = 1,
\quad \lim_{l \to \infty} \|H_l\|_{\mathcal{C}_{\epsilon_l}} = 0, \quad \lim_{l \to \infty} (\bl_l, \bs_l) = (\bl_{\infty}, \bs_{\infty}) \in O^{\delta_0}.
\end{equation}
Set $\mathcal{C}_l = \mathcal{C}_{\epsilon_l}$, $\Omega_l = \Omega_{\epsilon_l}$, $P_l w_{il} = P_{{\epsilon_l}}w_{\lambda_{il}, \sigma_{il}}$ and $P_l \Psi_{il}^{j} = P_{{\epsilon_l}} \Psi^{j}_{\lambda_{il}, \sigma_{il}}$ for $i = 1, \cdots, k$ and $j = 1, \cdots, n$.
If we further denote $\phi_l = i_{\epsilon_l}(\Phi_l)$, then we have
\begin{equation}\label{pnief}
C_s^{-1} \Phi_l - i_{\epsilon_l}^{*} \left[f'_{\epsilon_l}\left(\sum_{i=1}^{k} P_l w_{il}\right) \phi_l \right] = H_l + Q_l \quad \text{in } H_{0,L}^{1} (\mathcal{C}_l)
\end{equation}
where $Q_l := \sum_{i=1}^k \sum_{j=1}^n c_{ij}^{l} P_l \Psi^{j}_{il}$ for some constants $c_{ij}^{l} \in \mathbb{R}$.
By our assumptions above and the relation
\begin{equation}\label{Psi_ortho}
\lim_{l \to \infty}\left(P_l\Psi_{i_1l}^{j_1}, P_l\Psi_{i_2l}^{j_2}\right)_{\mathcal{C}_l}
= p \cdot C_s \lim_{l \to \infty} \int_{\Omega_l} U_{i_1}^{p-1}\psi_{i_1l}^{j_1}P_l\psi_{i_2l}^{j_2}
= c_{j_1} \delta_{i_1i_2}\delta_{j_1j_2}
\end{equation}
for some constant $c_{j_1} > 0$ depending on $j_1$ ($i_1, i_2 = 1, \cdots, k$ and $j_1, j_2 = 0, \cdots, n$), it holds that $\|Q_l\|_{\mathcal{C}_l}$ is bounded and so is $\left|c_{ij}^{l}\right|$.

\medskip
First we claim that
\[\lim_{l \rightarrow \infty} \|Q_l\|_{\mathcal{C}_l} = 0.\]
Indeed, testing \eqref{pnief} with $Q_l$, and employing Lemmas \ref{lem_red_appen1}, \ref{lem_red_appen2} and \ref{lem_red_appen3}, the definition of the operator $i^*_{\epsilon_l}$,
and the relation $\left(\Phi_l, P_l\Psi_{il}^j\right)_{\mathcal{C}_l} = C_s  \int_{\Omega_l} w_{il}^{p-1}\psi_{il}^j\phi_l = 0$ which comes from $\Phi_l \in K_{\bl_l, \bs_l}^{\epsilon_l}$ and \eqref{def_P_epsilon}, we can deduce
\begin{align*}
&\ \|Q_l\|_{\mathcal{C}_l}^2\\
&= - \int_{\Omega_l} f_{\epsilon_l}'\left(\sum_{i=1}^k P_l w_{il}\right) \phi_l q_l - (H_l, Q_l)_{\mathcal{C}_l}\\
& \le \left[ \left( p \left\| \left(\sum_{i=1}^k P_l w_{il}\right)^{p-1} - \sum_{i=1}^k  w_{il}^{p-1} \right\|_{L^{n \over 2s}(\Omega_l)} + \epsilon^{1 + 2s\alpha_0}|\Omega_l|^{2s \over n}\right) \|\Phi_l\|_{L^{2n \over n-2s}(\Omega_l)} + \|H_l\|_{\mathcal{C}_l} \right] \|Q_l\|_{\mathcal{C}_l}\\
&\ + \left(\sum_{i=1}^k \left\|f_{\epsilon_l}'(w_{il})\right\|_{L^{n \over 2s}(\Omega_l)}\right) \|\Phi_l\|_{L^{2n \over n-2s}(\Omega_l)} \left(\sum_{i,j} \left|c_{ij}^l\right| \Big\|P_l \psi_{il}^j - \psi_{il}^j\Big\|_{L^{2n \over n-2s}(\Omega_l)}\right)\\
&= o(1) \|Q_l\|_{\mathcal{C}_l} + o(1) = o(1)
\end{align*}
where $q_l := i_{\epsilon_l}(Q_l)$.

Choose now a smooth function $\chi: \mathbb{R} \rightarrow [0,1]$ such that $\chi(x)=1 $ if $|x| \leq \delta_0/2$ and $\chi(x)=0$ if $|x| \geq \delta_0$ (where $\delta_0$ is the small number chosen in \eqref{admissible}), and set
\[\chi_l (x) = \chi(\epsilon_l^{\alpha_0} x), \quad \Phi_{hl}(x,t) = \Phi_l (x + \epsilon_l^{-\alpha_0} \sigma_{hl}, t) \chi_l (x) \quad \text{for } (x,t) \in \mathcal{C}_l\]
and $\phi_{hl} := i_{\epsilon_l} (\Phi_{hl})$ for each $h = 1, \cdots, k$.
Since $\|\Phi_{hl}\|_{\mathbb{R}^{n+1}_+} $ is bounded for each $h$, $\Phi_{hl}$ converges to $\Phi_{h\infty}$ weakly in $\mathcal{D}^{s}(\mathbb{R}_+^{n+1})$ up to a subsequence.
Using the same arguments of \cite{MP}, we can conclude that
$\Phi_{h \infty}$ is a weak solution of \eqref{extended_entire} with $(\lambda, \xi) = (\lambda_{h\infty}, 0)$ and
\[\int_{\mathbb{R}^{n+1}_+} t^{1-2s} \nabla \Phi_{h\infty} \cdot \nabla \Psi_{\lambda_{h\infty},0}^j = 0 \quad \text{for all } j=0,1,\cdots,n.\]
In order to use the result of \cite{DDS} to show $\Phi_{h\infty} = 0$,
we also need to know that $\phi_{h\infty}$ is bounded where $\phi_{h \infty}(x) := \Phi_{h \infty}(x, 0)$ for any $x \in \mathbb{R}^n$, and it is the next step we will be concerned with.
Define $\widetilde{\Phi}_L = \min\{|\Phi_{h \infty}|, L\}$ and $\widetilde{\phi}_L = \text{tr}|_{\mathbb{R}^{n+1}}\widetilde{\Phi}_L$ for any $L > 0$,
and select the test function $\widetilde{\Phi}_L^{\beta} \in \mathcal{D}^s(\mathbb{R}^{n+1})$ for \eqref{pnief} with any $\beta > 1$ to obtain
\[{4\beta \over (\beta+1)^2} \Big\|\widetilde{\Phi}_L^{\beta+1 \over 2}\Big\|^2_{\mathbb{R}^{n+1}_+} = C_s \int_{\mathbb{R}^n} f'_0(w_{\lambda_{h\infty}}) \widetilde{\phi}_L^{\beta+1} dx.\]
Then by applying the Sobolev trace embedding and taking $L \to \infty$, we can get
\begin{equation}\label{eq-boot-estimate}
\Big\| \phi_{h\infty}^{\frac{\beta+1}{2}} \Big\|_{L^{\frac{2n}{n-2s}}(\mathbb{R}^{n+1}_+)}
\leq C_{\beta} \left\| \phi_{h\infty} \right\|_{L^{\beta+1}(\mathbb{R}^{n+1}_+)}^{\frac{\beta+1}{2}}
\end{equation}
with a constant $C_{\beta} > 0$ which depends only on $\beta$.
Since we already have that $\|\phi_{h\infty}\|_{L^{\frac{2n}{n-2s}}(\mathbb{R}^{n+1}_+)}$ is finite,
we may deduce from \eqref{eq-boot-estimate} that for any $q > 1$, there is a constant $C_q > 0$ which relies only on the choice of $q$ such that
\[\| \phi_{h \infty}\|_{L^q(\mathbb{R}^{n+1}_+)} \leq C_q.\]
Now we note the expression
\begin{align*}
&\ \phi_{h \infty} (x)\\
&= \int_{\mathbb{R}^{n}} \frac{\mathfrak{a}_{n,s}}{|x-y|^{n-2s}} f'_0( w_{\lambda_{h\infty}})(y) \phi_{h \infty} (y) dy\\
&= \int_{\{|x-y| \leq 1\}} \frac{\mathfrak{a}_{n,s}}{|x-y|^{n-2s}} f'_0 (w_{\lambda_{h\infty}})(y) \phi_{h\infty}(y) dy
+ \int_{\{|x-y| > 1\}} \frac{\mathfrak{a}_{n,s}}{|x-y|^{n-2s}} f'_0 (w_{\lambda_{h\infty}})(y) \phi_{h \infty}(y) dy\\
&:= I_1(x) + I_2(x) \qquad \text{for } x \in \mathbb{R}^n.
\end{align*}
As for $I_1$, we take a very large number $q$ so that $r:=\frac{q}{q-1}$ is sufficiently close to $1$. Then we get
\begin{equation}\label{eq-A}
\begin{aligned}
I_1 & \leq C \left( \int_{\{|x-y| \leq 1\}} \frac{1}{|x-y|^{(n-2s)r}} dy \right)^{1 \over r} \left( \int_{\{|x-y| \leq 1\}} \left| f'_0 (w_{\lambda_{h\infty}})(y) \phi_{h \infty}(y)\right|^q dy \right)^{1 \over q}\\
& \leq C \| \phi_{h \infty}\|_{L^q(\mathbb{R}^{n+1})} \leq C.
\end{aligned}
\end{equation}
Considering $I_2$ we take $r$ such that $r = \frac{n}{n-2s} + \zeta$ for a small number $\zeta >0$.
Then $q$ is close to $\frac{n}{2s}$.
We further find numbers $q_1$ slightly less than $\frac{n}{2s}$ and $q_2$ such that $\frac{1}{q} = \frac{1}{q_1} + \frac{1}{q_2}$.
Then we get
\begin{equation}\label{eq-B}
I_2 \leq C \left( \int_{\{|x-y| > 1\}} \frac{1}{|x-y|^{(n-2s)r}} dy \right)^{1 \over r} \left\| f'_0 (w_{\lambda_{h\infty}})\right\|_{L^{q_1}(\mathbb{R}^{n+1})} \left\| \phi_{h\infty}\right\|_{L^{q_2}(\mathbb{R}^{n+1})} \leq C.
\end{equation}
The estimates \eqref{eq-A} and \eqref{eq-B} show that $\phi_{h \infty}$ is bounded.
Now we may achieve that $\Phi_{h \infty} = 0$ by the classification of the solutions for the linear problem \eqref{extended_entire} obtained in \cite{DDS}.
In summary, we proved that
\begin{equation}\label{phn0w}
\lim_{l \to \infty} \Phi_{hl} = 0 \quad \text{weakly in } \mathcal{D}^{s} (\mathbb{R}_{+}^{n+1}) \quad \text{and} \quad \lim_{l \to \infty} \phi_{hl} = 0 \quad \text{strongly in } L^q(\Omega) \text{ for } 1 \le q < p+1
\end{equation}
($h = 1,\cdots, k$).

\medskip
Consequently, \eqref{phn0w} yields
\[\lim_{l \to \infty} \int_{\Omega_l} f'_{\epsilon}\left(\sum_{i=1}^k P_l w_{il}\right) \phi_l^2 =0.\]
Hence by testing $\Phi_l$ to \eqref{pnief} we may deduce that
\[\lim_{l \to \infty} \|\Phi_l\|_{\mathcal{C}_l} =0.\]
However it contradicts to \eqref{en0ln}. This proves the validity of the lemma.
\end{proof}

For each $\epsilon > 0$ sufficiently small and $(\bls) \in O^{\delta_0}$ fixed,
the linear operator $L_{\bls}^{\epsilon}: K_{\bls}^{\epsilon} \rightarrow K_{\bls}^{\epsilon}$ has the form $\text{Id} + \mathcal{K}$ where Id is the identity operator and $\mathcal{K}$ is a compact operator on $K_{\bls}^{\epsilon}$,
because the trace operator $i_{\epsilon}: H^s_{0, L}(\mathcal{C}_{\epsilon}) \to L^q(\Omega_{\epsilon}) \subset L^{p+1}(\Omega_{\epsilon})$ is compact whenever $q \in [1, p+1)$.
Therefore, by the Fredholm alternative, it is a Fredholm operator of index 0.
However Lemma \ref{lem_linear_nondeg} implies that it is also an injective operator.
Consequently, we have the following result.

\begin{prop}\label{prop_fredholm}
The inverse $(L_{\bls}^{\epsilon})^{-1}$ of $L_{\bls}^{\epsilon}: K_{\bls}^{\epsilon} \rightarrow K_{\bls}^{\epsilon}$ exists for any $\epsilon > 0$ small and $(\bls) \in O^{\delta_0}$.
Besides, its operator norm is uniformly bounded in $\epsilon$ and $(\bls) \in O^{\delta_0}$, if $\epsilon$ is small enough.
\end{prop}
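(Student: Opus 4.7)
The plan is to deduce this Proposition directly from the uniform coercivity estimate of Lemma \ref{lem_linear_nondeg} combined with a standard Fredholm alternative argument, since the hard analytical content (blow-up/nondegeneracy analysis) has already been carried out there.

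First, I would rewrite $L_{\bls}^{\epsilon}$ in the form $C_s^{-1}\mathrm{Id} + \mathcal{K}_{\bls}^{\epsilon}$, where
\[
\mathcal{K}_{\bls}^{\epsilon}(\Phi) = -\Pi_{\bls}^{\epsilon}\, i_{\epsilon}^{*}\!\left[f_{\epsilon}'\!\left(\sum_{i=1}^{k}P_{\epsilon}w_i\right) i_{\epsilon}(\Phi)\right].
\]
Since $p-1 = \tfrac{4s}{n-2s}$ and $f_{\epsilon}'(U) = pU_+^{p-1}+\epsilon^{1+2s\alpha_0}$, H\"older's inequality shows that multiplication by $f_{\epsilon}'(\sum_i P_{\epsilon}w_i)$ sends $L^{q}(\Omega_{\epsilon})$ into $L^{\frac{2n}{n+2s}}(\Omega_{\epsilon})$ continuously for some $q<\frac{2n}{n-2s}$. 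Because $i_{\epsilon}:H_{0,L}^{s}(\mathcal{C}_{\epsilon})\to L^{q}(\Omega_{\epsilon})$ is compact for any $q<p+1=\frac{2n}{n-2s}$ and $i_{\epsilon}^{*}$ is continuous, the operator $\mathcal{K}_{\bls}^{\epsilon}$ is compact on $K_{\bls}^{\epsilon}$. Hence $L_{\bls}^{\epsilon}$ is a compact perturbation of a scalar multiple of the identity, and the Fredholm alternative applies: $L_{\bls}^{\epsilon}$ is Fredholm of index zero on $K_{\bls}^{\epsilon}$.

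Next, I would invoke Lemma \ref{lem_linear_nondeg}, which asserts a uniform lower bound
\[
\|L_{\bls}^{\epsilon}(\Phi)\|_{\mathcal{C}_{\epsilon}} \geq C\,\|\Phi\|_{\mathcal{C}_{\epsilon}}\qquad \text{for all }\Phi\in K_{\bls}^{\epsilon},
\]
with a constant $C=C(n,\delta_0)>0$ independent of $\epsilon$ and $(\bls)\in O^{\delta_0}$, provided $\epsilon$ is small enough. In particular $L_{\bls}^{\epsilon}$ is injective. Combined with the Fredholm index zero property, this yields surjectivity, so $(L_{\bls}^{\epsilon})^{-1}$ exists as a bounded operator on $K_{\bls}^{\epsilon}$. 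The lower bound translates into the operator-norm estimate
\[
\bigl\|(L_{\bls}^{\epsilon})^{-1}\bigr\|_{\mathrm{op}}\le \tfrac{1}{C},
\]
which is uniform in $\epsilon$ and $(\bls)\in O^{\delta_0}$, as claimed.

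There is no genuine obstacle at this stage: all the delicate analysis, namely the contradiction-and-blow-up argument together with the classification result of \cite{DDS} and the bootstrap bound on the limit profile $\phi_{h\infty}$, was already absorbed into Lemma \ref{lem_linear_nondeg}. The only mild point to verify carefully is the compactness claim for $\mathcal{K}_{\bls}^{\epsilon}$: although the domain $\Omega_{\epsilon}$ dilates as $\epsilon\to 0$, the Sobolev trace embedding $H_{0,L}^{s}(\mathcal{C}_{\epsilon})\hookrightarrow L^{q}(\Omega_{\epsilon})$ remains compact for each fixed $\epsilon$ because $\Omega_{\epsilon}$ is still bounded and one only needs compactness fiber-wise (uniformity in $\epsilon$ is not required for Fredholm theory, only the coercivity estimate, which Lemma \ref{lem_linear_nondeg} supplies uniformly).
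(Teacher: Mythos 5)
Your proposal is correct and follows essentially the same route as the paper: write $L_{\bls}^{\epsilon}$ as a scalar multiple of the identity plus a compact operator (compactness coming from the trace embedding $i_{\epsilon}:H^s_{0,L}(\mathcal{C}_{\epsilon})\to L^q(\Omega_{\epsilon})$ for $q<p+1$), apply the Fredholm alternative to get index zero, and use the coercivity of Lemma \ref{lem_linear_nondeg} for injectivity and the uniform bound $\|(L_{\bls}^{\epsilon})^{-1}\|\le 1/C$. Your extra remarks (the H\"older step placing $f_{\epsilon}'(\sum_i P_{\epsilon}w_i)\,i_{\epsilon}(\Phi)$ in $L^{\frac{2n}{n+2s}}$, and that compactness is only needed for each fixed $\epsilon$) are accurate refinements of what the paper leaves implicit.
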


The previous proposition gives us that
\begin{prop}\label{prop_remainder}
For any sufficiently small $\delta_0 >0$ chosen fixed, we can select $\epsilon_0 > 0$ such that for any $\epsilon \in (0,\epsilon_0)$ and $(\bl, \bs) \in O^{\delta_0}$, there exists a unique $\Phi_{\bl, \bs}^{\epsilon} \in K_{\bl, \bs}^{\epsilon}$ satisfying
\[\Pi_{\bl, \bs}^{\epsilon}\left\{C_s^{-1}\left( \sum_{i=1}^{k} P_{\epsilon} W_i + \Phi_{\bl, \bs}^{\epsilon}\right) - i_{\epsilon}^{*} \left[ f_{\epsilon} \left(\sum_{i=1}^{k} P_{\epsilon} w_i + i_{\epsilon} \left(\Phi_{\bl, \bs}^{\epsilon}\right) \right) \right] \right\} =0\]
and
\begin{equation}\label{Phi_est}
\| \Phi_{\bl, \bs}^{\epsilon} \|_{\mathcal{C}_{\epsilon}} \leq C \epsilon^{\eta_0} \quad \text{with} \quad
\eta_0 := \left\{\begin{array}{ll} \frac{1}{2} + 2s \alpha_0 &\text{if } n \geq 6s,\\
\frac{1}{2} + (1+\delta)s\alpha_0 &\text{if } 4s < n < 6s,
\end{array}\right.
\end{equation}
where $\delta > 0$ is chosen to satisfy $(4+2\delta)s < n$.
{Furthermore, the map $(\bls) \mapsto \Phi_{\bl, \bs}^{\epsilon}$ is $C^1(O^{\delta_0})$}.
\end{prop}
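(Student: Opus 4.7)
\medskip
\noindent\textbf{Proof plan for Proposition \ref{prop_remainder}.}

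The plan is to recast the projected equation as a fixed point problem for the linear operator $L_{\bls}^{\epsilon}$ introduced in \eqref{linear_operator} and then invoke the contraction mapping principle using the uniform invertibility already established in Proposition~\ref{prop_fredholm}. First I would perform a Taylor expansion
\[
f_{\epsilon}\!\left(\sum_{i=1}^k P_{\epsilon} w_i + i_{\epsilon}(\Phi)\right) = f_{\epsilon}\!\left(\sum_{i=1}^k P_{\epsilon} w_i\right) + f_{\epsilon}'\!\left(\sum_{i=1}^k P_{\epsilon} w_i\right) i_{\epsilon}(\Phi) + \mathcal{N}_{\epsilon}(i_{\epsilon}(\Phi)),
\]
where the remainder $\mathcal{N}_{\epsilon}$ is at least quadratic in its argument. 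Applying $\Pi_{\bls}^{\epsilon} \circ i_{\epsilon}^*$ and using the definition of $L_{\bls}^{\epsilon}$, the equation for $\Phi \in K_{\bls}^{\epsilon}$ becomes
\[
L_{\bls}^{\epsilon}(\Phi) = M_{\bls}^{\epsilon} + \Pi_{\bls}^{\epsilon} i_{\epsilon}^*\!\left[\mathcal{N}_{\epsilon}(i_{\epsilon}(\Phi))\right],
\]
where the ansatz error is
\[
M_{\bls}^{\epsilon} = \Pi_{\bls}^{\epsilon}\!\left\{ i_{\epsilon}^*\!\left[ f_{\epsilon}\!\Big(\sum_{i=1}^k P_{\epsilon} w_i\Big) - \sum_{i=1}^k w_i^p \right] \right\}.
\]
By Proposition~\ref{prop_fredholm}, this is equivalent to
$\Phi = (L_{\bls}^{\epsilon})^{-1}\bigl(M_{\bls}^{\epsilon} + \Pi_{\bls}^{\epsilon} i_{\epsilon}^*[\mathcal{N}_{\epsilon}(i_{\epsilon}(\Phi))]\bigr) =: T_{\bls}^{\epsilon}(\Phi)$.

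The core technical task is to estimate $\|M_{\bls}^{\epsilon}\|_{\mathcal{C}_{\epsilon}}$; this will be the main obstacle. I would split the error into three pieces: (i) the boundary correction terms $P_{\epsilon}w_i - w_i$, which can be estimated using the maximum principle of Lemma~\ref{lem-maximum} and the regularity of the regular part $H_{\mathcal{C}}$ from Lemma~\ref{lem-property-H}, giving contributions of order $\epsilon^{(n-2s)\alpha_0}$ after accounting for the dilation; (ii) the interaction term $(\sum_i P_{\epsilon} w_i)^p - \sum_i (P_{\epsilon} w_i)^p$, which by the elementary inequality
\[
\Bigl| (a+b)^p - a^p - b^p \Bigr| \le C \begin{cases} a^{p-1}b + ab^{p-1} & \text{if } p \ge 2,\\ \min(a,b)^p + \min(a^{p-1}b, ab^{p-1}) & \text{if } 1 < p < 2,\end{cases}
\]
yields the two different exponents depending on whether $n \ge 6s$ (equivalent to $p \le 2$) or $4s < n < 6s$ (equivalent to $p > 2$); and (iii) the linear perturbation $\epsilon^{1 + 2s\alpha_0} \sum_i P_{\epsilon}w_i$, which is handled by a direct $L^{2n/(n+2s)}$ bound and contributes $O(\epsilon^{1+2s\alpha_0})$ after using the duality $\|i_{\epsilon}^*(g)\|_{\mathcal{C}_{\epsilon}} \le C \|g\|_{L^{2n/(n+2s)}(\Omega_{\epsilon})}$. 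Collecting these with the separation assumption built into $O^{\delta_0}$ and the fact that the bubbles $w_i$ concentrate at scale $1$ in the dilated variable while being separated by distance $\delta_0 \epsilon^{-\alpha_0}$, one obtains $\|M_{\bls}^{\epsilon}\|_{\mathcal{C}_{\epsilon}} \le C \epsilon^{\eta_0}$ with the stated $\eta_0$.

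Once $M_{\bls}^{\epsilon}$ is controlled, it remains to show that $T_{\bls}^{\epsilon}$ is a contraction on the ball $B_{\epsilon} := \{\Phi \in K_{\bls}^{\epsilon} : \|\Phi\|_{\mathcal{C}_{\epsilon}} \le C_0 \epsilon^{\eta_0}\}$ for some large $C_0$. For this I would use the mean value theorem together with H\"older's inequality and the Sobolev trace embedding to obtain
\[
\Big\| \mathcal{N}_{\epsilon}(i_{\epsilon}(\Phi_1)) - \mathcal{N}_{\epsilon}(i_{\epsilon}(\Phi_2)) \Big\|_{L^{\frac{2n}{n+2s}}(\Omega_{\epsilon})} \le C \bigl(\|\Phi_1\|_{\mathcal{C}_{\epsilon}}^{\min(p-1,1)} + \|\Phi_2\|_{\mathcal{C}_{\epsilon}}^{\min(p-1,1)}\bigr) \|\Phi_1 - \Phi_2\|_{\mathcal{C}_{\epsilon}},
\]
and a similar estimate for $\|\mathcal{N}_{\epsilon}(i_{\epsilon}(\Phi))\|_{L^{2n/(n+2s)}}$. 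Combined with the uniform bound on $\|(L_{\bls}^{\epsilon})^{-1}\|$, this shows $T_{\bls}^{\epsilon}$ maps $B_{\epsilon}$ into itself and contracts provided $\epsilon$ is small. The Banach fixed point theorem then delivers a unique $\Phi_{\bls}^{\epsilon} \in B_{\epsilon}$, yielding the bound \eqref{Phi_est}. Finally, to obtain $C^1$ dependence on the parameters, I would apply the implicit function theorem to the $C^1$ map
\[
\mathcal{F}(\epsilon, \bls, \Phi) := \Pi_{\bls}^{\epsilon}\!\left\{ C_s^{-1}\!\Big( \sum_i P_{\epsilon} W_i + \Phi \Big) - i_{\epsilon}^*\!\Big[ f_{\epsilon}\!\Big( \sum_i P_{\epsilon} w_i + i_{\epsilon}(\Phi)\Big) \Big] \right\}
\]
at the solution $(\epsilon, \bls, \Phi_{\bls}^{\epsilon})$, noting that $D_{\Phi} \mathcal{F} = L_{\bls}^{\epsilon}$ is invertible by Proposition~\ref{prop_fredholm} and the dependence of the projections $P_{\epsilon} W_i$, $P_{\epsilon}\Psi_i^j$ on $(\bls)$ is smooth by elliptic regularity for the extension problem.
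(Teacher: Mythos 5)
Your proposal is correct and follows essentially the same route as the paper: the paper likewise rewrites the projected equation as a fixed point problem $\Phi = (L_{\bls}^{\epsilon})^{-1}(N_{\epsilon}(\Phi) + R_{\epsilon})$, where $R_{\epsilon}$ is exactly your ansatz error $M_{\bls}^{\epsilon}$ (up to the $C_s$ normalization), bounds $\|R_{\epsilon}\|_{\mathcal{C}_{\epsilon}} = O(\epsilon^{\eta_0})$ via Lemmas \ref{lem-projection-robin}, \ref{lem_red_appen1} and \ref{lem_red_appen3} (which encode your three pieces: projection error, bubble interaction with the $n \gtrless 6s$ dichotomy, and the $\epsilon^{1+2s\alpha_0}$ linear term), and concludes by the contraction mapping principle on a ball of radius $C\epsilon^{\eta_0}$, invoking Proposition \ref{prop_fredholm} and referring to \cite[Proposition 1.8]{MP}, \cite[Proposition 3]{DDM} for the remaining details, including the $C^1$ dependence you obtain via the implicit function theorem.
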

\begin{proof}
Define
\begin{align*}
N_{\epsilon}(\Phi) &= \Pi_{\bls}^{\epsilon} \circ i_{\epsilon}^* \left[f_{\epsilon}\left(\sum_{i=1}^{k} P_{\epsilon} w_i + i_{\epsilon} (\Phi)\right) - f_{\epsilon}\left(\sum_{i=1}^{k} P_{\epsilon} w_i\right) - f'_{\epsilon}\left(\sum_{i=1}^{k} P_{\epsilon} w_i\right) i_{\epsilon} (\Phi)\right],\\
R_{\epsilon} &= \Pi_{\bls}^{\epsilon} \left( i_{\epsilon}^*\left[ f_{\epsilon}\left(\sum_{i=1}^{k} P_{\epsilon} w_i\right) \right] - C_s^{-1}\sum_{i=1}^{k} P_{\epsilon} W_i\right)\\
\intertext{and}
T_{\epsilon}(\Phi) &= (L_{\bls}^{\epsilon})^{-1}(N_{\epsilon}(\Phi) + R_{\epsilon}) \quad \text{for } \Phi \in K_{\bls}^{\epsilon},
\end{align*}
where the set $K_{\bls}^{\epsilon}$ and the operator $\Pi_{\bls}^{\epsilon}$ are defined in \eqref{ortho} and the sentence following it.
Also, the well-definedness of the inverse of the operator $L_{\bls}^{\epsilon}$ is guaranteed by Proposition \ref{prop_fredholm}.
By Lemmas \ref{lem-projection-robin}, \ref{lem_red_appen1} and \ref{lem_red_appen3}, we have $\|R_{\epsilon}\|_{\mathcal{C}_{\epsilon}} = O\left(\epsilon^{\eta_0}\right)$ as $\epsilon \to 0$, and
from this we can conclude that $T_{\epsilon}$ is a contraction mapping on
$\mathcal{K}_{\bls}^{\epsilon} := \{\Phi \in K_{\bls}^{\epsilon}: \|\Phi\|_{\mathcal{C}_\epsilon} \le C \epsilon^{\eta_0}\}$  for some small $C > 0$, which implies the existence of a unique fixed point of $T_{\epsilon}$ on $\mathcal{K}_{\bls}^{\epsilon}$.
It is easy to check that this fixed point is our desired function $\Phi_{\bls}^{\epsilon}$.
For the detailed treatment of the argument, we refer to \cite[Proposition 1.8]{MP} (see also \cite[Proposition 3]{DDM}).
\end{proof}

\subsection{The reduced problem}
We set $\alpha_0 =\frac{1}{n-4s}$.
Notice that equation \eqref{equation-dilated} for each fixed $\epsilon > 0$ has the variational structure, that is, $U \in H_{0,L}^s(\Omega)$ is a weak solution of the equation if and only if it is a critical point of the energy functional
\begin{equation}\label{energy_functional}
E_{\epsilon}(U) := {1 \over 2C_s} \int_{\mathcal{C}_{\epsilon}} t^{1-2s}|\nabla U|^2 - \int_{\Omega_{\epsilon} \times \{0\}} F_{\epsilon}(i_{\epsilon}(U))
\end{equation}
where $F_{\epsilon}(t) := \int_0^t f_{\epsilon}(t) dt$.
In fact, thanks to the Sobolev trace embedding $i_{\epsilon}: H_{0,L}^s(\mathcal{C}_{\epsilon}) \to L^{p+1}(\Omega_{\epsilon})$,
we can obtain that $E_{\epsilon}: H_{0,L}^s(\mathcal{C}_{\epsilon}) \to \mathbb{R}$ is a $C^1$-functional and
\[E'_{\epsilon}(U)\Phi = {1 \over C_s}\int_{\mathcal{C}_{\epsilon}} t^{1-2s} \nabla U \cdot \nabla \Phi - \int_{\Omega_{\epsilon} \times \{0\}} f_{\epsilon}(i_{\epsilon}(U))i_{\epsilon}(\Phi) \quad \text{for any } \Phi \in H_{0,L}^s(\mathcal{C}_{\epsilon}).\]
Using Proposition \ref{prop_remainder}, we can define a localized energy functional defined in the admissible set $O^{\delta_0}$ in \eqref{admissible}:
\begin{equation}\label{energy_functional_local}
\widetilde{E}_{\epsilon}(\bls) := E_{\epsilon} \left(\sum_{i=1}^k P_{\epsilon} W_{\lambda_i, \frac{\sigma_i}{\epsilon^{\alpha_0}}} +
\Phi_{\bls}^{\epsilon}\right)
\end{equation}
for $(\bls) = ((\lambda_1, \cdots, \lambda_k), (\sigma_1, \cdots, \sigma_k)) \in O^{\delta_0}$. Then we can obtain the following important properties of $\widetilde{E}_{\epsilon}$.

\begin{prop}\label{prop_main_est}
Suppose $\epsilon > 0$ is sufficiently small.

\noindent (1) If $\widetilde{E}_{\epsilon}'(\bl^{\epsilon}, \bs^{\epsilon}) = 0$ for some $(\bl^{\epsilon}, \bs^{\epsilon}) \in O^{\delta_0}$, then the function $U_{\epsilon} := \sum_{i=1}^k P_{\epsilon} W_{\lambda_i^{\epsilon}, \frac{\sigma_i^{\epsilon}}{\epsilon^{\alpha_0}}} + \Phi_{\bl^{\epsilon}, \bs^{\epsilon}}^{\epsilon}$ is a solution of \eqref{equation-dilated-m}.
Hence one concludes that a dilated function $V_{\epsilon}(z) := \epsilon^{-{n-2s \over 2(n-4s)}} U_{\epsilon}(\epsilon^{-{1 \over n-4s}}z)$ defined for $z \in \mathcal{C}$ is a solution of \eqref{equation-construction}.

\noindent
(2) Recall the number $\eta_0$ chosen in \eqref{Phi_est}. Then it holds that
\begin{equation}\label{energy_local_expansion}
\widetilde{E}_{\epsilon}(\bls) = {ks \over n}c_0 + {1 \over 2} \Upsilon_k(\bls) \epsilon^{n-2s \over n-4s} + o(\epsilon^{n-2s \over n-4s})
\end{equation}
in $C^1$-uniformly in $(\bls) \in O^{\delta_0}$. Here $\Upsilon_k$ is the function introduced in \eqref{upsilon_brezis} and
\begin{equation}\label{constant_C}
c_0 = \int_{\mathbb{R}^n} w_{1,0}^{p+1}(x)dx
\end{equation}
(recall that $w_{1,0}$ is the function obtained by taking $(\lambda, \xi) = (1,0)$ in \eqref{bubble}).
\end{prop}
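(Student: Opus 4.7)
\emph{Part (1).} By Proposition \ref{prop_remainder} the function $U_{\bls} := \sum_{i=1}^k P_\epsilon W_i + \Phi_{\bls}^\epsilon$ satisfies
\[
C_s^{-1}U_{\bls} - i_\epsilon^{*}\bigl(f_\epsilon(i_\epsilon U_{\bls})\bigr) = \sum_{i,j} c_{ij}(\bls)\,P_\epsilon\Psi_i^{j}
\]
for smooth coefficients $c_{ij}$. Pairing this identity with $\partial_{\lambda_i} U_{\bls}$ and $\partial_{\sigma_i^j} U_{\bls}$ converts $\widetilde E'_\epsilon(\bl^\epsilon, \bs^\epsilon) = 0$ into a homogeneous linear system for the $c_{ij}$; since $\partial_{\lambda_i} U_{\bls} = P_\epsilon\Psi_i^0 + \partial_{\lambda_i}\Phi_{\bls}^\epsilon$ and analogously for $\partial_{\sigma_i^j}$, with the $\Phi$-derivatives of size $O(\epsilon^{\eta_0})$ by implicit differentiation of Proposition \ref{prop_remainder}, and the Gram matrix of the $P_\epsilon\Psi_i^j$ is a small perturbation of the diagonal matrix from \eqref{Psi_ortho}, this system's coefficient matrix is invertible, so $c_{ij} \equiv 0$ at the critical point and $U_\epsilon$ solves \eqref{equation-dilated-m}. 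Positivity follows by testing against $(U_\epsilon)_-$: the truncation $U^p_+$ annihilates the superlinear term on $\{U_\epsilon<0\}$, leaving
\[
\int_{\mathcal{C}_\epsilon} t^{1-2s}|\nabla (U_\epsilon)_-|^2\,dz = C_s\epsilon^{1+2s\alpha_0}\int_{\Omega_\epsilon}(U_\epsilon)_-^2\,dx,
\]
which by \eqref{eq-sharp-trace} combined with $|\Omega_\epsilon|^{2s/n} = O(\epsilon^{-2s\alpha_0})$ forces $(U_\epsilon)_- \equiv 0$ for $\epsilon$ small. Finally, the rescaling to $V_\epsilon$ is a direct change of variables: the critical homogeneity $(p-1)(n-2s)/2 = 2s$ converts the $\epsilon^{2s\alpha_0}$ coming from $\mathcal{A}_s$ into the right normalization of $V^p$, and $\epsilon^{-2s\alpha_0}\cdot\epsilon^{1+2s\alpha_0} = \epsilon$ handles the linear term.

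\emph{Part (2).} Put $V = \sum_{i=1}^k P_\epsilon W_i$ and $\Phi = \Phi_{\bls}^\epsilon$. Since $E_\epsilon \in C^2$, $\|\Phi\|_{\mathcal{C}_\epsilon} = O(\epsilon^{\eta_0})$ and the residual bound $\|R_\epsilon\|_{\mathcal{C}_\epsilon} = O(\epsilon^{\eta_0})$ from the proof of Proposition \ref{prop_remainder} give $\widetilde E_\epsilon(\bls) = E_\epsilon(V) + O(\epsilon^{2\eta_0})$, and a direct check confirms $2\eta_0 > (n-2s)/(n-4s)$ in both regimes of \eqref{Phi_est}, so the $\Phi$-contribution falls inside the error $o(\epsilon^{(n-2s)/(n-4s)})$. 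Integrating by parts using $\textnormal{div}(t^{1-2s}\nabla P_\epsilon W_i)=0$ and $\partial_\nu^s P_\epsilon W_i = w_i^p$ reduces the kinetic part of $E_\epsilon(V)$ to $\tfrac{1}{2}\sum_{i,j}\int_{\Omega_\epsilon}w_i^p P_\epsilon w_j\,dx$. Substituting the projection expansion provided by Lemma \ref{lem-projection-robin}---which writes $w_j - P_\epsilon w_j$ as a Robin-function multiple of order $\epsilon^{(n-2s)\alpha_0}$---together with the standard two-bubble interaction estimate for $\int w_i^p w_j$ (whose singular factor $|\sigma_i-\sigma_j|^{-(n-2s)}$ splits via $\mathfrak{a}_{n,s}|\sigma_i-\sigma_j|^{-(n-2s)} = G(\sigma_i,\sigma_j) + H(\sigma_i,\sigma_j)$), the $H$ contributions assemble into the diagonal term $c_1^2\sum_i H(\sigma_i,\sigma_i)\lambda_i^{n-2s}$ and the $G$ contributions into $-c_1^2\sum_{i\ne h}G(\sigma_i,\sigma_h)(\lambda_i\lambda_h)^{(n-2s)/2}$ with the signs recorded in \eqref{upsilon_brezis}. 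Expanding the potential $\int F_\epsilon(V)$ similarly, the identity $\tfrac12 - \tfrac{1}{p+1} = \tfrac{s}{n}$ produces the leading $\tfrac{ks}{n}c_0$, while the linear piece gives $\tfrac{c_2}{2}\epsilon^{1+2s\alpha_0}\sum_i\lambda_i^{2s} + o(\epsilon^{1+2s\alpha_0})$. The choice $\alpha_0 = 1/(n-4s)$, forcing $(n-2s)\alpha_0 = 1+2s\alpha_0 = (n-2s)/(n-4s)$, aligns the two small-parameter scales and yields \eqref{energy_local_expansion}.

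\emph{$C^1$-uniformity and main obstacle.} The $C^1$ convergence is obtained by differentiating $\widetilde E_\epsilon = E_\epsilon(V+\Phi)$ in $(\bls)$: the implicit function argument inside Proposition \ref{prop_remainder} also yields $\|\partial_{\lambda_i}\Phi_{\bls}^\epsilon\|_{\mathcal{C}_\epsilon} + \|\partial_{\sigma_i^j}\Phi_{\bls}^\epsilon\|_{\mathcal{C}_\epsilon} = O(\epsilon^{\eta_0})$, and each interaction integral appearing above is $C^1$ in $(\bls)$ with error estimates stable under differentiation; the routine but lengthy computations will be deferred to Appendix \ref{sec_appen_c}. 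The main obstacle throughout is tracking the two competing small scales $\epsilon^{(n-2s)\alpha_0}$ and $\epsilon^{1+2s\alpha_0}$, which agree only because of the special choice of $\alpha_0$, and in particular ensuring that in the lower-dimensional regime $4s < n < 6s$ the sharpened bound on $\|\Phi\|_{\mathcal{C}_\epsilon}$ given by the second branch of \eqref{Phi_est}---whose extra parameter $\delta$ is introduced precisely for this purpose---remains strong enough to guarantee $2\eta_0 > (n-2s)/(n-4s)$.
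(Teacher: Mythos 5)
Your proposal is correct and follows essentially the same route as the paper: part (1) via the vanishing of the Lagrange-multiplier coefficients $c_{ij}$ (using \eqref{Psi_ortho} and \eqref{Phi_est}) together with testing against the negative part and the rescaling $\alpha_0 = \tfrac{1}{n-4s}$, and part (2) via $\widetilde{E}_{\epsilon} = E_{\epsilon}\bigl(\sum_i P_{\epsilon}W_i\bigr) + O(\epsilon^{2\eta_0})$ followed by the projection/interaction expansions of Lemmas \ref{lem-projection-robin}--\ref{lem-projection-green} and Appendix \ref{sec_appen_c}. The only cosmetic differences are that you run the positivity argument in the dilated variables rather than through Sublemma \ref{sol_pos}, and you obtain the off-diagonal $G$-terms from the bubble--bubble interaction plus the Robin correction instead of reading $P_{\epsilon}w_i$ off Green's function away from $\sigma_i$; both are equivalent to the paper's computation.
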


\noindent We postpone its proof in Appendix \ref{subsec_proof_of_main_est}.

\subsection{Definition of stable critical sets and conclusion of the proofs of Theorems \ref{thm-m-multipeak}, \ref{domain-construction}}\label{subsec_proof_reduction}
We recall the definition of stable critical sets which was introduced by Li \cite{Li2}. \begin{defn}\label{stable_critical}
Suppose that $D \subset \mathbb{R}^n$ is a domain and $g$ is a $C^1$ function in $D$.
We say that a bounded set $\Lambda \subset D$ of critical points of $f$ is a stable critical set if there is a number $\delta > 0$ such that $\|g-h\|_{L^{\infty}(\Lambda)} + \|\nabla (g - h)\|_{L^{\infty}(\Lambda)} < \delta$ for some $h \in C^1(D)$ implies the existence of a critical point of $h$ in $\Lambda$.
\end{defn}

Now we are ready to prove Theorem \ref{thm-m-multipeak}.
\begin{proof}[Proof of Theorem \ref{thm-m-multipeak}]
By the virtue of Proposition \ref{prop_main_est} (2) and Definition \ref{stable_critical},
we can find a pair $(\bl^{\epsilon}, \bs^{\epsilon}) \in \Lambda_k$ which is a critical point of the reduced energy functional $\widetilde{E}_{\epsilon}$ (defined in \eqref{energy_functional}) given $0 < \epsilon < \epsilon_0$ for some $\epsilon_0$ small enough.
From this fact and Proposition \ref{prop_main_est} (1), we obtain a solution $v_{\epsilon} := i_{\epsilon}(V_{\epsilon})$ of \eqref{a12uu} for $\epsilon \in (0, \epsilon_0)$.

Also, by using the dilation invariance of \eqref{entire_nonlocal} and the trace inequality \eqref{eq-sharp-trace}, we see that
$v_{\epsilon} = \sum_{i=1}^k P_1 w_{\epsilon^{\alpha_0}\lambda_i^{\epsilon}, \sigma_i^{\epsilon}} + \widetilde{\phi}_{\bl^{\epsilon}, \bs^{\epsilon}}^{\epsilon}$ in $\Omega$
where $\|\widetilde{\phi}_{\bl^{\epsilon}, \bs^{\epsilon}}^{\epsilon}\|_{L^{2n \over n-2s}(\Omega)} \le C \|\Phi_{\bl^{\epsilon}, \bs^{\epsilon}}^{\epsilon}\|_{\mathcal{C}_{\epsilon}} = O(\epsilon^{\eta_0})$ ($\eta_0 > 0$ is chosen in \eqref{Phi_est}).
From this fact, if we test \eqref{a12uu} with $\widetilde{\phi}_{\bls}^{\epsilon}$ and use \eqref{def_P_epsilon2}, we can deduce $\|\widetilde{\phi}_{\bls}^{\epsilon}\|_{H^s(\Omega)} = o(1)$.
Furthermore, it is obvious that there exists a point
$(\bl^0, \bs^0) \in \Lambda_k$ such that
$(\bl^{\epsilon}, \bs^{\epsilon}) \to (\bl^0, \bs^0)$ up to a subsequence.
This completes the proof of Theorem \ref{thm-m-multipeak}.
\end{proof}

\begin{proof}[Proof of Theorem \ref{domain-construction}]
We recall that $G$ and $\tau$ are Green's function and the Robin function of $\mathcal{A}_{s}$ in $\Omega$ with the zero Dirichlet boundary condition, respectively (see \eqref{Green_Omega} and \eqref{Robin_Omega}).
To emphasize the dependence of $G$ and $\tau$ on the domain $\Omega$,
we append the subscript $\Omega$ in $G$ and $\tau$ so that $G = G_{\Omega}$ and $\tau = \tau_{\Omega}$.

If a sequence of domains $\{\Omega_{\epsilon}: \epsilon > 0\}$ satisfies $\lim_{\epsilon \rightarrow 0} \Omega_{\epsilon} = \Omega$ and $\Omega_{\epsilon_1} \subset \Omega_{\epsilon_2}$ for any $\epsilon_1 < \epsilon_2$, then $\tau_{\Omega_{\epsilon}}$ converges to $\tau_{\Omega}$ in $C^{1}_{\text{loc}}(\Omega)$.
In order to prove this statement, we first note that the maximum principle (Lemma \ref{lem-maximum}, cf. \cite[Lemma 3.3]{T2}) ensures that $\tau_{\Omega_{\epsilon}}$ is monotone increasing as $\epsilon \to 0$ and tends to $\tau_{\Omega}$ pointwise.
Then we can deduce from {Lemma \ref{lem-property-H}} that it converges also in $C^{1}$ on any compact set of $\Omega$.
Similar arguments also apply to show that $G_{\Omega_\epsilon}(x,y)$ converges to $G_{\Omega}(x,y)$ in $C^{1}$ locally on $\{(x,y) \in \Omega_{\epsilon}^2: x \neq y\}$.
The rest part of the proof goes along the same way to \cite{MP} or \cite{EGP},
where the authors considered domains $\Omega_{\epsilon}$ consisting of $k$ disjoint balls and thin strips liking them whose widths are $\epsilon$.
\end{proof}

\section{The subcritical problem}\label{sec_subcrit}
We are now concerned in the proofs of Theorem \ref{thm-m-limit-sub} and Theorem \ref{thm-m-multipeak2}.
Since many steps of the proofs for the previous theorems can be modified easily for problem \eqref{eqtn-subcritical}, we only stress the parts where some different arguments should be introduced.

\medskip
Remind that $\mu_{\epsilon} = \mathfrak{c}_{n,s}^{-1} \sup_{x \in \Omega} u_{\epsilon}(x)$ and $x_{\epsilon} \in \Omega$ is a point which satisfies $\mu_{\epsilon}= \mathfrak{c}_{n,s}^{-1}u_{\epsilon}(x_{\epsilon})$. (See Lemma \ref{lem_mu_e}.)
We also define the  functions $b_{\epsilon}$ and $B_{\epsilon}$ with their domains $\Omega_{\epsilon}$ and $\mathcal{C}_{\epsilon}$ as in \eqref{aexme} and \eqref{aezme}, replacing the scaling factor ${2 \over n-2s} = {p-1 \over 2s}$ by ${p-1-\epsilon \over 2s}$.
Then $b_{\epsilon}$ converges to $w_1$ pointwisely.

In order to get the uniform boundedness result, we first need the following bound of $\mu_{\epsilon}$.
\begin{lem}\label{forso}
There exists a constant $C>0$ such that
\begin{equation}\label{dmn1e}
C \leq \mu_{\epsilon}^{-(\frac{n}{2s}-1)\epsilon} \quad \text{for all } \epsilon >0.
\end{equation}
\end{lem}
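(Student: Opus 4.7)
The proof plan rests on a scaling identity between $u_\epsilon$ and its rescaled version $b_\epsilon$. A direct change of variables $y = \mu_\epsilon^{(p-1-\epsilon)/(2s)}(x-x_\epsilon)$ produces
\[
\int_\Omega u_\epsilon(x)^{p+1-\epsilon}\,dx \;=\; \mu_\epsilon^{((n-2s)/(2s))\epsilon}\int_{\Omega_\epsilon} b_\epsilon(y)^{p+1-\epsilon}\,dy,
\]
since the exponent $(p+1-\epsilon) - n(p-1-\epsilon)/(2s)$ simplifies to $(n/(2s)-1)\epsilon$. The target inequality is therefore equivalent to the comparison $\int_\Omega u_\epsilon^{p+1-\epsilon}\,dx \leq K \int_{\Omega_\epsilon} b_\epsilon^{p+1-\epsilon}\,dy$ for some $\epsilon$-independent $K$.

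The easier half is a uniform positive lower bound on $\int_{\Omega_\epsilon} b_\epsilon^{p+1-\epsilon}\,dy$. Since $b_\epsilon \to w_1$ pointwise on $\mathbb{R}^n$ (and in fact uniformly on compact sets, by the subcritical analog of Lemma \ref{lem-convergence}) while $b_\epsilon(0)=\mathfrak{c}_{n,s}$, I fix a large radius $R>0$ such that $\int_{B_n(0,R)} w_1^{p+1}\,dy \geq \tfrac{2}{3}\int_{\mathbb{R}^n}w_1^{p+1}$. The uniform convergence on $B_n(0,R)$ then gives $\int_{\Omega_\epsilon} b_\epsilon^{p+1-\epsilon}\,dy \geq \int_{B_n(0,R)} b_\epsilon^{p+1-\epsilon}\,dy \geq \tfrac{1}{2}\int_{\mathbb{R}^n} w_1^{p+1} > 0$ for all sufficiently small $\epsilon$.

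The harder half is a uniform upper bound on $\int_\Omega u_\epsilon^{p+1-\epsilon}\,dx$. Testing the equation $\mathcal{A}_s u_\epsilon = u_\epsilon^{p-\epsilon}$ with $u_\epsilon$ yields the energy identity $\int u_\epsilon^{p+1-\epsilon} = \|u_\epsilon\|_{H_0^s(\Omega)}^2$. The sharp Sobolev inequality and Hölder's inequality applied in turn give
\[
\|u_\epsilon\|_{L^{p+1}(\Omega)}^{2} \leq \mathcal{S}_{n,s}^2 \int u_\epsilon^{p+1-\epsilon}\,dx \leq \mathcal{S}_{n,s}^2 \|u_\epsilon\|_{L^{p+1}(\Omega)}^{p+1-\epsilon}|\Omega|^{\epsilon/(p+1)},
\]
which furnishes a uniform positive lower bound $\|u_\epsilon\|_{L^{p+1}(\Omega)}^{p-1-\epsilon} \geq \mathcal{S}_{n,s}^{-2}|\Omega|^{-\epsilon/(p+1)}$. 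Combining this with the reverse-Hölder type inequality $\|u_\epsilon\|_{L^{p+1}(\Omega)}^{p+1} \leq \|u_\epsilon\|_{L^\infty(\Omega)}^{\epsilon}\int u_\epsilon^{p+1-\epsilon}$ (which uses $\|u_\epsilon\|_{L^\infty}=\mathfrak{c}_{n,s}\mu_\epsilon$) and the hypothesis \eqref{2p1s0}, which forces the ratio $\|u_\epsilon\|_{H_0^s(\Omega)}^2/\|u_\epsilon\|_{L^{p+1}(\Omega)}^2$ to stay bounded, produces a closed system of inequalities that ultimately controls $\mu_\epsilon^\epsilon$.

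The principal obstacle is exactly this final step: the estimates above have a self-referential flavor (lower bound on $\|u_\epsilon\|_{L^{p+1}}$, bound on $\|u_\epsilon\|_{H_0^s}^2$ in terms of $\|u_\epsilon\|_{L^{p+1}}^2$, bound on $\|u_\epsilon\|_{L^{p+1}}^{p+1}$ in terms of $\mu_\epsilon^\epsilon$ and $\|u_\epsilon\|_{H_0^s}^2$), and one must carefully disentangle them to extract an unconditional upper bound on $\mu_\epsilon^\epsilon$. Crucially, this must be done using only the pointwise convergence $b_\epsilon \to w_1$ and the assumption \eqref{2p1s0}, since the stronger uniform pointwise bound $b_\epsilon \leq C w_1$ (the subcritical analog of Proposition \ref{prop-uniform-bound}) is only proved later and takes the present lemma as input.
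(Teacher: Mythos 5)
Your reduction is exactly the paper's: the change of variables giving $\int_\Omega u_\epsilon^{p+1-\epsilon}\,dx=\mu_\epsilon^{(\frac{n}{2s}-1)\epsilon}\int_{\Omega_\epsilon}b_\epsilon^{p+1-\epsilon}\,dy$, together with a uniform positive lower bound on $\int_{\Omega_\epsilon}b_\epsilon^{p+1-\epsilon}$ (the paper gets it from Fatou's lemma on $B_n(0,1)$ using only pointwise convergence $b_\epsilon\to w_1$), reduces the lemma to the uniform bound $\int_\Omega u_\epsilon^{p+1-\epsilon}\,dx\le C$, i.e.\ to a uniform bound on the Dirichlet energy $\int_\Omega|\mathcal{A}_s^{1/2}u_\epsilon|^2$. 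Up to this point your argument and the paper's three-line proof coincide.

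The gap is the step you yourself flag: the ``closed system'' you propose cannot produce that energy bound, because it is genuinely circular. The only inequality in your list that turns an upper bound on $\|u_\epsilon\|_{L^{p+1}}$ into control of $\int u_\epsilon^{p+1-\epsilon}$ is the reverse H\"older $\|u_\epsilon\|_{L^{p+1}}^{p+1}\le\|u_\epsilon\|_{L^\infty}^{\epsilon}\int u_\epsilon^{p+1-\epsilon}$, and feeding it back through \eqref{2p1s0} returns $\int_\Omega u_\epsilon^{p+1-\epsilon}\le C\mu_\epsilon^{\frac{2\epsilon}{p-1}}=C\mu_\epsilon^{(\frac{n}{2s}-1)\epsilon}$, which after the change of variables gives only the tautology $\mu_\epsilon^{(\frac{n}{2s}-1)\epsilon}\le C\mu_\epsilon^{(\frac{n}{2s}-1)\epsilon}$. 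In fact no amount of juggling of \eqref{2p1s0}, Sobolev and H\"older can close it: a direct scaling computation for a family $u_\epsilon(x)=\mu_\epsilon b_\epsilon(\mu_\epsilon^{\frac{p-1-\epsilon}{2s}}(x-x_\epsilon))$ shows that the quotient in \eqref{2p1s0} is, to leading order, independent of $\mu_\epsilon^\epsilon$ (the powers of $\mu_\epsilon^\epsilon$ in numerator and denominator cancel exactly), so the quotient condition by itself carries no information about $\mu_\epsilon^\epsilon$; your lower bound $\|u_\epsilon\|_{L^{p+1}}^{p-1-\epsilon}\ge\mathcal{S}_{n,s}^{-2}|\Omega|^{-\epsilon/(p+1)}$ points the wrong way and does not help. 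What the paper's last display actually invokes is the a priori boundedness of the energy of the solutions under consideration, obtained by adapting the computations at the beginning of Section \ref{sec_asymp} (the analogue of \eqref{eq-up-limit}) to the least-energy setting of \eqref{eqtn-subcritical}; that bound is an input from the characterization of $u_\epsilon$, not something to be re-derived inside this lemma from \eqref{2p1s0} alone. As written, your proposal is missing precisely this ingredient, so the decisive half of the lemma is not proved.
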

\begin{proof}
Since $b_{\epsilon}$ converges to $w_1$ pointwise, we have
\[\int_{B_n (0,1)} b_{\epsilon}^{p+1-\epsilon} \geq C\]
by Fatou's lemma. Note that
\begin{align*}
\int_{B_n (0,1)} b_{\epsilon}^{p+1-\epsilon}(x) dx &= \int_{B_n (0,1)} \mu_{\epsilon}^{-(p+1-\epsilon)} u_{\epsilon}^{p+1-\epsilon} \left(\mu_{\epsilon}^{-\frac{p-1-\epsilon}{2s}} x + x_{\epsilon}\right) dx
\\
&\leq \int_{\Omega} \mu_{\epsilon}^{-(p+1-\epsilon)} \mu_{\epsilon}^{\frac{n}{2s}(p-1-\epsilon)} u_{\epsilon}^{p+1-\epsilon} (x) dx
\\
&\leq C \mu_{\epsilon}^{-(\frac{n}{2s}-1)\epsilon}.
\end{align*}
Combining these two estimates completes the proof.
\end{proof}
Next, as before we denote by $d_{\epsilon}$ and $D_{\epsilon}$ the Kelvin transforms of  $b_{\epsilon}$ and $B_{\epsilon}$ (see \eqref{eq-kelvin-b} and \eqref{eq-kelvin-B}). Then the function $D_{\epsilon}$ satisfies
\[\left\{ \begin{array}{ll} \textrm{div}(t^{1-2s}\nabla D_{\epsilon} )(z) = 0 &\quad \text{in} ~\kappa( \mathcal{C}_{\epsilon}),
\\
\partial_{\nu}^s D_{\epsilon} = |x|^{-\epsilon (n-2s)} D_{\epsilon}^{p-\epsilon} &\quad \text{in}~ \kappa(\Omega_{\epsilon} \times \{0\}),
\\
D_{\epsilon} > 0 &\quad \text{in}~ \kappa(\mathcal{C}_{\epsilon}),
\\
D_{\epsilon} =0 &\quad \text{on}~\kappa( \partial_L \mathcal{C}_{\epsilon}).
\end{array}
\right.\]
From \eqref{aexme}, we have $|x| \geq C\mu_{\epsilon}^{-\frac{p-1-\epsilon}{2s}}$ for $x \in \kappa (\Omega_{\epsilon})$, hence Lemma \ref{forso} yields
\[|x|^{-\epsilon (n-2s)} \leq \mu_{\epsilon}^{\frac{(p-1-\epsilon)}{2s}(n-2s)\epsilon} \leq  C \quad \text{for all } x \in \kappa (\Omega_{\epsilon}).\]
By this fact we may use Lemma \ref{lem-harnack-1} and Lemma \ref{lem-harnack-2} and the proof of Proposition \ref{prop-uniform-bound} to find $C>0$ such that
\begin{equation}\label{eqtn-sub-bound}
u_{\epsilon}(x) \leq C w_{\mu_{\epsilon}^{-{2 \over n-2s}},x_{\epsilon}} (x) \quad \text{for all } \epsilon > 0 \text{ and } x \in \Omega.
\end{equation}
Now we need to get a sharpened bound of $\mu_{\epsilon}$.
Considering both \eqref{eqtn-sub-bound} with Proposition \ref{prop-sub-estimate} simultaneously,
we can prove the following lemma.
\begin{lem}\label{lem-subcritical-control}
(1) There exists a constant $C >0$ such that
\[\epsilon \leq C \mu_{\epsilon}^{-2-2 \epsilon} \quad \text{for } \epsilon >0 \text{ small}.\]
\noindent (2) We have
\begin{equation}\label{mee10}
\lim_{\epsilon \rightarrow 0} \mu_{\epsilon}^{\epsilon} = 1.
\end{equation}
\end{lem}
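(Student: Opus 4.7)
The plan is to apply the local Pohozaev-type inequality of Proposition~\ref{prop-sub-estimate} to the extension $U_\epsilon$ of a solution $u_\epsilon$ of \eqref{eqtn-subcritical}, with $f(u)=u^{p-\epsilon}$ and $F(u) = u^{p+1-\epsilon}/(p+1-\epsilon)$, and then close the estimate using the preliminary bound $\mu_\epsilon^\epsilon \le C$ furnished by Lemma~\ref{forso}. The computational key is the identity $(n-2s)/2 = n/(p+1)$, which yields
\[
nF(u) - \tfrac{n-2s}{2}\, u f(u) \;=\; \frac{n\epsilon}{(p+1-\epsilon)(p+1)}\, u^{p+1-\epsilon},
\]
so that the left-hand side of \eqref{poho-prop-estimate} equals, up to a positive constant, $\epsilon \int_{\mathcal{I}(\Omega,r/2)} u_\epsilon^{p+1-\epsilon}\,dx$, rather than vanishing as in the critical case.

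To estimate both sides I would use the natural change of variables $x = x_\epsilon + \mu_\epsilon^{-(p-1-\epsilon)/(2s)} y$, writing $u_\epsilon(x) = \mu_\epsilon b_\epsilon(y)$, which gives the scaling identity
\[
\int_{\Omega} u_\epsilon^{q-\epsilon}\,dx \;=\; \mu_\epsilon^{(q-\epsilon) - n(p-1-\epsilon)/(2s)} \int_{\Omega_\epsilon} b_\epsilon^{q-\epsilon}\,dy;
\]
for $q=p+1$ the exponent of $\mu_\epsilon$ becomes $\epsilon(n-2s)/(2s)$, while for $q=p$ it is $-1 + \epsilon(n-2s)/(2s)$. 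Combining the pointwise limit $b_\epsilon \to w_1$ with the envelope \eqref{eqtn-sub-bound}, dominated convergence produces
\[
\int_{\mathcal{I}(\Omega,r/2)} u_\epsilon^{p+1-\epsilon}\,dx \;=\; \mu_\epsilon^{\epsilon(n-2s)/(2s)} \Bigl( \textstyle\int_{\mathbb{R}^n} w_1^{p+1}\,dx + o(1)\Bigr),
\]
and similarly $\bigl(\int_{\mathcal{I}(\Omega,\delta/2)} u_\epsilon^{p-\epsilon}\bigr)^2 = O\bigl(\mu_\epsilon^{-2+\epsilon(n-2s)/s}\bigr)$. The two boundary integrals on $\mathcal{O}(\Omega,2\delta)$ in \eqref{poho-prop-estimate} are $O(\mu_\epsilon^{-2(p-\epsilon)})$ and $O(\mu_\epsilon^{-(p+1-\epsilon)})$, via the far-field decay $u_\epsilon(x)\le C\mu_\epsilon^{-1}$ away from $x_\epsilon$ (a direct consequence of \eqref{eqtn-sub-bound}); both are negligible compared with the previous term since $p>1$.

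Inserting these estimates into \eqref{poho-prop-estimate} gives $\epsilon\mu_\epsilon^2 \le C \mu_\epsilon^{\epsilon(n-2s)/(2s)}$, and Lemma~\ref{forso}, which yields the uniform boundedness of $\mu_\epsilon^\epsilon$, upgrades this to $\epsilon\mu_\epsilon^2 \le C$. Part~(1) then follows from $\epsilon \le C\mu_\epsilon^{-2} = C\mu_\epsilon^{2\epsilon}\cdot\mu_\epsilon^{-2-2\epsilon} \le C'\mu_\epsilon^{-2-2\epsilon}$. Part~(2) is a direct corollary: from part~(1) one has $\mu_\epsilon^{2+2\epsilon} \le C/\epsilon$, hence
\[
1 \;\le\; \mu_\epsilon^\epsilon \;\le\; (C/\epsilon)^{\epsilon/(2+2\epsilon)} \;=\; \exp\!\Bigl(\tfrac{\epsilon}{2+2\epsilon}\log\tfrac{C}{\epsilon}\Bigr) \;\longrightarrow\; 1,
\]
where the lower bound uses $\mu_\epsilon\to\infty$ (Lemma~\ref{lem_mu_e}). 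The only technical point requiring care will be the dominated-convergence step for $\int b_\epsilon^{p+1-\epsilon}\,dy$, but uniform integrability of the envelope is clear because $(n-2s)(p+1-\epsilon) = 2n - (n-2s)\epsilon > n$ for small $\epsilon$, so $w_1^{p+1-\epsilon}$ lies in $L^1(\mathbb{R}^n)$ uniformly in $\epsilon$.
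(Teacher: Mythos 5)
Your proposal is correct and follows essentially the same route as the paper: both apply Proposition \ref{prop-sub-estimate} to $U_\epsilon$, bound the left-hand side below by $C\epsilon$ (your extra factor $\mu_\epsilon^{\epsilon(n-2s)/(2s)}$ being absorbed, as you note, via Lemma \ref{forso}), and bound the right-hand side by $C\mu_\epsilon^{-2-2\epsilon}$ using the envelope \eqref{eqtn-sub-bound}, with the boundary terms on $\mathcal{O}(\Omega,2\delta)$ negligible. Your part (2), obtained by exponentiating $\mu_\epsilon^{2+2\epsilon}\le C/\epsilon$, is an equivalent (if anything slightly cleaner) version of the paper's Taylor-expansion step, which likewise relies on $\mu_\epsilon\to\infty$.
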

\begin{proof}
As in the proof of Lemma \ref{eq-mu-epsilon}, we take a small number $\delta>0$.
Recall also the definition of $\mathcal{I}(\Omega,r)$ and $\mathcal{O}(\Omega,r)$ (see \eqref{eq-I-omega} and \eqref{eq-O-omega}).
Then we see that the left-hand side of \eqref{poho-prop-estimate} is bounded below, i.e.,
\begin{equation}\label{lcxnu}
\left(\frac{n}{p+1-\epsilon} - \frac{n-2s}{2}\right) \int_{\mathcal{I}(\Omega,{\delta}) \times \{0\}} |U_{\epsilon}(x,0)|^{p+1-\epsilon} dx \geq C \epsilon
\end{equation}
for some constant $C>0$.
\

On the other hand, using \eqref{eqtn-sub-bound} we deduce
\begin{equation}\label{oodup}
\begin{aligned}
\left( \int_{ \Omega} u_{\epsilon}^{p-\epsilon} dx \right)^2 \leq&~ C \left( \int_{\mathbb{R}^n} {w_{\mu_{\epsilon}^{-{2 \over n-2s}},x_{\epsilon}}^{p-\epsilon}} (x) dx \right)^2
\\
\leq&~ C \mu_{\epsilon}^{-2(p-\epsilon)} \left( \int_{\mathbb{R}^n} \left( \mu_{\epsilon}^{-\frac{4}{n-2s}}  + |x|^2 \right)^{-\frac{n-2s}{2}(p-\epsilon)} dx \right)^2
\\
\leq& ~C \mu_{\epsilon}^{-2 - 2 \epsilon}.
\end{aligned}
\end{equation}
Since $ x_0 \notin \mathcal{O}(\Omega,{2\delta})$ we have $w_{\mu_{\epsilon}, x_{\epsilon}} (x) \leq C \mu_{\epsilon}^{-1}$ for $x \in \mathcal{O}(\Omega,\delta)$.
It yields, for a fixed large number $q>0$, that
\begin{equation}\label{odu2p}
\left(\int_{\mathcal{O}(\Omega,\delta)} u_{\epsilon}^{(p-\epsilon)q} dx\right)^{2/q} \leq C \mu_{\epsilon}^{-2(p-\epsilon)}.
\end{equation}
Now we inject the estimates \eqref{lcxnu}, \eqref{oodup} and \eqref{odu2p} to the inequality in the statement of Proposition \ref{prop-sub-estimate} to get
\[C \epsilon \leq C\left[ \mu_{\epsilon}^{-2 - 2\epsilon} + \mu_{\epsilon}^{-2(p-\epsilon)}\right] \leq 2 C \mu_{\epsilon}^{-2 - 2 \epsilon},\]
which proves the first statement of the lemma.
Using Taylor's theorem, we get
\[|\mu_{\epsilon}^{\epsilon} -1| \leq \sup_{0\leq t \leq 1}\epsilon \mu_{\epsilon}^{t\epsilon} \log (\mu_{\epsilon}) = O (\mu_{\epsilon}^{-1-2\epsilon}  \log (\mu_{\epsilon}) ).\]
It proves $\lim_{\epsilon \rightarrow 0} \mu_{\epsilon}^{\epsilon} = 1$ because  $\mu_{\epsilon}$ goes to infinity. Now the proof is complete.
\end{proof}

We now prove Theorems \ref{thm-m-limit-sub} and \ref{thm-m-multipeak2}.
\begin{proof}[Proof of Theorem \ref{thm-m-limit-sub}]
By definition we have
\[\mathcal{A}_{s} (\|u_{\epsilon}\|_{L^{\infty}(\Omega)} u_{\epsilon})(x) = \mathfrak{c}_{n,s} \mu_{\epsilon} u_{\epsilon}^{p-\epsilon}(x).\]
Note from $p=\frac{n+2s}{n-2s}$ that
\[\int_{\Omega}\mathfrak{c}_{n,s} \mu_{\epsilon} u_{\epsilon}^{p-\epsilon}(x)dx
= \int_{\Omega} \mathfrak{c}_{n,s}\mu_{\epsilon}^{p+1-\epsilon} b_{\epsilon}^{p-\epsilon}\left( \mu_{\epsilon}^{p-1-\epsilon \over 2s} (x-x_{\epsilon})\right)dx
= \int_{\Omega_{\epsilon}} \mathfrak{c}_{n,s} \mu_{\epsilon}^{({n \over 2s} -1)\epsilon}  b_{\epsilon}^{p-\epsilon}(x) dx.\]
Here, from Lemma \ref{lem-subcritical-control} and the dominated convergence theorem with the fact that $b_\epsilon$ converges to $w_1$ pointwise, we conclude that
\[\lim_{\epsilon \rightarrow 0} \int_{\Omega} \mathfrak{c}_{n,s}\mu_{\epsilon} u_{\epsilon}^{p-\epsilon}(x)dx = \int_{\mathbb{R}^n} \mathfrak{c}_{n,s} w_1^{p}(x) dx = \mathfrak{b}_{n,s}\]
(see \eqref{eq-beta}). Now the first statement follows as in the proof of Theorem \ref{thm-m-limit}.

The proof of the second statement can be performed similarly to the proof of Theorem \ref{thm-m-location}.
The constant $\mathfrak{g}_{n,s}$ is given by
\begin{equation}\label{eq-constant-L}
\mathfrak{g}_{n,s} = \frac{4n}{2C_s} \mathcal{S}_{n,s}^{n/s} \mathcal{D}_{n,s} \mathfrak{a}_{n,s}\mathfrak{b}_{n,s}^2.
\end{equation}
The proof is complete.
\end{proof}

\begin{proof}[Proof of Theorem \ref{thm-m-multipeak2}]
This theorem can be proved in a similar way to the proof of Theorem \ref{thm-m-multipeak}.
In this case, if we take $\alpha_0 = \frac{1}{n-2s}$ and $f_{\epsilon}(U) = U_+^{p-\epsilon}$, then an analogous result of Proposition \ref{prop_main_est} holds with $\widetilde{\Upsilon}$ (refer to \eqref{upsilon_subcritical}).
Therefore there exists a family of solutions which concentrate at a critical point of $\widetilde{\Upsilon}$.
\end{proof}

\medskip
\noindent \textbf{Acknowledgments}

\medskip
The authors thank to the referee for the comments which improved the exposition of the paper.
W. Choi was supported by the Global Ph.D Fellowship of the Government of South Korea 300-20130026.
He wants to express gratitude for his advisor Prof. Ponge for the support and encouragement.
S. Kim was partially supported by FONDECYT Grant 3140530, Chile.
He is also grateful to Prof. Byeon and Department of Mathematical Sciences in KAIST for their financial support through research grant G04130010 during his study in KAIST, and to Prof. Pistoia for her support and hospitality on his visit to La Sapienza - Universit\`a di Roma.
Ki-Ahm Lee was supported by the IT R\&D program of MSIP/KEIT. [No.10047212, Development of homomorphic encryption supporting arithmetics on ciphertexts of size less than 1 kB and its applications].

\appendix
\section{Proof of Proposition \ref{prop-sub-estimate}}\label{sec_appen_a}
This section is devoted to present the proof of Proposition \ref{prop-sub-estimate}, namely, the following proposition.
\begin{prop}
Suppose that $U \in {H^s_{0,L}(\mathcal{C})}$ is a solution of problem \eqref{u0inc} with $f$ such that $f$ has the critical growth and $f = F'$ for a function $F \in C^1 (\mathbb{R})$.
Then, for each $ \delta >0$ and $q >\frac{n}{s}$ there is a constant $C= C(\delta, q) >0$ such that
\begin{equation}\label{eq_local_poho}
\begin{aligned}
&\min_{r \in [\delta, 2\delta]}\left|n \int_{\mathcal{I}(\Omega,{r/2})\times\{0\}} F(U) dx - \left(\frac{n-2s}{2}\right) \int_{ \mathcal{I}(\Omega,{r/2}) \times \{0\}} U f(U) dx\right|
\\
&\leq C \left[ \left(\int_{\mathcal{O}(\Omega,{2\delta})\times \{0\}} |f(U)|^{q} dx\right)^{2 \over q} + \int_{\mathcal{O}(\Omega,2\delta)\times \{0\}} |F(U)| dx + \left( \int_{\mathcal{I}(\Omega, \delta/2) \times \{0\}} |f(U)| dx \right)^2 \right]
\end{aligned}
\end{equation}
where $\mathcal{I}$ and $\mathcal{O}$ is defined in  \eqref{eq-I-omega} and \eqref{eq-O-omega}.
\end{prop}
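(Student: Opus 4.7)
The plan is to derive a localized Pohozaev identity for the extended problem \eqref{u0inc}. For each $r\in[\delta,2\delta]$, I fix a smooth cutoff $\eta_r:\Omega\to[0,1]$ with $\eta_r\equiv 1$ on $\mathcal{I}(\Omega,r/2)$, $\eta_r\equiv 0$ on $\mathcal{O}(\Omega,r/4)$, and $|\nabla\eta_r|\leq C/r$, so that $\supp(\nabla\eta_r)\subset\mathcal{O}(\Omega,2\delta)$ uniformly in $r$. Together with an auxiliary $t$-cutoff $\chi_T\equiv 1$ on $[0,T]$ (to be removed by $T\to\infty$ using the exponential decay of $U$ in $t$ coming from the Dirichlet spectral expansion on $\Omega$), I test the equation $\text{div}(t^{1-2s}\nabla U)=0$ against the standard Pohozaev multiplier $\eta_r\chi_T\bigl(z\cdot\nabla U+\tfrac{n-2s}{2}U\bigr)$ with $z=(x,t)$. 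The bulk $\int t^{1-2s}|\nabla U|^2$ cross-terms cancel by the scale-invariance of the Caffarelli--Silvestre extension, and the Neumann condition $\partial_\nu^s U = f(U)$ converts the trace at $t=0$ into $C_s\int_\Omega\eta_r f(u)\bigl(x\cdot\nabla_x u+\tfrac{n-2s}{2}u\bigr)dx$. A further integration by parts on $\Omega$ (using $\nabla\cdot(\eta_r x)=n\eta_r+x\cdot\nabla\eta_r$) then yields, in the limit $T\to\infty$,
\[C_s\!\left[n\!\int_\Omega\eta_r F(u)\,dx-\tfrac{n-2s}{2}\!\int_\Omega\eta_r u f(u)\,dx\right]=-C_s\!\int_\Omega(x\cdot\nabla\eta_r)F(u)\,dx+R_2+R_3,\]
where $R_2=-\int_\mathcal{C} t^{1-2s}\bigl(z\cdot\nabla U+\tfrac{n-2s}{2}U\bigr)(\nabla_x U\cdot\nabla\eta_r)\,dz$ and $R_3=\tfrac12\int_\mathcal{C} t^{1-2s}(x\cdot\nabla\eta_r)|\nabla U|^2\,dz$. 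Since $\eta_r\equiv 1$ on $\mathcal{I}(\Omega,r/2)$ and the transition region lies in $\mathcal{O}(\Omega,2\delta)$, the target LHS of \eqref{eq_local_poho} differs from $\int_\Omega\eta_r(\cdots)$ by a remainder bounded by $C\int_{\mathcal{O}(\Omega,2\delta)}(|F(u)|+|uf(u)|)$; and trivially $|\int(x\cdot\nabla\eta_r)F(u)|\leq C(\delta)\int_{\mathcal{O}(\Omega,2\delta)}|F(u)|$.

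The heart of the proof is estimating $R_2,R_3$ by boundary data alone. For this I invoke a Caccioppoli-type inequality: testing the equation against $\tilde\eta_r^{\,2}U$, with $\tilde\eta_r\equiv 1$ on $\supp\nabla\eta_r$ and supported in $\mathcal{O}(\Omega,2\delta)$, Young's inequality gives
\[\int_\mathcal{C} t^{1-2s}\tilde\eta_r^{\,2}|\nabla U|^2\,dz\leq C\!\left[\int_\Omega|f(u)|\tilde\eta_r^{\,2}|u|\,dx+\int_\mathcal{C} t^{1-2s}U^2|\nabla\tilde\eta_r|^2\,dz\right].\]
By H\"older, $\int_\Omega|f(u)|\tilde\eta_r^{\,2}|u|\leq\|f(u)\|_{L^q(\mathcal{O})}\|u\|_{L^{q'}(\mathcal{O})}$. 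The crucial bound on $\|u\|_{L^{q'}(\mathcal{O})}$ (and analogously for $U$ in the bulk) comes from the Green's function representation $u(x)=\int_\Omega G(x,y)f(u(y))\,dy$: split the $y$-integral into $\{|y-x|\geq c\delta\}$, where $G(x,\cdot)$ is uniformly bounded and produces the term $C_\delta\|f(u)\|_{L^1(\mathcal{I}(\Omega,\delta/2))}$ (after checking that $\mathcal{I}(\Omega,\delta/2)$ lies in this region up to a slight adjustment of the cutoff scales), and $\{|y-x|<c\delta\}$, where $G(x,\cdot)\in L^{q'}$ locally precisely because $q>n/s$ forces $q'<n/(n-s)<n/(n-2s)$, producing $C\|f(u)\|_{L^q(\mathcal{O}(\Omega,2\delta))}$. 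For the bulk term $\int_\mathcal{C} t^{1-2s}U^2|\nabla\tilde\eta_r|^2\,dz$, one uses the same split with the extended Green's function $G_\mathcal{C}$ and the exponential decay of $U$ in $t$ to integrate the $t$-variable. Squaring the resulting pointwise estimate and collapsing cross-terms via Cauchy--Schwarz produces exactly the three-term RHS of \eqref{eq_local_poho}.

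Combining these estimates, for every $r\in[\delta,2\delta]$ the inequality of \eqref{eq_local_poho} holds with a constant $C(\delta,q)$ independent of $r$ (the $1/r$ factors from $|\nabla\eta_r|$ absorb into $C(\delta)$ since $r\geq\delta$); a fortiori the minimum over $r$ satisfies the same bound. The main technical obstacle is the Caccioppoli-plus-Green's-function step above, where the bulk $|\nabla U|^2$ contributions must be squeezed into purely boundary data of the specific form prescribed by the right-hand side. The threshold $q>n/s$ enters essentially to guarantee the local $L^{q'}$-integrability of $G(x,\cdot)$, which in turn allows the $L^q$-norm of $f(u)$ to control $u$ pointwise near where $f$ concentrates.
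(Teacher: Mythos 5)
Your localization is genuinely different from the paper's: you cut off with $\eta_r(x)$ over the full-height cylinder and reduce the error to collar terms controlled by a Caccioppoli inequality plus the Green-function representation of $u$, whereas the paper integrates the divergence identity \eqref{eq-poho-root} over the bounded sets $D_r=\{z:\dist(z,\mathcal{I}(\Omega,r)\times\{0\})\le r/2\}$, averages in $r\in[\delta,2\delta]$ to turn the surface terms on $\partial D_r^+$ into a bulk integral over $E_\delta$, and then estimates $\int_{E_\delta}t^{1-2s}|\nabla U|^2$ directly from $\nabla U(z)=\int_\Omega\nabla_z G_{\mathbb{R}^{n+1}_+}(z,y)f(U)(y,0)\,dy-\int_\Omega\nabla_z H_{\mathcal C}(z,y)f(U)(y,0)\,dy$. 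Your near/far splitting of the Green kernel (collar data in $L^q$, interior data only in $L^1$) is exactly the paper's key mechanism, and your observation that the volume-cutoff version gives the bound for every $r$ (so the minimum is not really needed) is a legitimate simplification. Note, however, that in the paper the hypothesis $q>n/s$ is not about $L^{q'}_{\mathrm{loc}}$-integrability of $G(x,\cdot)$ (that only needs $q>n/(2s)$); it is what makes $t^{1-2s}$ times the squared $L^{q'}_y$-norm of $\nabla_zG_{\mathbb{R}^{n+1}_+}(z,\cdot)$, which behaves like $t^{-2n/q+4s-2}$, integrable near $t=0$.

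The genuine gap is in your treatment of the bulk errors $R_2$, $R_3$. Because your cutoff does not localize in $t$, the multiplier $z\cdot\nabla U$ carries the unbounded weight $|z|\sim t$ on the collar $\supp\nabla\eta_r\times(0,\infty)$, so $R_2$ is controlled by a $t$-weighted energy $\int t^{1-2s}(1+t)|\nabla U|^2$ there; the Caccioppoli inequality you state bounds only the unweighted energy and therefore cannot close this step as written. The appeal to ``exponential decay of $U$ in $t$'' is also only qualitative: the proposition must hold with a constant $C(\delta,q)$ uniform over solutions (it is applied to the blowing-up family $U_\epsilon$), so any decay used for $t\ge 1$, for the removal of $\chi_T$, and for the term $\int_{\mathcal C}t^{1-2s}U^2|\nabla\tilde\eta_r|^2$ over the infinite collar must be quantified through the representation $U(z)=\int_\Omega G_{\mathcal C}(z,y)f(U)(y,0)\,dy$ (together with a gradient bound for $G_{\mathcal C}=G_{\mathbb{R}^{n+1}_+}-H_{\mathcal C}$, where the $H_{\mathcal C}$ part needs an argument such as the Caccioppoli-type bound \eqref{eq-appendix-H-pre}), yielding contributions of size $\|f(U)(\cdot,0)\|_{L^1(\Omega)}^2$ which are then split into the admissible norms. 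These repairs are available, but they are precisely the core of the bulk estimate and are missing from your write-up; the paper's choice of the truncated domains $D_r$ of height at most $\delta$ avoids the large-$t$ issue entirely, at the cost of the $r$-averaging that produces the minimum in \eqref{eq_local_poho}. A smaller point: the integration by parts underlying the Pohozaev identity needs the boundary regularity of $\nabla_x U$ up to $\{t=0\}$ (the paper invokes \cite{CS2}), which you should at least record.
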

\begin{proof}
Recall the local form of the Pohozaev identity
\begin{equation}\label{eq-poho-root}
\textrm{div} \biggl\{ t^{1-2s} \langle z,\nabla U \rangle \nabla U - t^{1-2s} \frac{|\nabla U|^2}{2} z \biggr\} + \biggl( \frac{n-2s}{2} \biggr) t^{1-2s}|\nabla U |^2=0
\end{equation}
and define the following sets:
\begin{align*}
D_r &= \left\{ z \in \mathbb{R}^{n+1}_{+} : \textrm{dist}(z,\mathcal{I}(\Omega,r) \times \{0\}) \leq r/2\right\},\\
\partial D_r^{+} &= \partial D_r \cap \left\{(x,t) \in \mathbb{R}^{n+1}: t>0\right\} \quad \text{and} \quad E_{\delta} = \bigcup_{r = \delta}^{2\delta} \partial D_{r}^{+}.
\end{align*}
Note that $\partial D_r = \partial D_r^{+} \cup (\mathcal{I}(\Omega,r/2)\times \{0\})$.
Fix a small number $\delta >0$.
We integrate the identity \eqref{eq-poho-root} over $D_r$ for each $r \in (0,2\delta]$ to derive
\begin{multline}\label{dxdy0}
 \int_{ \partial D_{r}^{+}} t^{1-2s}\left\langle \langle z, \nabla U \rangle \nabla U - z \frac{|\nabla U|^2}{2}, \nu\right\rangle dS
+ C_s \int_{\mathcal{I}(\Omega,{r/2}) \times \{0\}} \langle x, \nabla_x U \rangle \partial_{\nu}^s U  dx
\\
\quad = - \left( \frac{n-2s}{2}\right) \int_{ D_r} t^{1-2s}|\nabla U|^2 dx dt.
\end{multline}
In view of Lemmas 4.4 and 4.5 of \cite{CS2}, one can deduce that the $i$-th component $\partial_{x_i} U$ of $\nabla_x U$ is H\"older continuous in $\overline{D_r}$ for each $i = 1, \cdots, n$, which justifies the above formula.
By using $\partial_{\nu}^s U = f(U)$ and performing integration by parts, we derive
\begin{align*}
\int_{\mathcal{I}(\Omega,{r/2}) \times \{0\}} \langle x, \nabla_x U \rangle \partial_{\nu}^s U dx
&=\int_{\mathcal{I}(\Omega,{r/2}) \times\{0\}} \langle x, \nabla_x F(U) \rangle dx
\\
&= -n \int_{\mathcal{I}(\Omega,{r/2})\times\{0\}} F(U) dx +\int_{\partial \mathcal{I}(\Omega,{r/2})\times \{0\}} \langle x,\nu \rangle F(U) dS_x
\end{align*}
and
\[\int_{D_r} t^{1-2s} |\nabla U|^2 dx dt= C_s\int_{ \mathcal{I}(\Omega,{r/2}) \times \{0\}} U f(U) dx + \int_{\partial D_r^{+}} t^{1-2s} U \frac{\partial U}{\partial \nu} dS.\]
Then \eqref{dxdy0} is written as
\begin{equation}\label{eq-local-poho}
\begin{aligned}
&\ C_s \left\{n \int_{\mathcal{I}(\Omega,{r/2})\times\{0\}} F(U) dx - \left(\frac{n-2s}{2}\right) \int_{ \mathcal{I}(\Omega,{r/2}) \times \{0\}} U f(U) dx\right\}\\
& \qquad \qquad = \int_{ \partial D_{r}^{+}} t^{1-2s} \left[ \left< \langle z, \nabla U \rangle \nabla U - z \frac{|\nabla U|^2}{2}, \nu\right> dS + \left(\frac{n-2s}{2}\right) U \frac{\partial U}{\partial \nu} \right] dS
\\
& \qquad \qquad \  +  \int_{\partial \mathcal{I}(\Omega,{r/2})\times \{0\}} \langle x,\nu \rangle F(U) dS_x.
\end{aligned}
\end{equation}
From this identity we get
\begin{multline*}
\left|n \int_{\mathcal{I}(\Omega,{r/2})\times\{0\}} F(U) dx-\left(\frac{n-2s}{2}\right) \int_{\mathcal{I}(\Omega,{r/2}) \times \{0\}} U f(U) dx\right|
\\
\leq C \int_{\partial D_r^{+}} t^{1-2s} (|\nabla U|^2 + U^2 ) dS + \int_{\partial \mathcal{I}(\Omega,{r/2}) \times\{0\}} \langle x,\nu \rangle  F(U) dS_x.
\end{multline*}
We integrate this identity with respect to $r$ over an interval $[\delta, 2\delta]$ and then use the Poincar\'e inequality. Then we observe
\begin{multline*}
\min_{r \in [\delta ,2\delta]} \left|n \int_{\mathcal{I}(\Omega,{r/2})\times\{0\}}  F(U) dx-\left(\frac{n-2s}{2}\right) \int_{ \mathcal{I}(\Omega,{r/2}) \times \{0\}} U f(U) dx \right|\\
\leq C \int_{E_{\delta}}t^{1-2s} |\nabla U|^2 dz  + C \int_{\mathcal{O}(\Omega,{\delta})}|F(U)(x,0)| dx.
\end{multline*}
We only need to estimate the first term of the right-hand side of the previous inequality since the second term is already one of the terms which constitute the right-hand side of \eqref{eq_local_poho}. Note that
\begin{equation}\label{eq-U-decom}
\nabla_z U(z) = \int_{\Omega} \nabla_z  G_{\mathbb{R}^{n+1}_{+}} (z,y) f(U)(y,0) dy - \int_{\Omega} \nabla_z H_{\mathcal{C}}(z,y) f(U)(y,0) dy
\end{equation}
for $z \in E_{\delta}$.

Let us deal with the last term of \eqref{eq-U-decom} first.
Admitting the estimation
\begin{equation}\label{eq-appendix-H-pre}
\sup_{y \in \Omega} \int_{E_{\delta}} t^{1-2s} |\nabla_z H_{\mathcal{C}}(z,y)|^2 dz \le C
\end{equation}
for a while and using H\"older's inequality, we get
\begin{equation}\label{eq-appendix-H}
\begin{aligned}
&\ \int_{E_{\delta}} t^{1-2s} \left( \int_{\Omega} |\nabla_z H_{\mathcal{C}}(z,y) f(U)(y,0)| dy \right)^2 dz
\\
&\le \left(\sup_{y \in \Omega} \int_{E_{\delta}} t^{1-2s} |\nabla_z H_{\mathcal{C}}(z,y)|^2 dz\right) \left( \int_{\Omega} |f(U)(y,0)| dy\right)^2 \le C \left( \int_{\mathcal{I}(\Omega,{\delta}) \cup \mathcal{O}(\Omega,{\delta})} |f(U)(y,0)| dy\right)^2
\\
&\le C \left[\left(\int_{\mathcal{O}(\Omega,{2\delta})} |f(U)(y,0)|^q dy\right)^{2 \over q} + \left( \int_{\mathcal{I}(\Omega, \delta/2) } |f(U)(y,0)| dy \right)^2\right],
\end{aligned}
\end{equation}
which is a part of the right-hand side of \eqref{eq_local_poho}.

The validity of \eqref{eq-appendix-H-pre} can be reasoned as follows.
First of all, if $y$ is a point in $\Omega$ such that $\dist(y, E_{\delta}) \leq \delta/ 2$,
then it automatically satisfies that $\dist (y, \partial \Omega) \geq \delta/2$ from which we know
\[\sup_{\dist(y,\partial \Omega) \ge \delta/2} \left(\int_{E_{\delta}} t^{1-2s} |\nabla_z H_{\mathcal{C}}(z,y)|^2 dz\right) \le \sup_{\dist(y, \partial \Omega) \geq \delta/2} \left(\int_{\mathcal{C}} t^{1-2s} |\nabla_z H_{\mathcal{C}} (z,y) |^2 dz\right) \le C.\]
See the proof of Lemma \ref{lem-H-existence} for the second inequality.
Meanwhile, in the complementary case $\dist (y, E_{\delta}) > \delta /2$, we can assert that
\begin{equation}\label{eq-h-nabla}
\int_{E_{\delta}} t^{1-2s} |\nabla_z H_{\mathcal{C}}(z,y)|^2 dz
\leq C \left(\int_{N(E_{\delta}, \delta/4)} t^{1-2s} |H_{\mathcal{C}}(z,y)|^2 dz\right)
\end{equation}
where $N(E_{\delta}, \delta/4) := \{ z \in \mathcal{C}: \dist (z, E_{\delta}) \leq \delta /4\}$.
To show this, we recall that $H_{\mathcal{C}}$ satisfies
\begin{equation}\label{eq-appendix-h}
\left\{\begin{array}{ll}
\textrm{div}(t^{1-2s} \nabla H_{\mathcal{C}}(\cdot, y)) = 0 &\quad \textrm{in}~\mathcal{C},
\\
\partial_{\nu}^{s} H_{\mathcal{C}}(\cdot, y) = 0 &\quad \textrm{on}~\Omega \times \{0\}.
\end{array}\right.
\end{equation}
Fix a smooth function $\phi \in C^{\infty}_{0} (N(E_{\delta}, \delta/4))$ such that $\phi =1$ on $E_{\delta}$ and $|\nabla \phi|^2 \leq C_0 \phi$ holds for some $C_0 > 0$,
and multiply $H_{\mathcal{C}}(\cdot,y) \phi (\cdot)$ to \eqref{eq-appendix-h}.
Then we have
\[\int_{\mathcal{C}} t^{1-2s}  | \nabla H_{\mathcal{C}} (z,y)|^2 \phi (z) + \int_{\mathcal{C}} t^{1-2s}[\nabla H_{\mathcal{C}}(z,y) \cdot \nabla \phi (z)] H_{\mathcal{C}}(z,y) dz = 0.\]
From this we deduce that
\begin{align*}
&\ \int_{\mathcal{C}} t^{1-2s}  | \nabla H_{\mathcal{C}} (z,y)|^2 \phi (z) dz
\\
&= - \int_{\mathcal{C}} t^{1-2s}[\nabla H_{\mathcal{C}}(z,y) \cdot \nabla \phi (z)] H_{\mathcal{C}}(z,y) dz
\\
& \leq \frac{1}{2 C_0} \int_{\mathcal{C}}t^{1-2s} |\nabla H_{\mathcal{C}} (z,y)|^2 |\nabla \phi (z)|^2 dz + 2C_0 \int_{N(E_{\delta}, \delta/4)} t^{1-2s} |H_{\mathcal{C}}(z,y)|^2 dz.
\end{align*}
Using the property $|\nabla \phi|^2 \leq C_0 \phi$ we derive that
\[\int_{\mathcal{C}} t^{1-2s}  | \nabla H_{\mathcal{C}} (z,y)|^2 \phi (z) dz \leq 4C_0 \int_{N(E_{\delta}, \delta/4)} t^{1-2s} |H_{\mathcal{C}}(z,y)|^2 dz.\]
It verifies inequality \eqref{eq-h-nabla}.
Since the assumption $\dist (y, E_{\delta}) > \delta /2$ implies $\dist(y, N(E_{\delta}, \delta/4)) > \delta /4$, it holds
\[\sup_{\dist (y, E_{\delta}) > \delta/2} \sup_{z \in N (E_{\delta},\delta/4)} |H_{\mathcal{C}}(z,y)|
\le \sup_{\dist (y, E_{\delta}) > \delta/2} \sup_{z \in N (E_{\delta},\delta/4)} |G_{\mathbb{R}^{n+1}_{+}} (z,y)| \leq C.\]
Combination of this and \eqref{eq-h-nabla} gives
\[\sup_{\dist(y,E_{\delta}) > \delta/2} \left(\int_{E_{\delta}} t^{1-2s} |\nabla_z H_{\mathcal{C}}(z,y)|^2 dz\right) \le
C \left(\int_{N(E_{\delta}, \delta/4)} t^{1-2s} dz\right) \le C.\]
This concludes the derivation of the desired uniform bound \eqref{eq-appendix-H-pre}.

It remains to take into consideration of the first term of \eqref{eq-U-decom}. We split the term as
\begin{align*}
&\ \int_{\Omega} \nabla_z G_{\mathbb{R}^{n+1}_{+}} (z,y) f(U) (y,0) dy \\
&= \int_{\mathcal{O}(\Omega,2\delta)} \nabla_z G_{\mathbb{R}^{n+1}_{+}} (z,y) f(U)(y,0) dy + \int_{\mathcal{I}(\Omega,{2\delta})} \nabla_z G_{\mathbb{R}^{n+1}_{+}} (z,y) f(U)(y,0) dy\\
&:= A_1 (z) + A_2 (z).
\end{align*}
Take $q > \frac{n}{s}$ and  $r>1$ satisfying $\frac{1}{q}+ \frac{1}{r} =1$. Then
\[|A_{1}(z)| \leq \left(\int_{\mathcal{O}(\Omega,2\delta)} |\nabla_z G_{\mathbb{R}^{n+1}_{+}}(z,y)|^{r} dy \right)^{1 \over r} \| f(U)(\cdot,0)\|_{L^{q}(\mathcal{O}(\Omega, 2\delta))}.\]
In light of the definition of $G_{\mathbb{R}^{n+1}_{+}}$, it holds that
\begin{align*}
\left(\int_{\mathcal{O}(\Omega,2\delta)} |\nabla_z G_{\mathbb{R}^{n+1}_{+}}(z,y)|^{r} dy\right)^{1 \over r} &\leq C \left(\int_{\mathcal{O}(\Omega,2\delta)}  \frac{1}{ |(x-y,t)|^{(n-2s+1)r}} dy\right)^{1 \over r}
\\
& \leq C \max\left\{ t^{{n \over r}-(n-2s+1)}, 1\right\}
= C\max \left\{ t^{- \frac{n}{q} +2s-1}, 1 \right\}.
\end{align*}
Thus we have
\[|A_1 (z)| \leq C \max\left\{ t^{-\frac{n}{q}+2s-1} ,1\right\}\| f(U)(\cdot,0)\|_{L^{q}(\mathcal{O}(\Omega,{2\delta}))}.\]
Using this we see
\begin{equation}\label{eq-appendix-G-1}
\begin{aligned}
\int_{E_{\delta}} t^{1-2s}|A_1(z)|^2 dz &\leq C \int_{0}^{1} \max\left\{ t^{1-2s} t^{-\frac{2n}{q}+4s-2}, t^{1-2s}\right\} \| f(U)(\cdot,0)\|_{L^{q}(\mathcal{O}(\Omega,{2\delta}))}^2 dt
\\
&  = \int_{0}^1 \max\left\{ t^{2s- \frac{2n}{q} -1}, t^{1-2s}\right\} \| f(U)(\cdot,0)\|_{L^{q}(\mathcal{O}(\Omega,{2\delta}))}^2 dt.
\\
& \leq C \|f(U)(\cdot,0)\|_{L^{q}(\mathcal{O}(\Omega,{2\delta}))}^2.
\end{aligned}
\end{equation}
Concerning the term $A_2$, we note that $E_{\delta}$ is away from $\mathcal{I}(\Omega,2\delta) \times \{0\}$. Thus we have
\[\sup_{z \in E_{\delta}, y \in  \mathcal{I}(\Omega,{2\delta})} |\nabla_z G_{\mathbb{R}^{n+1}_{+}} (z,y) |\leq C.\]
Hence
\[|A_2 (z)| \leq C \int_{\mathcal{I}(\Omega,{2\delta})} |f(U)(y,0)|dy, \quad z \in E_{\delta}.\]
Using this we find
\begin{equation}\label{eq-appendix-G-2}
\int_{E_{\delta}}t^{1-2s} |A_2 (z)|^2 dz \leq C \left(\int_{\mathcal{I}(\Omega,{2\delta})} |f(U)(y,0)|dy\right)^2.
\end{equation}
We have obtained the desired bound of $\int_{E_{\delta}} t^{1-2s} |\nabla U|^2 dz$ through the estimates \eqref{eq-appendix-H}, \eqref{eq-appendix-G-1} and \eqref{eq-appendix-G-2}. The proof is complete.
\end{proof}

\begin{rem}
Estimate \eqref{eq-appendix-H-pre} can be generalized to
\begin{equation}\label{eq-appendix-H-pre-1}
\sup_{y \in \Omega} \int_{E_{\delta}} t^{1-2s} |\nabla_z \partial_y^I H_{\mathcal{C}}(z,y)|^2 dz \le C,
\end{equation}
for any multi-index $I \in (\mathbb{N} \cup \{0\})^n$.
The proof of this fact follows in the same way as the derivation of \eqref{eq-appendix-H-pre} with an observation that $\partial_y^I H_{\mathcal{C}}(\cdot, y)$ satisfies equation \eqref{eq-derive-h}.
\end{rem}

\section{Proof of \eqref{eq-main-limit}}\label{sec_appen_b}
The aim of this section is to provide the derivation of \eqref{eq-main-limit}.
Due to a technical issue, we shall use an identity derived from integrating the local Pohozaev identity \eqref{eq-local-poho} (actually, its slight modification) with respect to $r\in(\delta, 2\delta)$,
and then apply the Lebesgue dominated convergence theorem to it.
The notations defined in Section \ref{sec_blowup} will be used here.

\medskip
In Section \ref{sec_asymp} we proved that $Q_{\epsilon}(x) := \|U_{\epsilon}(x,0)\|_{L^{\infty}(\Omega)} U_{\epsilon}(x,0)$ is of the form
\begin{equation}\label{eq-prop-v}
Q_{\epsilon} (z) = \int_{\Omega} G_{\mathcal{C}} (z,y) v_{\epsilon} (y) dy,
\end{equation}
where $v_{\epsilon} \in C^{0}(\Omega) $ satisfies
\begin{equation}\label{eq-prop-v2}
\lim_{\epsilon \rightarrow 0} \int_{\Omega} v_{\epsilon}(x) dx = \mathfrak{b}_{n,s} > 0 \quad \textrm{and} \quad \lim_{\epsilon \rightarrow 0} v_{\epsilon}(y) = 0\quad \textrm{in} ~C_{\text{loc}}^{0}(\Omega \setminus \{x_0\})
\end{equation}
(the number $\mathfrak{b}_{n,s}$ is defined in \eqref{eq-beta}).
Using an equivalent form of the local Pohozaev identity \eqref{eq-poho-root},
\begin{equation}\label{2divx}
\textrm{div}\left[2 t^{1-2s} \left( \langle z-X_0,\nabla V\rangle\nabla V - |\nabla V|^2 (z-X_0)\right) \right] + (n-2s) t^{1-2s} |\nabla V|^2 =0 \quad \text{in } D
\end{equation}
which holds for a function $V \in C^1(D)$ for some subset $D \subset \mathbb{R}^{n+1}_+$ satisfying div$(t^{1-2s}\nabla V) = 0$ in $D$,
we can obtain an identity corresponding to \eqref{eq-local-poho} with $\mathcal{I}(\Omega,r/2)$ and $D_r$ changed into $\Gamma_r$ and $B_r$, respectively.
After integrating it for the function $Q_{\epsilon}$ in $r$ from $\delta$ to $2\delta$, we have
\begin{equation}\label{eq-delta-2delta}
\begin{aligned}
&\int_{\delta}^{2\delta} \left[ \epsilon s C_s \int_{\Gamma_r} Q_{\epsilon}^2 (x,0) dx\right]dr \\
&= \int_{\delta}^{2\delta} \int_{\partial B_r^+} t^{1-2s}\left< \langle z-X_0, \nabla Q_{\epsilon} \rangle \nabla Q_{\epsilon} - (z-X_0) \frac{|\nabla Q_{\epsilon}|^2}{2}, \nu\right> dS dr
\\
&\ + \left(\frac{n-2s}{2}\right) \int_{\delta}^{2\delta} \int_{\partial B_r^+} t^{1-2s} Q_{\epsilon} \frac{\partial Q_{\epsilon}}{\partial \nu} dS dr
\\
&\ + \left[\int_{\delta}^{2\delta}{ \int_{\partial \Gamma_r} \langle x-x_0,\nu \rangle \|U_{\epsilon}(x,0)\|_{L^{\infty}}^2 \left({\epsilon \over 2} U_{\epsilon}^2 + {n-2s \over 2n} U_{\epsilon}^{2n \over n-2s}\right) dS_x}\right]dr.
\end{aligned}
\end{equation}
We shall apply the dominated convergence theorem to the right-hand side of this identity.
For this we need to find an integrable dominating function.
We only concern the first term in the right-hand side of  \eqref{eq-delta-2delta} because the other terms can be handled in similar fashion.
Set $E_{\delta}' = \cup_{r=\delta}^{2\delta} \partial B_r^+$ for some sufficiently small $\delta > 0$.
Then we bound $|\nabla Q_{\epsilon}|$ using \eqref{eq-prop-v} and \eqref{eq-prop-v2} as follows.
\[|\nabla Q_{\epsilon} (z)| = \left| \int_{\Omega} \nabla_z G_{\mathcal{C}} (z,y) v_{\epsilon}(y) dy\right|
\leq 2\mathfrak{b}_{n,s} \sup_{y \in \Gamma_{r/2}} |\nabla_z G_{\mathcal{C}} (z,y)| + \left|\int_{\Omega \setminus \Gamma_{r/2}} \nabla_z G_{\mathcal{C}}(z,y) v_{\epsilon} (y) dy \right|.\]
We will take $2\mathfrak{b}_{n,s} \sup_{y \in \Gamma_{r/2}} |\nabla_z G_{\mathcal{C}}(z,y)|$ as a dominating function
and prove that the quantity $|\int_{\Omega \setminus \Gamma_{r/2}} \nabla_z G_{\mathcal{C}}(z,y) v_{\epsilon} (y) dy|$ is negligible in the sense that its contribution tends to zero as $\epsilon \rightarrow 0$.
Note that
\[\left|\left\langle \langle z-X_0, \nabla Q_{\epsilon} \rangle \nabla Q_{\epsilon} - (z-X_0) \frac{|\nabla Q_{\epsilon}|^2}{2}, \nu\right\rangle \right| \leq C |\nabla Q_{\epsilon}(z)|^2 \quad \text{on } E_{\delta}'.\]
Thus it is enough to show
\begin{equation}\label{eq-ba}
\int_{E_{\delta}'}  t^{1-2s}\left(\sup_{y \in \Gamma_{r/2}}|\nabla_z G_{\mathcal{C}}(z,y)|\right)^2 dz \leq C
\end{equation}
and
\begin{equation}\label{eq-0ba}
\lim_{\epsilon \rightarrow 0} \int_{E_{\delta}'} t^{1-2s} \left(\int_{\Omega \setminus \Gamma_{r/2}} \nabla_z G_{\mathcal{C}}(z,y) v_{\epsilon} (y) dy\right)^2 dz = 0.
\end{equation}
\begin{proof}[Proof of \eqref{eq-ba} and \eqref{eq-0ba}]
We note that $\nabla_z G_{\mathbb{R}^{n+1}_{+}}(z,y)$ is uniformly bounded for $z \in E_{\delta}'$ and $y \in \Gamma_{r/2}$ since $\dist(E_{\delta}', \Gamma_{r/2}) > 0$.
Thus we only need to prove that
\begin{equation}\label{eq-basup}
\int_{E_{\delta}'} t^{1-2s} \left( \sup_{y \in \Gamma_{r/2}} |\nabla_z H_{\mathcal{C}}(z,y)|\right)^2 dz \leq C.
\end{equation}
Using the Sobolev embedding $H^{n}(\Omega) \hookrightarrow L^{\infty}(\Omega)$ and \eqref{eq-appendix-H-pre-1}, we obtain
\begin{equation*}
\textrm{(LHS) of \eqref{eq-basup}} \leq \sum_{|\alpha| \leq n} \int_{\Gamma_{r/2}}\left( \int_{E_{\delta}'} t^{1-2s} |\nabla_z \partial_y^I H_{\mathcal{C}} (z,y)|^2 dz\right) dy \leq C.
\end{equation*}
It proves \eqref{eq-ba}.

In order to deduce \eqref{eq-0ba} it suffices to show that
\begin{equation}\label{eq-g-zero}
\lim_{\epsilon \rightarrow 0} \int_{E_{\delta}'} t^{1-2s} \left( \int_{\Omega \setminus \Gamma_{r/2}} |\nabla_z G_{\mathbb{R}^{n+1}_{+}}(z,y) v_{\epsilon}(y)| dy \right)^2 dz = 0,
\end{equation}
and
\begin{equation}\label{eq-h-zero}
\lim_{\epsilon \rightarrow 0} \int_{E_{\delta}'} t^{1-2s} \left( \int_{\Omega \setminus \Gamma_{r/2}} |\nabla_z H_{\mathcal{C}}(z,y) v_{\epsilon}(y)| dy \right)^2 dz = 0.
\end{equation}
To show \eqref{eq-g-zero} we note that
\begin{align*}
\int_{\Omega \setminus \Gamma_{r/2}} |\nabla_z G_{\mathbb{R}^{n+1}_{+}} (z,y) v_{\epsilon}(y)| dy &\leq C \sup_{y \in \Omega \setminus \Gamma_{r/2}} |v_{\epsilon}(y)| \left( \int_{\Omega \setminus \Gamma_{r/2}} \frac{1}{|z-y|^{n+1-2s}} dy\right)
\\
&\leq \left\{ \begin{array}{ll}
C t^{2s-1} \sup_{y \in \Omega \setminus \Gamma_{r/2}}|v_{\epsilon}(y)| &\textrm{if}~ s \in (0, 1/2),
\\
C |\log t| \sup_{y \in \Omega \setminus \Gamma_{r/2}}|v_{\epsilon}(y)| &\textrm{if}~ s = 1/2,
\\
C\sup_{y \in \Omega \setminus \Gamma_{r/2}}|v_{\epsilon}(y)| &\textrm{if}~ s \in (1/2,1).
\end{array}
\right.
\end{align*}
Thus we have
\begin{align*}
\textrm{(LHS) of \eqref{eq-g-zero}} &\leq \lim_{\epsilon \rightarrow 0} \left(\sup_{y \in \Omega \setminus \Gamma_{r/2}} |v_{\epsilon}(y)|^2 \int_{E_{\delta}'} t^{1-2s} \max \{t^{4s-2}, |\log t|, 1\} dz\right)
\\
&\leq C \lim_{\epsilon \rightarrow 0} \left(\sup_{y \in \Omega \setminus \Gamma_{r/2}} |v_{\epsilon}(y)|^2\right) = 0.
\end{align*}
In order to prove \eqref{eq-h-zero} we use H\"older's inequality and \eqref{eq-appendix-H-pre} to obtain
\begin{align*}
\textrm{(LHS) of \eqref{eq-h-zero}} &\leq \int_{E_{\delta}'} t^{1-2s} \left( \int_{\Omega \setminus \Gamma_{r/2}} |\nabla_z H_{\mathcal{C}}(z,y)|^2 |v_{\epsilon}(y)| dy \right)\left( \int_{\Omega \setminus \Gamma_{r/2}} |v_{\epsilon}(y)| dy \right) dz
\\
&\leq \sup_{y \in \Omega} \left(\int_{E_{\delta}'} t^{1-2s} |\nabla_z H_{\mathcal{C}} (z,y)|^2 dz \right) \left(\int_{\Omega \setminus \Gamma_{r/2}} |v_{\epsilon}(y)|dy\right)^2
\\
&\leq C \left(\int_{\Omega \setminus \Gamma_{r/2}} |v_{\epsilon}(y)|dy\right)^2 \rightarrow 0\quad \textrm{as}~ \epsilon \rightarrow 0.
\end{align*}
It proves \eqref{eq-0ba}.
\end{proof}
Now we can take a limit $\epsilon \rightarrow 0$ to get
\begin{multline*}
\lim_{\epsilon \rightarrow 0} \int_{E_{\delta}'} t^{1-2s}\left\langle \langle z-X_0, \nabla Q_{\epsilon} \rangle \nabla Q_{\epsilon} - (z-X_0) \frac{|\nabla Q_{\epsilon}|^2}{2}, \nu\right\rangle dz
\\
= \mathfrak{b}_{n,s}^2 \int_{E_{\delta}'} t^{1-2s}\left\langle \langle z-X_0, \nabla G (z) \rangle \nabla G(z) - (z-X_0) \frac{|\nabla G(z)|^2}{2}, \nu\right\rangle dz.
\end{multline*}
In a similar way one can deduce that
\begin{align*}
&\lim_{\epsilon \rightarrow 0} \left[\left(\frac{n-2s}{2}\right) \int_{E_{\delta}'} t^{1-2s} Q_{\epsilon} \frac{\partial Q_{\epsilon}}{\partial \nu} dz \right.
\\
&\left. \hspace{80pt} + \int_{\delta}^{2\delta} \int_{\partial \Gamma_r} \langle x-x_0,\nu \rangle \|U_{\epsilon}(x,0)\|_{L^{\infty}}^2 \left({\epsilon \over 2} U_{\epsilon}^2 + {n-2s \over 2n} U_{\epsilon}^{2n \over n-2s}\right) dS_xdr \right]\\
&= \mathfrak{b}_{n,s}^2\left(\frac{n-2s}{2}\right) \int_{E_{\delta}'} t^{1-2s} G(z) \frac{\partial G(z)}{\partial \nu} dz.
\end{align*}
Combining the above two identities gives
\begin{equation}\label{eq-q2}
\begin{aligned}
& \lim_{\epsilon \to 0} \epsilon s C_s \int_{\delta}^{2\delta}\left[\int_{\Gamma_r} Q_{\epsilon}^2 (x,0) dx\right]dr \\
&= \mathfrak{b}_{n,s}^2\int_{\delta}^{2\delta}\left[\int_{\partial B_r^+} t^{1-2s}\left<(z-X_0, \nabla G(z)) \nabla G(z) - (z-X_0) \frac{|\nabla G(z)|^2}{2}, \nu\right> dS\right.
\\
&\ + \left.\left(\frac{n-2s}{2}\right) \int_{\partial B_r^+} t^{1-2s} G(z) \frac{\partial G(z)}{\partial \nu} dS\right] dr.
\end{aligned}
\end{equation}
On the other hand, since $x_0 \in \Gamma_r$,
we can derive that
\begin{equation}\label{eq-limit-l3}
\epsilon s \int_{\Gamma_r} Q_{\epsilon}^2 (x,0) dx
= \epsilon s \mu_{\epsilon}^{2} \int_{\Gamma_r} \mu_{\epsilon}^2 b_{\epsilon}^2\left(\mu_{\epsilon}^{\frac{p-1}{2s}} x\right) dx
=  \epsilon s \mu_{\epsilon}^{\frac{2n-8s}{n-2s}} \int_{(\Gamma_r)_{\epsilon}} b_{\epsilon}^2 (x) dx
\end{equation}
and
\begin{equation}\label{eq-limit-l3-1}
\lim_{\epsilon \rightarrow 0} \int_{(\Gamma_r)_{\epsilon}} b_{\epsilon}^2 (x) dx = \int_{\mathbb{R}^n} w_1^2 (x) dx = \mathfrak{c}_{n,s}^2 \frac{ \pi^{n/s} \Gamma (\frac{n}{2}-2s)}{\Gamma (n-2s)},
\end{equation}
where $(\Gamma_r)_{\epsilon} := (\Gamma_r - x_{0})/\epsilon$.
Now \eqref{eq-q2}, \eqref{eq-limit-l3}, and \eqref{eq-limit-l3-1} shows the validity of \eqref{eq-main-limit}.

\section{Technical computations in the proof of Theorem \ref{thm-m-multipeak}}\label{sec_appen_c}
In this section, we collect technical lemmas which are necessary during the proof of Theorem \ref{thm-m-multipeak}.

\subsection{Estimation of $P_{\epsilon} w_{\lambda, \xi}$ and $P_{\epsilon}\psi_{\lambda, \xi}^j$ ($j = 0, \cdots, n$)}
We recall the functions $w_{\lambda, \xi}$, $\psi^j_{\lambda, \xi}$, $P_{\epsilon}w_{\lambda, \xi}$ and $P_{\epsilon}\psi^j_{\lambda, \xi}$ defined in \eqref{bubble}, \eqref{psis} and \eqref{def_P_epsilon} for any $\lambda > 0$, $\xi \in \mathbb{R}^n$ and $j = 0, \cdots, n$.

\medskip
In the next two lemmas, we estimate the difference between $w_{\lambda, \xi}$ and $P_{\epsilon}w_{\lambda, \xi}$ (or $\psi^j_{\lambda, \xi}$ and $P_{\epsilon}\psi^j_{\lambda, \xi}$)
in terms of Green's function $G$ and its regular part $H$ of the fractional Laplacian $\mathcal{A}_s$ (see \eqref{Green_Omega} and \eqref{Robin_Omega} for their definitions).
\begin{lem}\label{lem-projection-robin}
Let $\lambda > 0$ and $\sigma = (\sigma^1, \cdots, \sigma^n) \in \Omega$. Then we have
\begin{align*}
P_{\epsilon} w_{\lambda, \sigma\epsilon^{-\alpha_0}} (x)
&= w_{\lambda, \sigma\epsilon^{-\alpha_0}} (x) - c_1 \lambda^{\frac{n-2s}{2}} H(\epsilon^{\alpha_0}x, \sigma) \epsilon^{(n-2s)\alpha_0} + o (\epsilon^{(n-2s)\alpha_0}),
\\
P_{\epsilon}\psi_{\lambda, \sigma\epsilon^{-\alpha_0}}^j (x) &= \psi_{\lambda, \sigma\epsilon^{-\alpha_0}}^j (x) - c_1 \lambda^{\frac{n-2s}{2}} \frac{\partial H}{\partial \sigma^j} (\epsilon^{\alpha_0}x, \sigma) \epsilon^{(n-2s+1)\alpha_0} + o(\epsilon^{(n-2s+1)\alpha_0}),
\\
P_{\epsilon}\psi_{\lambda, \sigma\epsilon^{-\alpha_0}}^{0} (x) &= \psi_{\lambda, \sigma\epsilon^{-\alpha_0}}^0 (x) - \frac{(n-2s)c_1}{2} \lambda^{\frac{n-2s-2}{2}} H(\epsilon^{\alpha_0}x, \sigma) \epsilon^{(n-2s)\alpha_0} + o(\epsilon^{(n-2s)\alpha_0})
\end{align*}
for all $x \in \Omega_{\epsilon}$ where $c_1 > 0$ is the constant defined in \eqref{constant_AB}.
Here the little $o$ terms tend to zero as $\epsilon \to 0$ uniformly in $x \in \Omega_{\epsilon}$ and $\sigma \in \Omega$ provided $\textrm{dist}(\sigma, \partial \Omega) > \overline{C}$ for some constant $\overline{C} > 0$.
\end{lem}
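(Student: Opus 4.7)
The plan is to lift the identity to the extension problem on $\mathcal{C}_{\epsilon}$, perform a natural rescaling to the fixed half-cylinder $\mathcal{C}$, and conclude via the maximum principle of Lemma \ref{lem-maximum}. I will detail the first identity; the other two follow by the same argument with modified scaling exponents. The only real technical hurdle is maintaining uniformity in $\sigma$ as the bubble center $\sigma\epsilon^{-\alpha_0}$ diverges in $\mathbb{R}^n$, which is precisely where the separation hypothesis $\dist(\sigma, \partial\Omega) > \bar{C}$ enters.

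First I set $R^{\mathrm{ext}}_{\epsilon} := W_{\lambda, \sigma\epsilon^{-\alpha_0}} - P_{\epsilon}W_{\lambda, \sigma\epsilon^{-\alpha_0}}$. By \eqref{wlyxt} and \eqref{def_P_epsilon}, it satisfies $\text{div}(t^{1-2s}\nabla R^{\mathrm{ext}}_{\epsilon}) = 0$ in $\mathcal{C}_{\epsilon}$, $\partial_{\nu}^s R^{\mathrm{ext}}_{\epsilon} = 0$ on $\Omega_{\epsilon} \times \{0\}$, and $R^{\mathrm{ext}}_{\epsilon} = W_{\lambda, \sigma\epsilon^{-\alpha_0}}$ on $\partial_L\mathcal{C}_{\epsilon}$. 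Next, define
\[
\tilde{R}_{\epsilon}(y, t) := \epsilon^{-(n-2s)\alpha_0}\, R^{\mathrm{ext}}_{\epsilon}(\epsilon^{-\alpha_0}y, \epsilon^{-\alpha_0}t), \qquad (y, t) \in \mathcal{C}.
\]
A direct scaling check shows that the weighted equation and the vanishing conormal derivative at $t = 0$ are both preserved, so $\tilde{R}_{\epsilon}$ solves the analogous problem on $\mathcal{C}$.

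The key step is to identify the limit of the boundary data on $\partial_L\mathcal{C}$. Using the Green representation $W_{\lambda, \xi}(x, t) = \int_{\mathbb{R}^n} \mathfrak{a}_{n,s}|(x-z, t)|^{-(n-2s)} w_{\lambda, \xi}^p(z)\, dz$ (an immediate consequence of \eqref{Green_half}) and the change of variable $z = \sigma\epsilon^{-\alpha_0} + w$, one finds
\[
\tilde{R}_{\epsilon}(y, t) = \int_{\mathbb{R}^n} \frac{\mathfrak{a}_{n,s}}{|(y - \sigma - \epsilon^{\alpha_0}w,\, t)|^{n-2s}}\, w_{\lambda, 0}^p(w)\, dw \quad \text{on } \partial_L\mathcal{C}.
\]
The hypothesis $\dist(\sigma, \partial\Omega) > \bar{C}$ gives a uniform lower bound on $|y - \sigma - \epsilon^{\alpha_0}w|$ for $(y, t) \in \partial_L\mathcal{C}$ and $w$ in any fixed compact set when $\epsilon$ is small, and together with the decay of $w_{\lambda, 0}^p$ this supplies an integrable dominating function. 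Dominated convergence and the scaling identity $\int_{\mathbb{R}^n} w_{\lambda, 0}^p = c_1 \lambda^{(n-2s)/2}$ then yield
\[
\tilde{R}_{\epsilon}(y, t) \to c_1 \lambda^{(n-2s)/2}\, \mathfrak{a}_{n,s}|(y - \sigma, t)|^{-(n-2s)} \quad \text{on } \partial_L\mathcal{C},
\]
uniformly in $(y, t)$, in $\sigma$ with $\dist(\sigma, \partial\Omega) > \bar{C}$, and in $\lambda$ on compacta of $(0, \infty)$. Since the limit is precisely the boundary trace of $c_1 \lambda^{(n-2s)/2} H_{\mathcal{C}}(\cdot, \sigma)$, applying Lemma \ref{lem-maximum} to the difference $\tilde{R}_{\epsilon} - c_1 \lambda^{(n-2s)/2} H_{\mathcal{C}}(\cdot, \sigma)$ (still weighted harmonic with vanishing conormal at the bottom) forces uniform $L^\infty$ convergence to $0$ on all of $\mathcal{C}$. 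Restricting to $t = 0$, substituting $y = \epsilon^{\alpha_0}x$, and undoing the rescaling produces the first identity.

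For the second identity, apply the same scheme to $\Psi^j_{\lambda, \sigma\epsilon^{-\alpha_0}} - P_{\epsilon}\Psi^j_{\lambda, \sigma\epsilon^{-\alpha_0}}$ with rescaling factor $\epsilon^{-(n-2s+1)\alpha_0}$; the extra $\epsilon^{\alpha_0}$ comes from the chain-rule identity $\Psi^j_{\lambda, \sigma\epsilon^{-\alpha_0}} = \epsilon^{\alpha_0}\, \partial_{\sigma^j} W_{\lambda, \sigma\epsilon^{-\alpha_0}}$, and differentiating the boundary integrand above in $\sigma^j$ gives the limit $c_1 \lambda^{(n-2s)/2}\, \partial_{\sigma^j} H_{\mathcal{C}}(\cdot, \sigma)$, which matches the boundary data of $\partial_{\sigma^j} H_{\mathcal{C}}(\cdot, \sigma)$ through \eqref{eq-derive-h}. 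For the third identity, $\Psi^0 = \partial_\lambda W$ introduces no extra $\epsilon$-power, so the rescaling factor is again $\epsilon^{-(n-2s)\alpha_0}$, and differentiating the limit boundary value in $\lambda$ produces the coefficient $\partial_\lambda\bigl(c_1 \lambda^{(n-2s)/2}\bigr) = \tfrac{(n-2s)c_1}{2}\, \lambda^{(n-2s-2)/2}$. In each case uniformity in $x \in \Omega_{\epsilon}$ is automatic from the global sup-norm control furnished by Lemma \ref{lem-maximum}.
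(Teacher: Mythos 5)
Your proposal is correct and follows essentially the same route as the paper: represent $W_{\lambda,\sigma\epsilon^{-\alpha_0}}$ through the kernel $G_{\mathbb{R}^{n+1}_+}$, compute the limit of the data on the lateral boundary (where $\dist(\sigma,\partial\Omega)>\overline{C}$ keeps the kernel away from its singularity, up to a harmless tail), match it with the lateral data of the regular part $H_{\mathcal{C}}(\cdot,\sigma)$, and conclude by the maximum principle of Lemma \ref{lem-maximum}, with the second and third identities obtained by differentiating in $\sigma^j$ and $\lambda$. The only cosmetic difference is that you rescale the remainder to the fixed cylinder $\mathcal{C}$, whereas the paper stays on $\mathcal{C}_{\epsilon}$ and compares with $\mathcal{F}(z)=c_1\lambda^{\frac{n-2s}{2}}H_{\mathcal{C}}(\epsilon^{\alpha_0}z,\sigma)\epsilon^{(n-2s)\alpha_0}$, which is the same comparison after a change of variables.
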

\begin{proof}
For fixed $\lambda > 0$ and $\sigma \in \Omega$, let $\Phi_{\lambda, \sigma\epsilon^{-\alpha_0}} = W_{\lambda, \sigma\epsilon^{-\alpha_0}}- P_{\epsilon}W_{\lambda, \sigma\epsilon^{-\alpha_0}}$.  Then $\Phi_{\lambda, \sigma\epsilon^{-\alpha_0}}$ satisfies
\[\left\{ \begin{array}{ll}
\text{div}\left(t^{1-2s}\nabla \Phi_{\lambda, \sigma\epsilon^{-\alpha_0}}\right) = 0 & \text{in } \mathcal{C}_{\epsilon},\\
\Phi_{\lambda, \sigma\epsilon^{-\alpha_0}} =W_{\lambda, \sigma\epsilon^{-\alpha_0}}& \text{on } \partial_L \mathcal{C}_{\epsilon},\\
\partial_{\nu}^{s} \Phi_{\lambda, \sigma\epsilon^{-\alpha_0}} = 0 &\text{on } \Omega_\epsilon \times \{0\}.
\end{array}\right.\]
On the other hand, the function $\mathcal{F}(z):= c_1 \lambda^{\frac{n-2s}{2}} H_{\mathcal{C}}(\epsilon^{\alpha_0}z, \sigma) \epsilon^{(n-2s)\alpha_0}$ defined for $z \in \mathcal{C}_{\epsilon}$ solves
\[\left\{ \begin{array}{ll}
\text{div}(t^{1-2s}  \nabla \mathcal{F})= 0 &\text{in } \mathcal{C}_{\epsilon},
\\
\mathcal{F}(z) = \epsilon^{(n-2s)\alpha_0} c_1 \lambda^{\frac{n-2s}{2}}G_{\mathbb{R}^{n+1}_{+}}(\epsilon^{\alpha_0} z,\sigma) & \text{on } \partial_L \mathcal{C}_{\epsilon},
\\
\partial_{\nu}^{s} \mathcal{F} = 0 &\text{on } \Omega_{\epsilon} \times \{0\}.
\end{array}\right.\]
Note that
\begin{align*}
W_{\lambda, \sigma\epsilon^{-\alpha_0}}(x,t)
& = \int_{\mathbb{R}^{n}} G_{\mathbb{R}^{n+1}_+}(x,t,y)  W_{\lambda, \sigma\epsilon^{-\alpha_0}}^{p} (y,0) dy
\\
&= \mathfrak{c}_{n,s}^p\int_{\mathbb{R}^n} G_{\mathbb{R}^{n+1}_{+}} (x,t,y) \frac{ \lambda^{\frac{n+2s}{2}}}{|(y-\sigma\epsilon^{-\alpha_0}, \lambda)|^{n+2s}}dy
\\
&= \mathfrak{c}_{n,s}^p\int_{\mathbb{R}^{n}} \lambda^{\frac{n-2s}{2}} G_{\mathbb{R}^{n+1}_{+}}(x,t, \lambda y+ \sigma\epsilon^{-\alpha_0}) \frac{1}{|(y,1)|^{n+2s}} dy \quad \text{for } (x,t) \in \mathbb{R}^{n+1}_+.
\end{align*}
For $(x, t) \in \partial_{L} \mathcal{C}$, we calculate
\begin{align*}
W_{\lambda, \sigma\epsilon^{-\alpha_0}}(x\epsilon^{-\alpha_0} ,t\epsilon^{-\alpha_0}) &=  \mathfrak{c}_{n,s}^p\int_{\mathbb{R}^{n}} \lambda^{\frac{n-2s}{2}} G_{\mathbb{R}^{n+1}_{+}}((x-\sigma)\epsilon^{-\alpha_0},t\epsilon^{-\alpha_0}, \lambda y) \frac{1}{|(y,1)|^{n+2s}} dy
\\
&=\epsilon^{(n-2s)\alpha_0}\mathfrak{c}_{n,s}^p\int_{\mathbb{R}^{n}} \lambda^{\frac{n-2s}{2}} G_{\mathbb{R}^{n+1}_{+}}(x-\sigma, t, \lambda y) \frac{\epsilon^{-n\alpha_0}}{|(y\epsilon^{-\alpha_0},1)|^{n+2s}} dy
\\
&=\epsilon^{(n-2s)\alpha_0}c_1\lambda^{\frac{n-2s}{2}} G_{\mathbb{R}^{n+1}_{+}}(x-\sigma,t, 0) + o (\epsilon^{(n-2s)\alpha_0}).
\end{align*}
As $\epsilon > 0$ goes to 0, the term $o (\epsilon^{(n-2s)\alpha_0})$ above converges to 0 uniformly in $(x,t) \in \partial_L\mathcal{C}$ and $\sigma \in \Omega$ satisfying $\textrm{dist}(\sigma, \partial \Omega) > \overline{C}$.

On the other hand, we have
\[\mathcal{F}(x\epsilon^{-\alpha_0},t\epsilon^{-\alpha_0}) = \epsilon^{(n-2s)\alpha_0} c_1 \lambda^{\frac{n-2s}{2}} G_{\mathbb{R}^{n+1}_{+}}(x,t, \sigma)
=  \epsilon^{(n-2s)\alpha_0} c_1 \lambda^{\frac{n-2s}{2}} G_{\mathbb{R}^{n+1}_{+}}(x-\sigma,t,0) .\]
Thus
\[\sup_{(x\epsilon^{-\alpha_0},t\epsilon^{-\alpha_0}) \in \partial_L \mathcal{C}}
|\Psi_{\lambda, \sigma\epsilon^{-\alpha_0}}(x\epsilon^{-\alpha_0} ,t\epsilon^{-\alpha_0})-\mathcal{F}(x\epsilon^{-\alpha_0},t\epsilon^{-\alpha_0}, \sigma)| = o (\epsilon^{(n-2s)\alpha_0}).\]
By the maximum principle (Lemma \ref{lem-maximum}), we get
\[\sup_{z \in \mathcal{C}_{\epsilon}}|\Psi_{\lambda, \sigma\epsilon^{-\alpha_0}}(z)-\mathcal{F}(z)| = o (\epsilon^{(n-2s)\alpha_0}).\]
By taking $z = (x,0)$ for $x \in \Omega_{\epsilon}$ we obtain $\sup_{x \in \Omega_{\epsilon}}|w_{\lambda, \sigma\epsilon^{-\alpha_0}}(x) -Pw_{\lambda, \sigma\epsilon^{-\alpha_0}}(x)-  \mathcal{F}(x,0)| = o (\epsilon^{(n-2s)\alpha_0})$. Now the first identity follows from the definition of $\mathcal{F}$.

The second and third estimation can be proved similarly.
\end{proof}

From the above lemma, we immediately get the following lemma.
\begin{lem}\label{lem-projection-green}
For any $\lambda > 0$ and $\sigma = (\sigma^1, \cdots, \sigma^n) \in \Omega$, we have
\begin{align*}
P_{\epsilon} w_{\lambda, \sigma\epsilon^{-\alpha_0}} (x) &= c_1 \lambda^{\frac{n-2s}{2}} G(\epsilon^{\alpha_0}x, \sigma) \epsilon^{(n-2s)\alpha_0} + o (\epsilon^{(n-2s)\alpha_0})
\\
P_{\epsilon}\psi_{\lambda, \sigma\epsilon^{-\alpha_0}}^j (x) &= c_1 \lambda^{\frac{n-2s}{2}} \frac{\partial G}{\partial \sigma^j} (\epsilon^{\alpha_0}x, \sigma) \epsilon^{(n-2s+1)\alpha_0} + o( \epsilon^{ (n-2s+1)\alpha_0})
\\
P_{\epsilon}\psi_{\lambda, \sigma\epsilon^{-\alpha_0}}^{0} (x) &=\left(\frac{n-2s}{2}\right) c_1 \lambda^{\frac{n-2s-2}{2}} G(\epsilon^{\alpha_0}x, \sigma) \epsilon^{(n-2s)\alpha_0} + o( \epsilon^{(n-2s)\alpha_0}),
\end{align*}
where the little $o$ terms tend to zero uniformly in $x \in \Omega_{\epsilon}$ and $\sigma \in \Omega$ provided $|\epsilon^{\alpha_0} x - \sigma|>C$ and $\textrm{dist}(\epsilon^{\alpha_0}x,\partial \Omega) >C$ for a fixed constant $C>0$.
As the previous lemma, $c_1 > 0$ is the constant given in \eqref{constant_AB}.
\end{lem}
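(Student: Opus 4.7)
The plan is to substitute the far-field expansion of the bubble $w_{\lambda,\sigma\epsilon^{-\alpha_0}}$ (and of its derivatives $\psi^j_{\lambda,\sigma\epsilon^{-\alpha_0}}$, $\psi^0_{\lambda,\sigma\epsilon^{-\alpha_0}}$) into the three identities of Lemma~\ref{lem-projection-robin}, and then recognize the resulting singular term paired with $-H$ as Green's function $G$ through the decomposition recorded in \eqref{Robin_Omega}.

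The first ingredient is the constant identity $\mathfrak{c}_{n,s} = c_1 \mathfrak{a}_{n,s}$. I would derive it by letting $|x-\xi|\to\infty$ in the Riesz representation
\[ w_{\lambda,\xi}(x) = \int_{\mathbb{R}^n}\frac{\mathfrak{a}_{n,s}}{|x-y|^{n-2s}}\, w_{\lambda,\xi}^p(y)\,dy, \]
which is a consequence of $\mathcal{A}_s w_{\lambda,\xi}= w_{\lambda,\xi}^p$ on $\mathbb{R}^n$, and matching the leading order with the direct expansion coming from \eqref{bubble}, using the scaling identity $\int_{\mathbb{R}^n} w_{\lambda,\xi}^p\,dy = c_1\lambda^{(n-2s)/2}$. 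The second ingredient is the pointwise asymptotic behavior of the bubble in the prescribed region. A direct binomial expansion of \eqref{bubble} in $\lambda^2/|x-\sigma\epsilon^{-\alpha_0}|^2$ (together with its $\xi^j$- and $\lambda$-derivatives) yields, uniformly for $|\epsilon^{\alpha_0}x-\sigma|>C$,
\begin{align*}
w_{\lambda,\sigma\epsilon^{-\alpha_0}}(x) &= \mathfrak{c}_{n,s}\lambda^{\frac{n-2s}{2}}\frac{\epsilon^{(n-2s)\alpha_0}}{|\epsilon^{\alpha_0}x-\sigma|^{n-2s}} + O(\epsilon^{(n-2s+2)\alpha_0}),\\
\psi^j_{\lambda,\sigma\epsilon^{-\alpha_0}}(x) &= \mathfrak{c}_{n,s}\lambda^{\frac{n-2s}{2}}\frac{\partial}{\partial\sigma^j}\left(\frac{1}{|\epsilon^{\alpha_0}x-\sigma|^{n-2s}}\right)\epsilon^{(n-2s+1)\alpha_0} + O(\epsilon^{(n-2s+3)\alpha_0}),\\
\psi^0_{\lambda,\sigma\epsilon^{-\alpha_0}}(x) &= \frac{n-2s}{2}\mathfrak{c}_{n,s}\lambda^{\frac{n-2s-2}{2}}\frac{\epsilon^{(n-2s)\alpha_0}}{|\epsilon^{\alpha_0}x-\sigma|^{n-2s}} + O(\epsilon^{(n-2s+2)\alpha_0}).
\end{align*}

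Plugging these expansions into the three identities of Lemma~\ref{lem-projection-robin}, replacing $\mathfrak{c}_{n,s}$ by $c_1\mathfrak{a}_{n,s}$, and using $G(y,\sigma) = \mathfrak{a}_{n,s}|y-\sigma|^{-(n-2s)} - H(y,\sigma)$ together with its $\sigma^j$-derivative (meaningful by Lemma~\ref{lem-property-H}) merges the singular profile with the $-H$ (or $-\partial_{\sigma^j}H$) contribution into $G$ (or $\partial_{\sigma^j} G$). The $O(\cdot)$ remainders above are of strictly higher order in $\epsilon$ than the displayed scales, so they are absorbed into the $o(\cdot)$ remainders inherited from Lemma~\ref{lem-projection-robin}, yielding the three claimed formulas with the same uniformity. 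The only step with any genuine content is the identification $\mathfrak{c}_{n,s}=c_1\mathfrak{a}_{n,s}$; the remainder of the argument is routine bookkeeping.
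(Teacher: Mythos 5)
Your proposal is correct and follows exactly the route the paper intends: the paper states Lemma \ref{lem-projection-green} as an immediate consequence of Lemma \ref{lem-projection-robin}, and your argument supplies precisely the implicit bookkeeping, namely the far-field expansions of $w_{\lambda,\sigma\epsilon^{-\alpha_0}}$, $\psi^j$, $\psi^0$ in the region $|\epsilon^{\alpha_0}x-\sigma|>C$, the identity $\mathfrak{c}_{n,s}=c_1\mathfrak{a}_{n,s}$ (obtained from the Riesz representation, consistent with how the paper proves Lemma \ref{lem-projection-robin}), and the decomposition $G=\mathfrak{a}_{n,s}|\cdot|^{2s-n}-H$ from \eqref{Robin_Omega} together with Lemma \ref{lem-property-H} for the $\sigma^j$-derivative. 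No gaps.
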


\subsection{Basic estimates}
Let $w_i$ and $\psi_i^j$ (for $i = 1, \cdots, k$ and $j = 0, \cdots, n$) be the functions given in \eqref{W_i}.
Then applying the definition of $w_{\lambda, \xi}$ in \eqref{bubble}, Lemma \ref{lem-projection-robin} and the Sobolev trace inequality \eqref{eq-sharp-trace}, we can deduce the following estimates.
For the details, we refer to \cite{MP} in which the authors deal with the case $s = 1$.

\begin{lem}\label{lem_red_appen1}
It holds that
\[\| P_{\epsilon} w_i \|_{L^{\frac{2n}{n-2s}}(\Omega_{\epsilon})} \leq \| w_i \|_{L^{\frac{2n}{n-2s}}(\Omega_{\epsilon})} \leq C.\] Also we have
\[\|P_{\epsilon} w_i \|_{L^{\frac{2n}{n+2s}}(\Omega_{\epsilon})} \leq \left\{\begin{array}{ll} C &\quad \text{if}~ n>6s,\\
C \epsilon^{-(6s-n) \alpha_0/2} |\log \epsilon|& \quad \text{if}~ n \leq 6s.
\end{array}\right.\]
Similarly,
\begin{align*}
& \big\|P_{\epsilon}\psi_i^j \big\|_{L^{\frac{2n}{n-2s}}(\Omega_{\epsilon})} \leq C,
\quad \big\|P_{\epsilon}\psi_i^j \big\|_{L^{\frac{2n}{n+2s}}(\Omega_{\epsilon})} \leq C \quad \text{if } j = 1, \cdots, n,
\intertext{and}
& \big\|P_{\epsilon}\psi_i^0 \big\|_{L^{\frac{2n}{n+2s}}(\Omega_{\epsilon})} \leq
\left\{ \begin{array}{ll} C &\quad \text{if} ~ n > 6s,\\
C\epsilon^{-(6s-n)\alpha_0 /2 } |\log \epsilon| &\quad\text{if}~ n \leq 6s.
\end{array}\right.
\end{align*}
\end{lem}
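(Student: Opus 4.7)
\smallskip

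The plan is to reduce everything to direct integration of the bubbles and their derivatives on $\Omega_\epsilon$, using two simple ingredients: pointwise comparison via the maximum principle for the bubble itself, and the $L^\infty$ expansions already established in Lemma \ref{lem-projection-robin} for the $\psi_i^j$. The case distinction $n>6s$ vs.\ $n\le 6s$ will come entirely from whether the tail integral $\int_{\Omega_\epsilon}w_i^{2n/(n+2s)}dx$ converges as $\mathrm{diam}(\Omega_\epsilon)\sim \epsilon^{-\alpha_0}\to\infty$.

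First I will prove the bounds for $P_\epsilon w_i$. The function $W_i - P_\epsilon W_i$ solves $\mathrm{div}(t^{1-2s}\nabla \cdot)=0$ in $\mathcal{C}_\epsilon$, has boundary value $W_i\ge 0$ on $\partial_L\mathcal{C}_\epsilon$, and satisfies $\partial_\nu^s(\cdot)=0$ on $\Omega_\epsilon\times\{0\}$, so the maximum principle of Lemma \ref{lem-maximum} gives $0\le P_\epsilon w_i\le w_i$ pointwise on $\Omega_\epsilon$. Hence for every $q\ge 1$
\[
\|P_\epsilon w_i\|_{L^q(\Omega_\epsilon)}\le \|w_i\|_{L^q(\Omega_\epsilon)}.
\]
For $q=2n/(n-2s)$ the right-hand side is bounded by $\|w_{\lambda_i,0}\|_{L^{p+1}(\mathbb{R}^n)}$, which is independent of $\epsilon$ by translation and scale invariance of the critical $L^{p+1}$ norm. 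For $q=2n/(n+2s)$ I change variables $y=\lambda_i^{-1}(x-\sigma_i\epsilon^{-\alpha_0})$ to reduce to
\[
\|w_i\|_{L^{2n/(n+2s)}(\Omega_\epsilon)}^{2n/(n+2s)}\le C\lambda_i^{2s}\int_{|y|\le C\epsilon^{-\alpha_0}}\frac{dy}{(1+|y|^2)^{n(n-2s)/(n+2s)}}.
\]
The exponent $\beta:=2n(n-2s)/(n+2s)$ satisfies $n-\beta=n(6s-n)/(n+2s)$, so the integral is bounded by $C$ when $n>6s$, by $C|\log\epsilon|$ when $n=6s$, and by $C\epsilon^{-\alpha_0 n(6s-n)/(n+2s)}$ when $n<6s$. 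Taking the $(n+2s)/2n$-th power yields the stated bound (the logarithm is harmless in the critical case and can be folded into the factor $\epsilon^{-(6s-n)\alpha_0/2}|\log\epsilon|$).

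Next I will treat the $P_\epsilon\psi_i^j$. Since $\psi_i^j$ is not sign-definite, the maximum principle trick is unavailable, but Lemma \ref{lem-projection-robin} already tells us
\[
P_\epsilon\psi_i^j(x)=\psi_i^j(x)+\epsilon^{(n-2s+\mathbf{1}_{j\ne 0})\alpha_0}\,\Theta_{i,j}^\epsilon(x),\qquad \|\Theta_{i,j}^\epsilon\|_{L^\infty(\Omega_\epsilon)}\le C,
\]
where the exponent is $(n-2s)\alpha_0$ when $j=0$ and $(n-2s+1)\alpha_0$ when $1\le j\le n$. The $L^q$ norm of the correction term is controlled by $\|1\|_{L^q(\Omega_\epsilon)}\sim \epsilon^{-n\alpha_0/q}$, and a direct check shows this contribution is negligible compared with the dominating term $\|\psi_i^j\|_{L^q(\Omega_\epsilon)}$ in each case. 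So it remains to integrate $\psi_i^j$ directly. For $j=1,\dots,n$ the decay $|\psi_i^j(x)|\le C\lambda_i^{(n-2s)/2}|x-\sigma_i\epsilon^{-\alpha_0}|^{-(n-2s+1)}$ gives uniformly bounded $L^{2n/(n\pm 2s)}$ norms on $\Omega_\epsilon$ since $(n-2s+1)\cdot\tfrac{2n}{n+2s}>n$ as soon as $n>6s-2$, which is implied by $n>4s>6s-2$ for $s<1$. For $j=0$ the derivative $\partial w/\partial\lambda$ has the same $|x|^{-(n-2s)}$ decay as $w$ itself, so the computation from the first part repeats verbatim and produces the identical $n>6s$ vs.\ $n\le 6s$ dichotomy.

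The only mildly delicate step is verifying that the two correction terms produced by Lemma \ref{lem-projection-robin} are genuinely of lower order than the leading $\psi_i^j$ contribution in $L^{2n/(n\pm2s)}$ for all the relevant ranges of $n$, $s$; this is a straightforward arithmetic comparison of exponents $\epsilon^{(n-2s)\alpha_0-n\alpha_0/q}$ against $\epsilon^0$ or $\epsilon^{-(6s-n)\alpha_0/2}$, and the main obstacle is just careful bookkeeping rather than any substantive analytic issue.
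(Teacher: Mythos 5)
Your argument is correct and is essentially the route the paper intends: the paper proves nothing beyond invoking the explicit form of $w_{\lambda,\xi}$, Lemma \ref{lem-projection-robin} and the reference \cite{MP}, and your pointwise comparison $0\le P_{\epsilon}w_i\le w_i$ together with direct integration of the bubble tails (giving the $n>6s$ versus $n\le 6s$ dichotomy) is exactly that standard computation. Two trivial touch-ups: the lower bound $P_{\epsilon}w_i\ge 0$ is not literally Lemma \ref{lem-maximum} (which has zero Neumann data) but follows by testing the equation for $P_{\epsilon}W_i$ with its negative part, and for $j=0$, $n<6s$ the correction term of size $\epsilon^{(n-2s)\alpha_0}|\Omega_{\epsilon}|^{(n+2s)/2n}=\epsilon^{-(6s-n)\alpha_0/2}$ is of the same order as, not negligible against, the main term—still within the stated bound.
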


\begin{lem}\label{lem_red_appen2}
For $i=1,\cdots,k$, we have
\[\big\| P_{\epsilon}\psi_i^{j} - \psi_i^{j} \big\|_{L^{\frac{2n}{n-2s}}(\Omega_{\epsilon})} \leq C \epsilon^{\alpha_0 \left(\frac{n}{2}-s+1\right)}\quad \text{if}~ j=1,\cdots, n\]
and
\[\big\| P_{\epsilon}\psi_i^{0}-\psi_i^{0} \big\|_{L^{\frac{2n}{n-2s}}(\Omega_{\epsilon})} \leq C \epsilon^{\alpha_0 \frac{n-2s}{2}}.\]
\end{lem}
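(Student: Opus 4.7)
The plan is to derive both estimates directly from the pointwise asymptotic expansions already furnished by Lemma \ref{lem-projection-robin}. Recall that lemma provides, uniformly in $x \in \Omega_{\epsilon}$ and in $\sigma_i$ staying at positive distance from $\partial \Omega$, the expansions
\[
P_{\epsilon}\psi_i^{j}(x) - \psi_i^{j}(x) = -c_1 \lambda_i^{\frac{n-2s}{2}} \frac{\partial H}{\partial \sigma^j}(\epsilon^{\alpha_0} x, \sigma_i)\, \epsilon^{(n-2s+1)\alpha_0} + o\!\left(\epsilon^{(n-2s+1)\alpha_0}\right) \quad (j=1,\dots,n)
\]
and an analogous identity for $j=0$ with $\frac{\partial H}{\partial \sigma^j}$ replaced by $H$ itself and with prefactor $\epsilon^{(n-2s)\alpha_0}$. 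Thus the task reduces to estimating the $L^{2n/(n-2s)}(\Omega_{\epsilon})$ norm of each explicit term on the right-hand side.

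The key step will be a scaling computation. By the change of variables $y = \epsilon^{\alpha_0} x$, one has
\[
\left\| \partial_{\sigma^j} H(\epsilon^{\alpha_0}\, \cdot\,, \sigma_i) \right\|_{L^{\frac{2n}{n-2s}}(\Omega_{\epsilon})} = \epsilon^{-\alpha_0 \frac{n-2s}{2}} \left\| \partial_{\sigma^j} H(\,\cdot\,, \sigma_i) \right\|_{L^{\frac{2n}{n-2s}}(\Omega)},
\]
and the same identity holds with $\partial_{\sigma^j} H$ replaced by $H$. Next I would invoke Lemma \ref{lem-property-H}, which guarantees that both $H(\,\cdot\,, \sigma_i)$ and $\partial_{\sigma^j} H(\,\cdot\,, \sigma_i)$ are bounded on $\Omega$ (because $\sigma_i$ lies in a fixed compact subset of $\Omega$ by the definition of $O^{\delta_0}$). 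Consequently the $L^{2n/(n-2s)}(\Omega)$ norms on the right-hand side are uniformly bounded, and multiplying by the prefactor $\epsilon^{(n-2s+1)\alpha_0}$ produces
\[
C\,\epsilon^{(n-2s+1)\alpha_0 - \alpha_0 \frac{n-2s}{2}} = C\,\epsilon^{\alpha_0\left(\frac{n-2s}{2} + 1\right)} = C\,\epsilon^{\alpha_0\left(\frac{n}{2} - s + 1\right)},
\]
which is exactly the target estimate for $j=1,\ldots,n$. The same calculation, but with the prefactor $\epsilon^{(n-2s)\alpha_0}$ in place of $\epsilon^{(n-2s+1)\alpha_0}$, gives the bound $C\,\epsilon^{\alpha_0(n-2s)/2}$ for $j=0$.

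Finally, I must handle the remainder term $o(\epsilon^{(n-2s+1)\alpha_0})$ (respectively $o(\epsilon^{(n-2s)\alpha_0})$). Since Lemma \ref{lem-projection-robin} asserts that this decay is uniform in $x \in \Omega_{\epsilon}$, its $L^{2n/(n-2s)}(\Omega_{\epsilon})$ norm is at most $o(1)$ times $|\Omega_{\epsilon}|^{(n-2s)/(2n)} \sim \epsilon^{-\alpha_0(n-2s)/2}$ times the pertinent prefactor, which is therefore absorbed into the stated bound. I do not anticipate a genuine obstacle: Lemma \ref{lem-projection-robin} carries out the difficult asymptotic analysis, and the present lemma is essentially a matter of measuring sizes. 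The only point requiring mild care is to confirm the scaling exponent arithmetic above, and to verify that the regularity statements of Lemma \ref{lem-property-H} apply uniformly over the compact set of admissible concentration points $\sigma_i$.
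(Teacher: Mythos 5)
Your proposal is correct and follows essentially the route the paper itself indicates for this lemma: the paper derives these bounds from the expansion in Lemma \ref{lem-projection-robin} (with the uniform boundedness of $H$ and $\partial_{\sigma^j}H$ from Lemma \ref{lem-property-H}), referring to \cite{MP} for the remaining routine details, and your scaling computation $\epsilon^{(n-2s+1)\alpha_0-\alpha_0\frac{n-2s}{2}}=\epsilon^{\alpha_0(\frac{n}{2}-s+1)}$ (resp. $\epsilon^{(n-2s)\alpha_0-\alpha_0\frac{n-2s}{2}}=\epsilon^{\alpha_0\frac{n-2s}{2}}$ for $j=0$) together with the uniform-in-$x$ treatment of the $o(\cdot)$ remainder is exactly that argument carried out.
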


\begin{lem}\label{lem_red_appen3}
It holds that
\[\left\| \left(\sum_{i=1}^k P_{\epsilon} w_i \right)^p - \sum_{i=1}^k w_i^p \right\|_{L^{\frac{2n}{n+2s}}(\Omega_{\epsilon})} \leq
\left\{\begin{array}{ll} C\epsilon^{\frac{n+2s}{2}\alpha_0} &\quad \text{if}~ n> 6s,\\
C\epsilon^{(n-2s) \alpha_0}|\log \epsilon| &\quad \text{if}~ n \leq 6s.
\end{array}\right.\]
Besides,
\[\left\| \left(\sum_{i=1}^k P_{\epsilon} w_i\right)^{p-1} - \sum_{i=1}^k  w_i^{p-1}\right\|_{L^{\frac{n}{2s}}(\Omega_{\epsilon})} \leq C\epsilon^{2s\alpha_0}.\]
and
\[\left\| \left[ \left(\sum_{i=1}^k P_{\epsilon} w_i\right)^{p-1} - \sum_{i=1}^k w_i^{p-1} \right] P_{\epsilon}\psi_h^{j} \right\|_{L^{\frac{2n}{n+2s}}(\Omega_{\epsilon})} \leq C \epsilon^{\alpha_0 \frac{n+2s}{2}}\]
for $h = 1, \cdots, k$ and $j = 0, 1, \cdots, n$.
\end{lem}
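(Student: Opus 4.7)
All three estimates rest on the same two pointwise ingredients, together with an algebraic inequality for powers. First, by Lemma~\ref{lem-projection-robin} the projection errors $w_i - P_{\epsilon} w_i$ and $\psi_i^j - P_{\epsilon}\psi_i^j$ are $O(\epsilon^{(n-2s)\alpha_0})$ on $\Omega_{\epsilon}$, uniformly in the concentration points because $\sigma_i$ remains in a fixed compact subset of $\Omega$. Second, the separation $|\sigma_i-\sigma_j|\ge\delta_0$ combined with the decay $w_i(x)\sim|x-\sigma_i\epsilon^{-\alpha_0}|^{-(n-2s)}$ at infinity implies that on the ball $\Omega_{\epsilon,i}:=\{x\in\Omega_{\epsilon}:|x-\sigma_i\epsilon^{-\alpha_0}|<\delta_0\epsilon^{-\alpha_0}/2\}$ every bubble $w_j$ with $j\ne i$ is $O(\epsilon^{(n-2s)\alpha_0})$, and outside $\bigcup_i\Omega_{\epsilon,i}$ every bubble is of this size too. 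The plan is to split each difference into a \emph{projection error} (comparing $P_{\epsilon} w_i$ to $w_i$) plus a \emph{bubble interaction} (comparing $(\sum w_i)^p$ to $\sum w_i^p$), apply the pointwise bounds above, and integrate in the prescribed $L^q$ norm over $\Omega_{\epsilon}$.

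\textbf{First estimate.} Applying $|(a+b)^p - a^p|\le C(a^{p-1}|b|+|b|^p)$ with $a=\sum w_i,\ b=\sum(P_{\epsilon} w_i-w_i)$ on the projection side, and on each $\Omega_{\epsilon,i}$ with $a=w_i,\ b=\sum_{j\ne i}w_j$ on the interaction side, dominates the integrand by $C\epsilon^{(n-2s)\alpha_0}\sum_i w_i^{p-1}+C\epsilon^{(n+2s)\alpha_0}$. The $|b|^p$ residual is multiplied by $|\Omega_\epsilon|^{(n+2s)/(2n)}\sim\epsilon^{-(n+2s)\alpha_0/2}$, producing $\epsilon^{(n+2s)\alpha_0/2}$ unconditionally. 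The leading $\epsilon^{(n-2s)\alpha_0}\|w_i^{p-1}\|_{L^{2n/(n+2s)}(\Omega_{\epsilon})}$ term drives the announced dichotomy: since $w_i^{(p-1)\cdot 2n/(n+2s)}$ decays like $|x|^{-8sn/(n+2s)}$ at infinity, the norm is uniformly bounded for $n<6s$, carries a logarithm at the borderline $n=6s$, and picks up $\epsilon^{-\alpha_0(n-6s)/2}$ for $n>6s$ after truncation at radius $\sim\epsilon^{-\alpha_0}$; multiplying by $\epsilon^{(n-2s)\alpha_0}$ yields the stated $\epsilon^{(n-2s)\alpha_0}|\log\epsilon|$ when $n\le 6s$ and $\epsilon^{(n+2s)\alpha_0/2}$ when $n>6s$.

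\textbf{Second and third estimates.} For the second estimate we repeat the splitting with $p$ replaced by $p-1$, using $|(a+b)^{p-1}-a^{p-1}-b^{p-1}|\le C(a^{p-2}b+ab^{p-2})$ when $p\ge 2$ (that is $n\le 6s$) and the subadditive inequality $|(a+b)^{p-1}-a^{p-1}|\le Cb^{p-1}$ combined with $b^{p-1}$ being already small when $p<2$ (that is $n>6s$). The key simplification is that now the relevant exponent satisfies $(p-1)\cdot n/(2s)=2n/(n-2s)$, so $\|w_i^{p-1}\|_{L^{n/(2s)}(\Omega_{\epsilon})}$ is uniformly bounded in $\epsilon$ by the Sobolev trace integrability of the bubble; the dimensional dichotomy disappears, and both algebraic sub-cases produce the common bound $\epsilon^{2s\alpha_0}$. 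The third estimate is the most delicate: a direct H\"older application with exponents $n/(2s)$ and $2n/(n-2s)$ would only yield $\epsilon^{2s\alpha_0}$, which is weaker than the target $\epsilon^{(n+2s)\alpha_0/2}$ since $n>2s$. Instead we exploit that $P_{\epsilon}\psi_h^j$ is itself concentrated at $\sigma_h\epsilon^{-\alpha_0}$ with the same scaling profile as $w_h$: applying the pointwise decomposition first and then passing to the change of variable $y=\lambda_h^{-1}(x-\sigma_h\epsilon^{-\alpha_0})$ on $\Omega_{\epsilon,h}$ reduces the bound to standard integrals of the form $\int_{\mathbb R^n}(1+|y|^2)^{-\alpha}dy$, which produce the sharp exponent $(n+2s)\alpha_0/2$.

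\textbf{Main obstacle.} The substantive difficulty is neither the algebra nor the concentration bookkeeping but the dimensional dichotomy $n\lessgtr 6s$ in the first estimate, which arises because $w_i^{p-1}\notin L^{2n/(n+2s)}(\mathbb R^n)$ in high dimensions. One must exploit the bounded scale $\sim\epsilon^{-\alpha_0}$ of $\Omega_{\epsilon}$, and the exponent $\alpha_0(n-6s)/2$ gained from truncation precisely cancels part of the gain $\epsilon^{(n-2s)\alpha_0}$ from the projection error to yield the final exponent $(n+2s)\alpha_0/2$ when $n>6s$.
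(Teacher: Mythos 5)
Your treatment of the first estimate is correct and follows exactly the route the paper intends (the paper gives no proof and defers to \cite{MP}): split into projection error plus bubble interaction, bound the error pointwise by $C\epsilon^{(n-2s)\alpha_0}$ via Lemma \ref{lem-projection-robin}, and let the truncated norm $\|w_i^{p-1}\|_{L^{2n/(n+2s)}(\Omega_\epsilon)}$ produce the $n\lessgtr 6s$ dichotomy; your exponent bookkeeping there is right.

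The second estimate, however, is not proved by the mechanism you invoke. Your claim that the dichotomy disappears because $\|w_i^{p-1}\|_{L^{n/(2s)}(\Omega_\epsilon)}$ is bounded does not control the dominant term in the regime $p\ge 2$ (i.e.\ $4s<n\le 6s$): there the leading contribution is $a^{p-2}|b|$ with $a=\sum_i w_i$ and $|b|\le C\epsilon^{(n-2s)\alpha_0}$, and $w_i^{p-2}$ decays only like $|x|^{-(6s-n)}$, so $w_i^{p-2}\notin L^{n/(2s)}(\mathbb{R}^n)$ precisely because $n>4s$. One must compute $\|w_i^{p-2}\|_{L^{n/(2s)}(\Omega_\epsilon)}\sim \epsilon^{-(n-4s)\alpha_0}$ on the expanding domain; the product $\epsilon^{(n-2s)\alpha_0}\cdot\epsilon^{-(n-4s)\alpha_0}=\epsilon^{2s\alpha_0}$ is exactly the stated bound, so the divergence of this norm is essential, not a detail. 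Moreover the inequality you quote, $|(a+b)^{p-1}-a^{p-1}-b^{p-1}|\le C(a^{p-2}b+ab^{p-2})$, is too lossy: with your bookkeeping the term $ab^{p-2}$ only yields $\epsilon^{(6s-n)\alpha_0}\|a\|_{L^{n/(2s)}(\Omega_\epsilon)}\sim\epsilon^{(6s-n)\alpha_0}$, which is strictly weaker than $\epsilon^{2s\alpha_0}$ whenever $n<6s$ (and at $n=6s$ it gives $O(1)$). The inequality you need is $|(a+b)^{q}-a^{q}|\le C(a^{q-1}|b|+|b|^{q})$ for $q=p-1\in[1,2]$, with the $|b|^{p-1}$ piece handled by the volume factor as you do in the $n>6s$ case.

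For the third estimate your sketch cannot be completed as stated, and the obstruction is structural rather than computational. Near $\sigma_h\epsilon^{-\alpha_0}$, Lemma \ref{lem-projection-robin} gives $w_h-P_\epsilon w_h = c_1\lambda_h^{\frac{n-2s}{2}}\tau(\sigma_h)\epsilon^{(n-2s)\alpha_0}(1+o(1))$ with $\tau(\sigma_h)>0$, so on a unit-size ball around the concentration point the integrand $\left[(\sum_i P_\epsilon w_i)^{p-1}-\sum_i w_i^{p-1}\right]P_\epsilon\psi_h^j$ is of size $\epsilon^{(n-2s)\alpha_0}$ (both $w_h^{p-2}$ and $P_\epsilon\psi_h^j$ are of order one there), whence the $L^{2n/(n+2s)}$ norm is bounded \emph{below} by $c\,\epsilon^{(n-2s)\alpha_0}$. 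Since $(n+2s)/2>n-2s$ exactly when $n<6s$, the exponent $\epsilon^{(n+2s)\alpha_0/2}$ is attainable only for $n\ge 6s$; no change of variables can do better in low dimensions, and the correct statement for $4s<n<6s$ carries the same dichotomy as the first estimate (an $\epsilon^{(n-2s)\alpha_0}$, possibly logarithmic, bound), as in the $s=1$ estimates of \cite{MP}. So you should either carry out the rescaling computation and restrict it to $n\ge 6s$, supplying the low-dimensional case separately, or prove the dichotomized version and check that it still suffices where the lemma is used (in Proposition \ref{prop_main_est} such terms are paired with quantities of size $O(\epsilon^{\eta_0})$, which restores the needed smallness).
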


\subsection{Proof of Proposition \ref{prop_main_est}}\label{subsec_proof_of_main_est}
This subsection is devoted to give a proof of Proposition \ref{prop_main_est}.
\begin{proof}[Proof of Proposition \ref{prop_main_est}]
We first prove (1).
Applying $\widetilde{E}_{\epsilon}'(\bl^{\epsilon}, \bs^{\epsilon}) = 0$,
we can obtain after some computations that
\[{\partial \over \partial \varrho} \widetilde{E}_{\epsilon}'(\bl^{\epsilon}, \bs^{\epsilon})
= \sum_{h=1}^k\sum_{j=0}^n c_{hj} \left[\left(P_{\epsilon}\Psi_h^j,  \sum_{i=1}^k P_{\epsilon}{\partial W_i \over \partial \varrho} \right)_{\mathcal{C}_{\epsilon}} - \left( P_{\epsilon} {\partial \Psi_h^j \over \partial \varrho}, \Phi_{\bl^{\epsilon}, \bs^{\epsilon}}^{\epsilon} \right)_{\mathcal{C}_{\epsilon}} \right] = 0\]
where $\varrho$ is one of $\lambda_i$ or $\sigma_i^j$ with $i = 1, \cdots, k$ and $j = 0, \cdots, n$ (see \eqref{admissible}).
Using \eqref{Psi_ortho} and \eqref{Phi_est}, we can conclude that $c_{hj} = 0$ for all $h$ and $j$,
which implies that the function $U_{\epsilon}$ defined in the statement of the proposition is a solution of \eqref{equation-dilated-m}.
The assertion that $V_{\epsilon}$ is a solution of \eqref{equation-construction} is justified by the following sublemma provided $\epsilon > 0$ small.
\begin{slem}\label{sol_pos}
Suppose that $U \in H_{0,L}^{s}(\mathcal{C})$ is a solution of problem \eqref{equation-construction} with $U^p$ substituted by $U^p_+$ (here, the condition $U > 0$ in $\mathcal{C}$ is ignored).
If $\epsilon$ is small, then there is a constant $C>0$ depending only on $n$ and $s$, such that the function $U$ is positive.
\end{slem}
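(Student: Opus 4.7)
\smallskip
\noindent \textbf{Proof plan.} The standard device is to test the weak equation against the negative part of $U$ and invoke the Poincar\'e inequality generated by the first Dirichlet eigenvalue of $\mathcal{A}_s$. Writing $U = U_+ - U_-$ with $U_\pm \geq 0$, the fact that $U \in H_{0,L}^s(\mathcal{C})$ vanishes on $\partial_L\mathcal{C}$ implies that $U_- \in H_{0,L}^s(\mathcal{C})$ as well, so it is an admissible test function in the weak formulation of the modified problem. First I would record the two pointwise identities $U_+^p \cdot U_- \equiv 0$ and $U \cdot U_- \equiv -U_-^2$ on $\Omega \times \{0\}$, and $\nabla U \cdot \nabla U_- = -|\nabla U_-|^2$ on $\mathcal{C}$, then test the equation against $-U_-$ to obtain
\[
\int_{\mathcal{C}} t^{1-2s} |\nabla U_-|^2 \, dz \;=\; \epsilon\, C_s \int_{\Omega} U_-(x,0)^2 \, dx.
\]

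The next step is a Poincar\'e-type lower bound. Let $u_- := U_-(\cdot, 0) \in H_0^s(\Omega)$ and let $V_-$ be the $s$-harmonic extension of $u_-$ supplied by Subsection~\ref{subsec_frac_Sob}. Because $V_-$ minimizes the weighted Dirichlet energy among all functions in $H_{0,L}^s(\mathcal{C})$ with trace $u_-$ on $\Omega \times \{0\}$, and $U_-$ is one such competitor, the minimizing property and the definition \eqref{H_0^s} of $H_0^s(\Omega)$ give
\[
\int_{\mathcal{C}} t^{1-2s} |\nabla U_-|^2 \, dz \;\geq\; \int_{\mathcal{C}} t^{1-2s} |\nabla V_-|^2 \, dz \;=\; C_s \|u_-\|_{H_0^s(\Omega)}^2 \;\geq\; C_s \lambda_1^s \|u_-\|_{L^2(\Omega)}^2,
\]
where $\lambda_1$ is the first eigenvalue of $-\Delta$ under the zero Dirichlet condition on $\partial\Omega$. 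Combining the two displays yields $(\lambda_1^s - \epsilon) \|u_-\|_{L^2(\Omega)}^2 \leq 0$, so whenever $\epsilon < \lambda_1^s$ we conclude $u_- \equiv 0$, that is, $u := U(\cdot, 0) \geq 0$ on $\Omega$.

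Finally, to pass from $u \geq 0$ on $\Omega \times \{0\}$ to positivity of $U$ throughout $\mathcal{C}$, I would invoke Lemma~\ref{lem-maximum} applied to $-U$ (which has nonpositive boundary values on $\partial_L \mathcal{C}$ and satisfies the divergence equation in $\mathcal{C}$, with $\partial_\nu^s U = u_+^p + \epsilon u \geq 0$ on $\Omega \times \{0\}$): a routine variant of that maximum principle argument forces $U \geq 0$ on all of $\mathcal{C}$. Strict positivity $U > 0$ in $\mathcal{C}$ then follows from the strong maximum principle for the degenerate elliptic operator $\textnormal{div}(t^{1-2s}\nabla \cdot)$ (see \cite{CS,CS2}), provided $U$ is not identically zero; in the context of Proposition~\ref{prop_remainder} the ansatz $\sum_i P_\epsilon W_i + \Phi_{\bls}^\epsilon$ is manifestly nontrivial for small $\epsilon$, so this alternative is excluded. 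The main subtlety is essentially just bookkeeping, namely checking that $U_-$ really belongs to $H_{0,L}^s(\mathcal{C})$ and is an admissible test function; the core estimate is the Poincar\'e bound, and the threshold on $\epsilon$ is precisely $\epsilon < \lambda_1^s$.
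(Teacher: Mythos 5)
Your proof is correct, and it reaches the conclusion by a genuinely different route than the paper at the key step. Both arguments begin identically: test the weak formulation with $U_-$ and use $U_+^pU_-\equiv 0$, $UU_-=-U_-^2$, $\nabla U\cdot\nabla U_-=-|\nabla U_-|^2$ to get $\int_{\mathcal{C}}t^{1-2s}|\nabla U_-|^2 = \epsilon C_s\int_{\Omega}U_-^2$. The paper then finishes in one line by applying the Sobolev trace inequality \eqref{eq-sharp-trace} to the left-hand side and H\"older's inequality to the right, obtaining $\|U_-(\cdot,0)\|_{L^{2n/(n-2s)}(\Omega)}\le \epsilon C\|U_-(\cdot,0)\|_{L^{2n/(n-2s)}(\Omega)}$ and hence $U_-\equiv 0$ for $\epsilon$ small; its implicit threshold involves $\mathcal{S}_{n,s}$ and $|\Omega|^{2s/n}$. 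You instead use the variational characterization of the $s$-harmonic extension (the minimality of $V_-$ among competitors with the same trace, together with the energy identity $\int_{\mathcal{C}}t^{1-2s}|\nabla V_-|^2=C_s\|u_-\|_{H_0^s(\Omega)}^2$) and the spectral lower bound $\|u_-\|_{H_0^s(\Omega)}^2\ge\lambda_1^s\|u_-\|_{L^2(\Omega)}^2$, which gives the sharper and more natural threshold $\epsilon<\lambda_1^s$ — precisely the linear resonance threshold already appearing in Lemma \ref{lem-boundary-sup}. The trade-off is that your route needs the minimizing property of the extension (easily justified, as you indicate, since the cross term vanishes for perturbations with zero trace), whereas the paper's is a direct one-line estimate from the identity; in exchange you get an $\Omega$-intrinsic, scale-natural smallness condition. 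Two further remarks: once $u_-\equiv 0$, the original identity already forces $\nabla U_-\equiv 0$ and hence $U_-\equiv 0$ in all of $\mathcal{C}$, so the appeal to a variant of Lemma \ref{lem-maximum} is not really needed (though harmless); and your explicit treatment of strict positivity via the strong maximum principle and nontriviality of the ansatz is a point the paper leaves implicit, so it is a welcome addition. Finally, note that your threshold $\lambda_1^s$ depends on $\Omega$, but so does the constant in the paper's own proof (through $|\Omega|^{2s/n}$); the statement's phrase ``depending only on $n$ and $s$'' is loose in either reading and immaterial in the application, where $\Omega$ is fixed.
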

\begin{proof}
We multiply $U_{-}$ by equation  \eqref{equation-dilated-m} with $\epsilon = 1$. Then we have
\[\int_{\mathcal{C}} t^{1-2s} |\nabla U_{-}|^2 = \epsilon C_s \int_{\Omega \times \{0\}} U_{-}^2\]
(refer to \eqref{cs}).
By utilizing the Sobolev trace inequality and H\"older's inequality, we get
\[\| U_{-}(\cdot, 0) \|_{L^{\frac{2n}{n-2s}}(\Omega)} \leq \epsilon C \| U_{-}(\cdot, 0) \|_{L^{\frac{2n}{n-2s}}(\Omega)}\]
for some $C > 0$ independent of $U$.
Hence $U_{-}$ should be zero given that $\epsilon$ is sufficiently small.
The lemma is proved.
\end{proof}
\noindent The first part (1) of Proposition \ref{prop_main_est} is proved.

\medskip
We continue our proof by considering the second part (2). By \eqref{Phi_est}, there holds
\begin{align}
\widetilde{E}_{\epsilon}(\bls) &= E_{\epsilon} \left(\sum_{i=1}^k P_{\epsilon} W_{\lambda_i, \frac{\sigma_i}{\epsilon^{\alpha_0}}}\right) + O\left(\epsilon^{2\eta_0}\right) = E_{\epsilon} \left(\sum_{i=1}^k P_{\epsilon} W_i \right) + o\left(\epsilon^{(n-2s)\alpha_0}\right) \nonumber \\
&= {1 \over 2C_s}\int_{\mathcal{C}_{\epsilon}} t^{1-2s}\left|\nabla \left(\sum_{i=1}^k P_{\epsilon}W_i\right)\right|^2 - \int_{\Omega_{\epsilon}} F_{\epsilon}\left(i_{\epsilon}\left(\sum_{i=1}^kP_{\epsilon}W_i\right)\right) + o\left(\epsilon^{(n-2s)\alpha_0}\right) \label{decomp}
\end{align}
so it suffices to estimate each of the two terms that appear in \eqref{decomp} above.

Setting $B_i = B_{n}(\sigma_i, \delta_0/2) \subset \Omega$ where $\delta_0$ is a small number chosen in the definition \eqref{admissible} of $O^{\delta_0}$,
and applying Lemma \ref{lem-projection-robin} and Lemma \ref{lem-projection-green}, we find that
\begin{align*}
\int_{\Omega_{\epsilon}} w_i^p P_{\epsilon} w_i &= \int_{\Omega_{\epsilon}} w_1^{p+1} + \int_{\Omega_{\epsilon}} w_i^p(P_{\epsilon}w_i-w_i) = c_0 - c_1^2\lambda_i^{n-2s}H(\sigma_i, \sigma_i)\epsilon^{(n-2s)\alpha_0} + o(\epsilon^{(n-2s)\alpha_0}),
\\
\int_{\Omega_{\epsilon}} w_h^p P_{\epsilon}w_i &= \int_{B_i \over \epsilon^{\alpha_0}} w_h^p P_{\epsilon}w_i + o(\epsilon^{(n-2s)\alpha_0}) = c_1^2(\lambda_h\lambda_i)^{n-2s \over 2}G(\sigma_h,\sigma_i)\epsilon^{(n-2s)\alpha_0} + o(\epsilon^{(n-2s)\alpha_0}),
\\
\int_{\Omega_{\epsilon}} w_iP_{\epsilon} w_i &= \int_{\Omega_{\epsilon}} w_i^2 + o(1) = c_2 \lambda_i^{2s} + o(1) \quad \text{(if } n > 4s),\\
\int_{\Omega_{\epsilon}} w_hP_{\epsilon} w_i &= o(1) \quad \text{(if } n > 4s),
\end{align*}
for $i, h = 1, \cdots, k$ and $i \ne h$,
where $G$ and $H$ are the functions defined in \eqref{Green_Omega} and \eqref{Robin_Omega}, and $c_1$ and $c_2$ are positive constants given in \eqref{constant_AB} while $c_0$ is defined in \eqref{constant_C}.

Then the estimates obtained in the previous paragraph yield that
\begin{align*}
&\ {1 \over 2C_s}\int_{\mathcal{C}_{\epsilon}} t^{1-2s}\left|\nabla \left(\sum_{i=1}^k P_{\epsilon}W_i\right)\right|^2
= {1 \over 2} \sum_{i=1}^k \int_{\Omega_{\epsilon}} w_i^pP_{\epsilon}w_i + {1 \over 2} \sum_{\substack{i, h = 1 \\ i \ne h }}^k \int_{\Omega_{\epsilon}} w_h^pP_{\epsilon}w_i
\\
& = {kc_0 \over 2} + \left[{c_1^2 \over 2} \left\{\sum_{i \ne h}G(\sigma_i, \sigma_h)(\lambda_h\lambda_i)^{n-2s \over 2} - \sum_{i=1}^k H(\sigma_i,\sigma_i)\lambda_i^{n-2s} \right\} + o(1) \right]\epsilon^{(n-2s)\alpha_0}
\end{align*}
and
\begin{align*}
&\ \int_{\Omega_{\epsilon}} F_{\epsilon}\left(\sum_{i=1}^kP_{\epsilon}w_i\right)
\\
&= \sum_{h=1}^k \bigg[\int_{B_h \over \epsilon^{\alpha_0}} F_{\epsilon} \bigg(w_h + (P_{\epsilon}w_h - w_h) + \sum_{\substack{i, h = 1 \\ i \ne h }}^k P_{\epsilon}w_i \bigg) - F_{\epsilon}(w_h) \bigg] + \sum_{h=1}^k \int_{B_h \over \epsilon^{\alpha_0}} F_{\epsilon}(w_h) + o(\epsilon^{(n-2s)\alpha_0})
\\
&= \sum_{h=1}^k \left[\int_{B_h \over \epsilon^{\alpha_0}} F_{\epsilon}(w_h) + \int_{B_h \over \epsilon^{\alpha_0}} f_{\epsilon}(w_h)(P_{\epsilon}w_h-w_h)\right] + \sum_{i \ne h} \int_{B_h \over \epsilon^{\alpha_0}} f_{\epsilon}(w_h)P_{\epsilon}w_i + o(\epsilon^{(n-2s)\alpha_0})
\\
&= {kc_0 \over p+1} + \left[c_1^2 \left\{\sum_{i \ne h}G(\sigma_i, \sigma_h)(\lambda_h\lambda_i)^{n-2s \over 2} - \sum_{i=1}^k H(\sigma_i,\sigma_i)\lambda_i^{n-2s} \right\} + {c_2 \over 2} \sum_{i=1}^k \lambda_i^{2s} + o(1) \right]\epsilon^{(n-2s)\alpha_0}
\end{align*}
Note that here we also used that $1+2s\alpha_0 = (n-2s)\alpha_0$ which holds owing to our choice $\alpha_0 = {1 \over n-4s}$.
As a consequence, \eqref{energy_local_expansion} holds $C^0$-uniformly in $O^{\delta_0}$.
Similarly, with Lemmas \ref{lem_red_appen1}, \ref{lem_red_appen2} and \ref{lem_red_appen3}, one can conclude that \eqref{energy_local_expansion} has its validity in $C^1$-sense (see \cite[Section 7]{GMP} and \cite[Proposition 2.2]{MP}). This completes the proof.
\end{proof}

\end{document}